\title[Explicit Sarkisov program for regular surfaces]{Explicit Sarkisov program for regular surfaces over arbitrary fields and applications}
\author[F. Bernasconi]{Fabio Bernasconi}
\address{Dipartimento di Matematica “Guido Castelnuovo”, SAPIENZA Università di Roma, 
	Piazzale Aldo Moro 5, I-00185 Roma, Italia}
\curraddr{}
\email{fabio.bernasconi@uniroma1.it}
\author[A. Fanelli]{Andrea Fanelli}
\address{Univ. Bordeaux, CNRS, Bordeaux INP, IMB, UMR 5251, F-33400 Talence, France}
\curraddr{}
\email{andrea.fanelli@math.u-bordeaux.fr}
\author[J. Schneider]{Julia Schneider}
\address{Université Bourgogne Europe, CNRS, IMB UMR 5584, 21000 Dijon, France}
\curraddr{}
\email{julia.schneider@ube.fr}
\author[S. Zimmermann]{Susanna Zimmermann}
\address{Institut für Mathematik und Informatik, Spiegelgasse 1, Basel, Schweiz}
\curraddr{}
\email{susanna.zimmermann@unibas.ch}
\subjclass[2020]{}
\thanks{FB was supported by the grants $\#200021/169639$ and PZ00P2-21610 from the Swiss National Science Foundation.
AF was supported by the ANR project ``FRACASSO'' ANR- 22-CE40-0009-01.
JS was supported by the ERC starting grant $\#804334$, as well as by a Postdoc Grant of the University of Zurich, Verf\"ugung Nr. FK-22-125.
SZ was supported by Universit\'e d'Angers, the Centre Henri Lebesgue, the ERC StG Saphidir 101076412 and the Institut Universitaire de France.}
\begin{document}

%===========================================================

\begin{abstract}
	We prove the Sarkisov program for projective surfaces over excellent base rings, including the case of non-perfect base fields $\k$ of characteristic $p>0$. We classify the Sarkisov links between Mori fibre spaces and their relations for regular surfaces, generalising work of Iskovskikh.
	As an application, we discuss rationality problems for regular surfaces and the structure of the plane Cremona group.
\end{abstract}

\maketitle
\tableofcontents

%===========================================================

\section{Introduction} \label{Introduction}

The main goal of this work is to develop the explicit birational geometry of regular surfaces over {\it arbitrary} fields, with special focus on del Pezzo surfaces and conic bundles over genus-0 curves, generalising the classification results of Iskovskikh \cite{Isk79,Isko80,Isk96}.
Our core motivation, beyond an ancestral pleasure for generalisation and understanding the Cremona group $\Bir(\p^2_\k)$ of birational transformations of the 2-dimensional projective space over {\it imperfect} fields, comes from 3-dimensional birational geometry over algebraically closed fields of positive characteristic. Imperfect fields naturally appear when studying 3-dimensional del Pezzo fibrations and conic bundles as function fields of their bases.

Modern birational geometry methods have been fruitfully applied in recent years to deduce several structural results on the algebraic structure of the Cremona group over algebraically closed fields \cite{DI09,Bla09} (see \cite{Ser10} for a survey), even in higher rank, and/or over arbitrary perfect fields.
\\The key theoretical inputs required to apply these techniques are the following:
\begin{enumerate}
\item[($\text{in}_1$)] the Minimal Model Program (in short, MMP) for $\k$-surfaces (or $\k$-varieties);
\item[($\text{in}_2$)] classification results for {\it regular} rational del Pezzo surfaces and conic bundles over algebraically closed fields (in higher dimension, one requires classification, or some sort of boundedness, of Fano fibrations);
\item[($\text{in}_3$)] the study of the action of the Galois group on the Picard group of del Pezzo surfaces and conic bundles (in higher dimension, Fano fibrations) to discuss the general case of a perfect field.
\end{enumerate}
Thanks to the MMP ($\text{in}_1$), one can reduce many group-theoretic questions about the plane Cremona group to the study of automorphism groups of minimal rational del Pezzo surfaces and conic bundles. Combining ($\text{in}_2$) and ($\text{in}_3$) one obtains, over any perfect field, a classification of those minimal surfaces together with elementary birational maps (i.e.\ Sarkisov links)  and the relations between them.
\\Very recently, several striking applications on the structure of $\Bir(\p^n_\k)$, when $\k$ is perfect, have been obtained in this way:
\begin{enumerate}
\item[($\text{out}_1$)] generators and relations in $\Bir(\p^2_\k)$ \cite{Zim18,LZ20,SchneiderRelations,LS21,SZ21};
\item[($\text{out}_2$)] study of normal subgroups of $\Bir(\p^n_{\overline\k})$, $\text{char}(k)=0$, $n\ge 3$, and its non-simplicity \cite{BLZ21,BSY};
\item[($\text{out}_3$)] classification of maximal connected smooth algebraic subgroups of $\Bir(\p^n_{\overline\k})$, $n\ge 3$, up to conjugacy \cite{BFT22,BF22,BFT23,FFZ24,FZ24}. A large part of those classification results have been proved only in characteristic zero.
\end{enumerate}

The non-simplicity of the plane Cremona groups over algebraically closed fields have also been obtained using hyperbolic geometry and geometric group theory in the seminal paper \cite{CL13}, later generalised to arbitrary fields by Lonjou in \cite{Lon16}.
Further results on the generators of higher-rank Cremona groups and their quotients using motivic techniques and median geometry can be found in the recent works \cite{SL, GLU}.

\subsection*{Explicit Sarkisov program}
The minimal model program for surfaces over arbitrary fields (or even excellent rings) has been established in \cite{Sha66, Tan18}. Our first result is a classical application of the MMP to the birational geometry of Mori fibre spaces in dimension 2: the Sarkisov program \cite{Cor95, Isk96, HM13, Kal13}, which allows to decompose birational maps between Mori fibre spaces into elementary building blocks known as Sarkisov links.

\begin{theoremA*}[cf.\ \autoref{t-sarkisov-program}]
Any birational map between Mori fibre spaces of dimension 2 over an excellent integrally closed domain $T$ is a composition of Sarkisov links and isomorphisms of Mori fibre spaces.
\end{theoremA*}

Let $X$ be a minimal regular surface defined over a {\it perfect} field $\k$. After base-change to an algebraic closure $\overline{\k}$, the surface  $X_{\bar\k}$ stays regular since, over a perfect field, being regular coincides with being geometrically regular. Nonetheless, the Picard rank may increase: in general, $\rho(X_\k)=\rho(X_{\overline{\k}})^{\Gal(\overline{\k}/\k)}$. This is why in ($\text{in}_2$) one needs a classification even for {\it non-minimal} del Pezzo surfaces and conic bundles over $\overline{\k}$. When working over {\it imperfect} fields, one has to deal with purely inseparable field extensions, which do not modify the Picard rank but can produce {\it singularities}: there exist examples of regular del Pezzo surfaces which become non-regular, sometimes even non-normal or non-reduced after an inseparable base change of the base field (cf.\ \cite{Sch07, Mad16}). In the literature, these phenomena are often denoted as {\it pathologies} in positive characteristic. Despite this terminology, these are exactly the peculiar features of the geometry in positive characteristic which deserve a more comprehensive study.
\\So, when working over imperfect fields, ($\text{in}_2$) requires an upgrade:
\begin{enumerate}
\item[($\text{in}_2^{\text{im}}$)] Bounds on pathologies for del Pezzo surfaces and conic bundles over imperfect fields,
\end{enumerate}
where ``$\text{im}$'' stands for {\it imperfect}.

Several works appeared recently (cf.\ \cite{FS20a,PW22,BT22,Tan19,BM23}), where the authors investigated these wild phenomena and obtain a pretty good control in dimension 2. Focusing on the rationality problem, we obtain a birational rigidity result for pathological minimal del Pezzo surfaces, which implies the following non-rationality statement.

\begin{theoremB*}[cf.\ \autoref{prop: birational-rigidity-non-norm-dP}]
Let $X$ be a regular del Pezzo surface over a field $\k$ of Picard rank $1$. If $X$ is not geometrically normal, then $X$ is not $\k$-rational.
\end{theoremB*}

Since we can only apply Galois descent ($\text{in}_3$) from the separable closure, to isolate and classify the new phenomena over imperfect fields coming from purely inseparable extensions we develop the following.

\begin{enumerate}
\item[($\text{in}_3^{\text{im}}$)] Explicit Sarkisov links centred on closed points with purely inseparable residue field extension between geometrically rational surfaces.
\end{enumerate}

The term {\it explicit} aims to stress how our approach is very direct, often manipulating equations, and as a by-product we construct several interesting examples in \autoref{s: general points}.

We obtain a classification of del Pezzo surfaces of Picard rank $2$ in terms of the extremal rays of its Mori cone, giving rise to a Sarkisov link (links of type I respectively III in \autoref{prop: link_I}; links of type II in \autoref{prop: link II dP}; links of type IV in \autoref{prop: links IV}).
Our main result is then the classification of rational Mori fibre spaces in dimension 2 together with Sarkisov links between them over {\it arbitrary} fields, generalising the results over perfect fields in \cite{Isk96}.
As a first result, we show that if a minimal regular surface $X$ is rational, then $K_X^2 \geq 5$ (\autoref{cor: rat_deg_5}). In the rational case, we then obtain

\begin{theoremC*} [cf.\ \autoref{thm: classification_rational_surfaces}]
	Let $\k$ be an arbitrary field. Then the following hold, where $X_d$ denotes a del Pezzo surface of degree $d$.
	\begin{enumerate}
		\item\label{crs:1} Any rational Mori fibre space is isomorphic to one of the following:
		\begin{enumerate}
			\item $\p^2_{\k}$;
			\item a regular quadric surface $X_8\subset\p^3_{\k}$ with $\rho(X_8)=1$;
			\item a del Pezzo surface $X_6$ with $\rho(X_6)=1$;
			\item a del Pezzo surface $X_5$ with $\rho(X_5)=1$;
			\item the Hirzebruch surfaces $\mathbb{F}_n\to\p^1_\k$, $n\geq0$;
			\item a Mori conic bundle $X_5\to\p^1_\k$, where $X_5$ is a del Pezzo surface of Picard rank 2;
			\item a Mori conic bundle $X_6\to\p^1_{\k}$, where $X_6$ is a del Pezzo surface equipped with a birational morphism $X_6\to X_8$.
		\end{enumerate}
	\item\label{crs:2} Any Sarkisov link between rational Mori fibre spaces is one of the following:
		\begin{enumerate}[label=(\alph*)]
		\item links of type \I and \III:
		\begin{enumerate}[label=(\roman*), leftmargin=15mm]
			\item the blow-up $\F_1\to\p^2_{\k}$ in a rational point;
			\item the blow-up $X_5\to\p^2_{\k}$ of a point of degree $4$;
			\item the blow-up $X_6\to X_8$ of a point of degree $2$;
		\end{enumerate}
		\item links of type II:
		\begin{enumerate}[label=(\roman*)]
			\item elementary transformations $\F_n\rat \mathbb{F}_m$;
			\item elementary transformations $X_d\rat X_d'$ of conic fibrations $X_d/\p^1_{\k}$, $X_d'/\p^1_{\k}$, $d\in\{5,6\}$;
			\item $\chi_{\xx,\yy}\colon X_d \rat X_{d'}$, where $\chi_{\xx,\yy}$ is centred in the closed points $\xx$ and $\yy$ and the degrees $d \geq 5$ and $d'$ appear in the list of \autoref{prop: link II dP}.
		\end{enumerate}
		\item links of type $\IV$: exchanging the fibrations on $\F_0=\p^1_{\k}\times\p^1_{\k}$.
		\end{enumerate}
	\end{enumerate}
\end{theoremC*}

This classification specialises to  the case of separably closed fields in \autoref{thm:classification links p>2}. Moreover, for each of the cases we construct links with purely inseparable base points starting from $\p^2_{\k}$ if $\k$ is imperfect (cf. \autoref{thm: existence of Sarkisov links} for a precise statement).

\subsection*{Applications}
Once the explicit Sarkisov program is established, one can study {\it elementary relations} between Sarkisov links or, in more geometric terms, rank 3 fibrations (cf.\ \autoref{def: elementary relations}).
This provides, over any field, the classification of the contractions associated to the extremal faces of the nef cones of del Pezzo surfaces of Picard rank at most 3.

\begin{theoremD*}[cf.\ \autoref{thm:elementary relations}]
Let $\k$ be an arbitrary field. Then any elementary relation corresponding to a rank $3$ fibration $T/\Spec(\k)$ is described in the Figures \ref{fig:XB_ab}--\ref{X4_12}.
\end{theoremD*}

Further applications of our results generalise several structural results on the plane Cremona group to arbitrary fields.
The first result is about explicit quotients, generalising \cite{LZ20} and \cite{SchneiderRelations}.

\begin{theoremE*}[cf.\ \autoref{thm: quotients_p2}]
	Let $\k$ be a field such that $[\overline{\k}:\k]\geq3$.
	There exists a surjective group homomorphism
\[
\phi\colon \Bir(\p^2_{\k})\to (\bigoplus_{\mathcal{N}_{[\F_0/\p^1]}} \Z/2\Z) \ast\left(\Conv_{M\in I_5}(\bigoplus_{\mathcal{N}_{M}} \Z/2\Z)\right) \ast\left(\Conv_{M\in I_6}(\bigoplus_{\mathcal{N}_{M}} \Z/2\Z)\right) \ast\left(\Conv_{\mathcal{B}}\Z/2\right),
\]
	where the index set $\mathcal{N}_{[\F_0/\p^1]}$ is countably infinite and the remaining index sets are described in \autoref{thm: quotients_p2}.
\end{theoremE*}

\iffalse
The second application generalises the main result of \cite{LS21} to separably closed fields, which in turn generalised the celebrated Noether-Castelnuovo theorem, which holds over algebraically closed fields, to perfect fields.

\begin{theoremF*} [cf.\ \autoref{thm:involution}]
Let $\k$ be a separably closed field of characteristic $p\geq3$. Then $\Bir_\k(\p^2_{\k})$ is generated by involutions.
\end{theoremF*}

\fi

The second application is a classification result for smooth connected algebraic subgroups of $\Bir(\p^2_{\k})$.

\begin{theoremF*} [cf.\ \autoref{thm: max_alg_subgroups}]
		Let $\k$ be a field and let $G$ be a smooth connected algebraic group acting rationally on $\mathbb{P}^2_\k$.
		Then there exists a $G$-birational map $\varphi \colon \mathbb{P}^2_\k \dashrightarrow X$ such that $X$ is a regular projective $G$-surface (equivalently, $\varphi G \varphi^{-1} \subset \Autz_{X/\k}$) in the following list:
		\begin{enumerate}
			\item\label{mas:1} $X \simeq \p^2_\k$;
			\item\label{mas:2} $X \simeq \mathbb{F}_n$ for $n \in \mathbb{N} \setminus \left\{1\right\}$;
			\item\label{mas:3} $X \simeq Q \subset \mathbb{P}^3_\k$ is a quadric (in particular, a del Pezzo of degree 8) of Picard rank 1;
			\item\label{mas:4} $X$ is a del Pezzo surface of degree 6 and Picard rank 1.
			\item\label{mas:5} $X$ is a del Pezzo surface of degree 5 and Picard rank 1 such that $X_{\bar\k}$ is not smooth. Such $X$ exist only if $p \leq 5$ and $\k$ is not perfect.
		\end{enumerate}
		Moreover, the conjugacy classes of the group scheme $\Aut^\circ_{X/\k}$ of cases \eqref{mas:1}--\eqref{mas:5} are all pairwise disjoint, and \eqref{mas:1}, \eqref{mas:2} are maximal among smooth connected subgroups and \eqref{mas:3} (resp. \eqref{mas:4}) is maximal among smooth connected subgroups if and only if $Q_{\overline\k}\simeq\p^1_{\overline\k}\times\p^1_{\overline\k}$ (resp. $X$ is smooth).
\end{theoremF*}

The third application is the extension of \cite{LS21} to separably closed fields of characteristic $p\geq3$.

\begin{theoremG*}[cf.\ \autoref{thm:involution}]
	Let $\k$ be a separably closed field of characteristic $p\geq3$. Then $\Bir(\p^2_{\k})$ is generated by involutions.
\end{theoremG*}

This paper is organised as follows. In \autoref{preliminaries} we fix the notation and recall known facts on purely inseparable extensions and the birational geometry of regular surfaces over excellent rings we will need throughout the article.
In \autoref{sec:sarkisov}, we establish the Sarkisov program, proving \autoref{t-sarkisov-program}.
The main classification results \autoref{prop: birational-rigidity-non-norm-dP} and \autoref{thm: classification_rational_surfaces} are obtained in \autoref{s: rank 1 fibrations} and \autoref{s: Sarkisov_Links} via an explicit study of linear systems and numerical bounds on regular del Pezzo surfaces and conic bundles.
\autoref{sec:Elementary relations} is devoted to the classification of all possible elementary relations between Sarkisov links, while in \autoref{s: general points} we produce examples of Sarkisov links by blowing-up closed points with inseparable residue fields. 
The applications to the plane Cremona group are then discussed in \autoref{s-applications}.

\subsection*{Acknowledgements}

The authors would like to thank J. Blanc, S. Lamy, G. Martin, Yu. Prokhorov, S. Schröer, E. Yasinsky, H. Tanaka and A. Trepalin for many fruitful and interesting discussion on the content of the article.
We thank M. Brion for discussions on the Weil regularisation theorem for regular surfaces.
Finally, we would like to thank the referee for careful reading of our article and suggesting several improvements.
This project was completed during a stay of the four authors at the Mathematisches Forschungs Institut Oberwolfach as Research Fellows. We thank the institute for the hospitality. We also thank the Basaglia family for their warm hospitality in Valle Liona, where an important part of this work was discussed and developed.

\section{Preliminaries} \label{preliminaries}

\subsection{Notations}

We fix some notations we will use throughout this article.

\begin{enumerate}
\item In this article, $p$ is a prime number and $\k$ denotes a field.  We denote by $\k^{\text{sep}}$ (resp.~ $\overline{\k}$) a separable (resp.~ an algebraic) closure of $\k$.  We indicate by $\k^{\rm perf}$ the perfect closure of $\k$, which coincides with $\k^{p^{-\infty}}$ in the case the characteristic of $\k$ is $p>0$.
\item \label{item:p-degree} We say that a field $\k$ of characteristic $p>0$ is \emph{$F$-finite} if $[\k:\k^{p}] < + \infty$. The $p$\emph{-degree} (or degree of imperfection) of an $F$-finite field $\k$ is $\pdeg(\k) \coloneq \log_p[\k:\k^p]$.
\item We say that $X$ is a $\k$-\emph{variety} (or simply variety) if $X$ is an integral scheme that is separated and of finite type over $\k$. We denote by $\k(X)$ its fraction field.
\item Given a $\k$-{variety} $X$, we use $\xx$ or $\yy$ for closed points: we will say \emph{points} for closed points, and if we ever use generic points we will mention it.
\item We say $(X, \Delta)$ is a \emph{pair} if $X$ is a normal variety, $\Delta$ is a $\mathbb{Q}$-divisor and $K_X+\Delta$ is $\mathbb{Q}$-Cartier.
\item For the definition of the singularities of pairs appearing in the MMP (such as terminal, canonical, klt or lc) we follow \cite[Section 2.1]{kk-singbook}.
\item We say a proper morphism $\pi \colon X \to Y$ of normal Noetherian schemes is a \emph{contraction} if $\pi_*\mathcal{O}_X=\mathcal{O}_Y$. In particular, $\pi$ has geometrically connected fibres.
\item Given a Noetherian scheme $X$, we denote its Picard group by $\Pic(X)$.
\item Let $\pi \colon X \to Y$ be a projective morphism of normal Noetherian integral schemes.
Let $\text{Num}(X/Y)$ be the subgroup of $\Pic(X)$ composed of line bundles $L$ such that $L \cdot C=0$ for every integral curve $C$ such that $\pi(C)$ is a closed point.
The group $N^1(X/Y) \coloneqq \Pic(X)/\text{Num}(X/Y)$ is a finitely generated abelian group and it is called the \emph{relative numerical Néron-Severi group}.
We denote by $\rho(X/Y)$ the relative Picard number of $\pi$, i.e. the rank of the $\mathbb{Z}$-module $N^1(X/Y)$.
\item For the notions of positivity for line bundles and $\mathbb{Q}$-Cartier divisors (such as ample, nef, big) we refer to \cite{Laz04}.
\item We say a $\k$-variety of dimension $n$ is \emph{rational} if its function field is purely transcendental of degree $n$, i.e. $\k(X) \simeq \k(t_1, \dots, t_n)$. Note that rational varieties are geometrically integral.
\item We denote by $\mathbb{F}_n$ the $n$-th Hirzebruch surface $\mathbb{P}_{\mathbb{P}^1}(\mathcal{O}_{\mathbb{P}^1} \oplus \mathcal{O}_{\mathbb{P}^1}(n))$.
\end{enumerate}

\subsection{Purely inseparable extensions}

Recall that a field $\k$ is {\em perfect} if every irreducible polynomial is separable. In particular, $\overline{\k}/\k$ is a Galois extension. Examples include all fields in characteristic $0$, all algebraically closed fields, and finite fields.
Here, we are interested in imperfect fields.

Let $\k$ be a field of characteristic $p>0$. Then $\k$ is imperfect exactly if $\k\neq \k^p=\left\{a^p\mid a \in \k \right\}$. Examples are $\F_p(t), \overline{\F_p}(t)$ and, more generally, function fields of varieties of dimension at least 1 over a positive characteristic ground field.

We say an extension $\k \subset L$ is \emph{purely inseparable} if for every element $a \in L$ there exists an integer $e>0$ such that $a^{p^e} \in \k$. In particular, a purely inseparable extension is an algebraic extension.
For example, $\F_p(t)\subset \F_p(t^{1/p})$ is purely inseparable but $\F_p(t)\subset \F_{p^2}(t)$ is not.

To a finite purely inseparable extension $\k\subset L$, we have $[L:\k]=p^e$ for some $e \geq 0$.
Finally, by \cite[\href{https://stacks.math.columbia.edu/tag/030K}{Tag 030K}]{stacks-project} any algebraic field extension $\k \subset L$ can be factorised as $\k \subset F \subset L$, where the extension $\k \subset F$ is separable and the extension $ F \subset L$ is purely inseparable. We say that an algebraic field extension $\k \subset L$ is inseparable if, in the above factorisation, $F \subsetneq L$.

The $p$-rank of a field extension is an invariant that intuitively measures the degree to which the extension deviates from being separable.

\begin{definition}\label{def: p-basis}
  Let $\k \subset L$ be a field extension.
  A subset $\{t_i\}$ of $L$ is called a \emph{$p$-basis} of $L$ over $\k$ if the elements of the form	$t_E= \prod t_i^{d_i} \text{ for } 0 \leq d_i \leq p-1,$ are a basis of $L$ as a vector space over $\k L^p$, the composite field of $\k$ and $L^p$ in $L$.
  The $\prank(L:\k)$ of a finite extension $\k \subset L$ is the dimension of a $p$-basis of $L$ over $\k$.
\end{definition}

Equivalently, this invariant can be computed using K\"{a}hler differentials from the formula $\prank(L:\k)=\dim_{L} \Omega_{L/\k}$ (a proof can be found in \cite[\href{https://stacks.math.columbia.edu/tag/07P2}{Tag 07P2}]{stacks-project}).
Note that the $p$-degree of a field $\k$ defined in \autoref{item:p-degree} coincides with $\prank(\k : \k^p)$.
Let's see some elementary examples where we compute the degree and the $p$-rank of purely inseparable extensions.

\begin{example}
	Let $\k=\mathbb{F}_p(t_1, \dots, t_n)$.
	This is imperfect as soon as $n \geq 1$ and it has $p$-degree equal to $n$.
	Let $l_1, \dots, l_n \geq 0$ be integers and consider the purely inseparable extension $L=\mathbb{F}_p(t_1^{1/p^{l_1}}, \dots, t_n^{1/p^{l_n}})$ of $\k$.
	In this case,
	\[ [L:\k]=p^{\sum_j l_j},\, \text{ and } \prank(L:k) = |\left\{j \mid l_j >0 \right\}|.\]
\end{example}

We are used that over perfect fields the singularities properties remain stable after a base field extension.
This is no longer true over imperfect fields as their finite field extension are no longer necessarily \'{e}tale.

\begin{definition}
	Let $X$ be a $\k$-scheme of finite type.
	We say $X$ is \emph{geometrically integral} (resp.~\emph{connected}, \emph{irreducible}, \emph{reduced}, \emph{normal} or \emph{regular}) if $X_{\overline{\k}}$ is integral (resp.~{connected}, {irreducible}, {reduced}, normal or {regular}).
\end{definition}

\begin{remark}
Suppose that $\k$ is a separably closed field. Then any closed point $\xx$ of a $\k$-variety $X$ is reduced and geometrically irreducible by \cite[\href{https://stacks.math.columbia.edu/tag/038I}{Tag 038I}]{stacks-project}.
Note that  $\xx$ is geometrically reduced as a $\k$-scheme if and only if the extension $\k \subset \k(\xx)$ is trivial.
\end{remark}

Recall that being geometrically regular and of finite type over a field is equivalent to smoothness over the field by \cite[\href{https://stacks.math.columbia.edu/tag/07EL}{Tag 07EL}]{stacks-project} and \cite[\href{https://stacks.math.columbia.edu/tag/02H6}{Tag 02H6}]{stacks-project}.
As we work over imperfect fields, it is not sufficient to compute the sheaf of K\"ahler differentials $\Omega^1_{X/k}$ to check whether a variety is regular.
Nevertheless, a Jacobian criterion for regular rings still holds true using general derivations.

\begin{lemma}[{Jacobian criterion, \cite[\href{https://stacks.math.columbia.edu/tag/07PF}{Tag 07PF}]{stacks-project}}]\label{lem: jacobian}
	Let $R$ be a regular ring and let $f \in R$.
	Assume there exists a derivation $D \colon R \to R$ such that $D(f)$ is a unit of $R/(f)$. Then $R/(f)$ is regular.
\end{lemma}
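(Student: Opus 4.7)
The plan is to check regularity of $R/(f)$ pointwise. Recall that a Noetherian ring is regular if and only if every localisation at a prime ideal is a regular local ring, so it suffices to fix a prime $\mathfrak{q} \subset R/(f)$, let $\mathfrak{p} \subset R$ be its preimage (which contains $f$), and prove that
\[
\bigl(R/(f)\bigr)_{\mathfrak{q}} \;=\; R_{\mathfrak{p}}/(f)
\]
is a regular local ring. Since $R$ is assumed regular, $R_{\mathfrak{p}}$ is a regular local ring with maximal ideal $\mathfrak{m}_{\mathfrak{p}} = \mathfrak{p}R_{\mathfrak{p}}$, and the standard criterion says that $R_{\mathfrak{p}}/(f)$ is regular provided $f \notin \mathfrak{m}_{\mathfrak{p}}^{2}$ (the case $f \in R_\mathfrak{p}^\times$ does not occur because $f \in \mathfrak{p}$). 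So the entire task reduces to showing $f \notin \mathfrak{m}_{\mathfrak{p}}^{2}$ for every prime $\mathfrak{p}$ of $R$ containing $f$.

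The idea is to use $D$ to detect this. First, I would extend $D$ to a derivation $D_{\mathfrak{p}} \colon R_{\mathfrak{p}} \to R_{\mathfrak{p}}$ by the quotient rule $D_{\mathfrak{p}}(a/s) = \bigl(D(a)s - aD(s)\bigr)/s^{2}$; this is well-defined since $\mathfrak{p}$ is a prime of $R$ and requires no further hypothesis. The second ingredient is the Leibniz rule: for any derivation $\delta$ on any ring and any maximal (or prime) ideal $\mathfrak{m}$, one has $\delta(\mathfrak{m}^{2}) \subset \mathfrak{m}$, because if $f = \sum a_{i}b_{i}$ with $a_{i}, b_{i} \in \mathfrak{m}$, then $\delta(f) = \sum \bigl(\delta(a_{i})b_{i} + a_{i}\delta(b_{i})\bigr) \in \mathfrak{m}$.

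Now assume for contradiction that $f \in \mathfrak{m}_{\mathfrak{p}}^{2}$ for some prime $\mathfrak{p}$ containing $f$. By the Leibniz observation, $D_{\mathfrak{p}}(f) \in \mathfrak{m}_{\mathfrak{p}} = \mathfrak{p}R_{\mathfrak{p}}$, which means $D(f) \in \mathfrak{p}$ (since $D_{\mathfrak{p}}$ agrees with $D$ on $R$ and one can clear denominators). But the assumption that $D(f)$ is a unit in $R/(f)$ says that the image of $D(f)$ lies in no maximal ideal of $R/(f)$, hence $D(f) \notin \mathfrak{p}$ for every prime $\mathfrak{p}$ of $R$ containing $f$ — a contradiction. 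Therefore $f \notin \mathfrak{m}_{\mathfrak{p}}^{2}$ at every such $\mathfrak{p}$, and so $R/(f)$ is regular.

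The main obstacle is, essentially, nothing deep: the only subtle point is being careful that unitness of $D(f)$ in $R/(f)$ really translates into $D(f) \notin \mathfrak{p}$ for all primes $\mathfrak{p} \supset (f)$, which follows from the bijection between primes of $R/(f)$ and primes of $R$ containing $f$. Beyond that, the argument is an application of Leibniz and the regular-local-ring criterion for quotient regularity, and the use of a general derivation (rather than one coming from $\Omega^{1}_{R/\k}$) is what makes the lemma genuinely useful in the imperfect setting where the Kähler differentials criterion is insufficient.
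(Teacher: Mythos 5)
Your proof is correct and is essentially the standard argument from the cited source (Stacks Project Tag 07PF): reduce to the local case, note that regularity of $R_{\mathfrak p}/(f)$ follows from $f\notin\mathfrak m_{\mathfrak p}^2$, and use the Leibniz rule to show that $f\in\mathfrak m_{\mathfrak p}^2$ would force $D(f)\in\mathfrak p$, contradicting unitness of $D(f)$ in $R/(f)$. The paper does not supply its own proof, so there is nothing further to compare against.
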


Using \autoref{lem: jacobian}, it is not difficult to construct regular varieties which are not geometrically normal nor reduced.

\begin{example}\label{ex:Examples-to-start}
	Let $\k$ be an $F$-finite imperfect field and let $t_1, \dots, t_n$ be a $p$-basis of $\k^p \subset \k$ for $n \geq 1$.
	\begin{enumerate}
		\item $p=2$: the conic $C_1=(t_1x^2+y^2+z^2=0) \subset \mathbb{P}^2_\k$ is integral, not regular at $[0:1:1]$ and hence not normal, and $C_{1,\overline{\k}}$ is a double line;
		\item $p=2$: the conic $C_2=(t_1x^2+t_2y^2+z^2=0) \subset \mathbb{P}^2_\k$ is regular by applying \autoref{lem: jacobian} with the derivations $D_{t_1}$ and $D_{t_2}$, and $C_{2, \overline{\k}}$ is a double line;
		\item for any prime $p$, the Fermat--type hypersurface $H_p=(\sum_{i=1}^{n} t_i x_i^p=0) \subset \mathbb{P}^{n-1}_\k$ is regular by  \autoref{lem: jacobian} and geometrically $(H_p)_{\overline{\k}}$ is a $p$-fold hyperplane;
		\item $p=2,3$: consider the cubic curve given by $Q=(zy^2=x^3-t_1z^3) \subset \mathbb{P}^2_\k$. It is a regular geometrically integral curve such that $Q_{\overline{\k}}$  is the cuspidal cubic curve;
		\item $p=2$: the quadric surface $X:=(xy+z^2+t_1w^2=0) \subset \mathbb{P}^3_{\k,[w:x:y:z]}$ is regular. The base change $X_{\overline{\k}}$ is normal with a unique $A_1$-singularity at $[1:0:0:t_1^{\frac{1}{2}}]$.
	\end{enumerate}
	For further examples, we refer to \autoref{s: rank 1 fibrations} and \autoref{s: general points} where such phenomena appear systematically.
\end{example}

\begin{remark}\label{rmk:BlowUpOfRegular}
	Let $X$ be a regular surface, and let $\eta\colon Y\to X$ be the blow-up at a point $\xx\in X$.
	Then $Y$ is regular by \cite[Theorem VII.1.19]{Liu02}.
	However:
	\begin{enumerate}
		\item If $X$ is smooth, then $Y$ is not necessarily smooth, see for instance
    \autoref{rmk:weak-dp} and \autoref{ex:fibration not geometrically regular}, \autoref{ex:link deg2}, \autoref{ex: Mori-conic-double-lines} \autoref{ex:geometrically-non-normal-pdeg1},  for explicit examples.
		\item If $X$ is geometrically normal, then $Y$ is not necessarily geometrically normal
    see for instance \autoref{ex: Mori-conic-double-lines}, \autoref{ex:geometrically-non-normal-pdeg1}  for explicit examples.
	\end{enumerate}
\end{remark}

The following bounds the degree of imperfection of smooth closed points.

\begin{lemma}\label{lem:p-deg<3}
  Let $X$ be an integral $\k$-variety such that $\k$ is integrally closed in $k(X)$.
  Let $\xx$ be a smooth closed point of $X$.
  Then $\prank(\k(\xx) : \k) \leq \dim(X)$.
\end{lemma}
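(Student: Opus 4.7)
The plan is to apply the conormal exact sequence at $\xx$ and combine it with the K\"ahler-differential description of the $p$-rank recalled just after \autoref{def: p-basis}. Since $X$ is of finite type over $\k$ and $\xx$ is closed, the residue field extension $\k\subset\k(\xx)$ is finite, and hence
\[
\prank(\k(\xx):\k)=\dim_{\k(\xx)}\Omega^{1}_{\k(\xx)/\k}.
\]

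By smoothness at $\xx$, the sheaf $\Omega^{1}_{X/\k}$ is locally free of rank $n=\dim X$ in a neighbourhood of $\xx$; the assumption that $\k$ is integrally closed in $k(X)$ enters at this step, guaranteeing that $n$ really is the local dimension of $X$ at $\xx$ (and not the relative dimension of $X$ over some bigger field of constants). In particular the fibre $\Omega^{1}_{X/\k}\otimes_{\mathcal{O}_{X,\xx}}\k(\xx)$ is an $n$-dimensional $\k(\xx)$-vector space. I would then invoke the right-exact conormal sequence for the closed point~$\xx$,
\[
\mathfrak{m}_{\xx}/\mathfrak{m}_{\xx}^{2}\longrightarrow \Omega^{1}_{X/\k}\otimes_{\mathcal{O}_{X,\xx}}\k(\xx)\longrightarrow \Omega^{1}_{\k(\xx)/\k}\longrightarrow 0,
\]
to conclude $\dim_{\k(\xx)}\Omega^{1}_{\k(\xx)/\k}\leq n$, which combined with the first display gives the claimed bound.

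I do not expect a genuine obstacle: the proof is essentially the bookkeeping of three standard facts (finiteness of $\k\subset\k(\xx)$, the K\"ahler-differential reformulation of $\prank$, and right-exactness of the conormal sequence). The only points that warrant some care are verifying that the formula $\prank(L:\k)=\dim_L\Omega^{1}_{L/\k}$ is indeed applied to a finite extension (which it is, since $\xx$ is closed in a finite-type $\k$-scheme) and that smoothness at $\xx$ forces the local rank of $\Omega^{1}_{X/\k}$ to be exactly $\dim X$, which is precisely what the integral-closedness hypothesis is there to secure.
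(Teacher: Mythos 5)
Your proposal is correct and follows essentially the same route as the paper: both reduce to the surjection $\Omega^1_{X/\k}\otimes_{\mathcal{O}_{X,\xx}}\k(\xx)\twoheadrightarrow\Omega^1_{\k(\xx)/\k}$ coming from the (co)tangent exact sequence, use smoothness at $\xx$ to see that the source has dimension $\dim X$, and conclude via the identity $\prank(\k(\xx):\k)=\dim_{\k(\xx)}\Omega^1_{\k(\xx)/\k}$. The paper phrases the surjection through the first fundamental sequence for $\k\hookrightarrow\mathcal{O}_{X,\xx}\twoheadrightarrow\k(\xx)$ rather than the conormal sequence, but this is the same statement.
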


\begin{proof}
	We can suppose $X=\Spec(R)$ to be affine.
	We can further suppose  that $R$ is a local ring with maximal ideal corresponding to $\xx$.
	In particular, we have the following map of rings
	$\k \hookrightarrow R \twoheadrightarrow \k(\xx) $, which induces an exact sequence \[\k(\xx) \otimes_R \Omega_{R/\k} \rightarrow \Omega_{\k(\xx)/\k} \rightarrow \Omega_{\k(\xx)/R}=0.\]
	As $X$ is smooth at $\xx$, $\Omega_{R/\k}$ is a locally free $R$-module of rank equal to $\dim(X)$, thus $\dim_{\k(\xx)}\Omega_{\k(\xx)/\k} \leq \dim_{\k(\xx)} \k(\xx) \otimes_R \Omega_{R/\k} =n$.
\end{proof}

\begin{remark}
	The hypothesis of $\xx$ being a smooth point is necessary.
	Consider $\k=\mathbb{F}_2(t_1,t_2)$ and let $\xx=(x^2-t_1, y^2-t_2)$ in $\mathbb{A}^2_\k=\Spec \k[x,y]$. The integral conic $C=(x^2-t_1=0)$ contains $\xx \in C$ and $\prank(k(\xx):\k)=2$. Note that $C$ is not smooth at $\xx$ as it is not geometrically reduced.
\end{remark}

\subsection{Birational geometry of excellent surfaces}\label{sec:BirationalgeometryofExcellentSurfaces}
In this section, we review some notions of birational geometry and the Minimal Model Program (for short, MMP) for excellent surfaces.
For the definition of excellent ring, we refer to \cite[\href{https://stacks.math.columbia.edu/tag/07QS}{Tag 07QS}]{stacks-project}.
Examples of excellent rings important for us are fields (possibly imperfect), Noetherian complete local ring (such as the $p$-adic numbers $\mathbb{Z}_p$ and formal power series $k\llbracket     t \rrbracket$), the ring of integers $\mathcal{O}_K$ of number fields $K$ and finite type ring extension thereof (see \cite[\href{https://stacks.math.columbia.edu/tag/07QW}{Tag 07QW}]{stacks-project}).

Let $X$ be an integral scheme admitting a quasi-projective morphism $X \to T=\Spec (R)$, where $R$ is an integrally closed excellent ring of finite Krull dimension admitting a dualizing complex.
We will say that $X$ is a {\em variety defined over $T$}.
Notice that the local rings of $X$ are excellent rings by  \cite[\href{https://stacks.math.columbia.edu/tag/07QW}{Tag 07QW}]{stacks-project} and we will say, with a slight abuse of terminology, that $X$ is an {\em excellent $T$-variety}.
If $X$ is moreover a surface and $X \to T$ is surjective, then $T$ has dimension at most $2$.
As in this article we are primarily interested in studying Mori fibre spaces, we assume that $\dim(T) \leq 1$. We summarise our setting in the following

\begin{notation} \label{notation-excellent}
	Throughout this section, $T$ denotes the spectrum of an integrally closed excellent domain $R$ of Krull dimension at most 1.
	We say $X$ is a (quasi-)projective $T$-variety if $X$ is an integral scheme together with a (quasi-)projective morphism $X \to T$.
	A $T$-morphism (or morphism over $T$) is a morphism $X\to Y$ between $T$-varieties $X$ and $Y$ satisfying the commutative diagram:
	\[
	\begin{tikzcd}
	X\ar[rr]\ar[rd]&&Y\ar[ld]\\
	&T&
	\end{tikzcd}
	\]
	In the same way, one defines a birational map $X\rat Y$ over $T$.
\end{notation}

The case of $\dim(T)=0$, that is when $R=k$ is a field, is especially important for us, as this includes the case of imperfect fields.
The case where $T$ has dimension $1$ includes the study of arithmetic surfaces, such as $\p^1_\Z$ and regular models of curves over a Dedekind domain.
The MMP and the abundance conjecture for surfaces that are projective over $T$ have been established in the classical sense in \cite{Sha66}  and in the log setting in \cite{Tan18, DW22, 7authors}.

We quickly review how intersection numbers and the degree of closed points are defined.
\begin{definition}\label{def:d_x etc}Let $X$ be a projective integral surface over $T$.
\begin{enumerate}
\item 	To a closed point $\xx \in X$ mapping to $t \in T$, we define the \emph{degree} of $\xx$ to be $d_\xx:=[k(\xx):k(t)]$.
	When $T=\Spec(\k)$, we have $d_\xx=[k(\xx):\k]$.
\item
	We say an integral subscheme $C \subset X$ is a \emph{curve} if $\pi(C)=t$, where $t\in T$ is a closed point.
	Given a Cartier divisor $L$ we define $L \cdot C := \deg_{k(t)}(L|_C) \in \mathbb{Z}$.
\item If $C$ is an integral curve inside a projective regular surface $X$ over $T$, we define $d_C:=[H^0(C, \mathcal{O}_C):\k(t)]$ and we have $C^2:=C \cdot  C = d_C m_C$ for some $m_C \in \mathbb{Z}$.
\end{enumerate}
\end{definition}

Let $X$ be a regular surface over $T$ and let $f \colon Y \to X$ be the blow-up of $X$ at a closed point $\xx$.
The exceptional divisor $E \simeq \mathbb{P}^1_{\k(\xx)}$ satisfies $E^2=-d_\xx=-d_E$. Moreover, we have $K_Y = f^*K_X+E$ and therefore $K_Y \cdot E = -d_\xx$.

\begin{definition}\label{def: curve of first kind}
	Let $X$ be a projective regular surface over $T$.
	We say that an integral curve $E \subset X$ is an \emph{exceptional curve of the first kind} if $E^2<0$ and $K_X \cdot E <0$.
\end{definition}

Exceptional curves of the first kind are a natural generalisation of the usual $(-1)$-curves (that is, $E\simeq\p^1_\k$ and $E^2=-1$) over algebraically closed fields. Over a perfect field $\k$, an exceptional curve of the first kind consists of a set of disjoint $(-1)$-curves over $\bar\k$ that forms a $\Gal(\bar\k/\k)$-orbit.
As we allow the finite extension $\k \subset \k(x)$ to be inseparable or to work over an arithmetic base, we cannot apply Galois theory and argue in term of Galois orbits.

The following lemma shows that exceptional curves of the first kind have an easy description (cf. \cite[\href{https://stacks.math.columbia.edu/tag/0C2I}{Tag 0C2I}]{stacks-project}).

\begin{lemma}\label{lem: characterise_-1_curves}
	Let $X$ be a regular projective surface over $T$ and $E$ a curve on $X$.
	Then $E$ is an exceptional curve of the first kind if and only if $E \simeq \mathbb{P}^1_{H^0(E, \mathcal{O}_E)}$ and $E^2=-d_E$.
\end{lemma}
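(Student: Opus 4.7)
The plan is to translate the numerical hypotheses via adjunction on the Cartier divisor $E\subset X$, and then to use Riemann--Roch on the integral Gorenstein curve $E$ to recover its $\mathbb{P}^1$-structure.

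For the backward direction, suppose $E\cong \mathbb{P}^1_{k'}$ with $k'=H^0(E,\mathcal{O}_E)$ and $E^2=-d_E$. Then $\omega_E\cong \mathcal{O}_{\mathbb{P}^1_{k'}}(-2)$, so $\deg_{k(t)}(\omega_E)=-2d_E$. The adjunction formula $\omega_E\cong (\omega_X\otimes \mathcal{O}_X(E))|_E$ gives $K_X\cdot E + E^2 = -2d_E$, hence $K_X\cdot E=-d_E<0$, and $E$ is an exceptional curve of the first kind.

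For the forward direction, I first observe that for any Cartier divisor $L$ on $X$ the intersection number $L\cdot E=\deg_{k(t)}(L|_E)$ is divisible by $d_E=[k':k(t)]$, because degrees multiply through the field extension $k(t)\subset k'$. Writing $E^2=d_E m$ and $K_X\cdot E=d_E n$ with $m,n\leq -1$, adjunction combined with Riemann--Roch gives
\[
(K_X+E)\cdot E \;=\; -2\chi(\mathcal{O}_E) \;=\; -2d_E + 2\,h^1(E,\mathcal{O}_E),
\]
so $d_E(m+n+2)=2\,h^1(E,\mathcal{O}_E)\geq 0$. This forces $m=n=-1$ and $h^1(E,\mathcal{O}_E)=0$. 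Hence $E^2=K_X\cdot E=-d_E$ and $E$ has arithmetic genus zero over $k'$.

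To identify $E$ with $\mathbb{P}^1_{k'}$, consider $M:=\mathcal{O}_X(-E)|_E$, a line bundle on $E$ of $k'$-degree $1$. Since $E$ is Gorenstein (as a Cartier divisor on a regular surface), Serre duality applies: $\deg_{k'}(\omega_E\otimes M^{-1})=-3$ together with the integrality of $E$ gives $H^0(\omega_E\otimes M^{-1})=0$, so $h^1(M)=0$ and Riemann--Roch yields $h^0(M)=2$. Any non-zero section of $M$ has zero divisor of $k'$-degree $1$, necessarily a single reduced (hence Cartier) $k'$-rational point; two linearly independent sections cannot share such a point, for otherwise $M$ twisted down would be a degree-$0$ line bundle on the integral curve $E$ with $h^0\geq 2$, which is impossible. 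Hence $|M|$ is base-point-free and defines a finite birational morphism $\phi\colon E\to \mathbb{P}^1_{k'}$. From the exact sequence $0\to \mathcal{O}_{\mathbb{P}^1_{k'}}\to \phi_*\mathcal{O}_E\to Q\to 0$ one computes $\chi(Q)=0$ and $Q$ is torsion, so $Q=0$ and $\phi$ is an isomorphism. The main obstacle is this last step: extracting the geometric identification $E\cong\mathbb{P}^1_{k'}$ from the numerical equalities on a possibly non-smooth Gorenstein curve, without an a priori $k'$-rational point and without Galois-theoretic tools.
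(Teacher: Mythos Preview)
Your proof is correct and follows the same adjunction-based strategy as the paper. The only difference is packaging: where the paper invokes \cite[Lemmas 10.6.3 and 10.6.4]{kk-singbook} to pass from $\deg_{k(t)}(\omega_E)<0$ to $\deg_{k(t)}(\omega_E)=-2d_E$ and then to $E\cong\mathbb{P}^1_{k'}$, you unpack both steps by hand via the divisibility-and-Riemann--Roch argument and the explicit base-point-free degree-$1$ linear system, giving a self-contained proof of exactly the same chain of implications.
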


\begin{proof}
	Let $t=\pi(E)$.
	By adjunction, we have $(K_X+E) \cdot E =\deg_{k(t)}(\omega_E)$.
	If $E$ is an exceptional curve of the first kind, we have $\deg_{k(t)}(\omega_E)<0$ and therefore $\deg_{k(t)}(\omega_E)=-2d_E$ by \cite[Lemma 10.6.3]{kk-singbook} and by Serre duality $H^1(E, \mathcal{O}_E)=0$.
	As $\deg_{k(t)}\mathcal{O}_E(E)=-d_E$, we conclude by \cite[Lemma 10.6.4]{kk-singbook} that $E \simeq \mathbb{P}^1_{H^0(E, \mathcal{O}_E)}$.
	The other direction follows immediately using the adjunction formula.
\end{proof}

We recall the Castelnuovo contraction theorem for excellent regular surfaces.

\begin{proposition} \label{lem: Castelnuovo_contraction}
	Let $X$ be a regular projective surface over $T$ and let $E \subset X$ be an exceptional curve of the first kind.
	Then
	\begin{enumerate}
		\item there exists a birational contraction $f \colon X \to Y$ over $T$ such that the exceptional locus of $f$ coincides with $E$ and the closed point $\xx := f_*E$ has residue field $k(\xx) \simeq H^0(E, \mathcal{O}_E)$;
		\item $Y$ is regular.
	\end{enumerate}
	Moreover, any projective birational morphism $\pi \colon X \to Y$ of regular surfaces is a composition of contractions of exceptional curves of the first kind.
\end{proposition}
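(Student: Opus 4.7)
The plan is to apply the cone and contraction theorem for excellent surfaces cited in \autoref{sec:BirationalgeometryofExcellentSurfaces} to the ray spanned by $E$, and then to read off the residue field at the image point and regularity of $Y$ from a formal-function computation. First I would verify that $R:=\mathbb R_{\geq 0}[E]\subset \overline{NE}(X/T)$ is a $K_X$-negative extremal ray: since $E^2=-d_E<0$, any effective curve numerically proportional to $E$ must be a non-negative multiple of $E$ itself, which forces $R$ to be extremal; the hypothesis $K_X\cdot E<0$ makes it $K_X$-negative. The contraction theorem then yields a projective birational $T$-morphism $f\colon X\to Y$ with $Y$ normal, $f_*\mathcal O_X=\mathcal O_Y$ and exceptional locus exactly $E$. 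Set $\xx:=f(E)$.

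To identify $k(\xx)$ and prove regularity of $Y$ at $\xx$, I would apply the theorem on formal functions to obtain $\widehat{\mathcal O_{Y,\xx}}\simeq \varprojlim_n H^0(nE,\mathcal O_{nE})$. Using the short exact sequences $0\to \mathcal O_X(-nE)|_E\to \mathcal O_{(n+1)E}\to \mathcal O_{nE}\to 0$ together with the isomorphism $E\simeq \mathbb P^1_{H^0(E,\mathcal O_E)}$ from \autoref{lem: characterise_-1_curves} and the positivity $\deg(\mathcal O_X(-E)|_E)=d_E>0$, one checks inductively that $H^1(nE,\mathcal O_{nE})=0$ and that the transition maps on $H^0$ are surjective, so the residue field of the inverse limit coincides with $H^0(E,\mathcal O_E)$; this proves $k(\xx)\simeq H^0(E,\mathcal O_E)$. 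Inspecting the same filtration shows that $\widehat{\mathfrak m}_\xx/\widehat{\mathfrak m}_\xx^2$ is generated by a uniformiser of $T$ at $f(\xx)$ (when $\dim T=1$) together with a $k(\xx)$-generator of the conormal line $\mathcal O_X(-E)|_E$ evaluated at a $k(\xx)$-rational point of $E\simeq\mathbb P^1_{k(\xx)}$, yielding exactly $\dim Y=2$ generators, hence regularity of $Y$ at $\xx$.

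For the final assertion, I would induct on the relative Picard number $\rho(X/Y)$: if $\pi\colon X\to Y$ is projective birational between regular surfaces and not an isomorphism, then $K_X-\pi^*K_Y$ is effective and $\pi$-exceptional, and negative-definiteness of the intersection form on the $\pi$-exceptional divisor produces an irreducible curve $E'\subset X$ with $(E')^2<0$ and $K_X\cdot E'<0$. The first part of the proposition contracts $E'$ and factors $\pi$ through a regular surface with strictly smaller relative Picard number, closing the induction. The main obstacle, and the step requiring the greatest care, is the residue-field and regularity computation at $\xx$: over imperfect fields, or over a general excellent base $T$, Galois descent from the perfect-field case is unavailable, so the formal-completion analysis has to be carried out directly, relying only on the intrinsic description of $E$ furnished by \autoref{lem: characterise_-1_curves}.
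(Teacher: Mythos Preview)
The paper's own proof consists solely of citations to the Stacks Project (Tags 0C2K and 0C5S), which establish Castelnuovo contraction and the factorisation statement directly, without any MMP machinery. Your route is therefore genuinely different: you obtain the contraction from the cone and base-point-free theorems, argue regularity via the theorem on formal functions, and then run the standard induction on $\rho(X/Y)$ for the last assertion. The overall strategy is sound and is in fact close to what the Stacks Project does for the regularity step; two points deserve comment.

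First, a minor structural issue: the cone and base-point-free theorems you invoke appear in the paper only in \S\ref{sec:sarkisov}, \emph{after} the present proposition, so your argument is out of order relative to the paper's presentation. There is no mathematical circularity, since Tanaka's proofs of those theorems do not rely on Castelnuovo contraction, but this explains why the authors preferred a direct citation.

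Second, and more substantively, your description of the generators of $\widehat{\mathfrak m}_\xx/\widehat{\mathfrak m}_\xx^2$ is not right. A uniformiser of $T$ plays no distinguished role here, and ``evaluating the conormal line at a $k(\xx)$-rational point of $E$'' does not produce an element of the cotangent space of $Y$ at $\xx$. What the formal-function filtration actually gives is an identification of $\widehat{\mathfrak m}_\xx/\widehat{\mathfrak m}_\xx^2$ with the global sections $H^0\!\bigl(E,\mathcal O_X(-E)|_E\bigr)\simeq H^0\!\bigl(\mathbb P^1_{\kappa},\mathcal O(1)\bigr)$, where $\kappa=H^0(E,\mathcal O_E)$; this space is $2$-dimensional over $\kappa$ because $\mathcal O_X(-E)|_E$ has degree~$1$ over $\kappa$ by \autoref{lem: characterise_-1_curves}. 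Surjectivity of the multiplication maps $H^0(\mathcal O(1))^{\otimes n}\to H^0(\mathcal O(n))$ then shows that these two sections generate $\widehat{\mathfrak m}_\xx$, and since $\dim Y=2$ this yields regularity. With this correction the argument goes through.
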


\begin{proof}
	This is proven in \cite[\href{https://stacks.math.columbia.edu/tag/0C2K}{Tag 0C2K}]{stacks-project} and \cite[\href{https://stacks.math.columbia.edu/tag/0C5S}{Tag 0C5S}]{stacks-project}.
\end{proof}

\begin{corollary}
	Any birational map  $\psi \colon X \rat Y$ between regular projective surfaces over $T$ can be factorised as a sequence of blow-ups and blow-downs of closed points.
\end{corollary}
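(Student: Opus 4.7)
The plan is to reduce to the previous proposition by producing a common regular resolution of $\psi$. Consider the graph $\Gamma_\psi \subset X \times_T Y$ of $\psi$, that is, the scheme-theoretic closure of the graph of $\psi$ restricted to its (open) domain of definition. The two projections induce projective birational $T$-morphisms $\Gamma_\psi \to X$ and $\Gamma_\psi \to Y$, so $\Gamma_\psi$ is a projective integral surface over $T$, birational to both $X$ and $Y$.

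Next I would resolve the singularities of $\Gamma_\psi$. Since $T$ is the spectrum of an excellent domain of dimension at most one, $\Gamma_\psi$ is an excellent Noetherian surface, and resolution of singularities for such surfaces (due to Lipman, see for instance \cite[\href{https://stacks.math.columbia.edu/tag/0BGP}{Tag 0BGP}]{stacks-project}) provides a projective birational morphism $Z \to \Gamma_\psi$ from a regular projective $T$-surface $Z$. Composing with the two projections yields projective birational morphisms $p \colon Z \to X$ and $q \colon Z \to Y$ of regular projective surfaces over $T$ such that $\psi = q \circ p^{-1}$ as birational maps.

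Now I apply the second part of \autoref{lem: Castelnuovo_contraction}: both $p$ and $q$ decompose as finite sequences of contractions of exceptional curves of the first kind. Reading $p$ backwards expresses the birational map $p^{-1} \colon X \rat Z$ as a sequence of blow-ups at closed points, while $q$ expresses $Z \to Y$ as a sequence of blow-downs at closed points. Concatenating gives the desired factorisation of $\psi$.

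The only non-routine input is the existence of the regular resolution $Z$; everything else is a direct invocation of the Castelnuovo-type statement proved just above. For $T = \Spec(\k)$ this is standard even in the imperfect case, and for $\dim(T) = 1$ this is precisely where excellence of $R$ is needed to guarantee resolution of singularities for the two-dimensional scheme $\Gamma_\psi$.
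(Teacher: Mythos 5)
Your proof is correct and follows essentially the same route as the paper: resolve the indeterminacies of $\psi$ via the graph, apply Lipman's resolution of singularities for excellent surfaces to obtain a regular $Z$ dominating both $X$ and $Y$, and then invoke the last part of \autoref{lem: Castelnuovo_contraction} to decompose each of the two resulting morphisms into contractions of exceptional curves of the first kind. The only difference is that you spell out the construction of the common resolution in more detail than the paper does.
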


\begin{proof}
	Let $W$ be a resolution of indeterminacies of $\psi$.
	By \cite{Lip78}, there exists a resolution of singularities $Z \to W$, which naturally admits projective birational morphisms to both $X$ and $Y$.
	Therefore we conclude by \autoref{lem: Castelnuovo_contraction}.
\end{proof}

\section{Sarkisov program for excellent surfaces}\label{sec:sarkisov}

Birational maps between Mori fibre spaces are in general hard to control.
The Sarkisov program aims to decompose birational maps between Mori fibre spaces into so-called Sarkisov links, which are more controllable type of birational maps.
It was proven in dimension two over arbitrary perfect fields in \cite[Appendix]{Cor95} by Corti and in \cite{Isk96} by Iskovskikh after an idea of Sarkisov. Both proofs are algorithmic, although the algorithm may be hard to execute explicity in general.
In higher dimension, Hacon-M$^{\text{c}}$Kernan proved in \cite{HM13} that any birational map between Mori fibre spaces over an algebraically closed field of characteristic zero is a composition of Sarkisov links. Their proof is non algorithmic and instead relies on the MMP as developed in \cite{BCHM10}.
Their proof was replicated in dimension two over perfect fields in \cite{LZ20}.

In this section, we give a proof of the Sarkisov program as stated in \cite{BLZ21} for excellent surfaces.

Throughout this section, $T$ is a scheme as in \autoref{notation-excellent}.
First, we recall the MMP for excellent surfaces and then recall the basic notions of the Sarkisov program as introduced in \cite{LZ20, BLZ21}.

\subsection{The MMP for excellent surfaces}
The MMP in the case where $X$ is a \emph{regular} surface is established in \cite{Sha66} (see \cite{Liu02} for a modern treatment).
For the proof of the Sarkisov program, we need the version of the MMP for klt surface pairs developed by Tanaka \cite{Tan18}.
From the point of view of higher dimensional MMP, recall that terminal surface singularities are regular \cite[Theorem 2.29]{kk-singbook}.

We will need the following basic result on push--forwards of ample divisors on surfaces multiple times.

\begin{lemma} \label{lem: push-forward-ample}
	Let $f \colon X \to Y$ be a proper birational morphism of projective $\mathbb{Q}$-factorial surfaces over $T$.
	If $H$ is an ample $\mathbb{Q}$-divisor on $X$, then $f_*H$ is ample.
\end{lemma}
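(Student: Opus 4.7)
The plan is to verify that $f_*H$ is ample via the Nakai--Moishezon criterion for projective normal surfaces. First I would compare the pullback of $f_*H$ with $H$: since $Y$ is $\mathbb{Q}$-factorial, $f_*H$ is $\mathbb{Q}$-Cartier, so $f^*(f_*H)$ makes sense as a $\mathbb{Q}$-Cartier divisor on $X$, and $f_*\bigl(f^*(f_*H)-H\bigr)=0$ gives
\[
f^*(f_*H) \;=\; H + D, \qquad D = \sum_i a_i E_i,
\]
where the $E_i$ are the prime exceptional curves of $f$ and $a_i \in \mathbb{Q}$.

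The key step is to show $D$ is effective. Because each $E_j$ is $f$-exceptional, the projection formula yields $f^*(f_*H) \cdot E_j = 0$, so the coefficients $a_i$ satisfy
\[
\sum_i a_i (E_i \cdot E_j) = -\,H \cdot E_j \qquad \text{for every } j.
\]
The intersection matrix $M = (E_i \cdot E_j)$ is negative definite by the classical negativity statement for curves contracted by a proper birational morphism between normal surfaces, and its off-diagonal entries are non-negative since distinct prime curves meet non-negatively. Standard M-matrix theory then gives that $M^{-1}$ has non-positive entries, while the right-hand side $-H \cdot E_j$ is strictly negative because $H$ is ample. Hence each $a_i \geq 0$, so $D \geq 0$.

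With $f^*(f_*H) = H + D$, $D \geq 0$, and $D$ $f$-exceptional, Nakai--Moishezon is then straightforward. For an integral curve $C \subset Y$ with strict transform $\tilde C$, the projection formula gives
\[
f_*H \cdot C \;=\; f^*(f_*H) \cdot \tilde C \;=\; H \cdot \tilde C + D \cdot \tilde C \;>\; 0,
\]
using ampleness of $H$ and the fact that $\tilde C$ is not contained in the support of $D$. For the self-intersection,
\[
(f_*H)^2 \;=\; f^*(f_*H) \cdot H \;=\; H^2 + D \cdot H \;>\; 0,
\]
by ampleness of $H$ and effectivity of $D$. Applying the Nakai--Moishezon criterion for projective normal surfaces (valid over the excellent base $T$), $f_*H$ is ample.

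The main technical obstacle is really the effectivity of $D$: this rests on the negative-definiteness of the intersection form on the exceptional locus of a proper birational morphism of normal surfaces, which has to be invoked cleanly in the excellent setting rather than the more familiar one over an algebraically closed field. Once this negativity (or equivalently the standard negativity lemma) is in hand, the rest of the argument is an immediate application of the projection formula and Nakai--Moishezon.
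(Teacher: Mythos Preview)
Your argument is correct: writing $f^*(f_*H)=H+D$ with $D$ an effective $f$-exceptional $\mathbb{Q}$-divisor (via the negativity lemma on the exceptional locus) and then checking Nakai--Moishezon is the standard route, and your intersection computations go through. One small remark: you phrase Nakai--Moishezon ``for projective normal surfaces''; in the paper's setup $\mathbb{Q}$-factorial implicitly entails normality, so this is fine, but it is worth being explicit.

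As for comparison with the paper: the paper does not prove this lemma at all. It is stated as a ``basic result'' and used repeatedly, but no argument is supplied. So there is nothing to compare against; your proof simply fills in what the authors left to the reader.
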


\begin{proof}
Without loss of generality, we can suppose that $H$ is effective and thus we have $f^*f_*H =H+F$, where $F$ is effective.
First, we note by projection formula that $f_*H \cdot f_* H=f^*f_*H \cdot H =(H+F)\cdot H>0$, as $H$ is ample.
Let $C$ be an effective curve on $Y$. By projection formula again, we have $f_*H \cdot C=H \cdot f^*C>0$. Thus we conclude that $f_*H$ is ample by the Nakai--Moishezon criterion of ampleness.
\end{proof}

We now recall the general version of the cone theorem and the base point free theorem for klt surface pairs proven in \cite{Tan18}.
Recall that klt surface singularities are rational and $\mathbb{Q}$-factorial by \cite[Proposition 2.28 and 10.9]{kk-singbook}.

\begin{theorem}[{Cone theorem, \cite[Theorem 2.46]{7authors}}]
\label{t-cone-theorem}
Let $X$ be a quasi-projective surface over $T$ admitting a projective morphism $\pi \colon X \to B$ over $T$ to a quasi-projective variety $B$ over $T$ and suppose $(X,\Delta)$ is a klt pair.
Then there is a countable set of curves $(C_i)_{i \in I}$ such that
\begin{enumerate}
	\item $K_X \cdot C_i<0$ and $\pi(C_i)$ is a closed point;
	\item the cone theorem holds:  $$\overline{\NE}(X/T)=\overline{\NE}(X/T)_{K_X+\Delta \geq 0}+\sum_{i\in I} \mathbb{R}_{+}[C_i];$$
	\item if $A$ is a $\pi$-ample $\mathbb{R}$-divisor, then there is a finite index $I_A \subset I$ such that $$\overline{\NE}(X/T)=\overline{\NE}(X/T)_{K_X+\Delta +A \geq 0}+\sum_{i\in I_A} \mathbb{R}_{+}[C_i].$$
\end{enumerate}
\end{theorem}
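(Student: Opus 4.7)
The plan is to follow the Kollár--Mori strategy for the cone theorem, adapted to the klt surface setting over excellent base rings as in Tanaka's MMP \cite{Tan18}. Since $X$ is a quasi-projective surface, the relative Néron--Severi rank $\rho(X/T)$ is finite, so $\overline{\NE}(X/T)$ is a closed cone in the finite-dimensional $\mathbb{R}$-vector space $N_1(X/T)_\mathbb{R}$ dual to $N^1(X/T)_\mathbb{R}$. Klt surface singularities are $\mathbb{Q}$-factorial and rational by \cite[Proposition 2.28 and 10.9]{kk-singbook}, so intersection numbers with Weil $\mathbb{Q}$-divisors are well-defined via pullback to the minimal resolution $\mu\colon \tilde X \to X$. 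Writing $\mu^*(K_X+\Delta) = K_{\tilde X} + \tilde\Delta$ with $\lfloor \tilde\Delta\rfloor = 0$ transports the analysis to a regular excellent surface, where I may invoke Castelnuovo contractions (\autoref{lem: Castelnuovo_contraction}) and adjunction freely.

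The key technical input is a boundedness estimate for extremal rays: every $(K_X+\Delta)$-negative extremal ray of $\overline{\NE}(X/T)$ is generated by the class of an integral curve $C$ with $\pi(C)$ a closed point, and with $-(K_X+\Delta)\cdot C$ bounded above by a universal constant $M$ depending only on $\dim X = 2$. To produce such a $C$ in each extremal ray $R$, I would choose a nef class supporting $R$, perturb to $\mathbb{Q}$-coefficients, and apply Riemann--Roch on $\tilde X$ together with the Hodge index theorem to exhibit effective divisors whose numerical classes accumulate at $R$; the intersection bound with $-(K_X+\Delta)$ then allows to extract an integral curve representing the ray.

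Given this boundedness, assertion (3) follows directly: if $R_i = \mathbb{R}_+[C_i]$ lies in the open half-space $\{K_X+\Delta+A < 0\}$, then $A\cdot C_i < -(K_X+\Delta)\cdot C_i \leq M$, so the classes $[C_i]$ lie in a bounded region of $N_1(X/T)_\mathbb{R}$; by discreteness of integral numerical classes within a bounded region of a finitely generated abelian group, only finitely many such rays can exist, yielding the finite index set $I_A$. Assertion (2) is obtained from (3) by letting $A$ range over a countable sequence of $\pi$-ample classes tending to $0$ in $N^1(X/T)_\mathbb{R}$ and taking $I := \bigcup_n I_{A_n}$; the complement of $\bigcup_i \mathbb{R}_+[C_i]$ in $\overline{\NE}(X/T)$ is, by this exhaustion, contained in the closed half-space $\{K_X+\Delta \geq 0\}$.

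The main obstacle, as in \cite{Tan18}, is the absence of Kawamata--Viehweg vanishing over excellent bases of positive or mixed characteristic. I would circumvent this by replacing vanishing-based constructions with direct Riemann--Roch computations on the minimal resolution $\tilde X$, using Serre duality supplied by the dualizing complex on $T$. A further mild subtlety is that $X$ is only quasi-projective over $T$, with the properness coming from the projective morphism $\pi\colon X\to B$; this is addressed by working relatively over $B$ throughout and noting that the arguments above depend only on the finite dimensionality of $N^1(X/T)_\mathbb{R}$ and on intersections with curves contracted by $\pi$, both of which behave well in the relative setting.
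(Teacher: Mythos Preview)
The paper does not give its own proof of this statement: the theorem is stated with a direct citation to \cite[Theorem 2.46]{7authors} and is used as a black box throughout Section~\ref{sec:sarkisov}. There is therefore nothing in the paper to compare your argument against; the authors simply import the cone theorem for klt surface pairs over excellent bases from the literature (Tanaka \cite{Tan18} and the seven-author paper \cite{7authors}).

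As for your sketch itself, the overall architecture (reduce to the minimal resolution, establish a bounded-degree generator for each $(K_X+\Delta)$-negative extremal ray, then deduce (3) and (2)) is the standard Koll\'ar--Mori line and is the right shape. One step is not yet justified, though: from $A\cdot C_i < -(K_X+\Delta)\cdot C_i \le M$ you conclude that the $[C_i]$ lie in a bounded region of $N_1(X/T)_{\mathbb{R}}$ and hence are finite by discreteness. A single linear bound $A\cdot C_i < M$ together with $A\cdot C_i>0$ only cuts out a slab, not a bounded set; you need either to bound $H_j\cdot C_i$ for a full basis $H_1,\dots,H_\rho$ of ample classes, or to run the usual local-discreteness argument (if extremal rays accumulated in the open half-space $\{K_X+\Delta<0\}$, one could express a generator $[C_n]$ as a nontrivial effective combination of nearby $[C_m]$, contradicting extremality). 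In the surface case one can also argue more directly via the intersection form and the classification of $(K_X+\Delta)$-negative curves with negative self-intersection, which is closer to what Tanaka actually does.
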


We recall a special case of the cone theorem when the relative dimension is at most $1$:

\begin{lemma}[{Relative Cone Theorem, \cite[Lemma 2.13]{Tan18}}]
\label{l-cone-theorem}
Let $X$ be a quasi-projective surface over $T$ and $\pi\colon X\to B$ a projective $T$-morphism to a quasi-projective variety $B$ over $T$ with $\dim \pi(X)\geq1$.
Then there is a finite set of projective curves $(C_i)_{i \in I}$ such that
	\begin{enumerate}
		\item $\pi(C_i)$ is a closed point;
		\item $\overline{\NE}(X/B)=\sum_{i\in I}\R_{\geq0}[C_i]$;
	\end{enumerate}
\end{lemma}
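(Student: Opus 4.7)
The plan is first to reduce via Stein factorisation to the case where $\pi$ is surjective with geometrically connected fibres, and then to analyse the two possibilities $\dim B = 2$ and $\dim B = 1$ separately. For the reduction, factor $\pi = \tau \circ \pi'$ with $\pi' \colon X \to B'$ surjective with connected fibres and $\tau \colon B' \to B$ finite. Since $\tau$ contracts no curves and preserves intersection numbers, $\overline{\NE}(X/B) = \overline{\NE}(X/B')$. Replacing $B$ further by $\pi(X)$ does not alter the set of $\pi$-vertical curves nor $N_1(X/B)_\R$, so we may assume from the outset that $\pi$ is projective and surjective onto $B$ with geometrically connected fibres.

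If $\dim B = 2$, then $\pi$ is a proper birational morphism of surfaces, and its exceptional locus $\mathrm{Ex}(\pi)$ is of pure dimension one, hence a finite union of integral curves $E_1, \dots, E_n$. Every $\pi$-vertical integral curve is contained in $\mathrm{Ex}(\pi)$ and therefore coincides with some $E_i$, so $\overline{\NE}(X/B) = \sum_{i=1}^n \R_{\geq 0}[E_i]$ is automatically a closed polyhedral cone.

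If $\dim B = 1$, then $B$ is an integral quasi-projective curve and all fibres of $\pi$ are one-dimensional. The key claim is that every scheme-theoretic fibre $F_t$ represents the same class in $N_1(X/B)_{\R}$: for any line bundle $L$ on $X$, flatness of $\pi$ over the regular curve $B$ (by miracle flatness, after reducing to the case where $X$ is normal and hence Cohen--Macaulay of dimension two) forces $t \mapsto \chi(F_t, L|_{F_t})$ to be locally constant, and therefore $L \cdot F_t$ to be independent of $t$. Upper semi-continuity of $\dim_{k(t)} H^0(F_t, \mathcal{O}_{F_t})$ then implies that only finitely many fibres are reducible; let $C_1, \dots, C_N$ be their irreducible components, supplemented with a single irreducible fibre $C_0$ in case no fibre is reducible. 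Every $\pi$-vertical integral curve is either some $C_j$ or lies in an irreducible fibre, so its class lies in the cone generated by $\{C_0, C_1, \dots, C_N\}$ via the divisor decomposition $F_{t_i} = \sum m_j C_j$ together with the numerical equivalence of fibres.

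The delicate step is the fibration case $\dim B = 1$: one must simultaneously establish that all fibres are numerically equivalent as $1$-cycles over $B$ and that only finitely many fibres are reducible. Both rest on flatness and semi-continuity, which become clean once $X$ is normal of dimension two over a regular curve; the bulk of the technical effort lies in this reduction, performed so that the normalisation does not inadvertently alter the Mori cone (push-forward along a finite birational morphism maps the relative Mori cone of $X^{\nu}$ surjectively onto that of $X$, so finite generation transfers from $X^\nu$ to $X$).
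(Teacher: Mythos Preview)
The paper does not give its own proof of this lemma; it simply records the statement and cites \cite[Lemma~2.13]{Tan18}. So there is nothing in the paper to compare against, and your write-up is an independent argument. The overall architecture (Stein factorisation, then split into the birational case $\dim B=2$ and the fibration case $\dim B=1$) is the standard one and is sound.

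There is, however, a genuine gap in the fibration case. You deduce that only finitely many fibres are reducible from upper semi-continuity of $\dim_{k(t)} H^0(F_t,\mathcal{O}_{F_t})$. This does not work: after Stein factorisation the fibres are connected, and a connected reducible curve (say two components meeting in a point) still has $h^0=1$. Semi-continuity of $h^0$ detects disconnected fibres, not reducible ones, so your argument proves nothing here. A correct replacement is either: (i) the locus of geometrically irreducible fibres is open (EGA~IV, 9.7.7--9.7.8), and the generic fibre is integral because $X$ is; or (ii) once $X$ is normal and $B$ is a regular curve, Zariski's lemma makes the intersection form on the components of each reducible fibre negative definite on the orthogonal complement of the fibre class, and since $N_1(X/B)$ has finite rank this bounds the number of reducible fibres.

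A smaller imprecision: the fibres $F_t$ do not in general represent the \emph{same} class in $N_1(X/B)_{\R}$, only proportional ones, because the intersection number $L\cdot F_t$ in the paper's conventions is computed over the image in $T$ and therefore scales with $[k(t):k(\tau)]$. This does not affect your conclusion (same ray is all you need), but the sentence ``$L\cdot F_t$ is independent of $t$'' is not literally correct as stated.
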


\begin{theorem}[{Base point free theorem}] \label{t-bpf-theorem}
Let $X$ be a quasi-projective surface over $T$ admitting a projective morphism $\pi \colon X \to B$ over $T$ to a quasi-projective variety $B$ over $T$ and suppose $(X,\Delta)$ is a klt pair.
Let $L$ be a $\pi$-nef $\mathbb{Q}$-Cartier divisor such that $L-(K_X+\Delta)$ is $\pi$-big and $\pi$-nef. Then $L$ is semiample.
\end{theorem}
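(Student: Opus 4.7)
The strategy is the classical Kawamata--Shokurov approach to the base point free theorem, adapted to excellent surface pairs. I would first clear denominators so that $L$ is Cartier, and work locally on $B$: since semiampleness can be checked after shrinking $B$ to an affine open, assume $B=\mathrm{Spec}\,R$. The problem then becomes showing that some $|mL|$ is base point free on $X$, and this decomposes into two steps: \emph{non-vanishing}, that $h^0(X,mL)\neq 0$ for $m$ sufficiently divisible, and \emph{base point freeness}, that after possibly enlarging $m$ such sections separate every closed point of $X$.

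For non-vanishing, set $A:=L-(K_X+\Delta)$, which is nef and big by hypothesis. Then for every $m\geq 1$ the divisor $mL-(K_X+\Delta)=A+(m-1)L$ is again nef and big, so a Kawamata--Viehweg vanishing theorem for klt surface pairs gives $H^i(X,mL)=0$ for $i\geq 1$. Combined with asymptotic Riemann--Roch on the $\mathbb{Q}$-factorial klt surface $X$ (using that klt surface singularities are rational and $\mathbb{Q}$-factorial, as recalled before \autoref{t-cone-theorem}), this forces $h^0(X,mL)$ to grow like $\tfrac{m^2}{2}L^2$ when $L^2>0$, giving non-vanishing immediately. The residual case $L^2=0$ has to be treated separately: via the cone theorem \autoref{t-cone-theorem}, every curve $C$ with $L\cdot C=0$ satisfies $(K_X+\Delta)\cdot C\leq 0$, and a Hodge-index / Zariski decomposition argument produces a morphism to a curve over which $L$ is numerically trivial, yielding sections after pulling back ample classes.

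For base point freeness, I would use the tie-breaking argument. Given a closed point $p$ in the base locus of $|mL|$ produced above, take a high-multiplicity combination of sections of $|mNL|$ to build an effective $\mathbb{Q}$-divisor $D\sim_{\mathbb{Q}}\varepsilon L$ whose log canonical threshold with respect to $(X,\Delta)$ localises a non-klt centre $Z$ through $p$. A small perturbation of $D$ together with the ampleness contribution of $A$ cuts $Z$ down to a single point or irreducible curve; applying Kawamata--Viehweg vanishing to the adjoint pair twisted by the associated multiplier ideal then lifts a non-zero section of $mL|_Z$ to a section of $mL$ on $X$ not vanishing at $p$, a contradiction.

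The main obstacle is the availability of a sufficiently general Kawamata--Viehweg type vanishing theorem for excellent klt surface pairs, covering imperfect residue fields and mixed-characteristic bases $T$. In positive characteristic the analogous vanishing fails in higher dimensions, so one really needs the surface-specific statement; fortunately this holds in the stated generality by Tanaka's work, and it is the single input that makes every step of the argument go through uniformly. Once granted, no part of the proof is sensitive to perfectness of residue fields, since Riemann--Roch, intersection theory on regular surfaces (as in \autoref{def:d_x etc}) and the cone theorem are all available in this setting.
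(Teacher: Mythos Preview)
The paper does not give a proof of this statement: it is quoted verbatim from \cite[Theorem 4.2]{Tan18} and used as a black box input to the Sarkisov program, so there is nothing in the paper to compare your proposal against.

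As for your sketch on its own merits, the overall architecture (non-vanishing via vanishing plus Riemann--Roch, then shrinking the base locus by tie-breaking) is the right shape, but the second step as you describe it leans on multiplier-ideal machinery that is not available in positive or mixed characteristic in the form you invoke. Tanaka's actual argument in \cite{Tan18} does not run the Kawamata--Shokurov $X$-method with multiplier ideals; instead it exploits surface-specific tools (running MMP, abundance for klt surface pairs, and reduction to the numerically trivial case) that bypass the need for Nadel-type vanishing. Your acknowledgement that ``the main obstacle is the availability of a sufficiently general Kawamata--Viehweg type vanishing theorem'' is on target, but even granting such vanishing, the lifting-from-a-centre step via multiplier ideals would need to be replaced by something that works over excellent bases of arbitrary residue characteristic.
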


\begin{proof}
In the case where $Z$ is the aﬃne spectrum of a field of positive characteristic
for the Stein factorisation $X \to Z \to B$ and $L \equiv 0$, the statement follows from \cite[Theorem 1.3]{BT22}. The remaining cases are proven in \cite[Theorem 4.2]{Tan18}. 
\end{proof}

The cone and the contraction theorem then permit to run the MMP for excellent klt surfaces.
We recall the definition of Mori fibre space in dimension 2.

\begin{definition}\label{def-mfs}
	Let $X$ be a quasi-projective surface over $T$.
	We say a projective contraction $\pi\colon X\to B$ over $T$ is a \emph{Mori fibre space} if $X$ is a regular surface, $\rho(X/B)=1$ and $-K_X$ is $\pi$-ample.
\end{definition}

\begin{theorem}[{Minimal model program, \cite[Theorem 1.1]{Tan18}}] \label{t-mmp-excellent}
Let $X$ and $B$ be quasi-projective varieties over $T$ and
$\pi \colon X \to B$ be a projective morphism over $T$.
Suppose $(X,\Delta)$ is a klt surface pair.
Then we can run a $(K_X+\Delta)$-MMP over $B$ which ends either with a Mori fibre space or with a good minimal model \footnote{We refer to \cite[Theorem 1.1]{Tan18} for details on good minimal models.}.
\end{theorem}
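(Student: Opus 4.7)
The plan is to iterate the cone-and-contraction loop of the classical MMP, with termination ensured by the relative Picard rank dropping at each step, and then to handle the good-model conclusion via abundance for klt surface pairs.

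Starting from $(X,\Delta)/B$: if $K_X+\Delta$ is already $\pi$-nef I move directly to the abundance step below. Otherwise the cone theorem \autoref{t-cone-theorem} provides a $(K_X+\Delta)$-negative extremal ray $R \subset \overline{\NE}(X/B)$. I would pick a $\pi$-ample $\mathbb{Q}$-divisor $A$ and take $t_0 \in \mathbb{Q}_{>0}$ maximal so that $L := K_X+\Delta+t_0 A$ is $\pi$-nef. Then $L$ vanishes exactly on $R$, while $L-(K_X+\Delta)=t_0 A$ is $\pi$-big and $\pi$-nef, so the base point free theorem \autoref{t-bpf-theorem} makes $L$ $\pi$-semiample. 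Stein factorisation of the associated morphism produces a $T$-contraction $\phi\colon X\to Y$ over $B$ with $\rho(X/Y)=1$, contracting precisely $R$.

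Next I would analyse $\phi$. If $\dim Y<2$, then $\phi$ realises $(X,\Delta)$ as a Mori fibre space and the program halts. Otherwise $\dim Y=2$ and $\phi$ is birational: on a $\mathbb{Q}$-factorial surface no small contractions exist (a curve with negative self-intersection automatically generates a divisorial ray), hence $\phi$ contracts a single exceptional curve $E$ to a closed point of $Y$. A standard discrepancy computation along $E$, together with the $\mathbb{Q}$-factoriality of klt surface singularities, shows that $(Y,\phi_*\Delta)$ is again a klt pair projective over $B$, and $\rho(Y/B)=\rho(X/B)-1$. Since the relative Picard number strictly decreases at each birational step, after finitely many iterations either a Mori fibre space is reached or $K_X+\Delta$ has become $\pi$-nef.

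The \textbf{main obstacle} is to upgrade a minimal model to a \emph{good} one, i.e.\ to show that a $\pi$-nef $K_X+\Delta$ is $\pi$-semiample. The base point free theorem is not directly applicable with $L=K_X+\Delta$, since $L-(K_X+\Delta)=0$ fails the bigness hypothesis, so a separate abundance argument is required. I would proceed by a case split on the numerical dimension $\nu(K_X+\Delta/B)\in\{0,1,2\}$: when $\nu=2$ the divisor is $\pi$-big, and a small $\pi$-ample perturbation followed by \autoref{t-bpf-theorem} yields semiampleness; when $\nu=1$ one constructs an auxiliary fibration onto a curve along which $K_X+\Delta$ descends to an ample class; when $\nu=0$ one establishes $\pi$-numerical triviality and concludes by non-vanishing. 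Over excellent bases with imperfect residue fields, the delicate input is non-vanishing in positive characteristic together with descent of abundance from the geometric generic fibre to the relative setting, treated in \cite{Tan18} by reducing ultimately to the regular minimal surface case of \cite{Sha66}.
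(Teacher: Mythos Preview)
The paper does not prove this theorem at all: it is stated with the attribution \cite[Theorem 1.1]{Tan18} and used as a black box, alongside the cone theorem \autoref{t-cone-theorem} and the base point free theorem \autoref{t-bpf-theorem}, which are likewise only quoted. So there is no ``paper's own proof'' to compare your proposal against.

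That said, your sketch is a faithful outline of the standard argument that Tanaka carries out in \cite{Tan18}: iterate cone-plus-contraction, observe that on surfaces every extremal contraction is either a Mori fibre space or a divisorial contraction (so no flips and the relative Picard number drops), and then invoke abundance for the nef endpoint. Your identification of abundance as the genuine content, and the case split on numerical dimension, is exactly where the work lies in \cite{Tan18}. One small quibble: in the $\nu=2$ case you cannot quite perturb by an ample and apply \autoref{t-bpf-theorem} directly, since after perturbation $L-(K_X+\Delta)$ is the small ample, which is fine, but you still need to pass to a limit or use a Kodaira-type vanishing argument to conclude for $K_X+\Delta$ itself; Tanaka handles this via his Keel-style semiampleness criterion and reduction to the curve case. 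But as a high-level plan your proposal is correct.
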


\subsection{Rank $r$ fibrations, Sarkisov links and elementary relations}

We start with defining rank $r$ fibrations for projective regular surfaces $X$ over $T$. By taking the Stein factorisation, we can assume that the morphism $X \to T$ is a projective contraction.

We will show in this and the next section that in the context of regular surfaces over perfect fields, these notions are equivalent to the ones introduced in \cite{Isk96,Cor95}. See also \cite{Kal13} for a slightly different notion of rank $3$ fibrations leading to generating relations of Sarkisov links.

\begin{definition}\label{def: rank_r_fibration}
	Let $X$ be a projective surface over $T$.
	We say that a projective morphism $\pi \colon X \to B$ is a \emph{rank $r$ fibration} if $\pi$ is a contraction between normal varieties over $T$ and
	\begin{enumerate}
		\item \label{item:decrease dimension-Picard} $\dim B <\dim X$ and $\rho(X/B)=r$;
		\item \label{item: regularity} $X$ is regular;
		\item  \label{item: Fano} $-K_X$ is $\pi$-ample.
	\end{enumerate}
\end{definition}

Notice that in the case $r=1$, $\pi$ is a Mori fibre space (cf.\ \autoref{def-mfs}).

\begin{lemma} \label{lem: rank_r_MDS}
	Let $\pi \colon X \rightarrow B$ be a rank $r$ fibration over $T$. Then $X$ is a Mori dream space over $B$, i.e. for any Weil divisor $D$ we can run a $D$-MMP over $B$ which will terminate with $f \colon X \to Y$.
	Moreover, $Y$ is regular and $Y \rightarrow B$ is a rank $r'$ fibration, for some $r' \leq r$.
\end{lemma}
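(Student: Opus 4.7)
My plan is first to reduce the existence of a $D$-MMP over $B$ to Tanaka's framework \autoref{t-mmp-excellent} by exhibiting $X$ as a variety of \emph{Fano type} over $B$. Since $X$ is regular (hence klt) and $-K_X$ is $\pi$-ample, by \autoref{t-bpf-theorem} some multiple of $-K_X$ is $\pi$-free, and a general member $\Delta\sim_{\mathbb{Q},B}-K_X$ yields a klt pair $(X,\Delta)$ with $K_X+\Delta\sim_{\mathbb{Q},B}0$. Given an arbitrary Weil divisor $D$, I would decompose $D$ as the difference of two effective divisors and perturb $\Delta$ by a small $\pi$-ample $\mathbb{Q}$-divisor to absorb the negative part, choosing $0<\varepsilon\ll 1$ so that the resulting pair $(X,\Delta')$ is still klt with $K_X+\Delta'\sim_{\mathbb{Q},B}\varepsilon D$. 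Running a $(K_X+\Delta')$-MMP over $B$ via \autoref{t-mmp-excellent} then coincides with a $D$-MMP over $B$.

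Second, I would inspect each step of this MMP. By the (relative) cone theorem (\autoref{t-cone-theorem}, or \autoref{l-cone-theorem} when $\dim\pi(X)\geq 1$), the cone $\overline{\NE}(X/B)$ is polyhedral, and since $-K_X$ is $\pi$-ample, every extremal ray is $K_X$-negative. If $D$ is already $\pi$-nef we stop with $Y=X$ and $r'=r$; otherwise a $D$-negative extremal ray $R$ is automatically $K_X$-negative, so \autoref{t-bpf-theorem} contracts it. Such a contraction is either divisorial---contracting a $\pi$-vertical curve $E$ with $E^2<0$ and $K_X\cdot E<0$, hence an exceptional curve of the first kind in the sense of \autoref{def: curve of first kind}, so that by Castelnuovo's theorem \autoref{lem: Castelnuovo_contraction} the new surface is again regular and we iterate replacing $D$ by its pushforward---or of fibre type, in which case the MMP terminates. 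As $\rho(\cdot/B)$ strictly decreases at each divisorial step, the process terminates after at most $r-1$ contractions, yielding $f\colon X\to Y$ with $\rho(Y/B)=r'\leq r$.

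Third, I would verify that $\pi_Y\colon Y\to B$ is a rank $r'$ fibration. Regularity of $Y$ follows from iterating \autoref{lem: Castelnuovo_contraction}, and the dimension condition $\dim B<\dim Y=\dim X$ is automatic, so the point is to check that $-K_Y$ is $\pi_Y$-ample. I would prove this by induction on the number of contractions: for a single Castelnuovo contraction $g\colon X'\to X''$ with exceptional curve $E\simeq\mathbb{P}^1_{H^0(E,\mathcal O_E)}$ and $E^2=-d_E$ (\autoref{lem: characterise_-1_curves}), comparing $K_{X'}\cdot E=-d_E$ and $g^*K_{X''}\cdot E=0$ forces the discrepancy formula $K_{X'}=g^*K_{X''}+E$. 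Hence for any $\pi_{X''}$-vertical curve $C\subset X''$ with strict transform $\widetilde C\subset X'$, the projection formula gives $-K_{X''}\cdot C=(-K_{X'}+E)\cdot\widetilde C\geq -K_{X'}\cdot\widetilde C>0$, using $E\cdot\widetilde C\geq 0$ (as $\widetilde C\neq E$) and that $-K_{X'}$ is $\pi_{X'}$-ample on the $\pi_{X'}$-vertical curve $\widetilde C$.

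The main obstacle is the first step: realising a $D$-MMP for an arbitrary, possibly non-pseudo-effective, Weil divisor $D$ as an instance of Tanaka's klt MMP requires constructing an effective klt boundary $\Delta'$ with $K_X+\Delta'\sim_{\mathbb{Q},B}\varepsilon D$. This is the classical ``Fano type implies Mori dream space'' argument, and it must be carried out within the excellent-scheme setting of \autoref{t-mmp-excellent}; the remaining steps then rely only on standard properties of Castelnuovo contractions on regular surfaces over excellent bases.
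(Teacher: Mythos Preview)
Your proposal is correct and follows essentially the same strategy as the paper: both exploit that $-K_X$ is $\pi$-ample to realise the $D$-MMP as a klt $(K_X+\text{boundary})$-MMP, and both observe that every contracted extremal ray is automatically $K_X$-negative so that each step is a Castelnuovo contraction preserving regularity.

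The paper's execution is somewhat more streamlined in two places. First, rather than constructing an intermediate $\Delta$ with $K_X+\Delta\sim_{\mathbb Q,B}0$ and then perturbing, the paper directly takes $n\gg 0$ so that $H\sim_{\mathbb Q}D-nK_X$ is $\pi$-ample and $(X,\tfrac1n H)$ is klt; then $\tfrac1n D\sim_{\mathbb Q}K_X+\tfrac1n H$ and one runs this MMP. This bypasses your decomposition $D=D_+-D_-$ and the absorption step. Second, for the final claim that $-K_Y$ is $\pi_Y$-ample, the paper simply invokes \autoref{lem: push-forward-ample} applied to $-K_Y=f_*(-K_X)$, whereas you reprove this by an explicit curve-by-curve intersection computation via the discrepancy formula. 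Your computation is fine (and is in effect a proof of the relative version of that lemma), but note that if $B=\Spec(\k)$ you should also record $(-K_{Y})^2>0$ to invoke Nakai--Moishezon; the push-forward lemma handles this uniformly.
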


\begin{proof}
	%Let $n \gg 0$ such that there exists an ample $\mathbb{Q}$-divisor $H$ such that $D-nK_X \sim_{\mathbb{Q}}H$ and $(X, \frac{1}{n}H)$ is klt.
	Choose a suﬃciently large natural number $n>0$ such that $A=\frac{1}{n}D-K_X$ is ample. By the existence of log resolution for excellent surfaces, we can repeat the proof of \cite[Lemma 2.8]{GNT19} and show that there exists an eﬀective $\mathbb{Q}$-divisor $H$ such that $H \sim_\mathbb{Q} A$ and $(X, H)$ is a klt pair.
	By \autoref{t-mmp-excellent}, as $\frac{1}{n}D \sim_{\mathbb{Q}} (K_X+H)$, we can run a $D$-MMP which terminates with $f \colon X \to Y$.
	As $H$ is ample, by \autoref{lem: push-forward-ample} $f$ is a composition of steps of $K_X$-MMP and therefore $Y$ is regular as well.
	As $-K_Y=f_*(-K_X)$, we conclude by \autoref{lem: push-forward-ample} that $Y$ is a rank $r'$ fibration over $B$.
\end{proof}

\begin{remark} \label{rem: compare_BLZ21}
	As a consequence of \autoref{lem: rank_r_MDS}, we note that the definition of rank $r$ fibration introduced in \cite{BLZ21} specialises to \autoref{def: rank_r_fibration} in the case of surfaces.

	Note that as $\dim(B) \leq 1$, the condition (RF4) of \cite[Definition 3.1]{BLZ21} is automatic and \autoref{item:decrease dimension-Picard} coincides with (RF2).

	We are left to verify that \autoref{item: regularity} and \autoref{item: Fano} are equivalent to (RF1), (RF3) and (RF5).
	One direction is proven in \autoref{lem: rank_r_MDS}.
	For the other direction, we argue by contradiction.
	Suppose that (RF1), (RF3) and (RF5) hold and that $X$ is regular and $-K_X$ is big over $B$ but not ample. Then there exists an integral curve $C$ such that $-K_X \cdot C \leq 0$.
	As $-K_X$ is big, then $C^2<0$.
In particular, by \autoref{t-mmp-excellent} we can run an MMP for $(X, \varepsilon C)$ for $\varepsilon$ sufficiently small and in particular there exists a birational contraction $\pi \colon X \to X'$ such that $\Exc(\pi)=C$.
	As $-K_X \cdot C \leq 0$, the surface $X'$ is not terminal, thus contradicting (RF3).
\end{remark}

Before we define Sarkisov link in terms of rank $2$ fibrations in \autoref{def: Sarkisov_link}, we first generalize the classical notion of Sarkisov links over a perfect field as in \cite{Cor95, Isk96} to Sarkisov links over an excellent ring, and will observe that both definitions are equivalent.

Following \cite{Cor95, Isk96}, a birational map $\chi \colon X_1 \dashrightarrow X_2$ between two $T$-surfaces is a \emph{Sarkisov link} if $X_i$ admits a Mori fibre space structure to some base $B_i$ for $i=1,2$ fitting into a commutative diagram of type \I, \II, \III, or \IV, as shown in Figure~\ref{fig:SarkisovTypes}, where $fib$ denotes a Mori fibre space, $div$ denotes a birational contraction of regular surfaces, and each non-horizontal morphism has relative Picard rank $1$.

\begin{figure}[ht]\label{figure-4-links}
\[
{
\def\arraystretch{2.2}
\begin{array}{cc}
\begin{tikzcd}[ampersand replacement=\&,column sep=1.3cm,row sep=0.16cm]
 \&\& Z=X_2 \ar[dd,"fib"]\ar[ddll,"div"] \\ \\
X_1\ar[uurr,"\chi",dashed, bend left=20]  \ar[dr,"fib",swap] \&  \& B_2 \ar[dl] \\
\& B_1=T \&
\end{tikzcd}
&
\begin{tikzcd}[ampersand replacement=\&,column sep=1.3cm,row sep=0.18cm]
 \& Z\ar[ddl,"div",swap]\ar[ddr,"div"]\& \\ \\
X_1 \ar[rr,"\chi",dashed] \ar[d,"fib",swap] \&  \& X_2 \ar[d,"fib"] \\
B_1 \ar[rr,"\simeq"]\&  \& B_2
\end{tikzcd}
\\
\text{type }\I& \text{type }\II
\\
\begin{tikzcd}[ampersand replacement=\&,column sep=1.3cm,row sep=0.16cm]
X_1=Z \ar[ddrr,"\chi=div"] \ar[dd,"fib",swap] \&\& \\ \\
B_1 \ar[dr] \& \& X_2 \ar[dl,"fib"] \\
\&  T=B_2  \&
\end{tikzcd}
&
\begin{tikzcd}[ampersand replacement=\&,column sep=1.7cm,row sep=0.16cm]
X_1=Z \ar[rr,"\simeq"] \ar[dd,"fib",swap]  \&\& X_2 \ar[dd,"fib"] \\ \\
B_1 \ar[dr] \& \& B_2 \ar[dl] \\
\& T \&
\end{tikzcd}
\\
\text{type }\III & \text{type }\IV
\end{array}
}
\]
\caption{The four types of Sarkisov links in dimension $2$.}
\label{fig:SarkisovTypes}
\end{figure}

\begin{lemma}[{\cite[p.250]{Cor95}}]\label{lem:Sarkisov links=rank 2 fibration}
The surfaces over $T$ on the top row of each diagram in \autoref{fig:SarkisovTypes} are rank $2$ fibrations above the variety on the bottom row of the diagram.
\end{lemma}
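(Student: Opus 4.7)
The plan is to verify the three conditions of \autoref{def: rank_r_fibration} with $r=2$ for the morphism $Z\to B$, where $Z$ denotes the top surface and $B$ the bottom variety in each of the four diagrams of \autoref{fig:SarkisovTypes}. The dimension condition $\dim B<\dim Z$ is automatic, since $B$ is at most one-dimensional and $Z$ is a surface. Regularity of $Z$ is also immediate: in Types \III and \IV, $Z$ coincides with some $X_i$, which is regular as the source of a Mori fibre space; in Types \I and \II, the divisorial morphisms $Z\to X_i$ factor as compositions of blow-ups of closed points on regular surfaces by \autoref{lem: Castelnuovo_contraction}, so $Z$ is regular by \autoref{rmk:BlowUpOfRegular}.

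For the relative Picard number, the hypothesis that every non-horizontal morphism in the diagram has relative Picard rank~$1$ gives $\rho(Z/B)=2$ by additivity along any path of two such morphisms connecting $Z$ to $B$. Concretely, $\rho(Z/T)=\rho(Z/X_1)+\rho(X_1/T)=2$ in Type~\I, $\rho(Z/B_1)=\rho(Z/X_1)+\rho(X_1/B_1)=2$ in Type~\II, Type~\III is symmetric to Type~\I, and $\rho(Z/T)=\rho(X_1/B_1)+\rho(B_1/T)=2$ in Type~\IV (using $Z=X_1$).

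The main step is to show that $-K_Z$ is ample over $B$. By \autoref{l-cone-theorem}, the cone $\overline{\NE}(Z/B)$ is finitely generated rational polyhedral and lies in a two-dimensional real vector space, so it is spanned by at most two extremal rays. It therefore suffices to exhibit two classes $\alpha_1,\alpha_2\in\overline{\NE}(Z/B)$ that generate independent rays and satisfy $-K_Z\cdot\alpha_i>0$; Kleiman's criterion then yields the $B$-ampleness of $-K_Z$. In Types~\I and~\III, I take $\alpha_1=[E]$, the class of the exceptional curve of the first kind contracted by the divisorial morphism (which is $-K_Z$-positive by \autoref{def: curve of first kind}), and $\alpha_2=[F]$, the class of a fibre of the Mori fibre space issuing from $Z$. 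In Type~\II, I take $\alpha_i=[E_i]$, the exceptional curve of the first kind of $Z\to X_i$. In Type~\IV, I take $\alpha_i=[F_i]$, a fibre of $X_i\to B_i$ viewed on $Z=X_1\simeq X_2$. In all cases $-K_Z\cdot\alpha_i>0$ follows either from \autoref{def: curve of first kind} or from the $B_i$-ampleness of $-K_Z$ on fibres of a Mori fibre space.

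The main obstacle is to verify the linear independence of these classes. In Types~\I and~\III it is immediate since $E^2<0=F^2$. In Type~\II, a relation $E_1\equiv\lambda E_2$ with $\lambda>0$ would give $E_1^2=\lambda E_1\cdot E_2\ge 0$, contradicting $E_1^2<0$. In Type~\IV, the two Mori fibre space structures on $Z$ are genuinely distinct, and hence determine distinct extremal rays in $\overline{\NE}(Z/T)$; alternatively, since both $F_i^2=0$, any relation $F_1\equiv\lambda F_2$ would force $F_1\cdot F_2=0$, whereas the fibres of two different fibrations on a surface meet positively. This confirms that $\alpha_1,\alpha_2$ span $\overline{\NE}(Z/B)$ and completes the verification.
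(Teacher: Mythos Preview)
Your overall strategy matches the paper's, but there are two gaps.

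First, the citation of \autoref{l-cone-theorem} is inapplicable in Types \I, \III, and \IV\ when $B=T=\Spec(\k)$: that lemma requires $\dim\pi(Z)\geq 1$. The conclusion you want---that $\overline{\NE}(Z/B)$ is a cone in a $2$-dimensional space with at most two extremal rays---follows instead from $\rho(Z/B)=2$ together with the existence of a relatively ample class (so the cone contains no line).

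Second, and more seriously, the sentence ``It therefore suffices to exhibit two classes $\alpha_1,\alpha_2\in\overline{\NE}(Z/B)$ that generate independent rays and satisfy $-K_Z\cdot\alpha_i>0$'' is false as stated: two linearly independent effective classes in a $2$-dimensional cone need not span it, so positivity of $-K_Z$ on them does not yield ampleness via Kleiman. What is missing is that your chosen $\alpha_i$ generate \emph{extremal} rays. This holds because each $\alpha_i$ is contracted by a morphism from $Z$ of relative Picard rank~$1$ (either $Z\to X_i$ or $Z\to B_i$): pulling back an ample class from the target gives a nef divisor on $Z$ whose orthogonal in $\overline{\NE}(Z/B)$ is exactly $\R_{\geq 0}[\alpha_i]$, so this ray is a face. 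Once extremality is established, your linear-independence checks show the two rays are distinct, hence $\overline{\NE}(Z/B)=\R_{\geq 0}[\alpha_1]+\R_{\geq 0}[\alpha_2]$ and Kleiman applies. This is exactly how the paper proceeds: it observes directly that the two arrows out of $Z$ are $K_Z$-negative extremal contractions, so the two extremal curves $f_1,f_2$ span the cone and $-K_Z\cdot f_i>0$.
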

\begin{proof}
Let $Z$ be a surface on the top row of the diagram and $B$ the variety at the bottom row of the diagram. As each non-horizontal arrow has relative Picard rank $1$, we have that $\rho(Z/B)=2$. So we only have to show that $-K_Z$ is relatively ample over $B$.
As there are at least two extremal contractions from $Z$ over $B$ (divisorial contractions and/or Mori fibre spaces) and $\rho(Z/B)=2$, there are exactly two $K_Z$-negative extremal contractions from $Z$ over $B$ and $\overline{\text{NE}}(Z/B)=\R_{\geq 0} [f_1] + \R_{\geq 0} [f_2]$, where $f_1,f_2$ are extremal curves.
Then $-K_{Z}\cdot f_i>0$ for $i=1,2$, and we conclude by Kleiman's criterion that $-K_{Z}$ is relatively ample over $B$.
\end{proof}

Conversely, given a rank $2$ fibration $\pi\colon Z\to B$, the cone theorem \autoref{l-cone-theorem} implies that the Mori cone $\overline{\NE}(Z/B)=\NE(Z/B)$ is spanned by two extremal rays generated by effective curves. Moreover, by the base point free theorem \autoref{t-bpf-theorem} all nef divisors are semiample.
Therefore we can play the $2$-ray game on $Z$ over $B$  (see~\cite[Section~2.F]{BLZ21}), which induces a commutative diagram over $T$
\[
	\begin{tikzcd}\label{diag:SarkisovLink}
		& Z\ar[ld,"f_1",swap]\ar[rd,"f_2"]\ar[ddd,"\pi", bend left=20] & \\
		X_1\ar[d,"\pi_1"]\ar[rr,dashed,"\chi"] & & X_2\ar[d,"\pi_2"]\\
		B_1\ar[dr] && B_2\ar[dl]\\
		&B&
	\end{tikzcd}
\]
such that $\pi_i\colon X_i\to B_i$ are Mori fibre spaces.
The case when $f_1$ is a divisorial contraction and $f_2$ an isomorphism (or vice-versa) is precisely when $f$ is a link of type \I (resp. \III).
If both $f_1$ and $f_2$ are divisorial contractions, then $f$ is a link of type \II.
Finally, if both $f_1$ and $f_2$ are isomorphisms, then $f$ is a link of type \IV.
As we suppose that $X_1 \to T$ is a contraction, the links of type \I, \III, \IV appear only when $\dim(T)=0$ by dimension reasons.

From the previous discussion, we can give the following concise definition of Sarkisov link.

\begin{definition}[rephrased]\label{def: Sarkisov_link}

	A birational map $\chi\colon X_1\dashrightarrow X_2$ is called a \emph{Sarkisov link} if it fits into a commutative diagram obtained from the $2$-ray game of a rank $2$ fibration $Z/B$, as described above.
\end{definition}

Note that if $\chi \colon X_1 \rat X_2$ is a Sarkisov link $\chi$ coming from a rank $2$ fibration $Z/B$, then also its inverse $\chi^{-1} \colon X_2 \rat X_1$ is a Sarkisov link coming from $Z/B$. Moreover, any composition of $\chi$ with isomorphisms at source and target are again Sarkisov links coming from $Z/B$.

\begin{definition} \label{def: birmori}
Let $X$ be a $T$-surface admitting a Mori fibre space structure $X/B$. Let
$$\BirMori_T(X) \coloneqq
\{f\colon X_1\rat X_2 \text{ birational} \mid \text{$X_i/B_i$ Mori fibre spaces birational to $X$}\}$$
be the groupoid of birational transformations of Mori fibre spaces birational to $X$ over $T$.
Note that $\BirMori_T(X)$ contains the groups $\Bir_T(X)$ and $\Aut_T(X)$.
\end{definition}
The Sarkisov program gives a description of the groupoid $\BirMori_T(X)$ in terms of generators and relations.

\begin{definition}
	Let $X \to B$ be a Mori fibre space over $T$.
	A \emph{relation} in the groupoid $\BirMori_T(X)$ is an equality of the form
	\[
		\varphi_n\circ\cdots\circ\varphi_1=\id_{X_1},
	\]
	where $\varphi_i\colon X_i/B_i\rat X'_{i}/B'_i$ are elements of $\BirMori_T(X)$ such that $X_{i+1}/B_{i+1}=X'_i/B'_i$, and $X'_n/B'_n=X_1/B_1$.

	We call a relation \emph{trivial} if it is of one of the following forms:
	\begin{enumerate}
		\item $n=2$ and $\varphi_2=\varphi_1^{-1}$, or
		\item $n=4$ and $\varphi_1,\varphi_3$ are Sarkisov links, $\varphi_2,\varphi_4$ are isomorphisms, or
		\item $\varphi_1,\ldots,\varphi_n$ are isomorphisms.
	\end{enumerate}
\end{definition}

We now recall elementary relations between Sarkisov links.

\begin{proposition}[{\cite[Proposition~4.3]{BLZ21}}] \label{prop: elementary_relations}
	Let $Z/B$ be a rank $3$ fibration over $T$. Then, there are only finitely many Sarkisov links $\chi_i$ dominated by $Z/B$, and they fit in a relation \[\chi_n\circ\cdots\circ\chi_1=\id.\]
\end{proposition}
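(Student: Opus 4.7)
The plan is to play the $3$-ray game on the regular surface $Z$ over $B$, analogously to how the $2$-ray game produces Sarkisov links from a rank $2$ fibration (\autoref{def: Sarkisov_link}), and to read the elementary relation off the cyclic structure of the polygonal cross-section of the relative Mori cone $\overline{\NE}(Z/B)$.

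First I would establish that $\overline{\NE}(Z/B)\subset N^1(Z/B)_{\R}\cong \R^3$ is a rational polyhedral cone of dimension $3$. Since $-K_Z$ is $\pi$-ample by the definition of a rank $3$ fibration, $-K_Z$ is strictly positive on $\overline{\NE}(Z/B)\setminus\{0\}$ by Kleiman's criterion. Applying the cone theorem (\autoref{t-cone-theorem}) with $\Delta=0$ and $A$ a small multiple of $-K_Z$ then yields that $\overline{\NE}(Z/B)$ is generated by finitely many $K_Z$-negative extremal rays $R_1,\dots,R_n$. Being strictly convex and of dimension $3$, these rays can be cyclically ordered so that the $2$-dimensional faces are exactly $F_i\coloneqq\R_{\geq 0}R_i+\R_{\geq 0}R_{i+1}$ (indices mod $n$).

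Next, to each face I would attach a contraction. A supporting nef divisor vanishing on $F_i$ (respectively on $R_i$) is semiample by the base point free theorem (\autoref{t-bpf-theorem}) and defines a contraction $p_i\colon Z\to X_i$ with $\rho(X_i/B)=1$ (respectively $q_i\colon Z\to Y_i$ with $\rho(Y_i/B)=2$). The contracted loci are exceptional curves of the first kind on the regular surface $Z$, so the targets are regular by \autoref{lem: Castelnuovo_contraction}, and the push-forward $-K_{X_i}=(p_i)_*(-K_Z)$ (respectively $-K_{Y_i}$) remains relatively ample by \autoref{lem: push-forward-ample}. Hence each $X_i/B$ is a Mori fibre space and each $Y_i/B$ is a rank $2$ fibration. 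By the discussion preceding \autoref{def: Sarkisov_link}, the $2$-ray game on $Y_i/B$ yields a Sarkisov link $\chi_i\colon X_{i-1}\dashrightarrow X_i$, where $X_{i-1}$ and $X_i$ are the Mori fibre spaces attached to the faces $F_{i-1}$ and $F_i$ adjacent to $R_i$. Since there are only finitely many extremal rays, this produces only finitely many such links dominated by $Z/B$.

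Finally, each link $\chi_i$ factors as $X_{i-1}\xleftarrow{p_{i-1}} Z\xrightarrow{p_i} X_i$ through the common divisorial contractions from $Z$. Composing once around the cycle, the self-map $\chi_n\circ\cdots\circ\chi_1$ of $X_0$ therefore lifts to the identity on $Z$, and so equals $\id_{X_0}$ in $\BirMori_T(X)$. The point that I would expect to dwell on is checking that the cyclic ordering really closes up, i.e. that the target of $\chi_i$ is literally the same Mori fibre space $X_i$ as the source of $\chi_{i+1}$ (not merely isomorphic to it). This follows from the uniqueness of the contraction associated to an extremal face (\autoref{t-bpf-theorem}), which guarantees that both $2$-ray games sharing the face $F_i$ produce the same Mori fibre space $X_i$, so that the $\chi_i$ fit together into a genuine relation in the groupoid $\BirMori_T(X)$.
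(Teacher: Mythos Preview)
Your argument has a genuine gap when $B=\Spec(\k)$. You assume that the contraction $p_i\colon Z\to X_i$ of each $2$-dimensional face $F_i$ is a birational morphism of surfaces, so that $X_i/B$ is a Mori fibre space and the links are in bijection with the extremal rays of $\overline{\NE}(Z/B)$. But if the two extremal $(-1)$-curves spanning $F_i$ meet, then after contracting one of them the image of the other becomes a fibre class, and $p_i$ is a fibration $Z\to\mathbb{P}^1_\k$ rather than a birational morphism; your appeals to \autoref{lem: Castelnuovo_contraction} and \autoref{lem: push-forward-ample} do not apply, and $X_i$ is not a Mori fibre space in the sense of \autoref{def-mfs}. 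Concretely, take $Z$ the blow-up of $\mathbb{P}^2_\k$ in two rational points (the rank $3$ fibration underlying Figure~\ref{P2_11}): the cone $\overline{\NE}(Z/\Spec\k)$ has only three extremal rays, so your argument would produce a triangle with three Sarkisov links, whereas the actual elementary relation is a pentagon with five links. The two rank $2$ fibrations you miss are the conic bundle structures $Z/\mathbb{P}^1_\k$, and four of the five Mori fibre spaces in the relation are $\mathbb{F}_0/\mathbb{P}^1_\k$ or $\mathbb{F}_1/\mathbb{P}^1_\k$, none of which arise as face contractions of $Z$ over $\Spec(\k)$.

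The correct bookkeeping (as in \cite[Proposition~4.3]{BLZ21}, and unpacked in \S\ref{sec:ComputeRelations}) is to enumerate all rank $1$ and rank $2$ fibrations dominated by $Z/B$, i.e.\ all factorizations $Z\to Y\to \widehat B\to B$ with intermediate base $\widehat B$ allowed to vary, and not merely the extremal contractions of $Z$ over $B$. Finiteness of these then comes from the Mori dream space property (\autoref{lem: rank_r_MDS}); the cyclic structure follows because each Mori fibre space dominated by $Z/B$ is dominated by exactly two rank $2$ fibrations. Your cone-section picture is correct when $\dim B=1$: there every extremal ray is a component of a reducible fibre, any two rays spanning a face are disjoint, all face contractions are birational, and the relation is indeed the square of Figure~\ref{fig:XB_ab}.
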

\begin{proof}
	By \autoref{lem: rank_r_MDS}, $Z/B$ is a Mori dream space and there are only finitely many rank $1$ and rank $2$ fibrations over $B$ coming from $Z$.
	Then the same proof of \cite[Proposition~4.3]{BLZ21} adapts to our setting.
\end{proof}

\begin{definition}\label{def: elementary relations}
	In the situation of \autoref{prop: elementary_relations}, we say that $\chi_n\circ\cdots\circ\chi_1=\id$ is an  \emph{elementary relation} between Sarkisov links coming from the rank $3$ fibration $Z/B$.
\end{definition}

\begin{remark}
	Note that the elementary relation is uniquely defined by $Z/B$, up to taking the inverse, cyclic permutations and insertion of isomorphisms.
\end{remark}

\subsection{Statement and proof of the Sarkisov program}

We state and prove the Sarkisov program for excellent surfaces.

\begin{theorem}[Sarkisov program for excellent surfaces]\label{t-sarkisov-program}
	Let $T$ be an excellent integrally closed domain of dimension at most $1$.
	\begin{enumerate}
		\item Any birational map $f \colon X \rat X'$ between two Mori fibre spaces $X/B$, $X'/B'$ over $T$ of dimension 2 is a composition of Sarkisov links and isomorphisms of Mori fibre spaces.
		\item Any relation between Sarkisov links is generated by trivial and elementary relations.
	\end{enumerate}
\end{theorem}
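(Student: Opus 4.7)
The plan is to adapt the approach of \cite{HM13,LZ20,BLZ21} to the excellent surface setting, using as black boxes the MMP for klt surfaces (\autoref{t-mmp-excellent}), the cone theorem (\autoref{t-cone-theorem}), and the base point free theorem (\autoref{t-bpf-theorem}). All the positivity arguments we need are available over $T$ thanks to Tanaka's work, and the dimensional restriction $\dim T \leq 1$ together with regularity of terminal surface singularities ensures that each MMP step stays within the class of regular surfaces (combined with \autoref{lem: push-forward-ample}).

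For part (1), I would proceed as in the Hacon--McKernan argument. Given a birational map $f\colon X_1/B_1 \rat X_2/B_2$ between Mori fibre spaces over $T$, pick a sufficiently general ample $\mathbb{Q}$-divisor $H_2$ on $X_2$ such that $(X_2,H_2)$ is klt and $K_{X_2}+H_2$ is ample over $T$ (say $H_2 = -\frac{1}{n}K_{X_2} + A_2$ for $n\gg 0$ and a general ample $A_2$). Let $H_1 \coloneqq f_*^{-1}H_2$ be its strict transform on $X_1$. Then run a $(K_{X_1}+H_1)$-MMP over $T$ with scaling of a fixed ample divisor, using \autoref{t-mmp-excellent}. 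Each step is either a divisorial contraction or a Mori fibre contraction (no flips in dimension two), and together with the scaling divisor it corresponds precisely to a rank $2$ fibration over $T$ or over an intermediate base, hence to a Sarkisov link by \autoref{def: Sarkisov_link}. The Noetherian/bounded Sarkisov degree argument (monotone decrease of the pair consisting of the quasi-effective threshold and the canonical threshold, as in \cite[\S3]{HM13}) guarantees termination; by generality of $H_2$, the output of the MMP must be the Mori fibre space $X_2/B_2$.

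For part (2), I would follow the polyhedral framework of \cite[\S4]{BLZ21}. By \autoref{lem: rank_r_MDS}, every rank $r$ fibration over $T$ is a Mori dream space, so its effective cone admits a finite chamber decomposition whose chambers correspond to the rank $1$ and rank $2$ fibrations it dominates. This lets us build a $2$-dimensional cell complex whose vertices are (equivalence classes of) Mori fibre spaces birational to $X$, whose edges are Sarkisov links coming from rank $2$ fibrations, and whose $2$-cells are the elementary relations coming from rank $3$ fibrations (\autoref{prop: elementary_relations}). A relation in $\BirMori_T(X)$ corresponds to a loop in this complex, and one shows that the complex is simply connected by a standard subdivision/amplification argument: any loop can be contracted to a trivial loop by inserting elementary $2$-cells, which is precisely the statement that relations are generated by trivial and elementary ones.

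The main obstacle I expect is the verification of the termination of the MMP with scaling and the decrease of the Sarkisov invariant in the excellent setting: the standard proofs invoke boundedness of extremal rays and the existence of ample models, both of which must be traced through \cite{Tan18,7authors}. Once these are in place, the combinatorial/polyhedral input for part (2) is essentially formal, relying only on the finiteness built into \autoref{lem: rank_r_MDS} and the finite enumeration of Sarkisov links dominated by a given rank $3$ fibration.
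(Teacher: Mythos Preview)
Your proposal for part~(1) has a genuine gap: you propose to run a $(K_{X_1}+H_1)$-MMP \emph{directly from $X_1$}, and claim that each step ``corresponds precisely to a rank~$2$ fibration \dots\ hence to a Sarkisov link''. This is not correct. An MMP step is a contraction, whereas a Sarkisov link of type~\I\ or~\II\ begins with a divisorial \emph{extraction} (a blow-up) before contracting; you cannot produce such links by only going down from $X_1$. Moreover, $X_1/B_1$ is already a Mori fibre space, so over $B_1$ the only contraction is the structure map itself. You also mix two distinct frameworks: the ``Sarkisov degree / canonical threshold'' invariants are the \emph{Corti--Iskovskikh} untwisting approach, not the method of \cite{HM13}. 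The Hacon--M\textsuperscript{c}Kernan argument, which is what the paper follows, never runs an MMP from $X_1$; instead it fixes a \emph{common resolution} $Z$ dominating all the Mori fibre spaces in play, builds a rational polytope $\mathcal C\subset N^1(Z/T)$ of klt boundaries, and studies its chamber decomposition. Mori fibre spaces correspond to maximal faces of the boundary $\partial^+\mathcal C=\mathcal C\cap(\overline{\mathrm{Eff}}\setminus\mathrm{Big})$, and Sarkisov links arise from codimension-$1$ inner faces (rank~$2$ fibrations). Walking along $\partial^+\mathcal C$ from the face for $X_1$ to that for $X_2$ is what factors $f$ into Sarkisov links.

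For part~(2) your overall plan is the right one and agrees with the paper, but you are glossing over the one step that genuinely needs work in the excellent setting: the identification of codimension-$r$ inner faces of $\partial^+\mathcal C$ with rank~$r$ fibrations (the analogue of \cite[Proposition~3.10]{LZ20}, here \autoref{prop: sarkisov-crucial}). The paper reproves precisely the part of that proposition where \cite{LZ20} uses perfectness of the base field, namely the case where the $K$-MMP from $S_j$ over $B$ ends with a Mori fibre space $S^2\to B'\to B$ with $\dim B'>\dim B$; one must then argue via the cone theorem on $S^2$ to match codimensions. Once this is established, both parts follow verbatim from \cite[Propositions~3.14,~3.15]{LZ20}. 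So your sketch for~(2) is fine in outline but incomplete without that key lemma, and your sketch for~(1) needs to be rewritten around the common resolution $Z$ and the polytope $\mathcal C$ rather than an MMP from $X_1$.
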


Our proof follows the strategy of \cite{HM13, Kal13} using the geography of ample models for adjoint divisors of the form $K+\Theta$.
As we work with surfaces, no flops appear and the proof can be simplified as done for example in \cite{LZ20}.\smallskip

Let $X_1/B_1,\dots,X_n/B_n$ be a finite collection of rank $r$ fibrations over $T$ together with birational maps $\theta_i\colon X_i\rat X_{i+1}$, $i=1,\dots,n$, and $X_{n+1}:=X_1$, such that $\theta_n\circ\cdots\circ\theta_1=\rm{id}_{X_1}$.
Let $Z$ be a common resolution of the $\theta_i$ and denote by $f_i \colon Z \to X_i$ the natural birational contraction.
We can choose a sufficiently small ample  $\mathbb{Q}$-divisor $A \geq 0$ such that each $f_i$ is made of steps of $(K_Z+A)$-MMP.
By the cone theorem \autoref{t-cone-theorem}, there is only a finite number of curves $\{C_i\}_{i\in I}$ such that $(K_Z+A)\cdot C_i<0$. Therefore there are only finitely many outputs of $(K_Z+A)$-MMP and the $X_1,\dots,X_n$ are among them.

For each $i$, we construct an effective ample divisor $\Delta_i$ on $Z$ such that $(Z, A +\Delta_i)$ is klt and $f_i \colon Z \to X_i$ is the $(K_Z+A+\Delta_i)$-ample model and it is obtained as a composition of $(K_Z+A)$-MMP steps with scaling of $\Delta_i$.
The construction goes as follows: let $\Theta_i \geq 0$ such that $-(K_{X_i}+{f_i}_*A)+\Theta_i$ is ample and consider an effective divisor $D_i \sim_{\mathbb{Q}} -(K_{X_i}+{f_i}_*A)+\Theta_i$. Set $\Delta_i:=\varepsilon_i A+f_i^*D_i$ for some $\varepsilon_i>0$ sufficiently small. Then ${f_i}_*(K_Z+A+\Delta_i)=\varepsilon_i{f_i}_*(A)+\Theta_i$, which is ample by \autoref{lem:  push-forward-ample}, and so $f_i$ is the ample model of $(K_Z+A+\Delta_i)$.

By adding possibly further divisors, we can assume that the $\Delta_i$ generate $N^1(Z)$.
Consider the cone
$$\mathcal{C}^{\circ} \coloneqq \left\{ D = \lambda (K_Z+A+\sum t_i r_i \Delta_i) \mid \lambda \geq 0, t_i \geq 0 \text{ and } \sum t_i \leq 1 \right\} \cap \overline{\text{Eff}}(Z/T) \subset N^1(Z/T),$$
and denote by $\mathcal{C}$ the section:
$$\mathcal{C} \coloneqq \left\{ D \in \mathcal{C}^{\circ} \mid (K_Z+A) \cdot D =-1 \right\}.$$

Let $\{g_j\colon Z\to S_j\}_{j\in J}$ be the collection of associated outputs of the $(K+A+\Delta)$-MMP with $\Delta\in\mathcal C^{\circ}$.
Notice it is a finite collection by the cone theorem \autoref{t-cone-theorem}.
Let $D$ be a big divisor in $\mathcal{C}$. Then there exists a partition $I=I^{+} \cup I^{-}$ such that
\[
D\in \bigcap_{i \in I^{+}} C_i^{>0} \cap \bigcap_{i \in I^{-}} C_i^{\leq 0}.
\]
There exists $j\in J$ such that $\varphi_D=g_j$ and the curves $C_i$, $i\in I^-$ are precisely the curves contracted by $g_j$.
Notice that
\[
\mathcal A_j:=\bigcap_{i \in I^{+}} C_i^{>0} \cap \bigcap_{i \in I^{-}} C_i^{\leq 0}
\]
is the set of all $D\in\mathcal C$ such that $g_j=\varphi_D$.

\begin{proposition}
	The cones $\mathcal{C}^o$ and $\mathcal{C}$ are rational polyhedral and convex.
	Moreover, the interior of $\mathcal{C}$ admits a chamber decomposition:
	$\Int(\mathcal{C})= \bigcup_j \mathcal{A}_{j}.$
\end{proposition}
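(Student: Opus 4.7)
The plan is to exploit the fact that only finitely many integral curves on $Z$ are $(K_Z+A)$-negative, and then to identify each chamber $\mathcal{A}_j$ with a locally closed rational polyhedral region in $N^1(Z)_\mathbb{R}$ cut out by sign conditions on intersection numbers with these curves.

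First I would apply the relative cone theorem \autoref{t-cone-theorem} to the klt pair $(Z,A)$ over $T$ to extract a finite collection $C_1,\ldots,C_N$ of integral curves with $(K_Z+A)\cdot C_i<0$; these are precisely the curves that can appear as extremal loci in the $(K_Z+A+\Delta)$-MMP for $\Delta = \sum t_i r_i\Delta_i$ with $\sum t_i\leq 1$, since $\Delta$ is effective. Next I would take $D\in\Int(\mathcal{C})$; then $D$ is big over $T$ (the $\Delta_i$ being ample and generating $N^1(Z)$), and after rational rescaling $D\sim_\mathbb{Q}K_Z+A+\Delta$ with $(Z,A+\Delta)$ klt. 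By \autoref{t-mmp-excellent} and \autoref{t-bpf-theorem} I can run a $(K_Z+A+\Delta)$-MMP with scaling and obtain an ample model $\varphi_D\colon Z\to S$, which necessarily coincides with some $g_j$ from our finite list. The curves contracted by $g_j$ are exactly the $C_i$ with $D\cdot C_i\leq 0$, whence $D\in\mathcal{A}_j$ and $\Int(\mathcal{C})=\bigcup_{j\in J}\mathcal{A}_j$.

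Each $\mathcal{A}_j$ is carved out of $\mathcal{C}$ by finitely many linear inequalities of the form $D\cdot C_i>0$ or $D\cdot C_i\leq 0$, so it is locally closed rational polyhedral. Since $\mathcal{C}$ is the closure of the finite union of these pieces, it is itself rational polyhedral, and then so is $\mathcal{C}^\circ$ by taking the cone; convexity of both is immediate from their definitions as intersections of convex rational polyhedral cones with $\overline{\mathrm{Eff}}(Z/T)$.

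The main obstacle I expect is verifying that the ample model $\varphi_D$ is the same $g_j$ for \emph{every} $D$ in a given chamber, rather than just for one representative. Here I would use the construction of the $\Delta_i$ from the paragraph preceding the proposition: each $f_i\colon Z\to X_i$ is the ample model of $K_Z+A+\Delta_i$ because the pushforward ${f_i}_*(K_Z+A+\Delta_i)=\varepsilon_i{f_i}_*A+\Theta_i$ is ample by \autoref{lem: push-forward-ample}. A continuity argument using termination of MMP with scaling for excellent klt surfaces (\autoref{t-mmp-excellent}) then shows that running the MMP on divisors lying in a common chamber contracts the same set of curves $\{C_i : i\in I^-_j\}$, so the ample model is constant on each $\mathcal{A}_j$ and the chamber structure really is dictated by the signs of $D\cdot C_i$.
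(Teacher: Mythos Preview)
Your sketch is essentially the argument in \cite[Lemma~3.2, Theorem~3.3]{LZ20}, to which the paper simply defers; the key ingredients---finiteness of $(K_Z+A)$-negative curves via the cone theorem, and chambers cut out by sign conditions on intersections with these curves---are exactly right.

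One small circularity to clean up: you argue that each $\mathcal{A}_j$ is rational polyhedral because it is carved out of $\mathcal{C}$ by finitely many linear inequalities, and then deduce that $\mathcal{C}$ is polyhedral from the chamber decomposition. But polyhedrality of $\mathcal{C}$ is what you are trying to prove, so the closures $\overline{\mathcal{A}_j}$ are not yet known to be polytopes at that point. The fix is to carve the $\mathcal{A}_j$ out of the a priori rational polytope given by the section of the simplicial cone $\{\lambda(K_Z+A+\sum t_ir_i\Delta_i)\}$ \emph{before} intersecting with $\overline{\mathrm{Eff}}(Z/T)$; then each $\overline{\mathcal{A}_j}$ is visibly a rational polytope, and since $\mathcal{C}$ is convex with $\mathcal{C}=\overline{\bigcup_j\mathcal{A}_j}$, it is the convex hull of finitely many rational polytopes, hence rational polyhedral. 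Relatedly, in your last sentence $\overline{\mathrm{Eff}}(Z/T)$ is only known to be convex, not polyhedral---but convexity is all that step requires.
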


\begin{proof}
	The proof is identical to \cite[Lemma~3.2]{LZ20} and  \cite[Theorem~3.3]{LZ20}.
\end{proof}

Moving along a segment $[\Delta,K+A]\cap \overline{\text{Eff}}(Z/T)$ for $\Delta\in \mathcal C$ represents steps in the $(K+A)$-MMP with scaling of $\Delta$.
More precisely, we have the following statement, which is an adaption of \cite[Theorem 3.3]{HM13}.

\begin{lemma}\label{lem:border-crossing}
\begin{enumerate}
\item\label{border-crossing:1} If $D$ belongs to $\overline{A_{j}}\setminus A_{j}$ with associated morphism $\varphi_D\colon Z\to Y$, then there exists a unique morphism $f\colon S_j\to Y$ such that $f\circ g_j=\varphi_D$. Moreover, $\varphi_D$ only depends on the face $\mathcal{F}\subset\overline{A_j}$ such that $D$ is contained in the relative interior of $\mathcal F$.
\item\label{border-crossing:1} If $\overline{A_j}\cap A_k\neq\varnothing$ and $g_{j,k}\colon S_j\to S_k$ is the morphism from \autoref{border-crossing:1} such that $g_k=g_{j,k}\circ g_j$, then $\rho(S_j/S_k)$ is equal to the codimension of $\overline{A_j}\cap \overline{A_k}$ in $\overline{A_j}$.
\end{enumerate}
\end{lemma}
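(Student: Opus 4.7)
The plan is to adapt the standard geography-of-ample-models argument of Hacon--McKernan, specialised to surfaces as in \cite{LZ20}, to the excellent relative setting. Two ingredients drive the proof: the relative cone theorem \autoref{l-cone-theorem} and the rigidity lemma for projective contractions. The key preliminary observation is that for every $D \in A_j$, the pushforward $(g_j)_\ast D$ is ample on $S_j$ and $D - g_j^\ast (g_j)_\ast D$ is effective and supported on $\mathrm{Exc}(g_j)$; by the negativity lemma for surfaces, every curve $C$ contracted by $g_j$ then satisfies $D \cdot C \leq 0$, and this inequality extends to every $D \in \overline{A_j}$ by continuity of the intersection pairing.

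\textbf{Proof of (1).} Fix $D \in \overline{A_j} \setminus A_j$ and let $\varphi_D \colon Z \to Y$ be its ample model. By the preliminary observation, every curve contracted by $g_j$ satisfies $D \cdot C \leq 0$; on a regular surface, the Zariski decomposition $D = P + N$ together with the fact that $\varphi_D$ factors through the ample model of $P$ and contracts $\mathrm{supp}(N)$ shows that any such curve is contracted by $\varphi_D$. Consequently $\varphi_D$ is constant on every fibre of $g_j$. Applying the rigidity lemma to the projective contraction $g_j$ (whose geometric fibres are connected) yields the desired unique $f \colon S_j \to Y$ with $\varphi_D = f \circ g_j$. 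For the dependence only on the face $\mathcal{F}$: if $D, D'$ lie in the relative interior of $\mathcal{F} \subseteq \overline{A_j}$, the subsets $\{i \in I^{+}_j : C_i \cdot D = 0\}$ and $\{i \in I^{+}_j : C_i \cdot D' = 0\}$ coincide, so the sets of curves contracted by $\varphi_D$ and $\varphi_{D'}$ agree. Hence both morphisms factor through the same target up to unique isomorphism, giving $\varphi_D = \varphi_{D'}$.

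\textbf{Proof of (2).} Pick $D \in \overline{A_j} \cap A_k$, so that $\varphi_D = g_k$. Part (1) provides the unique morphism $g_{j,k} \colon S_j \to S_k$ with $g_k = g_{j,k} \circ g_j$. Write $I^{-}_{\mathrm{new}} = I^{-}_k \setminus I^{-}_j$ for the set of new curves contracted on passing from $g_j$ to $g_k$. Each equation $C_i \cdot D = 0$ with $i \in I^{-}_{\mathrm{new}}$ is a new linear constraint imposed on $\overline{A_j} \cap \overline{A_k}$, and by the negativity lemma for exceptional loci of surface contractions the classes $\{[C_i]\}_{i \in I^{-}_k}$ are linearly independent in $N_1(Z/T)$. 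It follows that the codimension of $\overline{A_j} \cap \overline{A_k}$ in $\overline{A_j}$ equals $|I^{-}_{\mathrm{new}}|$. On the other hand, the pushforwards $(g_j)_\ast C_i$ for $i \in I^{-}_{\mathrm{new}}$ are exactly the curves on $S_j$ contracted by $g_{j,k}$ and their classes are linearly independent in $N_1(S_j/S_k)$, so the relative cone theorem \autoref{l-cone-theorem} gives $\rho(S_j/S_k) = |I^{-}_{\mathrm{new}}|$, as required.

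\textbf{Main obstacle.} The crucial technical step is verifying that every curve contracted by $g_j$ is also contracted by $\varphi_D$, which is what triggers the rigidity lemma. In dimension two this is clean: there are no flops, and the Zariski decomposition identifies the contracted locus entirely in terms of the condition $D \cdot C \leq 0$. In higher dimensions the analogous assertion (treated in \cite{HM13}) requires substantially more delicate MMP-with-scaling arguments to handle flops and small contractions, which is precisely the simplification that the surface setting affords us here.
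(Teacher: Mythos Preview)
Your overall strategy coincides with what the paper does (it simply cites \cite[Propositions~3.7 and 3.9]{LZ20}), namely: show that every curve contracted by $g_j$ is also contracted by $\varphi_D$, invoke rigidity to get the factorisation, and then count exceptional curves for the Picard rank. So the approach is the right one.

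That said, your ``preliminary observation'' has the logic inverted, and the argument as written is actually incorrect. The inequality $D\cdot C_i\le 0$ for $i\in I^-_j$ is \emph{part of the definition} of $A_j$ in the paper, so there is nothing to prove; the effectivity of $E:=D-g_j^*(g_j)_*D$ is a consequence of these inequalities via the negativity lemma, not a hypothesis from which they follow. In fact, from $E$ effective and exceptional one cannot conclude $E\cdot C\le 0$ for each individual exceptional curve $C$ (take two $(-1)$-curves meeting in a point). The error is harmless because the conclusion you need is immediate from the chamber description, but you should replace this paragraph by a direct appeal to the definition of $A_j$ and continuity.

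Two further small points. In part~(1), your Zariski-decomposition step only treats the case $C\in\mathrm{supp}(N)$; you should also note that if $C\notin\mathrm{supp}(N)$ then $N\cdot C\ge 0$, whence $P\cdot C\le D\cdot C\le 0$ and so $P\cdot C=0$ by nefness, which again forces $\varphi_D$ to contract $C$. In part~(2), the equality $\rho(S_j/S_k)=|I^-_{\mathrm{new}}|$ follows directly from the fact that the relative Picard rank of a birational contraction of regular surfaces equals the number of irreducible exceptional curves; the relative cone theorem is not the right tool here (and is not needed).
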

\begin{proof}
The proof is identical to the proof of \cite[Proposition 3.7, Proposition 3.9]{LZ20}.
\end{proof}

Define $\partial^{+} \mathcal{C}:=\mathcal{C} \cap (\overline{\text{Eff}}(Z/T) \setminus \overline{\text{Big}}(Z/T)).$
As explained in \cite[Section 3.2]{LZ20}, there is natural structure of polyhedral complex on $\partial^{+} \mathcal{C}$.
We say that a face of $\partial^+\mathcal C$ is an {\em inner} face if it intersects the relative interior of $\partial^+\mathcal C$.

\begin{proposition}[{cf. \cite[Proposition 3.10]{LZ20}}]\label{prop: sarkisov-crucial}
	Let $\mathcal{F}^r$ be an inner face in the polyhedral complex $\partial^{+} \mathcal{C}$ of codimension $r$. Then there exists a rank $r$ fibration $X^r/B$ such that
	\begin{enumerate}
		\item the induced morphism $Z \to X^r$ is equal to some $g_j$;
		\item the chamber $\mathcal{A}_{j}$ satisfies $\mathcal{F}^r \subset \overline{\mathcal{A}_{j}}$;
		\item If $\mathcal{F}^{r'} \subset \mathcal{F}^r$ is a smaller face, then the rank $r'$ fibrations factorises through $X^r/B$.
	\end{enumerate}
\end{proposition}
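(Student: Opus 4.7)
My plan is to follow the strategy of \cite[Proposition~3.10]{LZ20}, producing $X^r$ as the output of an appropriate $(K_Z+A)$-MMP from $Z$ associated to a suitably chosen chamber of $\mathcal{C}$ having $\mathcal{F}^r$ in its boundary. First I take a divisor $D$ in the relative interior of $\mathcal{F}^r$. Since $\mathcal{F}^r \subset \partial^{+}\mathcal{C}$, the divisor $D$ is nef and pseudo-effective but not big, so its ample model $\varphi_D \colon Z \to B$ exists with $\dim B < \dim Z = 2$. By \autoref{lem:border-crossing}, this morphism depends only on $\mathcal{F}^r$ (not on the choice of $D$), and $B$ will be the base of the prospective rank $r$ fibration.

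Next, since $\mathcal{F}^r$ is inner, I can choose a top-dimensional chamber $\mathcal{A}_j \subset \mathcal{C}^{\circ}$ in the big part such that $\mathcal{F}^r$ sits as a codimension-$r$ face of $\overline{\mathcal{A}_j}$. I then set $X^r := S_j$ with birational contraction $g_j \colon Z \to X^r$, and invoke \autoref{lem:border-crossing} to obtain a unique morphism $\pi \colon X^r \to B$ with $\pi \circ g_j = \varphi_D$, which already yields properties (1) and (2). To check that $\pi \colon X^r \to B$ is actually a rank $r$ fibration I have to verify: (i) $X^r$ is regular, which follows since it is the output of a $K$-MMP from a klt pair with $A$ ample on a regular surface, together with the fact that terminal surface singularities are regular by \cite[Theorem~2.29]{kk-singbook}; (ii) $\rho(X^r/B) = r$, which I plan to deduce from the polyhedral codimension of $\mathcal{F}^r$ in $\overline{\mathcal{A}_j}$ via a variant of \autoref{lem:border-crossing} adapted to the boundary situation; and (iii) $-K_{X^r}$ is $\pi$-ample, which I intend to extract by perturbing $D$ slightly into $\mathcal{A}_j$, so that $(g_j)_{*}$ of the perturbation is an ample $\mathbb{Q}$-divisor on $X^r$ (using \autoref{lem: push-forward-ample}), and then taking the limit to $D$ to read off relative ampleness of $-K_{X^r}$ over $B$.

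For property (3), given a smaller inner face $\mathcal{F}^{r'} \subset \mathcal{F}^r$ of codimension $r' > r$, I will apply the same construction and select a chamber $\mathcal{A}_{j'}$ adjacent to $\mathcal{F}^{r'}$ with $\mathcal{A}_{j'} \subset \overline{\mathcal{A}_j}$; this is possible because $\mathcal{F}^{r'} \subset \mathcal{F}^r \subset \overline{\mathcal{A}_j}$. Then the $(K_Z+A)$-MMP producing $X^{r'}$ refines the one producing $X^r$, yielding a morphism $X^{r'} \to X^r$ compatible with the projections to the respective bases, so that the rank $r'$ fibration $X^{r'}/B'$ factorises through $X^r/B$ as required.

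The main obstacle will be the precise verification of (ii) and (iii) above: this amounts to extending \autoref{lem:border-crossing} to the case where one side of the chamber wall corresponds to a non-birational ample model, i.e. a morphism with lower-dimensional target. This is the heart of the polyhedral--MMP dictionary, and I will have to check carefully that the excellent base ring setting does not introduce any new subtleties beyond those already handled in \autoref{lem:border-crossing} and in the proof of \cite[Proposition~3.10]{LZ20}, in particular that the finiteness of chambers and the boundary-crossing analysis remain valid when $B$ is allowed to be a curve rather than a surface.
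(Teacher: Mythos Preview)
Your overall architecture matches the paper's: pick $D$ in the relative interior of $\mathcal{F}^r$, take the ample model $Z\to B$, perturb $D$ into an adjacent big chamber $\mathcal{A}_j$ to get $g_j\colon Z\to S_j=X^r$, and use \autoref{lem:border-crossing} to factor through $\pi\colon X^r\to B$. The paper also defers (2), (3), and the regularity and relative-ampleness parts of (1) to the argument of \cite[Proposition~3.10]{LZ20}, so your treatment of those is fine.

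The gap is exactly where you flag it: step (ii), the equality $\rho(X^r/B)=r$. You say you will get this from ``a variant of \autoref{lem:border-crossing} adapted to the boundary situation'', but that lemma as stated only compares two \emph{interior} chambers, and there is no direct boundary analogue available. This is the unique place where the perfect-field hypothesis was used in \cite{LZ20}, and the paper supplies a genuinely new argument rather than a mild adaptation. Concretely: one runs a $K_{S_j}$-MMP over $B$, terminating with a Mori fibre space $S^l\to B'$. If $B'=B$ (case (i)), the argument of \cite[Proposition~3.10, proof of~(\dag)]{LZ20} applies unchanged. The new phenomenon is case (ii), $\dim B<\dim B'$, which occurs only when $T=\Spec(\k)$: then $S^2$ is a del Pezzo surface with a conic bundle structure over $B'$, and one invokes the cone theorem \autoref{t-cone-theorem} to find a curve $\Gamma\subset S^2$ with $\Gamma^2\le 0$ and $K_{S^2}\cdot\Gamma<0$. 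If $\Gamma^2<0$ one contracts it and reduces to case (i); if $\Gamma^2=0$ one exhibits the face $\mathcal{F}^B$ corresponding to $B$ as the intersection of two hyperplanes $(\cdot\Gamma=0)$, $(\cdot\Gamma'=0)$, hence of codimension $2$ in $\partial^+\mathcal{C}$, and intersecting with the codimension-$(r'-2)$ face $\mathcal{F}^{r'-2}$ shared by $\mathcal{A}_j$ and $\mathcal{A}_{S^2}$ yields a codimension-$r'$ face, forcing $r'=r$. Your proposal does not anticipate this dichotomy, and without it the rank computation does not go through over an arbitrary excellent base.
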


\begin{proof}
	We follow the proof of \cite[Proposition 3.10]{LZ20}, paying attention that we are not working over a perfect field, but over a general excellent ring $T$.
	The only part in \cite[Proposition 3.10]{LZ20} where the hypothesis that $T=\Spec(\k)$ with $\k$ perfect is used is (1), so we reprove it here.

	Let $D$ be a class in the relative interior of $\mathcal F^r$. By definition, there exists
	an ample class $\Delta \in \mathcal{C}$ such that $D$ lies in the segment $[\Delta, K_Z + A]$.
	Moreover for
	sufficiently small $\varepsilon>0$, $D - \varepsilon (K_Z+A)$ lies in a chamber $\mathcal{A}_{j}$, and $j$ does not depend on $D$ nor $\varepsilon$.
	Let $Z \to B$ be the morphism associated to $D$ and $Z \to S_j$ be the morphism associated to $D - \varepsilon (K_Z+A)$.
	The morphism $S_j \to B$ from \autoref{lem:border-crossing}\autoref{border-crossing:1} is a contraction and $K_Z+A +\lambda \Delta_i$ is trivial over $B$ for some $j$ and some small $\lambda\geq0$.
	In particular, $S_j \to B$ is a rank $r'$ fibration. We are left to show $r'=r$.
We run a $K_{S_j}$-MMP over $B$ (\autoref{t-mmp-excellent}) which is a composition of contractions and terminates with a Mori fibre space $S^l \to B'$, where $l=\rho(S^l/B) \in \left\{1,2 \right\}$. We divide into two cases:
\begin{enumerate}
	\item[(i)] $S_j=S^{r'}\to S^{r'-1} \to \dots \to S^1 \to B'=B$ , or;
	\item[(ii)] $S_j=S^{r'}\to S^{r'-1} \to \dots \to S^2 \to B'
	\to B$, where $\dim(B)<\dim(B')$,
\end{enumerate}
where the $S^i$ are rank $i$ fibrations over $B$ and the morphisms $S^{j} \to S^{j-1}$ are contractions of exceptional curves of the first kind.
Note that case (ii) appears only in the case where the base scheme $T$ is the spectrum of a field $\k$.

	In case (i), we can argue as in \cite[Proposition 3.10, proof of (\dag)]{LZ20}.
	In case (ii), $\mathcal A_j$ and the chamber $\mathcal A_{S^2}$ corresponding to $Z\to S^2$ share a codimension $2$ face $\mathcal F^{r'-2}$ by \autoref{lem:border-crossing}\autoref{border-crossing:1}.
	Moreover, $S^2$ is a del Pezzo surface with a conic bundle structure.
	By the cone theorem (\autoref{t-cone-theorem}), there exists a curve $\Gamma\subset S^2$ with $\Gamma^2 \leq 0$ and $K_{S^2}\cdot \Gamma<0$.
	If $\Gamma^2<0$, there exists a contraction $S^2 \to S^1$ over $B$ (\autoref{lem: Castelnuovo_contraction}) and we can reduce to case (1).
	Otherwise, the face $\mathcal F^B$ corresponding to $B$ is defined by
	$D \cdot \Gamma = 0$ and $D \cdot \Gamma' = 0$, where $\Gamma'$ is another curve in $S^2$ with $(\Gamma')^2=0$ and $K_{S^2}\cdot \Gamma'<0$.
	Hence $\mathcal F^B$ is a codimension 2 face in $\partial^{+}\mathcal{C}$. Intersecting with $\mathcal{F}^{r'-2}$ we obtain a codimension $r'$ face in $\partial^{+}\mathcal{C}$ and conclude, as in the previous case, that $r' = r$.
\end{proof}

\begin{proof}[Proof of \autoref{t-sarkisov-program}]
	The proofs of \cite[Proposition 3.14, Proposition 3.15]{LZ20} applies verbatim using \autoref{prop: sarkisov-crucial}.
\end{proof}

\section{Rank $1$ fibrations on surfaces}\label{s: rank 1 fibrations}

In this section, $T$ satisfies the assumption of \autoref{notation-excellent} and we suppose that the structure morphism $X \to T$ is a projective contraction.
The aim of this section is to prove several structure results on del Pezzo surfaces and surface conic bundles (resp. \autoref{ss-delPezzo} and \autoref{ss-conic}), which will be useful to classify Sarkisov links in the next section.

We now collect some results on rank $r$ fibrations depending on $\dim(B)$.

\subsection{del Pezzo surfaces} \label{ss-delPezzo}

In this section, we prove properties of del Pezzo surfaces over any field.

\begin{definition} \label{def: dP_surface}
	Let $\k$ be a field.
	Let $X$ be a normal projective $\k$-surface such that $H^0(X, \mathcal{O}_X)=\k$.
	We say $X$ is a \emph{del Pezzo surface} if $X$ is regular and $-K_X$ is ample.
\end{definition}

Notice that rank $r$ fibrations over the spectrum of a field are exactly del Pezzo surfaces of Picard rank $r$.

While del Pezzo surfaces over a perfect field are geometrically regular, there is more interesting behaviour over imperfect fields. In \autoref{thm: dP-base-change-classification} below, which collects the recent results \cite{BT22, PW22}, we will see, however, that over an imperfect field with $p\geq 11$ every del Pezzo surface is geometrically regular, and there are only geometrically non-normal del Pezzo surfaces for $p\in\{2,3\}$. See also \autoref{rmk:BlowUpOfRegular} for references to explicit examples.

The following result is useful for us as it allows us to build up on the theory of  \emph{weak del Pezzo surfaces} over algebraically closed fields (that is, a smooth projective surface $Y$ with $-K_Y$ big and nef; see \cite[Section~8.1.3]{Dol12} or also \cite{MSweakdP} for a nice overview).
It gives a tool to bound the type of singularities that appear on geometrically normal del Pezzo surfaces.

\begin{proposition}\label{rmk:weak-dp}
	Let $X$ be a geometrically normal del Pezzo surface over a field $\k$ and $f\colon Y \to X_{\overline{\k}}$ its minimal resolution over the base-change to the algebraic closure. Then the following hold:
	\begin{enumerate}
		\item\label{prop: geom_normal_implies_canonical-DP} $X$ is geometrically canonical;
		\item\label{weak-dp:1} $Y$ is a weak del Pezzo surface;
		\item\label{weak-dp:2} $\rho(Y/X_{\overline\k})$ is the number of $(-2)$-curves in $Y$;
		\item\label{lem: rho_volume_dP} $K_X^2=10-\rho(X_{\k^{\sep}})-\rho(Y/X_{\overline{\k}})$.
	\end{enumerate}
\end{proposition}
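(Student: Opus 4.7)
The plan is to address the four statements in order: statement (1) is the substantive input, and (2)--(4) then follow essentially formally.

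For (1), I would first observe that since $X$ is regular, $K_X$ is Cartier, so $K_{X_{\overline\k}}$ is Cartier by base change; combined with the geometric normality of $X$ and the ampleness of $-K_{X_{\overline\k}}$, this identifies $X_{\overline\k}$ as a normal Gorenstein surface with ample anticanonical divisor. The classification of pathologies of geometrically normal del Pezzo surfaces over imperfect fields from \cite{BT22, PW22} then guarantees that the singularities of $X_{\overline\k}$ are du Val, equivalently that the minimal resolution $f\colon Y\to X_{\overline\k}$ is crepant, $K_Y = f^*K_{X_{\overline\k}}$.

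Given (1), statement (2) is immediate: $-K_Y = f^*(-K_{X_{\overline\k}})$ is big and nef as the pullback of an ample divisor by a proper birational morphism, and $Y$ is smooth by construction, so $Y$ is a weak del Pezzo surface. For (3), let $E_1,\ldots,E_k$ be the exceptional prime divisors of $f$. Since $f$ is crepant and $f(E_i)$ is a point, the projection formula yields $K_Y\cdot E_i = 0$, and the negative definiteness of the exceptional intersection matrix gives $E_i^2 < 0$; adjunction on $Y$ then forces $E_i^2 = -2$ and $E_i \cong \p^1_{\overline\k}$. Conversely, any $(-2)$-curve $C\subset Y$ satisfies $-K_{X_{\overline\k}}\cdot f_*C = -K_Y\cdot C = 0$ by the projection formula, and ampleness of $-K_{X_{\overline\k}}$ forces $f_*C = 0$, so $C$ is exceptional. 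Thus the $(-2)$-curves of $Y$ are exactly the $E_i$, and they form a basis of $N^1(Y/X_{\overline\k})$ by negative definiteness, giving $k = \rho(Y/X_{\overline\k})$.

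For (4), flat base change preserves intersection numbers and $f$ is crepant, so $K_X^2 = K_{X_{\overline\k}}^2 = K_Y^2$. Every weak del Pezzo surface over an algebraically closed field is smooth rational, and Noether's formula then gives $K_Y^2 = 10 - \rho(Y)$. The exceptional divisor decomposition for the birational morphism $f$ yields $\rho(Y) = \rho(X_{\overline\k}) + \rho(Y/X_{\overline\k})$. Finally, along the purely inseparable extension $\k^{\sep}\subset \overline\k$ the Néron--Severi ranks agree, $\rho(X_{\overline\k}) = \rho(X_{\k^{\sep}})$, because base change along a purely inseparable extension induces a map on Picard schemes that is bijective on components up to $p$-power torsion and hence preserves ranks. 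Combining these identities yields (4).

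The main obstacle I anticipate is (1): over imperfect fields of characteristic $p\in\{2,3\}$ the canonicity of the geometric singularities is nontrivial, since even normal Gorenstein del Pezzo surfaces over an algebraically closed field can a priori exhibit non-canonical (wild) singularities. Ruling this out in our setting relies essentially on the recent classification of pathologies of geometrically normal del Pezzo surfaces in \cite{BT22, PW22}. Parts (2)--(4) are then routine once one has the standard theory of weak del Pezzo surfaces over algebraically closed fields.
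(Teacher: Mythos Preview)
Your proposal is correct and follows essentially the same approach as the paper: cite \cite{BT22} for (1), deduce (2) from crepancy, and obtain (4) via Noether's formula together with the invariance of Picard rank under purely inseparable base change (the paper cites \cite[Proposition~2.4]{Tan18b} for the latter). Your treatment of (3) is in fact more detailed than the paper's, which simply asserts it as a standard property of weak del Pezzo surfaces; your explicit argument via adjunction and the projection formula is a welcome elaboration.
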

\begin{proof}
	\autoref{prop: geom_normal_implies_canonical-DP} is proved in \cite[Theorem 3.3]{BT22}.

	As $X_{\overline{\k}}$ has only canonical singularities, we deduce that $-K_Y=f^*(-K_{X_{\overline{\k}}})$ is big and nef, proving \autoref{weak-dp:1}. As $Y$ is a weak del Pezzo surface obtained as the minimal resolution of a canonical del Pezzo surface, it follows that $\rho(Y/X_{\overline{\k}})$ is the number of $(-2)$-curves on $Y$, giving \autoref{weak-dp:2}.

	As $Y$ is a smooth weak del Pezzo surface, it holds that $\rho(Y)=10-K_Y^2$ by \cite[Lemma 5.1]{BT22}, and so we compute that  $K_X^2=K_Y^2=10-\rho(Y)=10-\rho(X_{\overline{\k}})-\rho(Y/X_{\overline{k}})$.
	As $\rho(X_{\overline{\k}})=\rho(X_{\k^{\sep}}) $ by \cite[Proposition 2.4]{Tan18b}, we obtain \autoref{lem: rho_volume_dP}.
\end{proof}

In the following we will see what happens if a del Pezzo surface $X$ is not geometrically normal. This can happen only in positive characteristic. We recall normalised base change to the algebraic closure. (Note that \autoref{thm:normalised-base-change} is also trivially true for $p=0$, taking $f=\id_X$ and $C=0$.)

\begin{theorem}[{\cite[Theorems 1.1, 1.2]{PW22}}]\label{thm:normalised-base-change}
	Given a normal variety $X$ over a field $\k$ of characteristic $p$ such that $\k$ is integrally closed in $k(X)$, we denote by
	\[
	f \colon Y:=(X \times_\k \bar{\k})_{\reduced}^n \to X
	\]
	the normalised base change to the algebraic closure.
	Then there exists an effective Weil divisor $C$ such that
	\begin{equation}\label{eq: normalised-base-change}
		K_Y+(p-1)C \sim f^*K_X.
	\end{equation}
	If $X$ is geometrically integral, then $(p-1)C$ is the conductor of the normalisation.
\end{theorem}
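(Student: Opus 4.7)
The plan is to reduce the formula \eqref{eq: normalised-base-change} to a finite tower of elementary degree-$p$ purely inseparable extensions and establish the discrepancy contribution at each step via a local calculation.

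First, I would factor $\k \subset \overline{\k}$ through the separable closure. The hypothesis that $\k$ is integrally closed in $k(X)$ forces $k(X) \cap \k^{\sep} = \k$ inside a fixed algebraic closure of $k(X)$, so $k(X) \otimes_\k \k^{\sep}$ is a field and $X_{\k^{\sep}}$ is integral; moreover $X_{\k^{\sep}} \to X$ is \'etale in codimension one and therefore contributes no ramification divisor. Thus one may replace $\k$ by $\k^{\sep}$. Since $X$ is of finite type, $Y$ is already defined over a finite purely inseparable extension $\k \subset L \subset \overline{\k}$; writing this as a tower $\k = L_0 \subset L_1 \subset \cdots \subset L_n = L$ of elementary $p$-th root extensions and setting $X_i := (X_{L_i})^n_{\reduced}$ reduces the problem to the elementary case, after which one sums the effective divisors obtained at each step (pulled back to $Y$).

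The core step is the local analysis for a single degree-$p$ purely inseparable base change. Let $F \subset F' = F(a^{1/p})$ and set $g\colon X' := (X_{F'})^n_{\reduced} \to X$, where $X$ is now a normal $F$-variety. Either $g$ is an isomorphism (nothing to prove), or $g$ is a finite purely inseparable morphism of normal integral varieties of degree $p$. At a codimension-one point of $X'$, the extension of DVRs $R \subset R'$ is purely inseparable of degree $p$; the essential input is that the different of $R'/R$ takes the form $(p-1)\delta$ for some effective divisor $\delta$. This follows from the description of the relative dualising module via derivations: any $R$-linear derivation $D$ of $R'$ satisfies $D^p = 0$ in characteristic $p$, which forces the factor $p-1$ in the conductor (locally, $g$ realises as a Frobenius sandwich and one can compute directly). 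Consequently,
\[
K_{X'} + (p-1)C' \sim g^* K_X
\]
for an effective Weil divisor $C'$ on $X'$; in the geometrically integral case, $(p-1)C'$ agrees with the conductor divisor of the normalisation $g$, which yields the ``moreover'' clause.

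Assembling the contributions up the finite tower produces the desired effective Weil divisor $C$ on $Y$ satisfying $K_Y + (p-1)C \sim f^* K_X$, and the conductor identity in the geometrically integral case follows after observing that the separable reduction contributes trivially. The main obstacle is the local discrepancy computation for a single degree-$p$ purely inseparable cover: one must identify the relative dualising module of such an extension of DVRs with the $(p-1)$-th power of the conductor ideal, where the factor $p-1$ is the characteristic-$p$ shadow of the usual different formula. A secondary technical subtlety is ensuring the divisors $C'$ patch globally into a well-defined effective Weil divisor on $Y$ and that the preliminary reductions are compatible with the operations ``take reduced subscheme'' and ``normalise''; both rely on the finite-type hypothesis and on $\k$ being integrally closed in $k(X)$.
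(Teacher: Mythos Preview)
The paper does not give its own proof of this statement: it is quoted as \cite[Theorems~1.1,~1.2]{PW22} and used as a black box throughout \S\ref{ss-delPezzo}. There is therefore nothing in the paper to compare your proposal against.

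For what it is worth, your outline is broadly in the spirit of the argument in \cite{PW22}: reduce to the separable closure, pass to a finite purely inseparable extension, factor as a tower of height-one (degree-$p$) extensions, and extract the factor $p-1$ from a local computation in codimension one. Two small imprecisions: in the geometrically integral case the relevant invariant is the \emph{conductor} of the normalisation $Y \to X_{\bar\k}$, not the different of $Y/X$ (the two are related but not identical, and your write-up slides between them); and the slogan ``$D^p=0$'' is not really the mechanism that produces the factor $p-1$. In \cite{PW22} (building on ideas of Tate, Rudakov--Shafarevich, Ekedahl and Schr\"oer) the factor comes from an explicit description of the relative canonical sheaf of a degree-$p$ purely inseparable morphism via a Frobenius sandwich, which yields a direct local formula for the conductor. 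Your architecture is correct; the heart of the argument is that local computation, which you have identified but not carried out.
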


For del Pezzo surfaces, we recall the classification of their normalised base change, as obtained in \cite{PW22, BT22, Tan19}.
We denote by $H$ an hyperplane class in $\mathbb{P}(1,1,m)$. For a Hirzebruch surface $\mathbb{F}_n$, we denote by $\Sigma_n$ a section with self-intersection $\Sigma_n^2=-n$.

\begin{theorem} \label{thm: dP-base-change-classification}
	Let $X$ be a del Pezzo surface over a field $\k$ of characteristic $p>0$.
	\begin{enumerate}
		\item If $p \geq 11$, then $X$ is smooth.
		\item If $p \geq 5$, then $X$ is geometrically canonical.
		\item If $p=3$ and $X$ is not geometrically normal, then $(Y,C)$ belongs to the following list, using the notation from \autoref{thm:normalised-base-change}:
		\begin{center}
			\begin{tabular}{ | m{1.5cm} | m{1.5cm}| m{1.5cm} |  m{1.5cm}|  }
				\hline
				Y & $C$ &  $(f^*K_X)^2$  & $ \rho(X_{\k^{\sep}})$ \\
				\hline
				\hline
				$\mathbb{P}^2$ & $H$ & $1$ & $1$ \\
				\hline
				$\mathbb{P}(1,1,3)$ & $H$  & $3$ & $1$ \\
				\hline
			\end{tabular}
		\end{center}
		\item If $p=2$ and $X$ is not geometrically normal, then $(Y,C)$ belongs to the following list, using the notation from \autoref{thm:normalised-base-change}:
		\begin{center}
			\begin{tabular}{ | m{1.5cm} | m{1.5cm}| m{1.5cm} | m{1.5cm}|}
				\hline
				$Y$ & $C$ & $(f^*K_X)^2$ & $ \rho(X_{\k^{\sep}})$ \\
				\hline
				\hline
				$\mathbb{P}^2$ & $H$ & $4$ & $1$ \\
				\hline
				$\mathbb{P}^2$ & $2H$  & $1$ & $1$ \\
				\hline
				$\mathbb{P}(1,1,2)$  & $2H$  & $2$ & $1$ \\
				\hline
				$\mathbb{P}(1,1,4)$  & $2H$  & $4$ & $1$ \\
				\hline
				$\mathbb{P}^1 \times \mathbb{P}^1$  & $F$  & $4$ & $2$ \\
				\hline
				$\mathbb{P}^1 \times \mathbb{P}^1$  & $\Sigma_0+F$  & $2$ & $2$ \\
				\hline
				$\mathbb{F}_1$  & $\Sigma_1$  & $5$ & $2$ \\
				\hline
				$\mathbb{F}_1$  & $\Sigma_1+F$  & $3$ & $2$ \\
				\hline
				$\mathbb{F}_2$  & $\Sigma_2$  & $6$ & $2$ \\
				\hline
				$\mathbb{F}_2$  & $\Sigma_2+F$  & $4$ & $2$ \\
				\hline
				$\mathbb{F}_4$  & $\Sigma_4$  & $8$ & $2$ \\
				\hline
				$\mathbb{F}_4$  & $\Sigma_4+F$  & $6$ & $2$ \\
				\hline
			\end{tabular}
		\end{center}
	\end{enumerate}
In the table, the divisors in the boxes of the column $C$ describe only the linear equivalence class of $C$.
\end{theorem}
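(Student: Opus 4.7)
\medskip
The plan is to combine the normalised base change formula \autoref{thm:normalised-base-change} with a numerical analysis of the resulting pair. Writing $f\colon Y\to X$ for the normalised base change, the linear equivalence $K_Y+(p-1)C\sim f^*K_X$ with $C\geq 0$, together with the ampleness of $-K_X$, forces $-(K_Y+(p-1)C)$ to be nef and big on $Y$. Furthermore, $Y$ is a normal projective rational surface of small Picard rank; indeed by \cite[Proposition 2.4]{Tan18b} and the compatibility of normalisation with resolution, one controls $\rho(Y)$ in terms of $\rho(X_{\k^{\sep}})$ and the geometric volume $K_X^2\leq 9$.

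\medskip
For parts (1) and (2), the strategy is to show that the coefficient $p-1$ forces $C=0$ once $p$ is sufficiently large. The inequality $(p-1)(C\cdot H)\leq -K_Y\cdot H$ for every ample class $H$ on $Y$, combined with uniform upper bounds on $-K_Y\cdot H$ coming from the classification of rational surfaces with big anti-canonical class of small Picard rank, yields $C=0$ whenever $p\geq 5$. By \autoref{rmk:weak-dp} this gives geometric canonicity; a finer analysis of the $(-2)$-curves on the minimal resolution of $X_{\bar\k}$ excludes even canonical singularities once $p\geq 11$, establishing smoothness.

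\medskip
For parts (3) and (4), knowing $p\in\{2,3\}$ and $C\neq 0$, I would enumerate the candidates for $Y$ among normal rational projective surfaces with $-K_Y$ big and small Picard rank: these are $Y\in\{\p^2,\p^1\times\p^1,\F_n,\p(1,1,n)\}$, the index $n$ being bounded by the nefness of $-K_Y-(p-1)C$. In each case, I would list the effective classes $C$ satisfying the nef-and-big condition, compute the self-intersection $(f^*K_X)^2=(K_Y+(p-1)C)^2$ on $Y$, and verify realisability by constructing regular del Pezzo surfaces (as in \autoref{ex:Examples-to-start}) whose normalised base change realises each tabulated pair. The classification thus obtained is the one compiled in \cite{PW22,BT22,Tan19}.

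\medskip
The main obstacle is the exhaustive case analysis in (4): with coefficient $p-1=1$, the possible $C$'s on a given Hirzebruch surface $\F_n$ are numerous, and one must both produce explicit examples for each tabulated entry and argue that no further pair arises. A secondary delicate point is the exclusion of $\p(1,1,n)$ for $n\notin\{2,3,4\}$, which requires the combination $K_Y+(p-1)C$ to be Cartier at the singular vertex, imposing divisibility constraints on $C$ that forbid large $n$.
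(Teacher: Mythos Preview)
The paper does not prove this theorem at all: its ``proof'' consists of three citations, to \cite[Proposition~5.2]{BT22} for part~(1), to \cite[Theorem~3.7]{BT22} for part~(2), and to \cite[Theorem~4.6]{Tan19} for parts~(3)--(4). So there is no argument in the paper to compare against; the result is imported wholesale from the literature.

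Your proposal instead sketches what those cited papers actually do, and the broad shape is right: the normalised base change formula \autoref{thm:normalised-base-change} reduces everything to analysing pairs $(Y,C)$ with $-(K_Y+(p-1)C)$ nef and big on a normal rational surface $Y$. However, as a standalone argument your sketch has real gaps. For (2), the step ``uniform upper bounds on $-K_Y\cdot H$ coming from the classification of rational surfaces with big anti-canonical class of small Picard rank'' is circular: you do not yet know $Y$ belongs to such a list, and controlling $\rho(Y)$ from $\rho(X_{\k^{\sep}})$ is not automatic since normalisation can change the Picard rank. The actual argument in \cite{BT22} runs an MMP on (a resolution of) $Y$ and tracks how $(p-1)C$ interacts with each contraction. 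For (1), ``a finer analysis of the $(-2)$-curves'' is not a proof; the exclusion of canonical singularities for $p\geq 11$ in \cite{BT22} uses specific bounds on lengths of extremal rays and local intersection computations that your outline does not supply. Finally, your proposal promises to ``verify realisability by constructing regular del Pezzo surfaces'' for each tabulated entry, but the theorem as stated only asserts membership in the list, not that every row occurs; so this step is unnecessary for the statement (and indeed the paper does not claim all rows are realised).
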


\begin{proof}
	The case where $p \geq 11$ is shown in \cite[Proposition 5.2]{BT22} and when $p \geq 5$ in \cite[Theorem 3.7]{BT22}.
	The restriction for the possibilities appearing in the list for $p= 2,3$ are proven in \cite[Theorem 4.6]{Tan19}.
	Note that $\rho(X_{\k^{\sep}})=\rho(Y)$ by \cite[Proposition 2.4]{Tan18b}.
\end{proof}

If $R$ is the local ring of $X \times_\k \k^{p^{-\infty}}$ at its generic point, the thickening exponent of geometric non-reducedness defined in \cite[Theorem 7.3]{Tan21} is the non-negative integer such that $\length_{R}(R)=p^{\epsilon(X/\k)}$.
Recall that by \cite[Lemma 4.5]{Tan19} we have the equation:
\begin{equation}\label{eq: self-intersection}
 K_X^2=p^{\epsilon(X/\k)}(f^*K_X)^2.
\end{equation}
Finally, $X$ is geometrically reduced if and only if $\epsilon(X/\k)=0$ by definition, giving $K_X^2=(f^*K_X)^2$. In particular, in this case \autoref{thm: dP-base-change-classification} gives the value of $K_X^2$, as well as an upper bound for $\rho(X)$, for geometrically integral del Pezzo surfaces that are not geometrically normal.
Using this, we obtain:

\begin{corollary}\label{cor: geometrically integral del Pezzo}
	Let $X$ be a del Pezzo surface over any field $\k$ of characteristic $p\geq 0$.
	Then $(f^*K_X)^2 \leq 9$.
	If $X$ is not geometrically normal, then $\rho(X) \leq 2$ and
	\begin{enumerate}
		\item $p=3$, $\rho(X)=1$ and $(f^*K_X)^2 \in \left\{ 1, 3\right\}$;
		\item $p=2$, $\rho(X) \leq 2$ and $(f^*K_X)^2 \in \left\{ 1, 2, 3, 4, 5, 6, 8 \right\}$. 
		\item Moreover, if $p=2$ and $\rho(X)=1$, then $(f^*K_X)^2\in\{1,2,4\}$.
	\end{enumerate}
	If $X$ is geometrically integral, we can replace $(f^*K_X)^2$ with $K_X^2$ in the previous statements.
\end{corollary}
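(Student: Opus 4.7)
My plan splits into two cases depending on whether $X$ is geometrically normal, hinging on one preliminary observation: since $X$ is geometrically integral, $X_{\overline{\k}}$ is integral and hence reduced, so the thickening exponent $\epsilon(X/\k)$ vanishes. Consequently equation \eqref{eq: self-intersection} simplifies to $K_X^2 = (f^*K_X)^2$ throughout, where $f \colon Y \to X$ is the normalised base change of \autoref{thm:normalised-base-change}.

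For the bound $K_X^2 \leq 9$: if $X$ is geometrically normal, \autoref{rmk:weak-dp} yields $K_X^2 = 10 - \rho(X_{\k^{\sep}}) - \rho(Y/X_{\overline{\k}}) \leq 9$, using $\rho(X_{\k^{\sep}}) \geq \rho(X) \geq 1$ and $\rho(Y/X_{\overline{\k}}) \geq 0$. If $X$ is not geometrically normal, \autoref{thm: dP-base-change-classification} forces $p \in \{2,3\}$, and inspection of the two tables shows $(f^*K_X)^2 \leq 8$, hence $K_X^2 \leq 8$.

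For the classification when $X$ is not geometrically normal, I would simply read off the tables of \autoref{thm: dP-base-change-classification}, combining $K_X^2 = (f^*K_X)^2$ with the standard inequality $\rho(X) \leq \rho(X_{\k^{\sep}})$. In characteristic $3$, every table entry has $\rho(X_{\k^{\sep}}) = 1$, forcing $\rho(X) = 1$ and $K_X^2 \in \{1,3\}$. In characteristic $2$, every entry has $\rho(X_{\k^{\sep}}) \leq 2$, hence $\rho(X) \leq 2$, and scanning the $(f^*K_X)^2$-column yields $K_X^2 \in \{1,2,3,4,5,6,8\}$.

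I expect the principal obstacle to be the refinement for $p=2$ with $\rho(X) = 1$: the four rows of the table with $\rho(X_{\k^{\sep}}) = 1$ already contribute exactly $\{1,2,4\}$, but for the remaining rows (with $\rho(X_{\k^{\sep}}) = 2$), achieving $\rho(X) = 1$ demands a non-trivial $\Gal(\k^{\sep}/\k)$-action on $N^1(X_{\k^{\sep}})$. I would carry out a short case analysis: when $Y = \F_n$ with $n \in \{1,2,4\}$, both the fibre class and the unique negative section $\Sigma_n$ are preserved by every automorphism of $Y$, so the induced Galois action on $N^1$ is trivial and $\rho(X) = 2$; when $Y = \p^1 \times \p^1$ with $C = F$, the ruling containing the $\k$-rational divisor $F$ must be Galois-stable, hence so is the other, again giving $\rho(X) = 2$; only $(Y,C) = (\p^1 \times \p^1, F+G)$ admits Galois swapping the two rulings while fixing $F+G$, producing $\rho(X) = 1$ with $K_X^2 = 2$, still lying in $\{1,2,4\}$. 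This case-by-case check of which Galois actions on $Y$ are compatible with $C$ and realisable is the key technical hurdle.
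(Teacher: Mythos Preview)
Your proposal is correct and follows the paper's strategy closely for the bound $K_X^2 \leq 9$ and the coarse classification. For the refinement in characteristic $2$ with $\rho(X)=1$, the paper takes a slightly more streamlined route: rather than analysing the Galois action on $N^1(Y)$ case by case, it simply observes that for $K_X^2 \in \{3,5,6,8\}$ the two extremal rays of $\NE(X_{\k^{\sep}})$ give contractions of different type---one a Mori conic bundle, the other birational---so the Galois group cannot interchange them, forcing $\rho(X)=2$. This is morally the same observation as yours (that $\Sigma_n$ and $F$ on $\F_n$ are intrinsically distinguishable), but phrased directly on $X_{\k^{\sep}}$ it spares you the $\p^1\times\p^1$ subcases (which have $K_X^2\in\{2,4\}$ and are therefore harmless anyway) and sidesteps the need to transport the Galois action from $N^1(X_{\k^{\sep}})$ through the normalisation map to $N^1(Y)$.
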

\begin{proof}
    If $X$ is geometrically normal, we have that $(f^*K_X)^2=K_X^2$ and \autoref{rmk:weak-dp}\autoref{lem: rho_volume_dP} implies that $K_X^2 \leq 9$.

	If $X$ is not geometrically normal, we consult \autoref{thm: dP-base-change-classification} and see that either $p=3$ or $p=2$, with certain $(f^*K_X)^2\leq 9$ and certain $\rho(X_{\k^{\rm sep}})$, where $f\colon Y\to X_{\bar\k}$ is the normalised base change.
	As $\rho(X)\leq\rho(X_{\k^{\rm sep}})\leq 2$, the statement follows.
	
	For the third statement, we show that if $(f^*K_X)^2\in\{3, 5, 6, 8\}$, then $\rho(X)=2$.
	In these cases, we see from the list in \autoref{thm: dP-base-change-classification} that $\rho(X_{\k^{\rm sep}})=2$ and $X_{\k^{\rm sep}}$ has a Mori conic bundle structure and a birational contraction $X \to Y$. In particular, the extremal rays of $\NE(X_{\k}^{\sep})$ cannot be interchanged by the Galois group $\Gal(\k^{\sep}/\k)$ and thus $\rho(X)=2$.
\end{proof}

\subsubsection{Classification of del Pezzo surfaces of high degree}

We classify del Pezzo surfaces of degree $\geq 7$. At the end of this section, in \autoref{cor: moredP}, we give a classification of all geometrically integral del Pezzo surfaces with Picard rank $1$.

\begin{lemma} \label{lem: SB}
	Let $X$ be a geometrically integral del Pezzo surface over a field $\k$ with $K_X^2=9$.
	Then $X$ is a Severi--Brauer surface (that is, $X_{\bar\k}\simeq\p^2_{\bar\k}$). In particular, $X(\k)\neq \emptyset$ if and only if $X \simeq \mathbb{P}^2_\k$.
\end{lemma}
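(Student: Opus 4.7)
The plan is to first reduce to showing $X_{\bar\k}\simeq\p^2_{\bar\k}$ by combining the two classification results already at our disposal, and then derive the "if and only if" statement from Châtelet's theorem on Severi--Brauer varieties.

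First, I would rule out the geometrically non-normal case. By \autoref{cor: geometrically integral del Pezzo}, a geometrically integral del Pezzo surface that fails to be geometrically normal has $K_X^2\in\{1,3\}$ when $p=3$ and $K_X^2\in\{1,2,3,4,5,6,8\}$ when $p=2$; in particular $K_X^2=9$ is excluded. So $X$ is automatically geometrically normal, and I can apply \autoref{rmk:weak-dp}: letting $f\colon Y\to X_{\bar\k}$ be the minimal resolution, $Y$ is a weak del Pezzo surface and
\[
9=K_X^2=10-\rho(X_{\k^{\sep}})-\rho(Y/X_{\bar\k}).
\]
Since $X_{\k^{\sep}}$ is projective and hence carries an ample class, $\rho(X_{\k^{\sep}})\geq 1$, forcing $\rho(X_{\k^{\sep}})=1$ and $\rho(Y/X_{\bar\k})=0$. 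The latter means that $f$ contracts no $(-2)$-curve, so $f$ is an isomorphism and $X_{\bar\k}$ is smooth. By the classical Castelnuovo classification of smooth del Pezzo surfaces over an algebraically closed field, a smooth del Pezzo surface of degree $9$ is isomorphic to $\p^2_{\bar\k}$. Hence $X_{\bar\k}\simeq\p^2_{\bar\k}$, which is precisely the condition for $X$ to be a Severi--Brauer surface.

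For the final equivalence, one direction is trivial since $\p^2_\k(\k)\neq\varnothing$. For the converse I would invoke Châtelet's classical theorem: a Severi--Brauer variety that admits a $\k$-rational point is $\k$-isomorphic to the corresponding projective space. I expect this to be the only genuinely external input; since it is a standard result (e.g.\ via the associated central simple algebra having trivial Brauer class), I would simply cite it rather than reprove it. Everything else is a direct bookkeeping consequence of \autoref{cor: geometrically integral del Pezzo} and \autoref{rmk:weak-dp}.
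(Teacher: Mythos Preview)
Your proposal is correct and follows essentially the same route as the paper: use \autoref{cor: geometrically integral del Pezzo} to get geometric normality, then the degree formula from \autoref{rmk:weak-dp} to force $\rho(Y/X_{\bar\k})=0$ and hence smoothness, conclude $X_{\bar\k}\simeq\p^2_{\bar\k}$ from the classification of smooth del Pezzo surfaces of degree $9$, and finish with Ch\^atelet's theorem. The paper's proof is simply a terser version of what you wrote.
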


\begin{proof}
	As $K_X^2=9$, by \autoref{cor: geometrically integral del Pezzo} we deduce that $X$ is geometrically normal.
	By \autoref{rmk:weak-dp}\autoref{lem: rho_volume_dP}, we have that $X_{\bar{\k}}$ is a smooth weak del Pezzo surface of degree $9$ and thus it is isomorphic to $\mathbb{P}^2_{\bar\k}$. The last assertion is Châtelet's theorem \cite[§IV.I, p.283]{Cha44}.
\end{proof}

\begin{lemma} \label{lem: dP8_normal}
	A geometrically integral del Pezzo surface over a field $\k$ with $K_X^2=8$ is geometrically normal.
\end{lemma}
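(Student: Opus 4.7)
The plan is to reduce the statement via \autoref{cor: geometrically integral del Pezzo} and \autoref{thm: dP-base-change-classification} to a single potentially pathological configuration, and then eliminate it by studying the Mori cone of $X$. If $X$ is geometrically integral with $K_X^2 = 8$ and fails to be geometrically normal, then \autoref{cor: geometrically integral del Pezzo} forces $\mathrm{char}(\k) = 2$ and $\rho(X) = 2$; consulting \autoref{thm: dP-base-change-classification} identifies the normalised base change as $(Y, C) = (\F_4, \Sigma_4)$, and it remains to derive a contradiction from this configuration.

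Since $\rho(X) = 2$ and $-K_X$ is ample, the cone theorem \autoref{t-cone-theorem} yields two $K_X$-negative extremal rays on $X$, each either divisorial (contracting an exceptional curve of the first kind) or of fibre type. If one ray is divisorial with exceptional curve $E$, then the target $X'$ is a del Pezzo with $\rho(X') = 1$ and $K_{X'}^2 = 8 + d_E$; combining the upper bound $K_{X'}^2 \leq 9$ (from \autoref{rmk:weak-dp}) with \autoref{lem: SB} forces $d_E = 1$ and $X' \cong \p^2_{\k}$, whence $X \cong \F_1$, which is smooth over $\k$---a contradiction.

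If instead both extremal rays are of fibre type, then $X$ carries two conic bundle structures $\pi_i \colon X \to B_i$ with smooth generic fibres $F_i$ (as $X$ is regular). From $-K_X \cdot F_i = 2$ and $K_X^2 = 8$ one computes $F_1 \cdot F_2 = 1$ and $-K_X = 2F_1 + 2F_2$; a singular fibre in some $\pi_i$ would split off an exceptional curve of the first kind generating a third distinct extremal ray, contradicting $\rho(X) = 2$, so both $\pi_i$ are smooth $\p^1$-bundles. The diagonal map $X \to B_1 \times_{\k} B_2$ is then a birational morphism between smooth surfaces of the same $K^2 = 8$, hence an isomorphism; thus $X_{\overline{\k}} \cong \F_0$ is smooth, again contradicting non-geometric-normality.

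The main obstacle I expect is handling the fibre-type case cleanly: one must carefully exclude singular fibres in the $\pi_i$ and then justify the identification $X \cong B_1 \times_{\k} B_2$ via the birational morphism above. Once this is in place, both cases pin $X$ down to a geometrically smooth surface (a form of $\F_0$ or $\F_1$), which is incompatible with the pathological normalisation $(\F_4,\Sigma_4)$ extracted from \autoref{thm: dP-base-change-classification}, and the lemma follows.
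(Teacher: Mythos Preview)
Your Case~A is essentially the paper's proof: after reducing to a separably closed base field, the paper identifies the normalised base change as $\F_4$ via \autoref{thm: dP-base-change-classification} and uses \cite[Proposition~2.4]{Tan18b} to descend the birational extremal contraction of $\F_4$ to one on $X$; from there, $K_Y^2 = 9$ forces $Y \cong \p^2_\k$ by \autoref{lem: SB} and \autoref{cor: geometrically integral del Pezzo}, exactly as you argue. (Small correction: your upper bound $K_{X'}^2 \leq 9$ should be cited from \autoref{cor: geometrically integral del Pezzo} rather than \autoref{rmk:weak-dp}, since you do not yet know that $X'$ is geometrically normal.)

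Case~B, however, contains a genuine gap. Your claim that a singular fibre would ``split off an exceptional curve of the first kind generating a third distinct extremal ray'' conflates \emph{singular} with \emph{reducible}: since $\rho(X/B_i) = 1$, every closed fibre of $\pi_i$ is automatically irreducible, but in characteristic~$2$---which is precisely the residual case forced by \autoref{cor: geometrically integral del Pezzo}---an irreducible fibre can be an integral conic that is geometrically a double line (see \autoref{l-conic}). Thus you have not shown that the $\pi_i$ are smooth, and consequently you have not justified that $B_1 \times_\k B_2$ is regular, which is what is needed for the finite degree-one morphism $X \to B_1 \times_\k B_2$ to be an isomorphism. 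The paper's descent from $\F_4$ shows that Case~B never occurs (one extremal ray on $X$ is always divisorial), so no further argument is needed there. If you prefer to keep the case split, you can repair Case~B by proving directly that each $B_i$ is smooth: since $\mathcal{O}_{B_i} = (\pi_i)_*\mathcal{O}_X$ and $X$ is geometrically integral, flat base change shows $(B_i)_{\overline{\k}}$ is an integral projective curve of arithmetic genus~$0$, hence $\p^1_{\overline{\k}}$; then $B_1 \times_\k B_2$ is smooth and your argument goes through.
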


\begin{proof}
	%We can assume that the base-field $\k$ is separably closed. 
	Suppose that $X$ is not geometrically normal.
	By \autoref{thm: dP-base-change-classification} the normalised base change $X_{\bar{\k}}^n$ is isomorphic to the Hirzebruch surface $\mathbb{F}_4$.
	In particular, $X_{\bar{\k}}^n$ has two extremal contractions, one of which is birational and descends to a birational contraction $f \colon X \to Y$ defined over $\k$ since $\rho(X)=2$ by \autoref{cor: geometrically integral del Pezzo}. %we have that 
	
	%In particular, $X_{\bar{\k}}^n$ has two extremal contractions, one of which $\varphi \colon X_{\bar{\k}}^n \to Z$ is birational.
	%By \autoref{cor: geometrically integral del Pezzo}, we have $\rho(X)=2$ and thus $\varphi$ descends to a birational contraction $f \colon X \to Y$ defined over $\k$.
	As $K_X^2=8$, $Y$ is a del Pezzo surface of degree at least $9$, and hence $K_Y^2=9$ by \autoref{cor: geometrically integral del Pezzo}. Therefore, by \autoref{lem: SB}, $X$ is obtained by blowing up a Severi--Brauer surface in a rational point, which gives a contradiction to geometric non-normality.
	%Using geometric integrality, \autoref{thm: dP-base-change-classification} tells us that $K_X^2=8$ can happen only if $p=2$, and if the normalised base change $X_{\bar{k}}^n$ is isomorphic to the Hirzebruch surface $\mathbb{F}_4$.
	%In particular, $X_{\bar{\k}}^n$ has two extremal contractions, one of which $\varphi \colon X_{\bar{\k}}^n \to Z$ is birational.
 	%By \cite[Proposition 2.4]{Tan18b}, the Picard rank does not change under a purely inseparable base change (that is, $\rho(X_{\bar\k})=\rho(X)$), and hence there is also a birational contraction $f \colon X \to Y$ defined over $\k$. As $K_X^2=8$, $Y$ is a del Pezzo surface of degree at least $9$, and hence $K_Y^2=9$ by \autoref{cor: geometrically integral del Pezzo}. Therefore, by \autoref{lem: SB}, $X$ is obtained by blowing up a Severi--Brauer surface in a rational point, which gives a contradiction to geometric non-normality.
\end{proof}

\begin{lemma} \label{lem: dP8}
	Let $X$ be a geometrically integral del Pezzo surface over any field $\k$ of degree $K_X^2=8$.
	Then either $X\simeq \F_1$, or $X$ is geometrically a quadric (that is, $X_{\bar \k}\subset \p^3_{\bar \k}$ is a quadric) and either
	\begin{enumerate}
		\item $X_{\bar{\k}} \simeq \mathbb{P}^1_{\bar\k} \times \mathbb{P}^1_{\bar \k}$, or
		\item $X$ is not smooth, $p=2$ and $X_{\bar\k} \simeq \mathbb{P}(1,1,2)$.
	\end{enumerate}
\end{lemma}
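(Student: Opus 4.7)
The plan is to first reduce to studying $X_{\bar\k}$ via the previous lemma and the weak del Pezzo machinery, then handle the $\F_1$-case via descent, and finally identify the remaining cases as quadrics.

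\medskip

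By \autoref{lem: dP8_normal}, $X$ is geometrically normal, so \autoref{rmk:weak-dp} gives that $X_{\bar\k}$ has canonical singularities and that its minimal resolution $f\colon Y\to X_{\bar\k}$ is a smooth weak del Pezzo surface over $\bar\k$ with $K_Y^2=8$. I would then invoke the classical classification of smooth weak del Pezzo surfaces of degree $8$ over an algebraically closed field: $Y$ is isomorphic to $\P^1_{\bar\k}\times\P^1_{\bar\k}$, $\F_1$, or $\F_2$. The first two contain no $(-2)$-curves, so $X_{\bar\k}\simeq Y$; the surface $\F_2$ contains a unique $(-2)$-curve (its negative section), and contracting it yields $X_{\bar\k}\simeq\P(1,1,2)$, the quadric cone.

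\medskip

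Next, I would handle the case $X_{\bar\k}\simeq \F_1$. The unique $(-1)$-curve $E_{\bar\k}\subset X_{\bar\k}$ is characterised numerically and hence is stable under any base change; by \autoref{lem: characterise_-1_curves} it descends to an exceptional curve of the first kind $E\subset X$. Applying \autoref{lem: Castelnuovo_contraction}, I contract $E$ to obtain a birational morphism $X\to Z$ onto a regular del Pezzo surface $Z$ with $K_Z^2=9$. By \autoref{lem: SB}, $Z$ is a Severi–Brauer surface, and since the image of $E$ provides a $\k$-rational point on $Z$, one concludes $Z\simeq \P^2_\k$, giving $X\simeq\F_1$.

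\medskip

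For the remaining cases, $X_{\bar\k}\simeq \P^1_{\bar\k}\times\P^1_{\bar\k}$ is embedded in $\P^3_{\bar\k}$ via the Segre map and $X_{\bar\k}\simeq \P(1,1,2)$ via the half-anticanonical linear system $|\mathcal O(2)|$; both images are quadric surfaces, giving cases (1) and (2). In case (2), $X_{\bar\k}$ has an $A_1$-singularity, hence is not smooth, which by the equivalence of smoothness and geometric smoothness over $\k$ forces $X$ to be non-smooth as well.

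\medskip

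The main obstacle is to show that case (2) forces $p=2$. The plan here is to argue by analysing the singularity: over $\bar\k$ the completed local ring at the vertex of $\P(1,1,2)$ is $\bar\k\llbracket u,v,w\rrbracket/(uv-w^2)$, coming from a rank-three quadratic form. In characteristic $\neq 2$ the rank of a quadratic form is invariant under field extension, so if $X$ admits such a base-changed singularity at a closed point $\xx$, the completion of $\mathcal O_{X,\xx}$ would itself be defined by a quadratic form of rank at most $3$ and would therefore be singular, contradicting the regularity of $X$. Hence $p=2$, completing the proof.
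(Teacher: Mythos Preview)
Your overall strategy matches the paper's proof closely: reduce to geometric normality via \autoref{lem: dP8_normal}, use \autoref{rmk:weak-dp} to see that the minimal resolution $Y$ of $X_{\bar\k}$ is a weak del Pezzo of degree $8$, classify $Y$ as $\P^1_{\bar\k}\times\P^1_{\bar\k}$, $\F_1$, or $\F_2$, and descend the $(-1)$-curve in the $\F_1$ case. All of this is correct and essentially identical to what the paper does.

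The genuine gap is in your argument that case (2) forces $p=2$. You assert that ``the completion of $\mathcal O_{X,\xx}$ would itself be defined by a quadratic form of rank at most $3$'', but this does not follow. The completion $\widehat{\mathcal O_{X,\xx}}$ is a regular $2$-dimensional complete local ring, abstractly isomorphic to $\k(\xx)\llbracket s,t\rrbracket$; there is no reason it should be presented as a hypersurface cut out by a quadratic form over $\k$. You only know that $X_{\bar\k}$ is a quadric in $\P^3_{\bar\k}$, not that $X$ is a quadric in $\P^3_\k$, so there is no quadratic form over $\k$ to which your rank-invariance argument applies. The actual phenomenon is subtler: the extra embedding dimension of $\widehat{\mathcal O_{X_{\bar\k},\yy}}$ arises from the inseparability of $\k(\xx)/\k$, and the specific shape of the relation (an $A_1$ rather than some other singularity) constrains $p$. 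The paper handles this by invoking \cite[Theorem~6.1]{Sch08}, which shows precisely that a regular local ring can acquire an $A_1$-singularity under purely inseparable base change only when $p=2$. This is a non-trivial input that your quadratic-form heuristic does not replace.
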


\begin{proof}
	By \autoref{lem: dP8_normal}, $X$ is geometrically normal and thus \autoref{rmk:weak-dp}\autoref{lem: rho_volume_dP} implies that either $X_{\bar{\k}}$ is smooth with $\rho(X_{\k^{\sep}})=2$, or $X_{\bar{\k}}$ has an $A_1$-singularity with $\rho(X)=1$.
	In the former case, by \cite[Section 8.4.1]{Dol12} we have that either $X_{\overline{\k}}$ is isomorphic to $\mathbb{P}^1_{\overline{\k}} \times \mathbb{P}^1_{\overline{\k}}$ or to $\mathbb{F}_1$. In the second case, $X\simeq\F_1$ because there is a unique curve $E\subset X$ such that $E_{\overline\k}$ is a $(-1)$-curve and thus there exists a birational contraction $X \to Y$ with exceptional locus $E$, where $Y$ is a del Pezzo of degree $K_Y^2>8$, and thus $K_Y^2=9$, and $Y$ contains a rational point. Hence, $Y\simeq \mathbb{P}^2_\k$ by \autoref{lem: SB}.

	In the latter case, we have that $X_{\bar{\k}}$ has one $A_1$-singular point
	$\yy$, which can appear only in characteristic $2$ by \cite[Theorem 6.1]{Sch08}.
	By blowing-up the $\yy$, the minimal resolution $Y \to X_{\bar\k}$	is a weak del Pezzo surface of degree $8$ by \autoref{rmk:weak-dp}\autoref{weak-dp:1}.
	By \cite[8.4.1]{Dol12}, $Y \simeq \mathbb{F}_2$ and thus $X_{\bar\k} \simeq \mathbb{P}(1,1,2)$.
\end{proof}

\begin{lemma} \label{lem: dp8-vs-quadric}
	Let $X$ be a geometrically integral del Pezzo surface of degree $8$ over any field $\k$.
	Then $X$ is a quadric in $\mathbb{P}^3_\k$ if and only if $-K_X=2H$ for some Cartier divisor $H$.
\end{lemma}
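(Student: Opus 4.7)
The plan is to obtain the forward implication by adjunction and reduce the backward implication to faithfully flat descent of a quadric embedding constructed geometrically.

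For the forward direction, if $X \subset \mathbb{P}^3_\k$ is a quadric, adjunction gives
\[
K_X = (K_{\mathbb{P}^3_\k} + X)\big|_X = (-4H + 2H)\big|_X = -2H|_X,
\]
so $-K_X = 2H|_X$ with $H|_X$ Cartier, as desired.

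For the converse, assume $-K_X = 2H$ with $H$ Cartier on $X$. I would first rule out $X_{\bar\k}\simeq \mathbb{F}_1$: pulling $2H = -K_X$ back to $X_{\bar\k}$ forces $-K_{X_{\bar\k}}$ to be $2$-divisible in $\Pic(X_{\bar\k})$, but on $\mathbb{F}_1$ one has $-K = 2C_0 + 3F$, which is not $2$-divisible in $\Pic(\mathbb{F}_1) = \Z C_0 \oplus \Z F$. Thus, by \autoref{lem: dP8}, $X_{\bar\k}$ is either $\mathbb{P}^1_{\bar\k}\times\mathbb{P}^1_{\bar\k}$ or $\mathbb{P}(1,1,2)_{\bar\k}$. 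In both cases $\Pic(X_{\bar\k})$ is torsion-free and $H_{\bar\k}$ is the unique square root of $-K_{X_{\bar\k}}$: namely the class $(1,1)$ on $\mathbb{P}^1\times\mathbb{P}^1$, respectively the generator $\mathcal{O}_{\mathbb{P}(1,1,2)}(2)$ of the Picard group. Either way, $H_{\bar\k}$ is the hyperplane class that embeds $X_{\bar\k}$ as a (possibly singular) quadric in $\mathbb{P}^3_{\bar\k}$, with $h^0(X_{\bar\k}, H_{\bar\k}) = 4$. Moreover $H$ itself is ample on $X$ by Nakai--Moishezon, since $H^2 = K_X^2/4 = 2 > 0$ and $4\,H\cdot C = -K_X\cdot C > 0$ for every integral curve $C\subset X$.

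To descend the embedding, flat base change gives $h^0(X, H) = h^0(X_{\bar\k}, H_{\bar\k}) = 4$, so the complete linear system $|H|$ (which is base-point-free since it is so after the faithfully flat base change to $\bar\k$) defines a $\k$-morphism $\varphi\colon X \to \mathbb{P}^3_\k$ whose base change to $\bar\k$ is the quadric embedding of $X_{\bar\k}$. Let $Y \subset \mathbb{P}^3_\k$ be the scheme-theoretic image of $\varphi$. Then $Y_{\bar\k}$ is a quadric hypersurface, and by faithfully flat descent along $\Spec(\bar\k) \to \Spec(\k)$ the ideal sheaf $\mathcal{I}_Y$ is generated by a section of $\mathcal{O}_{\mathbb{P}^3_\k}(2)$ and $\varphi\colon X \to Y$ is an isomorphism, realising $X$ as a quadric in $\mathbb{P}^3_\k$.

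The main obstacle is the final descent step; everything preceding it reduces to an elementary Picard-group computation leveraging the explicit geometric classification provided by \autoref{lem: dP8}.
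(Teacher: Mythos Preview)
Your proof is correct and follows essentially the same approach as the paper: use \autoref{lem: dP8} to identify $X_{\bar\k}$, recognise $H_{\bar\k}$ as the hyperplane class of the quadric embedding, and then descend very ampleness of $H$ to $\k$. You are simply more explicit than the paper, which compresses the entire converse into two sentences (in particular, the paper silently uses that $X\not\simeq\mathbb{F}_1$ when invoking \autoref{lem: dP8}, whereas you spell out the $2$-divisibility obstruction; and the paper's ``therefore $H$ is very ample'' is exactly your flat-base-change and descent paragraph unpacked).
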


\begin{proof}
	If $X$ is a quadric in $\p^3_\k$, then $H$ is the hyperplane section and we conclude by the adjunction formula.
	Suppose now that $-K_X=2H$ for some Cartier divisor $H$. Then $H_{\overline{\k}}$ is a Cartier divisor on $X_{\overline{\k}}$. In particular, $-K_{X_{\overline{\k}}}$ is divisible by 2 in the Picard group, so $X_{\overline{k}}$ is not isomorphic to $\mathbb{F}_1$ and hence $X_{\overline{k}}$ is a quadric in $\p^3_{\bar\k}$ by \autoref{lem: dP8} and $H_{\overline{\k}}$ is the hyperplane section.
	Therefore $H$ is very ample and embeds $X$ in $\mathbb{P}^3_\k$ as a quadric.
\end{proof}

We will need the following result on the degree of closed points of del Pezzo surfaces of large anticanonical degree.
We first recall the definition of the index of a variety over a field.

\begin{definition}
Let $X$ be a variety over a field $\k$.
The \emph{index} ${\rm ind}(X)$ of $X$ is the greatest common divisor of the degrees of closed points on $X$.
\end{definition}

\begin{lemma} \label{lem: index_SB}
    Let $X$ be a geometrically integral del Pezzo surface of degree 9 over any field $\k$.
    Then $\rm {ind}(X)$ is either 1 or 3. Moreover, $\rm {ind}(X)=1$ if and only if $X \simeq \mathbb{P}^2_{\k}$.
\end{lemma}

\begin{proof}
      By \autoref{lem: SB}, $X$ is a Severi--Brauer surface. 
      Thus the result follows from \cite[Theorem 68 and Corollary 70]{Kol16}.
\end{proof}

\begin{proposition} \label{prop: divisibility-dp8}
Let $X$ be a geometrically integral del Pezzo surface of degree $8$ over a field $\k$.
 Then ${\rm ind}(X)$ divides $4$.
Moreover, if ${\rm ind}(X)=1$, either $X\simeq\F_1$, or $-K_X=2H$ for some Cartier divisor $H$ on $X$.
Moreover, if ${\rm ind}(X)=4$, then $X$ is smooth.
\end{proposition}

\begin{proof}
By \autoref{lem: dP8} we can suppose that $X_{\bar{\k}}$ is a quadric, as otherwise $X \simeq \mathbb{F}_1$ and it has a rational point.

If $X$ is not geometrically regular, then $X_{\overline{k}}$ has an $A_1$-singularity and the closed point of $X$ corresponding to the non-smooth locus has degree 2 by \cite[Lemma 2.25]{BT22} and thus the index is either 1 or 2.
If $X$ is geometrically regular, then it is an involution surface and the index divides 4 by \cite[Example 3.3]{AuelBernardara}, thus concluding the proof.
%Note that if $X$ is a quadric in $\mathbb{P}^3_{\k}$, it is easy to see that ${\rm ind}(X)$ divides 2 by intersecting $X$ with a general line .
%In general, by \cite[Theorem 1.1]{Lie17}, then there exists an embedding $X \hookrightarrow Y$, where $Y$ is a Severi--Brauer variety of dimension 3.
%As $-K_Y=2X$, and $|X|$ is a very ample linear system, we conclude by Bertini's theorem \cite[Corollary 3.4.14]{FOV99} that there exist $H_1, H_2 \in |X|$ such that the complete intersection $C =H_1 \cap H_2 \subset Y$ intersects $X$ transversally. As $C \cdot X=8$, we deduce that ${\rm ind}(X)$ divides $8$, unless $C$ intersects $X$ in two closed point $\xx$ and $\yy$ of degree $3$ and $5$ respectively. In this remaining case, ${\rm ind}(X)=1$, and $Y$ is a $3$-dimensional Severi--Brauer variety with ${\rm ind}(Y)=1$ and therefore $Y \simeq \mathbb{P}^3_{\k}$ by \cite[Theorem 53]{Kol16} and thus $X$ is a quadric in $\mathbb{P}^3_{\k}$ and the index of $X$ divides 2.
\end{proof}

\begin{remark}\label{rem: points on quadric}
	Note that a geometrically integral del Pezzo surface $X$ over $\k$ of degree $8$, which is not $\mathbb{F}_1$, is a quadric in $\p^3_\k$ if and only if $\Pic(X)$ coincides with the Galois fixed part of $\Pic(X_{\k^{\sep}})$.
	This condition is satisfied if $X$ has a rational point or the Brauer group of $\k$ is trivial. In particular, a rational del Pezzo surface of degree 8 is either $\mathbb{F}_1$ or a quadric with a rational point.

	Over a perfect field, if $X$ is smooth of Picard rank $1$, then either $X\subset\p^3_\k$ is a quadric and $\ind(X)\in\{1,2\}$, or $X$ is not a quadric and $\ind(X)\in\{2,4\}$ \cite[Table~3 and Remark~7.4]{AuelBernardara}.
\end{remark}

\begin{lemma} \label{lem: descent_semiamp}
Let $X$ be a normal projective variety over $\k$.
Let $\k'$ be a purely inseparable extension of $\k$ and let $Y$ be the normalisation of $X \times \k'$, with the induced morphism $\beta \colon Y \to X$. 
Then 
\begin{enumerate}
\item The induced group homomorphism $\tilde{\beta} \colon N^1(X)_{\mathbb{Q}} \to N^1(Y)_{\mathbb{Q}}$ is bijective.
\item Suppose that $h^1(X, \mathcal{O}_X)=0$. 
Let $L$ be a semi-ample $\mathbb{Q}$-Cartier $\mathbb{Q}$-divisor on $Y$. Then there exists a semi-ample $\mathbb{Q}$-Cartier $\mathbb{Q}$-divisor $M$ on $X$ such that $\beta^{*}M \sim_{\mathbb{Q}}L.$
\end{enumerate}
\end{lemma}

\begin{proof}
    The first item follows from \cite[Proposition 2.4]{Tan18b}. 
    For (2), if $h^1(X, \mathcal{O}_X)=0$ then we have $N^1(X)_{\mathbb{Q}} \simeq \Pic(X)_{\mathbb{Q}}$, and thus by (1) there exists a $\mathbb{Q}$-Cartier $\mathbb{Q}$-divisor $L$ such that $\beta^*M \sim_{\mathbb{Q}}L$.
    We are left to check $M$ is semi-ample. For this, we can reduce to the case where $\k'$ is a finite extension of $\k$ such that $L$ descends to $X \times \k'$.
    As semi-ampleness descends for finite morphism by the Nakai--Moishezon criterion, we conclude that $M$ is semi-ample as well.
\end{proof}

\begin{lemma}\label{lem:no-minimal-dp7}
	Let $X$ be a del Pezzo surface with $K_X^2=7$ over a field $\k$.
	Then $\rho(X) \in \left\{2, 3 \right\}$, and there is a birational morphism $X\to\p^2_\k$. In particular, $X$ is rational.
\end{lemma}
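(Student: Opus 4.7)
The plan proceeds in three stages: establish geometric normality of $X$; bound the Picard rank via the weak del Pezzo structure of the minimal resolution over $\bar\k$; and construct the birational morphism to $\p^2_\k$ by contracting a Galois-invariant collection of exceptional curves of the first kind.

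First, by \autoref{cor: geometrically integral del Pezzo}, $X$ is geometrically normal, since $K_X^2=7$ does not appear in the list of degrees allowed for geometrically non-normal del Pezzo surfaces. Hence \autoref{rmk:weak-dp} applies: the minimal resolution $\mu\colon Y \to X_{\bar\k}$ is a smooth weak del Pezzo of degree $7$, whose $(-2)$-curves span a root subsystem of $E_{9-7}=E_2=A_1$. Thus $\rho(Y/X_{\bar\k}) \in \{0,1\}$, and \autoref{rmk:weak-dp}\autoref{lem: rho_volume_dP} gives $\rho(X_{\k^{\sep}})=10-7-\rho(Y/X_{\bar\k}) \in \{2,3\}$, so in particular $\rho(X) \leq 3$.

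Next I analyse the $(-1)$-curves of $X_{\bar\k}$. In the generic (smooth) case, $X_{\bar\k} \simeq \Bl_{\{p_1,p_2\}}\p^2_{\bar\k}$ carries three $(-1)$-curves $E_1, E_2, L$, where $L$ is uniquely distinguished as the one meeting both other $(-1)$-curves. Hence $L$ is Galois-invariant and so is the complementary pair $\{E_1, E_2\}$, so $E_1+E_2$ descends to a closed subscheme of $X$ that is either (a) a disjoint union of two rational $(-1)$-curves (if Galois fixes $E_1, E_2$ individually), or (b) a single integral exceptional curve of the first kind $E \subset X$ with $d_E = 2$ (if Galois swaps $E_1 \leftrightarrow E_2$). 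In case (a), contracting one of these via \autoref{lem: Castelnuovo_contraction} gives $X \to Y_1$ with $(Y_1)_{\bar\k} \simeq \F_1$; as $\F_1$ admits a unique $\k$-form, $Y_1 \simeq \F_1$ over $\k$, and the contraction of its $(-1)$-section produces $\F_1 \to \p^2_\k$. In case (b), contracting $E$ yields $X \to Z$ with $K_Z^2 = 9$, so $Z$ is a Severi--Brauer surface by \autoref{lem: SB}; the image of $E$ is a closed point of degree $2$ on $Z$, forcing $\mathrm{ind}(Z) \mid 2$. Combined with the fact that the index of a $2$-dimensional Severi--Brauer surface divides $3$, this gives $\mathrm{ind}(Z) = 1$ and hence $Z \simeq \p^2_\k$, again by \autoref{lem: SB}. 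The number of blowups involved determines $\rho(X) \in \{2,3\}$ concretely.

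The main obstacle will be twofold: justifying the descent of the distinguished $(-1)$-curves $L$ and $\{E_1,E_2\}$ from $X_{\bar\k}$ to $X$ cleanly over imperfect base fields (relying on geometric normality together with regularity of $X$), and handling the possible singular case $\rho(Y/X_{\bar\k})=1$, where $X_{\bar\k}$ acquires an $A_1$-singularity via purely inseparable base change. In that situation the two $(-1)$-curves of $X_{\bar\k}$ are individually Galois-fixed (distinguished by incidence with the singular point), but only the one disjoint from the singularity is Cartier on $X_{\bar\k}$; since $X$ itself is regular, all corresponding Weil divisors are Cartier on $X$, and tracking them through iterated contractions should again produce a Severi--Brauer surface of degree 9 (forced to be $\p^2_\k$ by the same index argument) or reduce to case (a)/(b) after one preliminary contraction.
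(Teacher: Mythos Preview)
Your approach matches the paper's: geometric normality via \autoref{cor: geometrically integral del Pezzo}, the weak del Pezzo minimal resolution, case split on the number of $(-2)$-curves, Galois-stable exceptional configurations, and the Severi--Brauer index argument. The smooth case is handled essentially identically (the paper phrases your ``$\F_1$ admits a unique $\k$-form'' as in \autoref{lem: dP8}, via the unique $(-1)$-curve on $(\F_1)_{\bar\k}$).

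The singular case needs tightening. Your Cartier/Weil discussion is a red herring: what must descend are not individual Weil divisors but the extremal \emph{contractions}, and for this the paper explicitly invokes \cite[Proposition~2.4]{Tan18b} (Picard rank is invariant under purely inseparable base change) to pass from $X_{\bar\k}$ to $X_{\k^{\sep}}$ before applying Galois descent to $\k$. The paper then distinguishes the two extremal contractions of $X_{\k^{\sep}}$ by the degrees of their targets (one lands on $\p^2_{\k^{\sep}}$, the other on a degree-$8$ surface), so Galois cannot swap them, forcing $\rho(X)=2$ with both contractions already defined over $\k$. Your ``incidence with the singular point'' criterion achieves the same, but there is no ``iterated contraction'' or ``reduction to case (a)/(b)'' here: since $\rho(X)=2$, a single contraction to the degree-$9$ target (a Severi--Brauer carrying a point of degree $2$, hence $\p^2_\k$) already finishes the argument.
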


\begin{proof}
	By the values assumed by $(f^*K_X)^2$ in \autoref{thm: dP-base-change-classification}, $X$ is geometrically normal, and thus geometrically canonical by \autoref{prop: geom_normal_implies_canonical-DP}.
	The minimal resolution $g\colon Y\to X_{\bar\k}$ is a weak del Pezzo surface of degree $7$ (see \autoref{rmk:weak-dp}\autoref{weak-dp:1}), and it admits a morphism $Y\to(\F_1)_{\bar\k}$ that is the blow-up at a single point $\xx\in\F_1(\bar\k)$ by \cite[Section~8.1.3]{Dol12}.
	Recall that by \autoref{rmk:weak-dp}
	we have $\rho(Y/X_{\bar\k})+\rho(X_{k^{\rm sep}})=3$, and that $\rho(Y/X_{\bar\k})$ is equal to the number of $(-2)$-curves on $Y$.

	Denote by $\Sigma_1$ the strict transform of the negative section of $(\F_1)_{\bar\k}$ in $Y$, by $f$ the strict transform of the fibre through $\xx$ on $Y$, and by $E_{\xx}$ the exceptional divisor.
	We distinguish the two cases $\xx\notin\Sigma_1$ and $\xx\in\Sigma_1$, illustrated in \autoref{fig:ConfigurationsdP7}.

	If $\xx\notin \Sigma_1$, then $Y$ has exactly three $(-1)$-curves, namely $\Sigma_1$, $f$ and $E_{\xx}$.
	Hence, $\rho(Y/X_{\bar\k})=0$, so $Y=X_{\bar\k}$ and  $\rho(X_{\k^{\rm sep}})=3$.
	%By \cite[Proposition 2.4]{Tan18b}
	By \autoref{lem: descent_semiamp}, the birational contraction of $\Sigma_1$ (respectively $f$, respectively $E_\xx$) on $X_{\bar\k}$ descends to a contraction on $X_{\k^{\rm sep}}$; we denote again by $\Sigma_1$ (respectively $f$, respectively $E_\xx$) the contracted curves on $X_{\k^{\rm sep}}$.
	Note that $\Sigma_1$ and $E_\xx$ both intersect $f$ but do not intersect each other. Hence, $\Gal(\k^{\rm sep}/\k)$ fixes $f$, and preserves the set of the two disjoint $(-1)$-curves $\{E_\xx, \Sigma_1\}$.
	Hence, the contraction of them descends to a birational contraction $X\to Z$ over $\k$, where $Z$ is a del Pezzo surface with $K_Z^2=9$ with either a rational point or a point of degree $2$.
	As $Z$ is a Severi--Brauer surface by \autoref{lem: SB}, it follows that $\rm {ind}(X)=1$ and thus $Z \simeq \mathbb{P}^2_{\k}$ by \autoref{lem: index_SB}.
	%Hence $Z\simeq\p^2_\k$ by \autoref{lem: SB} and \cite[Corollary 70]{Kol16}, and $\rho(X)\in\{2,3\}$.

	If $\xx\in\Sigma_1$, then $Y$ has exactly three negative curves, namely the two $(-1)$-curves $f$ and $E_\xx$, and the $(-2)$-curve $\Sigma_1$. Hence, $\rho(Y/X_{\bar\k})=1$ and $\rho(X_{\k^{\rm sep}})=2$. Note that $E_\xx$ intersects $\Sigma_1$, but $f$ does not.
	Note that the contraction of the curve $g_*E_{\xx}$ on $X_{\bar{\k}}$ gives a birational morphism $X_{\bar{\k}} \to \mathbb{P}^2_{\bar\k}$ which descends by \cite[Proposition 2.4]{Tan18b} to a birational morphism $p \colon X_{{\k}^{\sep}} \to \mathbb{P}^2_{\k^{\sep}}$.
	Note that the contraction of $g_*f$ on  $X_{\bar{\k}}$ gives a birational morphism $X_{\bar{\k}} \to Q$ (over $\bar\k$), where $K_Q^2=8$, which descends to a birational morphism $X_{\k^{\rm sep}} \to W$  (over $\k^{\rm sep}$), where $W$ is a del Pezzo surface of degree $8$.
	As $K_{\p^2}^2 \neq K_W^2$, we have that the two extremal rays on $\overline{\text{NE}}(X_{\k^{\rm sep}})$ defining the birational contractions are fixed by the Galois group  $\Gal(\k^{\rm sep}/\k)$, concluding that $\rho(X)=2$ and that both birational contractions on $X_{\k^{\rm sep}}$ descend to birational contractions from $X$.
	In particular, there exists a birational morphism $p \colon X \to Z$, where $Z$ is a Severi--Brauer surface.
	As $K_Z^2=9$, we have that $p$ is the blow-up of a closed point of degree $2$ on $Z$, which implies $Z \simeq \mathbb{P}^2_\k$ by \autoref{lem: index_SB} and thus $X$ is rational.
\end{proof}

\begin{figure}
\label{fig:ConfigurationsdP7}
    \centering
    \begin{tikzpicture}
    \draw[label=above] (-3.3,0) -- (-0.7,0) node[midway,below] {$f$} node[midway,above] {\tiny$-1$};
    \draw[] (-1,-0.3)--(-1,1.3) node[near end,right] {$E_{\xx}$} node[near end,left] {\tiny$-1$};
    \draw[] (-3,-0.3)--(-3,1.3) node[near end,left]
    {$\Sigma_1$} node[near end,right] {\tiny$-1$};
    
    \draw[label=above] (3.3,0) -- (0.7,0) node[midway,below] {$E_{\xx}$} node[midway,above] {\tiny$-1$};
    \draw[] (3,-0.3)--(3,1.3) node[near end,right] {$f$} node[near end,left] {\tiny$-1$};
    \draw[] (1,-0.3)--(1,1.3) node[near end,left] {$\Sigma_1$} node[near end,right] {\tiny$-2$};
    \end{tikzpicture}
    \caption{Cases: $\xx\notin \Sigma_1$ (left) and $\xx\in\Sigma_1$ (right)}
    \label{fig:ConfigurationsdP7}
\end{figure}

 \begin{corollary} \label{cor: moredP}
 	Let $X$ be a geometrically integral del Pezzo surface over a field $\k$ with $N^1(X)=\mathbb{Z}[H]$ for an ample Cartier divisor $H$.
 	Then $H \equiv -K_X$ if and only if we are in one of the following cases:
 	\begin{enumerate}
 		\item\label{moredP-1} $X$ is a non-trivial Severi-Brauer surface,
 		\item\label{moredP-2} $K_X^2=8$ and $X$ is not a quadric in $\p^3_\k$, or
 		\item\label{moredP-3} $K_X^2 \leq 6$.
 	\end{enumerate}
	Moreover, if $X$ is not in \autoref{moredP-1}, \autoref{moredP-2}, or \autoref{moredP-3}, then either $X=\p^2_\k$ and $3H\equiv -K_X$, or $X$ is a quadric in $\p^3_\k$ and $2H\equiv -K_X$.
 \end{corollary}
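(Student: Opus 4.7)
The plan is to write $-K_X = cH$ in $\Pic(X)=\Z[H]$ for some integer $c \geq 1$ (which exists since $\rho(X)=1$ and both $-K_X$ and $H$ are ample), and to determine $c$ through a case analysis on $d:=K_X^2$. By \autoref{cor: geometrically integral del Pezzo} we have $d\leq 9$, and by \autoref{lem:no-minimal-dp7} the value $d=7$ cannot occur, so $d\in\{1,2,3,4,5,6,8,9\}$. The assertion $H\equiv -K_X$ is equivalent to $c=1$, and the ``moreover'' clause will correspond to $c\in\{2,3\}$.

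For $d=9$, I would invoke \autoref{lem: SB}: $X$ is a Severi--Brauer surface. If $X\simeq\p^2_\k$, then $H$ is the line class and $-K_X=3H$, so $c=3$. If $X$ is non-trivial, the classes in $\Pic(X_{\bar\k})=\Z[H_{\bar\k}]$ that descend to $\Pic(X)$ are precisely the multiples of $3H_{\bar\k}=-K_{X_{\bar\k}}$ by the Brauer obstruction, whence $\Pic(X)=\Z[-K_X]$ and $c=1$ (case \autoref{moredP-1}). For $d=8$, \autoref{lem: dP8} together with $\rho(X)=1$ rules out $X\simeq\F_1$ and forces $X_{\bar\k}$ to be a quadric. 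By \autoref{lem: dp8-vs-quadric}, $X$ is a quadric in $\p^3_\k$ if and only if $-K_X=2H'$ for some Cartier $H'$; writing $H'=mH$ and using $H^2\geq 1$, the relation $(2m)^2 H^2=8$ forces $m=1$ and $H^2=2$, so $H$ is the hyperplane section and $c=2$. Otherwise $c=1$ (case \autoref{moredP-2}).

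For $d\leq 6$, the identity $c^2 H^2=d$ combined with $H^2\geq 1$ (as $H$ is an ample Cartier divisor on a regular projective surface) confines $c$ to $\{1,2\}$, and $c=2$ occurs only if $d=4$, $H^2=1$, and $H\cdot K_X=-2$. I expect this sub-case to be the main obstacle. The plan is to exclude it by a parity argument from adjunction on the Gorenstein surface $X$: for every effective Cartier divisor $C$, one has $C^2+C\cdot K_X = 2p_a(C)-2\in 2\Z$; since any Cartier divisor on the projective $X$ can be written as a difference of two effective Cartier divisors, the residue $D^2+D\cdot K_X \pmod 2$ is a well-defined function on $\Pic(X)$ which is identically zero. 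Applied to $D=H$ in our putative sub-case, this gives the contradiction $1+(-2)=-1\equiv 0\pmod 2$. Hence $c=1$ (case \autoref{moredP-3}).

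Combining the three analyses yields the ``if and only if'' statement and simultaneously identifies the two complementary configurations: $X\simeq\p^2_\k$ (with $c=3$, hence $3H\equiv -K_X$) and $X$ a quadric in $\p^3_\k$ (with $c=2$, hence $2H\equiv -K_X$), which is precisely the ``moreover'' clause. All remaining steps are mechanical bookkeeping with the preceding lemmas; the only genuinely non-formal ingredient is the degree-four parity argument.
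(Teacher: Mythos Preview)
Your proposal is correct, and for $d=9$ and $d=8$ it follows the paper's argument essentially verbatim. The genuine divergence is in the exclusion of the phantom case $K_X^2=4$, $-K_X=2H$, $H^2=1$.

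The paper handles this by a lengthy case split. If $X$ is geometrically normal it passes to the minimal resolution $Y\to X_{\bar\k}$, which is a weak del Pezzo of degree $4$, finds a $(-1)$-curve $l$ on $Y$, and uses $1=-K_Y\cdot l = dH_{\bar\k}\cdot l\geq d$ to force $d=1$. If $X$ is not geometrically normal it invokes the full classification of \autoref{thm: dP-base-change-classification}, running through four separate normalised base changes $(\mathbb{P}(1,1,4),\;\mathbb{P}^1\times\mathbb{P}^1,\;\mathbb{F}_2,\;\mathbb{P}^2)$ and checking in each that the pullback of $K_X$ is not divisible; the last case even requires a conductor square and the associated exact sequence of Picard groups.

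Your parity argument replaces all of this with a single line: Riemann--Roch on the regular projective surface $X$ gives $\chi(\mathcal{O}_X(D))-\chi(\mathcal{O}_X)=\tfrac12(D^2-D\cdot K_X)\in\Z$, so $D^2+D\cdot K_X$ is even for every $D\in\Pic(X)$; but $H^2+H\cdot K_X=1-2=-1$ is odd. This is valid over any base field (Riemann--Roch for regular projective surfaces holds in that generality), is insensitive to whether $X$ is geometrically normal, and bypasses the classification entirely. The trade-off is that the paper's argument, while longer, stays entirely within the toolkit already assembled (\autoref{rmk:weak-dp}, \autoref{thm: dP-base-change-classification}), whereas yours imports Riemann--Roch as an outside ingredient---but that ingredient is standard and the gain in brevity is substantial.
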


 \begin{proof}
	Since $X$ is geometrically integral, \autoref{cor: geometrically integral del Pezzo} implies that $K_X^2\leq 9$.
 	If $K_X^2=9$, then $X$ is a Severi-Brauer surface by \autoref{lem: SB}.
	In this case, $-K_X$ does not generate $\Pic(X)$ if and only if $dH=-K_X$ for some $d>1$. As $X_{\bar{\k}} \simeq \mathbb{P}^2_{\bar\k}$ and $\Pic(X_{\bar{\k}})=\mathbb{Z}[L]$ with $-K_{X_{\bar{\k}}}=3L$, the fact that $-K_X$ is divisible by $H$ is equivalent to $H_{\bar\k}=L$ on $\p^2_{\bar\k}$, and thus to $X$ being a trivial Severi--Brauer variety.

 	If $K_X^2=8$, then $-K_X=dH$ is only possible for $d\in\{1,2\}$. By \autoref{lem: dp8-vs-quadric}, $X$ is a quadric if and only if $d=2$.

	Suppose that $K_X^2 < 8$. Then by \autoref{lem:no-minimal-dp7} we have $K_X^2 \leq 6$.
	Let $d \geq 1$ such that $-K_X=dH$.
	If $d=1$, then $-K_X=H$ and we are done.
	If $d\geq 2$, then $d=2$, $K_X^2=4$, and $H^2=1$. We devote the rest of the proof to show that this case does not occur. From now on, let $X$ be a del Pezzo surface of degree $4$.

	Suppose first that $X$ is geometrically normal with $K_X^2=4$. Let $\pi \colon Y \to X_{\overline{\k}}$ be the minimal resolution, so that $Y$ is a weak del Pezzo surface of degree $4$ by \autoref{rmk:weak-dp}\autoref{weak-dp:1}.
	In this case, as $Y$ is obtained as the blow-up of $\mathbb{P}^2_{\bar{\k}}$, there exists a $(-1)$-curve $l$ on $Y$ and thus $-K_Y \cdot l=1$.
	As $-K_Y=\pi^*(-K_{X_{\overline{\k}}}) =\pi^*(dH_{\overline{\k}})$, we have $1=-K_Y\cdot l=dH_{\bar \k}\cdot l\geq d$ and thus we conclude that $d=1$.

	Suppose now that $X$ is not geometrically normal with $K_X^2=4$.
	Let $f\colon Y \to X_{\bar{\k}}$ be the normalisation. 
	Looking again at the list of \autoref{thm: dP-base-change-classification}, we have $p=2$. We distinguish the four possible cases to investigate.
		Suppose that $Y=\mathbb{P}(1,1,4)$ and $(p-1)C= 2L$, where $L$ is a hyperplane section (a line through the vertex of the cone over the quartic normal rational curve). Then $-(K_{\mathbb{P}(1,1,4)}+2L)=4L$ is not divisible in the Picard group (as the Cartier index of $L$ is 4).
		We conclude that $K_X$ is not divisible by formula \autoref{eq: normalised-base-change}.
	Suppose now that $Y=\p^1_{\bar{\k}} \times \p^1_{\bar{\k}}$ and $(p-1)C=F$, then $K_Y+(p-1)C=\mathcal{O}(-1, -2)$, which is not divisible.
		The case where $Y=\mathbb{F}_2$ is proven analogously as, in this case, the conductor $(p-1)C=\Sigma_2+F$, and thus $K_{\mathbb{F}_2}+ \Sigma_2+F \sim -\Sigma_2-3F$ is not divisible (note that its intersection with $F$ is 1), we conclude that $K_X$ is also not divisible.
Suppose finally that $Y=\p^2_{\bar{\k}}$ and $(p-1)C=L$.
	In this case, as explained in \cite{Rei94, FS20a} we consider the co-cartesian square of the normalisation,
	\begin{equation}\label{normalisation_square}
		\begin{tikzcd}
			L \ar[hook,r, "j"] \ar[d,"f|_L"]& \p^2_{\bar{\k}} \ar[d, "f"]  \\
			\p^1_{\bar{\k}} \ar[hook, r,"i"] & X_{\bar{\k}}.
		\end{tikzcd}
	\end{equation}
	which induces a short exact sequence of Picard groups \cite[cf. page 1791]{FS20a}: \[ 0 \to \Pic(X_{\bar{\k}}) \xrightarrow{(f^*, i^*)} \Pic(\mathbb{P}^2_{\bar{\k}}) \oplus  \Pic(\mathbb{P}^1_{\bar{\k}}) \xrightarrow{g} \Pic(L),\]
where $g=j^*-(f|_L)^*$.
	 As explained in \cite{Rei94}, the morphism $L \to \mathbb{P}^1_{\bar{\k}} $ is 2 to 1, and thus $g(n,m)=n-2m$.
	As $(f^*,i^*)(-K_{X_{\bar{\k}}})$ is $(2,1)$, we conclude that $-K_{X_{\bar{\k}} }$, and henceforth $-K_X$, is not divisible.
\end{proof}

\begin{corollary} \label{cor: moredP-sepclosed}
	Let $\k$ be a separably closed field.
 Let $X$ be a geometrically integral del Pezzo surface over $\k$ with $N^1(X)=\mathbb{Z}[H]$ for an ample Cartier divisor $H$. Assume that $K_X^2\geq 5$.
 Then, $X$ is one of the following:
 \begin{enumerate}
 	\item $X=\p^2_\k$;
	\item $p=2$, $K_X^2=8$, and $X_{\bar\k}$ has an $A_1$-singularity;
	\item $p=5$, $K_X^2=5$, and $X_{\bar\k}$ has an $A_4$-singularity.
 \end{enumerate}
\end{corollary}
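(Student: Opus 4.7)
The plan is to combine the classification of \autoref{cor: moredP} with the simplifications available over a separably closed field, and argue case by case on $K_X^2$. Since $\Pic(X)=\mathbb{Z}[H]$ forces $\rho(X)=1$ and \autoref{cor: geometrically integral del Pezzo} gives $K_X^2\leq 9$, we have $K_X^2\in\{5,6,7,8,9\}$. Moreover, the Picard-rank-one non-geometrically-normal entries in \autoref{cor: geometrically integral del Pezzo} all satisfy $K_X^2\leq 4$, so under our assumption $X$ is geometrically normal, hence geometrically canonical by \autoref{prop: geom_normal_implies_canonical-DP}.

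I would first handle the subcase of \autoref{cor: moredP} where $H\not\equiv -K_X$: then $X=\p^2_\k$ (yielding case (1)) or $X$ is a quadric in $\p^3_\k$ with $K_X^2=8$. In the latter situation, $X_{\bar\k}$ is a quadric in $\p^3_{\bar\k}$ by base change, hence either $\simeq\p^1_{\bar\k}\times\p^1_{\bar\k}$ (with $\rho=2$, excluded via $\rho(X)=\rho(X_{\bar\k})$ from \cite[Proposition~2.4]{Tan18b}) or the quadric cone $\simeq\mathbb{P}(1,1,2)$, which has Picard rank $1$, an $A_1$-singularity, and occurs only in characteristic $p=2$ by \autoref{lem: dP8}. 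This gives case (2).

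Next I would address the subcases of \autoref{cor: moredP} where $H\equiv -K_X$: case (1) (non-trivial Severi--Brauer) is vacuous because $\mathrm{Br}(\k)=0$ for separably closed $\k$; case (2) of that corollary ($K_X^2=8$, not a quadric) combined with \autoref{lem: dP8} would force $X\simeq\F_1$, contradicting $\rho(X)=1$; and case (3) gives $K_X^2\leq 6$. Combined with $K_X^2\geq 5$ and with \autoref{lem:no-minimal-dp7} ruling out $K_X^2=7$, I am reduced to $K_X^2\in\{5,6\}$. Here I would invoke the minimal resolution $f\colon Y\to X_{\bar\k}$, a weak del Pezzo surface by \autoref{rmk:weak-dp}\autoref{weak-dp:1}, with $\rho(Y/X_{\bar\k})=9-K_X^2\in\{3,4\}$ equal to the number of $(-2)$-curves on $Y$ by \autoref{rmk:weak-dp}\autoref{lem: rho_volume_dP}. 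Since a smooth del Pezzo of degree $\leq 7$ over $\bar\k$ has Picard rank $\geq 3$, $X_{\bar\k}$ cannot be smooth, so \autoref{thm: dP-base-change-classification} forces $p\in\{2,3,5,7\}$; I would then enumerate the possible ADE configurations of rank $3$ or $4$ on weak del Pezzos of degree $6$ or $5$ (cf.\ \cite[Section~8.1.3]{Dol12}) and determine, for each, whether the corresponding canonical del Pezzo over $\bar\k$ is the base change of a regular model over a separably closed imperfect field of the allowed characteristic. The expected outcome is that $K_X^2=6$ admits no such descent, and that $K_X^2=5$ admits a descent only for the $A_4$-configuration in characteristic $p=5$, giving case (3).

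The principal obstacle is precisely this last enumeration: classifying regular, non-smooth, Picard-rank-one canonical del Pezzo surfaces of degrees $5$ and $6$ over a separably closed imperfect field. The geometric part -- which ADE configurations of the predicted rank actually realise on a weak del Pezzo of the given degree -- is classical, but the arithmetic/descent part requires the detailed study of regular non-smooth del Pezzo surfaces in small characteristic developed in \cite{FS20a,PW22,BT22,Tan19,BM23}. Only the classical $A_4$ example in characteristic $5$ is expected to survive, matching case (3) of the statement.
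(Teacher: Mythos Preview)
Your strategy is sound and lands in the same place as the paper's proof; there is no genuine gap. The difference is that you overestimate the difficulty of your ``principal obstacle''. The paper dispatches the cases $K_X^2\in\{5,6\}$ with two sharp ingredients rather than the broad structure theory you invoke.

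First, the paper's case analysis is more direct than yours. Instead of routing through \autoref{cor: moredP}, it observes at the outset that over a separably closed field $\rho(X_{\bar\k})=\rho(X)=1$ \cite[Proposition~2.4(2)]{Tan18b}, so \autoref{rmk:weak-dp}\autoref{lem: rho_volume_dP} gives $K_X^2=9-\rho(Y/X_{\bar\k})$; thus $X$ is geometrically regular iff $K_X^2=9$. This immediately yields $\p^2_\k$ for $K_X^2=9$ (Severi--Brauer with a rational point) and forces $X_{\bar\k}\simeq\mathbb P(1,1,2)$, $p=2$ for $K_X^2=8$ via \autoref{lem: dP8}, without your detour through the quadric/non-quadric dichotomy.

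Second, and more importantly, your final enumeration does not require the full machinery of \cite{FS20a,PW22,BT22,Tan19,BM23}. The paper uses only:
\begin{itemize}
\item Dolgachev's tables \cite[Tables~8.4 and 8.5]{Dol12}, which show that a canonical del Pezzo of degree $6$ (resp.\ $5$) with Picard rank $1$ over $\bar\k$ has exactly the singularity configuration $A_1+A_2$ (resp.\ a single $A_4$);
\item Schr\"oer's theorem \cite[Theorem~6.1]{Sch08}, which says that a rational double point on $X_{\bar\k}$ lying under a regular point of $X$ constrains the characteristic: for type $A_n$ one needs $p\mid(n+1)$.
\end{itemize}
Hence degree $6$ would require simultaneously $p=2$ (for the $A_1$) and $p=3$ (for the $A_2$), which is impossible; degree $5$ forces $p=5$, giving case~(3). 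This is the missing key lemma you were looking for.
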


\begin{proof}
	First of all, note that since we work over a separably closed field, we have that $\rho(X_{\bar\k})=\rho(X)$ by \cite[Proposition 2.4(2)]{Tan18b}, and hence \autoref{rmk:weak-dp}\autoref{lem: rho_volume_dP} implies that $X$ is geometrically regular exactly if $K_X^2=9$.

	Suppose $K_X^2=9$. As any Severi--Brauer variety over a separably closed field has a rational point by \cite[Corollary 3.5.71]{Poo17}, we find that $X\simeq\p^2_\k$ by \autoref{lem: SB}.

	Suppose $K_X^2=8$. As $X$ is not geometrically regular, \autoref{lem: dP8} implies that $X_{\bar\k}\simeq \p(1,1,2)$ and $p=2$; in particular, $X_{\bar\k}$ has an $A_1$-singularity.

	$K_X^2=7$ is excluded by \autoref{lem:no-minimal-dp7}.

	For the remaining cases, we write now $Y \to X_{\bar{\k}}$ for the minimal resolution, and as $\rho(X_{\bar\k})=\rho(X)=1$, we obtain $\rho(Y/X_{\bar{\k}})=9-K_X^2$ from \autoref{rmk:weak-dp}\autoref{lem: rho_volume_dP}.

	We exclude $K_X^2=6$: by \autoref{rmk:weak-dp} we have $\rho(Y/X_{\bar{\k}})=3$.
	By \cite[Table 8.4, page 431]{Dol12}, we deduce $X_{\bar{\k}}$ has two singular points, one of type $A_1$ and one of type $A_2$. This is impossible as they cannot both be twisted form of regular sufaces by \cite[Theorem 6.1]{Sch08}.

	If $K_X^2=5$, by \autoref{cor: geometrically integral del Pezzo} $X$ is geometrically normal. By \autoref{rmk:weak-dp} we find $\rho(Y/X_{\bar{\k}})=4$.
	By \cite[Table 8.5]{Dol12}, we deduce $X_{\bar{\k}}$ has one $A_4$-singularity, and hence, $p=5$ by \cite[Theorem 6.1]{Sch08}.
\end{proof}
Note that one can similarly deduce the possible type of singularities for geometrically normal del Pezzo surfaces with $K_X^2 \leq 4$, using \cite[Theorem 6.1]{Sch08} and \cite[Table~8.6, Section~9.2.2, Table~8.8, Table~8.10]{Dol12}, respectively.For the non-taut singularities, a more detailed classification has been proven by Stadlmayr in \cite{Sta21}.

\subsubsection{Geiser and Bertini involutions.}

We extend the existence of Geiser and Bertini involutions on smooth del Pezzo surfaces of degree $1$ and $2$ to certain regular del Pezzo surfaces over arbitrary fields. This will be needed later when we discuss Sarkisov links of type IV and involutions in the Cremona group.

\begin{proposition} \label{lem: Geiser_Bertini_involutions}
Let $\k$ be an arbitrary field and let $X$ be a geometrically integral del Pezzo surface with $h^1(X, \mathcal{O}_X)=0$. Assume that $K_X^2=2$ (resp. $K_X^2=1)$. Then the following hold:
\begin{enumerate}
    \item The linear system $|-K_X|$ (resp. $|-2K_X|$) is base point free and it induces a double cover $\varphi_{-K_X} \colon X \to \mathbb{P}^2_\k$ (resp. $\varphi_{-2K_X} \colon X \to \mathbb{P}_\k(1,1,2)$).
    \item If $\varphi_{-K_X}$ (resp. $\varphi_{-2K_X}$) is separable, then it is a Galois cover whose Deck transformation is a biregular involution on $X$ called \emph{Geiser involution} (resp.\ \emph{Bertini involution}).
    \item If $\rho(X)>1$, then $\varphi_{-K_X}$ (resp.\ $\varphi_{-2K_X}$) is separable.
\end{enumerate}
\end{proposition}

\begin{proof}
    We prove the case $K_X^2=2$, as the case $K_X^2=1$ is analogous.
    In \cite[Theorem III.3.5]{k-rat-curves} and \cite[Theorem 2.15]{BT22} it is shown that if $X$ is geometrically canonical then $X$ is isomorphic to a hypersurface of $\P_\k(1,1,1,2)$ of degree $4$, and $|-K_X|$ induces a morphism $X\to\P^2_\k$ of degree $2$.
    In \cite[Proposition 2.4]{BT24} (see also \cite[Proposition 2.5]{BT24}), it is shown that the same strategy can be applied more generally if $X$ is a geometrically integral del Pezzo surface with $h^1(X, \mathcal{O}_X)=0.$ 
    If the map is separable of degree 2, it is Galois and its Deck transformation is a biregular involution on $X$.   
   
    If $\rho(X)>\rho(\mathbb{P}^2_\k)$, the maps are separable by \cite[Proposition 2.4(1)]{Tan18b}, concluding.
\end{proof}

\begin{remark}
    The condition on the Picard rank to be at least $2$ is necessary for the anticanonical maps to be separable and to show the existence of the Geiser and Bertini involutions. 
    Consider the following del Pezzo surface of degree $2$ with canonical singularities and Picard rank 1 over an algebraically closed field of characteristic $2$:
    $$ X=\{w^2=x^3y+y^3z+z^3x\}\subset \P(1,1,1,2).$$
    It is easy to see that $X$ is normal, it has $7A_1$ singularities and its anticanonical morphism is purely inseparable onto $\mathbb{P}^2_{\k}$. 
    One can construct a regular twisted form over $k=\overline{\mathbb{F}_2}(t_0, t_1, t_2)$ as follows:
    $$ Y=\{w^2=t_0x^3y+t_1y^3z+t_2z^3x\}.$$
    
    A similar pair of examples for del Pezzo of degree $1$ is given by
    $$\{w^2=y^3+x_0x_1y^2+x_0^5x_1+x_0x_1^5 \}\subset \P(1,1,2,3). $$
\end{remark}

It is useful to know the action of such involutions on the Picard group of the del Pezzo surfaces.
For this, we recall the description of the equivariant Picard group in the case of Galois finite cover.

\begin{lemma} \label{lem: pic_iso}
Let $ f \colon X \to Y$ be a finite Galois morphism of projective varieties with Galois group $G$.
Then the induced morphism of abelian groups $f^* \colon \Pic(Y)_{\mathbb{Q}} \to \Pic(X)_{\mathbb{Q}}^{G}$ is an isomorphism.
\end{lemma}
\begin{proof}
    Consider the Hochschild--Serre spectral sequence in the \'etale topology (see \cite[Theorem 2.20]{Mil80}):
    $$ H^p(G,H^q_{\acute{e}t}(X, \mathbb{G}_m)) \Rightarrow H^{p+q}_{\acute{e}t}(Y, \mathbb{G}_m). $$
    Looking at the first terms of the spectral sequence, and using Hilbert's 90 (\cite[Proposition 4.9]{Mil80}), we have the exact sequence
    $$ 0 \to \Pic(Y) \to \Pic(X)^G \to H^2(G, H^0(X, \mathcal{O}_X^*)).$$
    Since $G$ is finite, $H^2(G, H^0(X, \mathcal{O}_X^*))$ is a torsion group (e.g. \cite[Chapitre 2, Corollaire 3]{Serre-coh-galoisienne}) and thus we conclude that the injective homomorphism $\Pic(Y) \to \Pic(X)^G$ is surjective after tensoring with $\mathbb{Q}$.
\end{proof}

\begin{proposition}\label{lem: action-Bertini-Geiser}
Let $Y$ be a del Pezzo surface of degree $d=1$ (resp.\ $d=2$), and assume that the Bertini (resp.\  Geiser) involution $\sigma\in\Aut(Y)$ exists.
%Let $Y$ be a del Pezzo surface of degree $d\in\{1,2\}$ \st{with $\rho(Y) \geq 2$}. If $d=1$ (resp. $d=2$), let $\sigma\in\Aut(Y)$ be the Bertini (respectively, Geiser) involution on $Y$.
Then $N^1(Y)^{\sigma}_{\mathbb{Q}}=\mathbb{Q}[-K_Y]$.
%\FB{I would erase from now on and just say that the invariant part has Picard rank 1, no?}
%Then $\sigma^*\colon N^1(Y)_\Q\to N^1(Y)_\Q$ does not fix any extremal ray. 
In particular, if $\rho(Y)=2$, then $\sigma^*$ interchanges the two extremal rays of the nef cone of $Y$.
\end{proposition}
\begin{proof}
%By \autoref{lem: Geiser_Bertini_involutions}, the Geiser and the Bertini involutions exist.
Denote by  $G=\langle\sigma\rangle \simeq \Z/2\Z$ the subgroup of automorphisms generated by the Geiser (resp.\ Bertini) involution. By \autoref{lem: pic_iso}, we have $N^1(X)_{\mathbb{Q}}^{G} \simeq N^1(\mathbb{P}^2_{\k})_{\mathbb{Q}} \simeq \mathbb{Q}$ (resp. $N^1(Y)_{\mathbb{Q}}^{G} \simeq N^1(\mathbb{P}(1,1,2))_{\mathbb{Q}}$). As the numerical class $-K_Y$ is preserved by the Geiser (resp. Bertini) involution, we have $N^1(Y)_{\mathbb{Q}}^{G}=\mathbb{Q}[-K_Y]$. %As multiples of $K_Y$ are never extremal, $\sigma^*$ does not fix any extremal ray.
\end{proof}

\subsection{Conic bundles} \label{ss-conic}
	In this section, we give properties of rank $r$ fibrations $X\to B$ over $T$ in the case when $\dim B=1$ and $T$ is the spectrum of an excellent regular ring of dimension at most $1$.

\begin{definition}\label{def: conic-bundle}
	Let $X$ be a regular quasi-projective surface over $T$ and let $\pi \colon X \to B$ be a projective contraction onto a regular curve $B$ over $T$.
	We say $\pi$ is a \emph{conic bundle} if $-K_X$ is ample over $B$ and it is a \emph{Mori conic bundle} if $\rho(X/B)=1$.
	We say $\pi$ is {\em generically a conic bundle} if $-K_X$ is big over $B$.
\end{definition}

Notice that $\pi$ is a conic bundle if and only if $\pi$ is a rank $\rho(X/B)$ fibration over a curve. Moreover, $\pi$ is a Mori conic bundle if and only if $\pi$ is a rank $1$ fibration over a curve.
Note that, since the generic fibre of $\pi$ is a curve, $\pi \colon X \to B$ is generically a conic bundle if and only if $-K_{X_{k(B)}}$ is ample. Therefore $\pi$ is generically a conic bundle if and only if the generic fibre $X_{k(B)}$ is a regular conic in $\mathbb{P}^2_{k(B)}$ by \cite[10.6]{kk-singbook}.

Let $b \in B$ be a closed point with residue field $k(b)$. We denote by $X_b:=X \times_B \Spec k(b)$ the fibre over $b$.
We will identify $X_b$ as a subscheme of $X$ via the closed immersion $X_b \to X$, and we will denote the associated divisor with the same notation.

\begin{lemma}\label{l-conic}
	Let $\pi \colon X \to B$ be a rank $r$ fibration over $T$, where $B$ is a curve.
	Let $b \in B$ be a closed point such that the fibre $X_b$ is irreducible.
	Then $X_b$ is an integral conic in $\mathbb{P}^2_{\k(b)}$.
	Moreover, if $\charact \k(b) \neq 2$, then $X_b$ is a geometrically reduced $\k(b)$-scheme.
	Finally, if  $\k(b)$  is separably closed and $\charact(\k(b)) \neq 2$, then $X_{b}$ is a geometrically regular $\k(b)$-scheme.
\end{lemma}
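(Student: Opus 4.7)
The plan is to reduce the lemma to standard facts about conic bundles and conics in $\mathbb{P}^2$.

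Since $X$ is regular and $B$ is regular of dimension $1$, $\pi$ is flat by miracle flatness. Because $\pi$ is a rank $r$ fibration with $-K_X$ $\pi$-ample, the generic fibre $X_\eta$ is a regular projective curve over $k(B)$ with ample anticanonical; Riemann--Roch (together with the regularity of $X_\eta$ and $\omega_{X_\eta}=\omega_X\rvert_{X_\eta}$) gives $\deg(-K_{X_\eta})=2$, $\chi(\mathcal{O}_{X_\eta})=1$, $h^0(-K_{X_\eta})=3$, so $X_\eta$ is embedded by $|-K_{X_\eta}|$ as a smooth conic in $\mathbb{P}^2_{k(B)}$. Using $\omega_{X_b}=\omega_X\rvert_{X_b}$ (from the triviality of $\mathcal{O}_X(X_b)\rvert_{X_b}$, as $X_b=\pi^*[b]$ and $b$ is locally principal on the regular curve $B$), flat constancy of Euler characteristic and degree on fibres transfers these values to the closed fibre: $\chi_{\k(b)}(\mathcal{O}_{X_b})=1$ and $-K_X\cdot X_b=\deg_{\k(b)}(-\omega_{X_b})=2$.

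To show $X_b$ is integral, write $X_b=mC$ for the integral reduced curve $C$ underlying the irreducible fibre; since $C$ is contained in a fibre, $C^2=0$ and $K_X\cdot C=-2/m$. Adjunction then gives $2p_a(C)-2=K_X\cdot C+C^2=-2/m$, and the case $m=2$ would force $p_a(C)=1/2\notin\mathbb{Z}$, a contradiction. Hence $m=1$ and $X_b=C$ is integral. Then $X_b$ is a Gorenstein integral projective curve of arithmetic genus $0$ with $\deg\omega_{X_b}=-2<0$, so Serre duality gives $h^1(\mathcal{O}_{X_b})=h^0(\omega_{X_b})=0$ and $h^1(-K_{X_b})=h^0(2K_{X_b})=0$; Riemann--Roch yields $h^0(\mathcal{O}_{X_b})=1$ and $h^0(-K_{X_b})=3$. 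The complete anticanonical system then defines a closed immersion of $X_b$ into $\mathbb{P}(H^0(-K_{X_b}))^\vee\simeq\mathbb{P}^2_{\k(b)}$ onto an integral degree-$2$ subscheme, that is, an integral conic.

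For the geometric claims when $\charact\k(b)\neq 2$, write $X_b=V(f)\subset\mathbb{P}^2_{\k(b)}$ for an irreducible quadratic form $f$. In characteristic $\neq 2$, irreducibility of $f$ is equivalent to its symmetric matrix $M_f$ having full rank $3$; since the rank is invariant under base change, $V(f)\times\overline{\k(b)}$ is again cut out by a non-degenerate form, whose linear partial derivatives have no common zero in $\mathbb{P}^2_{\overline{\k(b)}}$. Hence $X_b\times\overline{\k(b)}$ is smooth, giving the geometric reducedness, and, in particular, the geometric regularity when $\k(b)$ is separably closed.

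The main obstacle I foresee is the identification of $X_b$ with an integral conic in $\mathbb{P}^2_{\k(b)}$: one must combine the numerical invariants coming from flatness with the classification of Gorenstein integral curves of arithmetic genus $0$ carrying an ample anticanonical of degree $2$ with three sections, in order to exhibit the anticanonical system as defining a closed immersion onto a degree-$2$ integral subscheme. The rest is then routine linear algebra for quadratic forms in characteristic $\neq 2$.
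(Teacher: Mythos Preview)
Your reduction to showing that $X_b$ is an integral conic is essentially correct and more explicit than the paper's approach, which simply observes that the $K_X$-MMP over $B$ is an isomorphism near the irreducible fibre $X_b$ (so one may assume $X/B$ is a Mori fibre space) and then cites \cite[Proposition~2.18]{BT22}. Your flatness/Riemann--Roch computation of the invariants is fine, and the integrality argument via $p_a(C)=1-1/m\notin\mathbb{Z}$ works for every $m\ge 2$, not just $m=2$. The very-ampleness of $|-K_{X_b}|$ that you flag can be completed as follows: the image of $\phi_{|-K_{X_b}|}\colon X_b\to\mathbb{P}^2_{\k(b)}$ cannot be a line (otherwise $\phi^*H^0(\mathcal{O}_{\mathbb{P}^2}(1))$ would factor through $H^0(\mathcal{O}_{\mathbb{P}^1}(1))$, of dimension $2$, contradicting $h^0(-K_{X_b})=3$), so $\phi$ is finite of degree $1$ onto a conic $D$; comparing $\chi(\mathcal{O}_{X_b})=1=\chi(\mathcal{O}_D)$ forces $\phi$ to be an isomorphism.

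There is, however, a genuine error in the quadratic-form step. In characteristic $\neq 2$, irreducibility of $f$ over $\k(b)$ is \emph{not} equivalent to the symmetric matrix $M_f$ having rank $3$: for instance $x^2+y^2$ over $\mathbb{R}$ is irreducible of rank $2$. What irreducibility actually gives is rank $\ge 2$, since in characteristic $\neq 2$ a rank-$1$ form is the square of a linear form already defined over $\k(b)$. Rank $\ge 2$ still suffices for geometric reducedness, because a rank-$2$ conic is the union of two \emph{distinct} lines over $\overline{\k(b)}$. For the final assertion, when $\k(b)$ is separably closed and $\charact\k(b)\neq 2$, the two linear factors of a rank-$2$ form are conjugate over a separable quadratic extension and hence are already defined over $\k(b)$, contradicting irreducibility; so the rank must be $3$, and geometric regularity follows. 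With this correction your argument goes through.
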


\begin{proof}
	Running the $K_X$-MMP over $B$ is an isomorphism around $X_b$ (since $X_b$ is irreducible by assumption), so we can assume that $X/B$ is a Mori fibre space.
	The proof of \cite[Proposition 2.18]{BT22} adapts to this setting.
\end{proof}

\begin{proposition}\label{prop: geom_normal_implies_canonical-over_base}
	Let $X$ be a projective regular surface over a field $\k$.
	Suppose that $f \colon X \to B$ is a rank $r$ fibration and $B$ a curve.
	If $X$ is geometrically normal, then $B$ is geometrically regular.
	Moreover, if $p\neq 2$, $r=1$, and $\k$ is separably closed, then $X$ is geometrically regular.
\end{proposition}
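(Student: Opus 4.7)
My plan is to prove the two assertions in sequence: the first by a descent argument using Stein factorisation, the second by reducing to geometric regularity of the fibres.

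For the first assertion, since $X$ is geometrically normal, $X_{\overline{\k}}$ is normal. By flat base change, $\pi_{\overline{\k}} \colon X_{\overline{\k}} \to B_{\overline{\k}}$ remains a proper contraction with $(\pi_{\overline{\k}})_*\mathcal{O}_{X_{\overline{\k}}} = \mathcal{O}_{B_{\overline{\k}}}$. The Stein factorisation of a proper morphism from a normal source yields a normal intermediate variety (locally, its sections coincide with global sections of an open of $X_{\overline{\k}}$, which are integrally closed as $X_{\overline{\k}}$ is normal); in our case this intermediate variety is $B_{\overline{\k}}$ itself, so $B_{\overline{\k}}$ is normal. A one-dimensional normal scheme is regular, so $B$ is geometrically regular.

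For the second assertion, since $\k$ is separably closed and $B$ is smooth over $\k$ by the first part, every closed point $b\in B$ has residue field $\k(b)=\k$. Moreover, each fibre $X_b$ over a closed point is irreducible: if $X_b = \sum n_i C_i$ had distinct irreducible components $C_i$, each $C_i$ would be vertical and $[C_i]$ would be proportional to the fibre class $[F]$ in $N^1(X/B)$ since $\rho(X/B)=1$; then $F \cdot C_i = 0$ would force $C_i \cdot C_j = 0$ for all $i,j$, contradicting the fact that distinct components of the connected fibre $X_b$ must meet. Applying \autoref{l-conic} with $\k(b)=\k$ separably closed and $p\neq 2$ then shows that each closed fibre $X_b$ is geometrically regular, hence smooth over $\k$. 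The generic fibre $X_\eta$ is also smooth over $\k(B)$, being an integral conic in $\mathbb{P}^2_{\k(B)}$ and integral conics in characteristic $\neq 2$ are smooth. Consequently $\pi$ is a smooth morphism (being flat, with smooth fibres), and composing with the smooth structure morphism $B \to \Spec \k$ yields that $X$ is smooth over $\k$, equivalently geometrically regular.

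The main technical obstacles are the normality descent in the first part (requiring care when $B_{\overline{\k}}$ is not integral, as components of a normal scheme meeting at a point would violate normality) and establishing the irreducibility of fibres in the Mori setting for the second part; the remaining deductions about flat morphisms with smooth fibres are routine.
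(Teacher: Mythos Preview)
Your approach matches the paper's: descend normality of $B_{\overline\k}$ from $X_{\overline\k}$ via $\pi_*\mathcal O_X=\mathcal O_B$, then in the rank~$1$ case argue that closed fibres are irreducible, apply \autoref{l-conic}, and conclude that $X\to B\to\Spec\k$ is a composition of smooth morphisms.

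Two intermediate claims are false as stated, though neither breaks the argument. First, smoothness of $B$ over a separably closed field $\k$ does \emph{not} force $\k(b)=\k$ for closed points $b$ (think of $\A^1_\k$); what is true, and what you actually need for \autoref{l-conic}, is that $\k(b)/\k$ is purely inseparable (since $\k=\k^{\mathrm{sep}}$), so $\k(b)$ is itself separably closed. Second, ``integral conics in characteristic $\neq 2$ are smooth'' is false: $x^2+ty^2=0$ over a field where $-t$ is not a square is integral but singular at $[0:0:1]$. What is true is that a \emph{regular} conic in characteristic $\neq 2$ is smooth, since after diagonalising the quadratic form any drop in rank produces a rational singular point; and the generic fibre $X_\eta$ is regular because it is a localisation of the regular scheme $X$. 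With these two fixes your proof goes through and is essentially the paper's argument, with the added benefit that you spell out the irreducibility of closed fibres (via $\rho(X/B)=1$ forcing all vertical curve classes to be proportional) where the paper merely asserts it.
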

\begin{proof}
	As $X$ is geometrically normal and $\pi_*\mathcal{O}_X=\mathcal{O}_B$, we conclude that $B$ is geometrically normal and thus geometrically regular as $\dim(B) = 1$.

	For the last statement, as $\k$ is separably closed and $r=1$, all closed fibres are geometrically irreducible.
	Since $p>2$, by \autoref{l-conic} all fibers of $f$ are smooth and therefore, as $X \to B \to \Spec(\k)$ is a composition of smooth morphisms (cf. \cite[Proposition 2.18]{BT22}), we conclude that $X$ is geometrically regular.
\end{proof}

\begin{example}\label{ex:fibration not geometrically regular}
	Let $\k$ be an imperfect field of characteristic $p>0$.
	Let $\pi_1 \colon  \mathbb{P}^1_\k \times \mathbb{P}^1_{\k} \to \mathbb{P}^1_{\k}$ be the natural projection onto the first term.
	Let $\yy \in \mathbb{P}^1_{\k}$ be a closed point whose residue field extension $k(\yy)/\k$ is purely inseparable of degree $p$.
	Let $\xx \in \mathbb{P}^1_{k(\yy)} = \pi_1^{-1}(\yy)$ be a $k(\yy)$-rational point. For example, one can take $\mathfrak{m}_\xx=(x_0^p-t,y_0)$ for some $t\in\k\setminus\k^p$.
	As will follow from \autoref{lem:k(x)=k(b)} below, the blow-up $\eta \colon X \to  \mathbb{P}^1_\k \times \mathbb{P}^1_{\k} $ at $\xx$ is endowed with a natural rank $2$ fibration $\pi_1 \circ \eta \colon X \to \mathbb{P}^1_\k$ and $X$ is not geometrically regular.
\end{example}

\begin{example}\label{ex:fibration not geometrically normal}
Let $\k$ be a field of characteristic $p>2$ and take the regular curve $B=(zx^{p-1}+y^p+tz^p=0)$ in $\p^2_\k$ for $t\in\k\setminus \k^p$, and set $X=B\times\p^1_\k$.
Then $X$ is not geometrically normal, $X\to B$ is a rank $1$ fibration (but $X\to\p^1_\k$ is not as the generic fibre is not of genus zero).
In $\charact(\k)=2$, there exist even \emph{rational} rank $1$ fibrations $X\to \p^1_\k$ where $X$ is not geometrically normal, see \autoref{ex: Mori-conic-double-lines} and \autoref{ex:geometrically-non-normal-pdeg1}.
\end{example}

The following shows that surfaces with a Mori conic bundle structure admitting a section are projective bundles.

\begin{lemma}\label{lem: MFS_conic_with_section}
	Let $X$ be a regular surface and let $\pi \colon X \to B$ be a Mori conic bundle over $T$.
	If $X_{\k(B)}(\k(B)) \neq \emptyset$, then for all $b \in B$ we have $X_{k(b)} \simeq \mathbb{P}^1_{\k(b)}$.
	Moreover, there exists a locally free sheaf $\mathcal{E}$ of rank $2$ on $B$ such that $X \simeq \mathbb{P}_B(\mathcal{E})$.
\end{lemma}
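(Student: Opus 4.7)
The plan is to extend the rational section to a genuine section $s\colon B \to X$, show each closed fibre is $\mathbb{P}^1_{\k(b)}$, and then realise $X$ as the projectivisation of a rank-$2$ bundle. For the extension, the given $\k(B)$-rational point of $X_{\k(B)}$ defines a section of $\pi$ over the generic point; since $\pi$ is proper and $B$ is a regular one-dimensional scheme, the valuative criterion of properness applied at each closed point of $B$ extends this to a section $s\colon B \to X$ of $\pi$. As $X$ is a regular surface, $S := s(B)$ is a prime Cartier divisor on $X$ with $\pi|_S\colon S \to B$ an isomorphism and $S \cdot X_b = 1$ for every closed point $b \in B$.

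Next, I would prove $X_b \simeq \mathbb{P}^1_{\k(b)}$ for every closed $b \in B$. Since $\rho(X/B) = 1$, every closed fibre of $\pi$ is irreducible, so by \autoref{l-conic} $X_b$ is an integral conic in $\mathbb{P}^2_{\k(b)}$. Let $p := s(b)$, a $\k(b)$-rational point of $X_b$. The crucial claim is that $X_b$ is smooth at $p$: if $t_b$ is a uniformiser of $\mathcal{O}_{B,b}$, then $X_b$ is cut out near $p$ by $\pi^*(t_b)$, and if $\pi^*(t_b) \in \mathfrak{m}_p^2$, applying $s^* \colon \mathcal{O}_{X,p} \to \mathcal{O}_{B,b}$ would yield $t_b = s^*\pi^*(t_b) \in s^*(\mathfrak{m}_p^2) \subseteq \mathfrak{m}_b^2$, contradicting that $t_b$ is a uniformiser. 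Hence $\pi^*(t_b) \notin \mathfrak{m}_p^2$, so $X_b$ is regular at $p$; since $p$ has residue field $\k(b)$, regularity is equivalent to smoothness at $p$. Projection from the smooth $\k(b)$-rational point $p$ then identifies the integral conic $X_b$ with $\mathbb{P}^1_{\k(b)}$.

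For the bundle structure I would set $\mathcal{E} := \pi_*\mathcal{O}_X(S)$. Since $\mathcal{O}_X(S)|_{X_b} \simeq \mathcal{O}_{\mathbb{P}^1_{\k(b)}}(1)$ has $h^0 = 2$ and $h^1 = 0$ on every fibre, cohomology and base change shows that $\mathcal{E}$ is locally free of rank $2$ and that its formation commutes with base change. The natural evaluation $\pi^*\mathcal{E} \to \mathcal{O}_X(S)$ is surjective on each fibre (as $\mathcal{O}_{\mathbb{P}^1}(1)$ is globally generated), hence globally surjective by Nakayama, and it induces a $B$-morphism $\phi\colon X \to \mathbb{P}_B(\mathcal{E})$. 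By construction $\phi$ restricts on each fibre to the complete linear-system morphism of $\mathcal{O}_{\mathbb{P}^1_{\k(b)}}(1)$, which is an isomorphism; a proper birational morphism of regular surfaces that is fibrewise an isomorphism is itself an isomorphism (for instance by Zariski's main theorem), giving $X \simeq \mathbb{P}_B(\mathcal{E})$.

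The main obstacle is the smoothness assertion in the middle step. Over imperfect fields of characteristic $2$ an integral conic in $\mathbb{P}^2_{\k(b)}$ can be regular but not geometrically reduced (for example $x^2 + ty^2 = 0$ with $t \notin \k(b)^2$), and then has no smooth $\k(b)$-rational point; the cotangent-space computation above is precisely what rules out such pathological fibres under the hypothesis that $X$ is regular and admits a section.
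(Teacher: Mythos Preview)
Your proof is correct and follows essentially the same strategy as the paper's: produce a section, show each closed fibre is smooth at the section's point (hence is $\mathbb{P}^1_{\k(b)}$), then use cohomology and base change on $\pi_*\mathcal{O}_X(S)$ to realise $X$ as a projective bundle. The only cosmetic differences are that the paper first identifies the generic fibre with $\mathbb{P}^1_{\k(B)}$ via Ch\^atelet before invoking the section (whereas you go straight to the valuative criterion), phrases the smoothness-at-$p$ step as the intersection-number equality $\Sigma\cdot X_b=d_b$ rather than your equivalent uniformiser computation, and packages the bundle step as an identification $\pi_*\mathcal{O}_X(m\Sigma)\simeq\mathrm{Sym}^m\mathcal{E}$ inside $\mathrm{Proj}$ rather than your explicit evaluation morphism.
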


\begin{proof}
	As $X$ is regular, then $X_{k(B)}$ is a regular Fano curve. Since  $X_{k(B)}(k(B)) \neq \emptyset$, $X_{k(B)}$ is geometrically reduced. As it is also geometrically irreducible, it is a smooth conic by \cite[10.6]{kk-singbook} and thus $X_{\overline{\k(B)}} \simeq \p^1_{\overline{\k(B)}} $. Thus  we conclude that $X_{\k(B)} \simeq \mathbb{P}^1_{\k(B)}$ by Châtelet's theorem \cite[Proposition 4.5.10]{Cha44} (see also \cite[10.6]{kk-singbook}).
	Consider a section $\Sigma$ of $\pi$.
	For every closed point $b \in B$, the fibre $X_b$ has intersection $X_b \cdot \Sigma=d_b$ (see Definition~\ref{def:d_x etc} for the definition of $d_b$), which implies that the schematic intersection of $\Sigma$ with $X_b$ is a regular $\k(b)$-point.
	Then $X_b$ is isomorphic to $\mathbb{P}^1_{\k(b)}$ by \cite[10.6]{kk-singbook}.

	We are left to verify that $\pi$ is a $\mathbb{P}^1$-bundle.
	As $\Sigma$ is a Cartier $\pi$-ample divisor, we have that $X \simeq \text{Proj}_{B} \bigoplus_{m \geq 0} \pi_*\mathcal{O}_X(m\Sigma)$ as $B$-scheme.
	By cohomology and base change \cite[Theorem 12.11]{Har77}, we have that $\mathcal{E}:=\pi_*\mathcal{O}_X(\Sigma)$ is a vector bundle of rank $2$ on $B$.
	Again by cohomology and base change, we obtain that $\pi_*\mathcal{O}_X(m \Sigma) \simeq \text{Sym}^m \mathcal{E}$ for every $m\geq0$, thus concluding $X \simeq \mathbb{P}_{B}(\mathcal{E})$.
\end{proof}

\begin{corollary} \label{lem: Mori-conic-p>2}
	Let $\k$ be a separably closed field of characteristic $p>2$.
	Let $f \colon X \to B$ be a Mori conic bundle onto a curve $B$ of genus $0$. Then $f$ is a smooth morphism and $X$ is a Hirzebruch surface.
\end{corollary}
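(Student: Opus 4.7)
The plan is to normalise $B$, establish smoothness of $f$ via a fibre-irreducibility argument, and then exhibit $X$ as a $\mathbb{P}^1$-bundle using a generic section produced by a Brauer-group vanishing.

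First, since $\k$ is separably closed, $\operatorname{Br}(\k)=0$, so the regular genus-$0$ curve $B$, being a $1$-dimensional Severi--Brauer variety over $\k$, is $\k$-isomorphic to $\p^1_\k$. Moreover, since $\overline{\k}=\k^{p^{-\infty}}$ is purely inseparable over $\k$, any class in $\operatorname{Br}(\k(t))$ split by $\overline{\k}(t)$ is already split by a finite purely inseparable subextension, and hence has $p$-power period. Tsen's theorem yields $\operatorname{Br}(\overline{\k}(t))=0$, so $\operatorname{Br}(\k(t))$ is $p$-primary torsion and in particular $\operatorname{Br}(\k(t))[2]=0$ because $p>2$.

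Next I would show that every fibre of $f$ is irreducible. Suppose $X_b=m_1C_1+m_2C_2$ with $C_1\neq C_2$ and $m_i>0$. Since $\rho(X/B)=1$ and both $C_i$ are contracted by $f$, the classes $[C_i]$ in $N_1(X/B)$ are positively proportional, say $C_1\equiv\lambda C_2$ with $\lambda>0$. Using $X_b\cdot C_1=0$ together with $C_1^2=\lambda\,C_1\cdot C_2$, one obtains $(m_1\lambda+m_2)\,C_1\cdot C_2=0$, hence $C_1\cdot C_2=0$; so $C_1$ and $C_2$ are disjoint, contradicting the geometric connectedness of the fibres of the contraction $f$. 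Therefore every $X_b$ is irreducible. For each closed point $b\in B$, the residue field $\k(b)$ is a purely inseparable extension of the separably closed field $\k$, hence itself separably closed; by \autoref{l-conic}, $X_b$ is then geometrically regular, so $f$ is a smooth morphism.

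Finally, the generic fibre $X_{\k(B)}$ is a smooth conic in $\p^2_{\k(B)}$ whose isomorphism class is a $2$-torsion element of $\operatorname{Br}(\k(B))\simeq\operatorname{Br}(\k(t))$, which vanishes by the first step. Hence $X_{\k(B)}\simeq\p^1_{\k(B)}$ admits a $\k(B)$-rational point, and \autoref{lem: MFS_conic_with_section} produces an isomorphism $X\simeq\p_B(\mathcal{E})$ for a rank-$2$ vector bundle $\mathcal{E}$ on $B\simeq\p^1_\k$. Grothendieck's splitting theorem on $\p^1$ gives $\mathcal{E}\simeq\mathcal{O}\oplus\mathcal{O}(n)$ after twisting by a line bundle, so $X\simeq\F_n$ for some $n\geq 0$. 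The main obstacle I anticipate is the Brauer-theoretic vanishing $\operatorname{Br}(\k(t))[2]=0$, which requires combining Tsen's theorem over the algebraic closure with the $p$-primary behaviour of the Brauer group under the purely inseparable extension $\overline{\k}/\k$.
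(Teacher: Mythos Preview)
Your proof is correct and takes a genuinely different route from the paper in two places.

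For fibre irreducibility, the paper invokes Tanaka's result that $\rho(X_{\bar\k})=\rho(X)$ over a separably closed field to conclude $\rho(X_{\bar\k}/\p^1_{\bar\k})=1$, forcing all fibres to be geometrically irreducible. Your direct intersection-theoretic argument on $X$ itself avoids this base-change machinery entirely; it is more elementary and self-contained. (Your computation, written for two components, extends immediately to $r\geq 2$ components since all $C_i\cdot C_j$ are proportional to $C_1^2$, which your equation forces to vanish.)

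For identifying $X$ as a Hirzebruch surface, the paper first shows $X_{\bar\k}\simeq\F_n$ and then splits into cases: for $n=0$ it uses the second fibration to exhibit $X\simeq\p^1_\k\times\p^1_\k$, and for $n>0$ it descends the negative section $\Sigma_n$ to $\k$ via Picard-rank invariance under purely inseparable base change, then applies \autoref{lem: MFS_conic_with_section}. Your Brauer-group argument --- $\operatorname{Br}(\k(t))$ is $p$-primary because $\overline\k(t)/\k(t)$ is purely inseparable and Tsen kills $\operatorname{Br}(\overline\k(t))$, so the $2$-torsion class of the generic conic vanishes --- is more uniform: it produces a rational point on the generic fibre with no case analysis, after which \autoref{lem: MFS_conic_with_section} and Grothendieck splitting finish at once. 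The cost is importing a bit of Brauer-group theory not otherwise used in the paper, whereas the paper's argument stays within the Picard-rank toolkit it has already set up.

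One small point to make explicit: your claim that $B$ is a Severi--Brauer variety silently uses that a regular genus-$0$ curve over $\k$ is \emph{smooth}, which genuinely requires $p>2$ (in characteristic $2$ there exist regular, geometrically non-reduced conics). The paper invokes $p>2$ for exactly this reason at the outset.
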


\begin{proof}
	As $p>2$, $B$ is geometrically integral and since $\k$ is separably closed, $B(\k) \neq \emptyset$ by \cite[Corollary 3.5.71]{Poo17} and thus $B \simeq \mathbb{P}^1_\k$ by \cite[10.6]{kk-singbook}.
	As $\k$ is separably closed, we have that $X_{\bar{\k}}$ has Picard rank $2$ by \cite[Proposition 2.4(2)]{Tan18b}, and thus all the fibres of $X \to \mathbb{P}^1_\k$ are geometrically irreducible.
	As $X_{\k(b)}$ is reduced by \cite[Proposition 2.18]{BT22} and $p>2$, it is a smooth conic. Since $\k$ is separably closed, this concludes that $f$ is a smooth morphism and thus  $X_{\bar{\k}}$ is a Hirzebruch surface $\F_n$ (over $\bar\k$).
	We are left to verify that $X$ is a Hirzebruch surface over $\k$. If $n=0$, there is another Mori conic bundle structure $g \colon X \to C\simeq \mathbb{P}^1_\k$ by \cite[Proposition 2.4(2)]{Tan18b}, and the natural morphism $f \times g \colon X \to B \times C \simeq \mathbb{P}^1_\k \times \mathbb{P}^1_\k$ is an isomorphism.
	If $n>0$, then by  \cite[Proposition 2.4(2)]{Tan18b} (using the birational contraction $X_{\bar\k}\to\mathbb{P}(1,1,n)$ contracting $\Sigma_n$) the negative section $\Sigma_n$ on $X_{\bar{k}}$ descends over $\k$ showing that $f$ has a section. This means that the generic fibre of $f$ has a rational point, and we conclude by \autoref{lem: MFS_conic_with_section}.
\end{proof}

\begin{corollary} \label{cor: self-intersection}
Let $\k$ be a field and let $X \to B$ be a regular Mori conic bundle over $\k$ onto a geometrically integral curve of genus 0.
If $X_{k(B)}(k(B)) \neq \emptyset$, then $K_X^2=8$.
\end{corollary}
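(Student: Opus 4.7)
The plan is to apply \autoref{lem: MFS_conic_with_section} to obtain a projective bundle presentation of $X$ and then compute $K_X^2$ via a formal intersection-theoretic calculation.

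First, the hypothesis $X_{k(B)}(k(B)) \neq \emptyset$ together with \autoref{lem: MFS_conic_with_section} yields a rank-$2$ locally free sheaf $\mathcal{E}$ on $B$ with $X \simeq \mathbb{P}_B(\mathcal{E})$. Writing $H$ for the tautological class $c_1(\mathcal{O}_{\mathbb{P}_B(\mathcal{E})}(1))$, the relative Euler sequence for a $\mathbb{P}^1$-bundle gives the usual canonical formula
\[
K_X = -2H + \pi^*(K_B + \det \mathcal{E}).
\]
I will then use the standard intersection numbers on a $\mathbb{P}^1$-bundle over a curve: $H^2 = \deg_\k \mathcal{E}$ (from the Grothendieck relation $H^2 = H \cdot \pi^*c_1(\mathcal{E})$, using that $c_2(\mathcal{E}) = 0$ on a curve), $H \cdot \pi^*D = \deg_\k D$ for any divisor $D$ on $B$ (by the projection formula), and $(\pi^*D)^2 = 0$ since $\dim B = 1$. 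Squaring the canonical formula then yields
\[
K_X^2 \;=\; 4H^2 - 4\,H\cdot\pi^*(K_B + \det\mathcal{E}) \;=\; 4\deg_\k\mathcal{E} - 4\deg_\k K_B - 4\deg_\k\mathcal{E} \;=\; -4\deg_\k K_B.
\]

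To conclude, since $B$ is geometrically integral of genus $0$ one has $p_a(B) = 0$ and hence $\deg_\k K_B = 2 p_a(B) - 2 = -2$, giving $K_X^2 = 8$. The argument is a formal calculation once \autoref{lem: MFS_conic_with_section} supplies the projective bundle structure; the only mild care needed is that intersection numbers are taken over $\k$ in the sense of \autoref{def:d_x etc} (so no rational section or rational point on $B$ is required), but this is built into the projection formula applied to $\pi$, so no genuine obstacle arises. In particular the computation is uniform in whether or not $B \simeq \mathbb{P}^1_\k$, which is convenient because nothing in the hypotheses forces $B$ to have a $\k$-rational point.
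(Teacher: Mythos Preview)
Your proof is correct. Both you and the paper invoke \autoref{lem: MFS_conic_with_section} as the key input, but you diverge afterwards. The paper argues geometrically: since $B$ is geometrically integral it is smooth, and since every fibre is $\mathbb{P}^1_{\k(b)}$ the morphism $\pi$ is smooth, hence $X$ is smooth; then $X_{\bar\k}\to B_{\bar\k}\simeq\mathbb{P}^1_{\bar\k}$ is a Hirzebruch surface, so $K_X^2=K_{X_{\bar\k}}^2=8$. You instead stay over $\k$ and compute $K_X^2$ directly from the canonical bundle formula for $\mathbb{P}_B(\mathcal{E})$ and the Grothendieck relation, arriving at $K_X^2=-4\deg_\k K_B=8$. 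Your route is more self-contained (no appeal to the classification over $\bar\k$, no need to pass through smoothness of $X$) and transparently shows the dependence on the genus of $B$; the paper's route is shorter and highlights the geometric reason, namely that $X$ is a twisted form of a Hirzebruch surface.
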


\begin{proof}
	As $B$ is geometrically integral, it is smooth. By \autoref{lem: MFS_conic_with_section}, we have that $\pi$ is a smooth morphism and thus $X$ is smooth.
	Geometrically, we deduce that $X$ is a Hirzebruch surface, and thus $K_X^2=8$.
\end{proof}

We now improve a structure result on surface Mori conic bundles proved by Koll\'ar and Mella in \cite[Lemma 17]{KM17}, which shows that a Mori conic bundle with no sections and positive $K_X^2$ is not too far from being a del Pezzo surface (see \cite[Proposition 5.2]{Pro18} and the references therein for the case of perfect fields).

\begin{proposition} \label{prop: conic-bundle-vs-dP}
Let $\pi \colon X \to B$ be a Mori conic bundle over a field $\k$, where $B$ is a geometrically integral curve of genus $0$.
Suppose that $0<K_X^2<8$ and define $F_{\xx}:=\pi^*\xx$ where $\mathcal{O}_B(\xx)$ generates $N^1(B)$ for some closed point $\xx\in B$.
Then one of the following holds:
\begin{enumerate}
	\item $K_X^2=1$, $B \simeq \mathbb{P}^1_{\k}$, $|-K_X | = C+|F_{\xx}|$, where $C$ is a geometrically integral smooth rational curve of self-intersection $-3$;
	\item $K_X^2=2$, $|-K_X | = C+|sF_{\xx}|$, where $s \in \left\{1,2\right\} , C_{\overline{\k}}$ is a Galois-conjugate pair of disjoint smooth rational curves with self-intersection $-3$ (in this case, $B \simeq \mathbb{P}^1_{\k}$ if and only if $s=2$);
	\item $X$ is a weak del Pezzo surface with $K_X^2<7$.
	Moreover, if $K_X^2>2$ and $K_X^2 \neq 4$, then $X$ is a del Pezzo surface.
\end{enumerate}
\end{proposition}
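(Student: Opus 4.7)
The plan is to use that $\rho(X) = 2$ (since $B$ is a regular genus-zero curve and $\rho(X/B) = 1$) to write $\overline{\NE}(X) = \R_{\geq 0}[F_\xx] + \R_{\geq 0}[C]$ for some irreducible curve $C$ generating the other extremal ray. The argument then splits along the sign of $-K_X\cdot C$, distinguishing whether $-K_X$ is nef (the weak del Pezzo case, giving (3)) or not (giving (1) and (2)).

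In the weak del Pezzo case, Kleiman's criterion yields that $-K_X\cdot C > 0$ forces $X$ to be del Pezzo, and the bound $K_X^2 < 7$ follows by excluding $K_X^2 = 7$: by \autoref{lem:no-minimal-dp7}, a geometrically integral del Pezzo surface of degree $7$ with $\rho = 2$ arises from two explicit geometric configurations over $\overline{\k}$; a direct inspection shows that neither admits a Mori conic bundle structure, as the two extremal contractions in the relevant Galois-invariant setup are both birational (or would force a section of $\pi$, hence $K_X^2 = 8$ by \autoref{lem: MFS_conic_with_section}). When $-K_X\cdot C = 0$, $C$ is a generalised $(-2)$-curve, and the formula $K_X^2 = -4C^2/m^2$ (from writing $-K_X$ in the basis $\{F_\xx, C\}$ of $N^1(X)\otimes\mathbb{Q}$, with $m = [\k(C):\k(B)]$) combined with the adjunction bound $C^2 \geq -2d_C$ gives $K_X^2 \leq 8d_C/m^2$; the exclusion of $K_X^2\in\{3,5,6\}$ in this sub-case uses that each geometric component of $C_{\overline{\k}}$ is horizontal and contributes at least $1$ to the geometric intersection $C_{\overline{\k}}\cdot F = 2$, bounding the admissible pairs $(m, d_C)$.

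For the non-nef case, $C$ is necessarily a fixed component of $|-K_X|$. I would decompose $|-K_X| = \nu C + |M|$ with $\nu\geq 1$ and $|M|$ mobile. Intersecting $-K_X = \nu C + M$ with $F_\xx$ gives $M\cdot F_\xx = (2 - \nu m)d_\xx \geq 0$, so $\nu m \leq 2$; the cases $\nu m = 1$ and $(\nu, m) = (2,1)$ would give a section of $\pi$, forcing $K_X^2 = 8$ by \autoref{lem: MFS_conic_with_section}, a contradiction. Hence $\nu = 1$, $m = 2$, so $M\cdot F_\xx = 0$, $M$ is numerically proportional to $F_\xx$ (using $\rho(X/B)=1$), and $M\sim sF_\xx$ for some $s\geq 1$ (the case $s=0$ gives $K_X^2 = C^2 < 0$). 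By the projection formula, $h^0(-K_X) = h^0(\mathcal{O}_B(s\xx)) = sd_\xx + 1$, so Riemann--Roch gives $K_X^2 \leq sd_\xx$; combined with $K_X^2 = C^2 + 4sd_\xx$, one obtains $C^2 \leq -3sd_\xx$.

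The main obstacle is to show $sd_\xx \leq 2$, which pins down $K_X^2 \in \{1, 2\}$. I plan to argue that $d_C \leq 2$: the geometric components $C_i$ of $C_{\overline{\k}}$ are horizontal with $C_i\cdot F \geq 1$, summing to $C\cdot F_\xx/d_\xx = 2$, so there are at most two components; adjunction $d_C \geq sd_\xx$ then gives $sd_\xx \leq 2$. Enumerating $(s, d_\xx) \in \{(1,1), (1,2), (2,1)\}$ and using integrality together with Galois-symmetry of $C^2$ to exclude $K_X^2 = 1$ when $sd_\xx = 2$ yields precisely the configurations of (1) and (2). The final description of $C$ (smooth rational components with self-intersection $-3$, disjoint in case (2)) follows from equality in the adjunction bound, which forces $C\simeq\p^1_{\k(C)}$ and hence $C^2 = -3sd_\xx$ with each geometric component a $(-3)$-curve.
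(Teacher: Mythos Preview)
Your organisation via the sign of $-K_X\cdot C$ on the second extremal ray is close to the paper's split on whether $|-K_X|$ has a fixed part, and in the end the two coincide: once you know $m=2$ and $\nu=1$, your curve $C$ \emph{is} the fixed part, and the Riemann--Roch/adjunction bookkeeping matches line for line. Your treatment of the strictly weak case via the closed formula $K_X^2=-4C^2/m^2$ is a tidy variant of the paper's computation with $D\equiv a(-K_X)+bF_\xx$; together with $m\ge 2$ (no section) and the divisibility $d_C\mid C^2$ it cleanly excludes $K_X^2\in\{3,5,6,7\}$, which the paper does by a more ad hoc case analysis.

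There is, however, one genuine gap. Your bound $d_C\le 2$ (and the implicit $d_C\le m$ in the $-K_X\cdot C=0$ case) is argued by counting irreducible components of $C_{\overline\k}$ via $\sum_i C_i\cdot F\le 2$. Over an imperfect field this does not control $d_C=[H^0(C,\mathcal O_C):\k]$: when $C$ is not geometrically reduced, $H^0(C_{\overline\k},\mathcal O)=H^0(C,\mathcal O_C)\otimes_\k\overline\k$ has nilpotents and its $\overline\k$-dimension can exceed the number of reduced components (e.g.\ $C=\mathbb P^1_L$ for $L/\k$ purely inseparable of degree $p$ has one geometric component but $d_C=p$). The fix is field-theoretic and is what the paper uses implicitly: since $B$ is geometrically integral, $\k(B)/\k$ is a regular extension, so $L:=H^0(C,\mathcal O_C)$ satisfies that $L\otimes_\k\k(B)$ is a field, hence $d_C=[L\k(B):\k(B)]\le[\k(C):\k(B)]=m$. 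With $m=2$ this gives $d_C\le 2$ and your argument goes through.

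Two smaller points worth tightening. First, you should justify upfront that the second ray of $\overline{\NE}(X)$ is spanned by an irreducible curve: this is not automatic for $\rho=2$, but follows here because either $-K_X$ is ample (cone theorem), or nef and not ample (a $K_X$-trivial curve sits on the ray by Hodge index), or not nef (any curve with $-K_X\cdot C<0$ has $C^2<0$ from the expression of $C$ in the basis $\{-K_X,F_\xx\}$, using that $-K_X$ is big). Second, excluding $K_X^2=7$ in the del Pezzo sub-case via \autoref{lem:no-minimal-dp7} really does require going into that lemma's proof to see both extremal contractions are birational; the statement alone only gives one birational morphism to $\mathbb P^2_\k$.
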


\begin{proof}
	By \cite[Theorem 3.3]{Tan18}, we deduce that $h^1(X, \mathcal{O}_X)=h^1(\mathbb{P}^1, \mathcal{O}_{\mathbb{P}^1})=0$, and therefore by the Riemann--Roch formula we have that $h^0(-K_X) \geq 1+K_X^2$. Moreover, $-K_X$ is big as $h^0(X, -mK_X) \geq 1+\frac{m(m+1)}{2}K_X^2$.

	As $\pi$ is a Mori conic bundle and $K_X^2<8$, by \autoref{cor: self-intersection} $\pi$ does not admit a section and thus we deduce that
	$$\NS(X)=\mathbb{Z}[-K_X] \oplus \pi^*\NS(B). $$
	Indeed, by the strong version of the base point free theorem for surfaces (see \cite[Proposition 2.1.1]{Ber21} or \cite[Theorem 4.2]{Tan18}), we have that $M \coloneqq \NS(X)/ \pi^*\NS(B)$ is a free module of rank $1$. As $-K_X \cdot F_{\xx} =2 d_\xx$, we have that either $-K_X$ generates $M$ or the generator $L$ of $M$ satisfies $2L=-K_X$.
	In the second case, by Riemann--Roch the divisor $L+ nF_{\xx}$ is linearly equivalent to an effective divisor $S$ for $n$ sufficiently large.
	As $S$ is a section of $\pi$, we deduce that $X$ is a projective bundle over $B$ by \autoref{lem: MFS_conic_with_section} and thus $K_X^2=8$, reaching a contradiction.

	Suppose that $|-K_X|=|M|+C$, where $|M|$ is the mobile part of the linear system and $C \neq 0$ the fixed part.
	As $B$ is a conic, note that either $\deg(\xx)=1$ (and $-K_B=2\xx$ and $B \simeq \mathbb{P}^1_{\k}$) or $\deg(\xx)=2$ (and $-K_B=\xx$).
	Note that we have
	\[ 2d_{\xx}=-K_S \cdot F_{\xx}=(C \cdot F_{\xx})+(M \cdot F_{\xx}) \geq d_{\xx} \deg(C \to B),  \]
	which implies $\deg(C \to B)=2$
	and $ M \sim sF_{\xx}$ for some some $s \in \mathbb{Z}_{\geq 0}$.
	Note that $$s \deg(\xx) = h^0(B, \mathcal{O}_B(s\xx))-1=h^0(X, sF_{\xx})-1=\dim |M| = \dim |-K_X| \geq K_X^2$$
	and
	\[ \deg(\omega_C)=-2d_{\xx}s , \text{ and } C^2=K_X^2-4sd_{\xx}. \]

	We now distinguish two cases.
	Suppose $h^0(C, \mathcal{O}_C)= \k$. In this case, $\deg(\omega_C)=-2$ and thus $d_\xx=s=1$. This implies that $B \simeq \mathbb{P}^1_\k$. As $1=sd_\xx \geq K_X^2$, we conclude that  $K_X^2=1$ and $C^2=-3$. Note that in this case $C$ is geometrically connected and irreducible.

	Suppose $h^0(C, \mathcal{O}_C)$ is a degree 2 extension of $\k$.
	In this case, $\deg(\omega_C)=-4$ and thus $d_\xx s =2$ which implies that $2 \geq K_X^2$. Note that $K_X^2=2C \cdot M +C^2$ must be divisible by 2 as $h^0(C, \mathcal{O}_C)$ divides $C^2$. Thus $K_X^2=2$ and $C^2=2-8=-6$.
	Note that either $C_{\overline{\k}}$ is a Galois-conjugate pair of smooth rational curves (if the degree 2 extension is separable) or it is a double line (if the extension is not separable).

	Suppose now that $K_X^2>2$. By the previous discussion, $-K_X$ has no fixed components and thus $-K_X$ is big and semi-ample and $X$ is a weak del Pezzo surface. To conclude, we are left to study when a weak del Pezzo surface with a Mori conic bundle can fail to be a del Pezzo surface.

	Suppose there exists an integral curve $D$ such that $-K_X \cdot D=0$.
	We can write $D \sim a[-K_X] -b[F_{\xx}]$ for some integers $a,b$.
	We have therefore
	$$ 0=-K_X \cdot (-aK_X-bF_{\xx})= a(K_X^2)-2bd_\xx.$$
	Moreover, we have by adjunction that
	$$-2d_D=D^2=a(aK_X^2-4bd_\xx).$$
	Write $D \cdot F_\xx = d_D \cdot m$ for some $m>0$ and we have also that $D \cdot F_{\xx}=2ad_\xx$.
	All these equations together show that $abmd_\xx=2ad_\xx$, which shows that
	$$ bm=2 \text{ and } a(K_X^2)=\frac{4}{m}d_{\xx}. $$
Then $m=1,b=2$ or $m=2,b=1$. Since $d_{\xx}\in\{1,2\}$, we conclude as follows:
	\begin{enumerate}
		\item if $m=1,b=2$, then $aK_X^2=4d_{\xx}\in\{4,8\}$. As $8>K_X^2>2$, we obtain $K_X^2=4$ and either $a=1,d_D=2$ or $a=2,d_D=8$.
		\item If $m=2,b=1$, then $aK_X^2=2d_{\xx}\in\{2,4\}$. As $K_X^2>2$, we obtain $K_X^2=4$, $a=1$ and $d_D=2$.
	\end{enumerate}
\end{proof}

We obtain the following restriction when two Mori conic bundle structures exist. 

\begin{lemma} \label{lem:linkIV-possible-degree}
	Let $\k$ be a field and let $f\colon X \to B$ be a surface Mori conic bundle over a projective curve $B$.

	Assume that there exists a Mori conic bundle $g \colon X \to C$ such that $g^*H_{B} \cdot f^*H_{C}>0$ for $H_B$ (resp. $H_C$) ample on $B$ (resp. $C$).
	Then $X$ is a del Pezzo surface and $K_X^2\in\{1,2,4,8\}$.
	Moreover, if $X$ is geometrically integral, $B$ and $C$ are curves of genus 0.
\end{lemma}
\begin{proof}
	The property that $X$ is del Pezzo follows immediately from Kleiman's criterion.
	Denote by $f_1$ the fibre of $X \to B$ over a closed point $b$ of degree $d_1$, and denote $f_2$ the fibre of $X \to C$ over a closed point $c$ of degree $d_2$.
	Writing $K_X=-(a_1f_1+a_2f_2)$ for some $a_1, a_2 \in\Q$, we obtain from adjunction that
	\[2d_i=-K_X\cdot f_i=a_j f_1\cdot f_2,\]
	for $\{i,j\}=\{1,2\}$,
	with $f_1\cdot f_2\in d_1d_2\Z_{>0}$.
	We obtain \[K_X^2=2a_1a_2f_1\cdot f_2=\frac{8d_1d_2}{f_1\cdot f_2}.\]
	As $f_1 \cdot f_2$ is a positive multiple of $d_1d_2$, we conclude $K_X^2\in\{1,2,4,8\}$.

	To show that $B$ and $C$ are curves of genus 0, it is sufficient to show that $h^1(X, \mathcal{O}_X)=0$.
	By \cite[Proposition 4.8]{BM23} and the assumption that $X$ admits two distinct fibrations onto a curve, we deduce that $h^1(X, \mathcal{O}_X)=0$.
\end{proof}

We extend \cite[Theorem 5(1)]{Isko80} (see also \cite[Theorem 5.3]{Pro18}) to the regular case over arbitrary fields.

\begin{proposition}\label{prop: Mori-conic-bdl-dP}
	Let $\pi \colon X \to B$ be a geometrically integral surface Mori conic bundle over a field $\k$ onto a curve $B$ of genus $0$ with $3\leq K_X^2\leq7$ and $K_X^2\neq 4$.
	Then $X$ is a del Pezzo surface and there is a birational contraction $f \colon X \to Y$ such that $Y$ is a del Pezzo surface and one of the following holds:
	\begin{enumerate}
		\item\label{it:Mori-conic-bdl-dP--1} $Y=\p^2_\k$ and $K_X^2=5$.
		\item\label{it:Mori-conic-bdl-dP--2} $K_Y^2=8$ and $K_X^2=6$.
		\item\label{it:Mori-conic-bdl-dP--3} $K_Y^2=4$ and $K_X^2=3$.
	\end{enumerate}
\end{proposition}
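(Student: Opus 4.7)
The plan is to proceed in four steps: show $X$ is a del Pezzo surface; produce a birational contraction $f\colon X\to Y$ to a Picard rank $1$ del Pezzo; determine the degree $d_R=[k(R):\k]$ of the contracted exceptional curve via a numerical identity; and identify $Y\simeq\p^2_\k$ in the case $K_X^2=5$.

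First, by \autoref{prop: conic-bundle-vs-dP}, the hypothesis $K_X^2>2$ and $K_X^2\neq 4$ gives that $X$ is a del Pezzo surface of degree $K_X^2\in\{3,5,6,7\}$. Since $B$ is geometrically integral of genus zero, $\rho(B)=1$ and hence $\rho(X)=2$. By the cone theorem (\autoref{t-cone-theorem}), $\overline{\NE}(X)=\R_{\ge 0}[F]+\R_{\ge 0}[R]$, where $F$ is the class of a fibre of $\pi$ and $R$ generates the other extremal ray. I would argue the $R$-contraction must be birational: otherwise $X$ would admit two Mori fibre space structures onto curves, forcing $X_{\bar{\k}}\simeq\p^1_{\bar{\k}}\times\p^1_{\bar{\k}}$ and $K_X^2=8$, contradicting the hypothesis. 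Hence, by \autoref{lem: Castelnuovo_contraction} and \autoref{lem: characterise_-1_curves}, $R\simeq\p^1_{k(R)}$ is an exceptional curve of the first kind with $R^2=-d_R$, and the contraction $f\colon X\to Y$ yields a regular surface with $\rho(Y)=1$. From $K_X=f^*K_Y+R$ one obtains $K_Y^2=K_X^2+d_R$, and the ampleness of $-K_Y$ follows from $\rho(Y)=1$ together with the positivity $(-K_X+R)\cdot\tilde C>0$ for every integral curve $C\subset Y$ with strict transform $\tilde C$; so $Y$ is a del Pezzo surface.

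The central step is determining $d_R$. Since $R^2<0=F^2$, $R$ is not a fibre component and $\pi|_R\colon R\to B$ is finite of some degree $r\ge 1$, giving $F\cdot R=r\cdot d_\xx$ with $d_\xx=\deg\pi(F)$. Writing $-K_X=aF+bR$ in $N^1(X)_\Q$ and using $F^2=0$, $R^2=-d_R$, $-K_X\cdot F=2d_\xx$, $-K_X\cdot R=d_R$, one derives the Diophantine identity
\[
K_X^2\,r^2=4\,d_R\,(r+1).
\]
Since $\gcd(r,r+1)=1$, this forces $(r+1)\mid K_X^2$ and $d_R=K_X^2r^2/(4(r+1))$. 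Enumerating the admissible values of $r$ for $K_X^2\in\{3,5,6,7\}$ yields the unique positive integer solutions $(r,d_R)=(2,1),(4,4),(2,2),(6,9)$ respectively. The last case would give $K_Y^2=16>9$, exceeding the upper bound for del Pezzo surfaces (\autoref{cor: geometrically integral del Pezzo}); hence $K_X^2=7$ does not occur, and the remaining three cases give exactly $(K_X^2,K_Y^2)=(3,4),(5,9),(6,8)$, matching the statement.

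Finally, for $K_X^2=5$ I would identify $Y\simeq\p^2_\k$ as follows: by \autoref{lem: SB}, $Y$ is a Severi--Brauer surface, whose index divides $3$; the image $f(R)\in Y$ is a closed point of degree $d_R=4$, and since $\gcd(4,3)=1$, the index of $Y$ is $1$, so $Y\simeq\p^2_\k$. The main technical obstacle I anticipate is the rigorous justification of the intersection identity $F\cdot R=r\cdot d_\xx$, particularly in characteristic $2$, where the generic fibre of $\pi$ may fail to be geometrically reduced and $\pi|_R$ may be inseparable, requiring careful handling of scheme-theoretic lengths and residue field degrees.
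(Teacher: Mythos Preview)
Your argument is correct and takes a genuinely different route from the paper. The paper works on the \emph{target} $Y$ of the contraction: it expresses the fibres of $\pi$ as a pencil $C_t\equiv dH$ on $Y$ with $\Pic(Y)=\Z H$, derives $0=\tilde C_t^2=d^2H^2-m_\xx(C_t)^2\deg(\xx)$, and then runs a case analysis on $K_Y^2\in\{9,8,\le 6\}$, using in each case the explicit generator $H$ of $\Pic(Y)$ (line on $\p^2$, hyperplane class on a quadric, or $-K_Y$). Your approach stays on $X$: the single Diophantine identity $K_X^2\,r^2=4\,d_R\,(r+1)$, obtained from $F^2=0$, $R^2=-d_R$, $F\cdot R=rd_\xx$, $-K_X\cdot F=2d_\xx$, $-K_X\cdot R=d_R$, pins down $d_R$ (and hence $K_Y^2$) uniformly, with no case split on the structure of $\Pic(Y)$. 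This is more economical; the paper's approach, on the other hand, identifies $Y$ more directly in each case.

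Three small corrections. First, your claim that two Mori fibre structures force $X_{\bar\k}\simeq\p^1_{\bar\k}\times\p^1_{\bar\k}$ is not justified; the correct reference is \autoref{lem:linkIV-possible-degree}, which yields $K_X^2\in\{1,2,4,8\}$, contradicting $K_X^2\in\{3,5,6,7\}$ (this is exactly what the paper invokes). Second, the case $K_X^2=7$ is already ruled out by \autoref{prop: conic-bundle-vs-dP}(3), which gives $K_X^2<7$; your separate exclusion via $K_Y^2=16>9$ would need $Y$ geometrically integral, which is not among the hypotheses. Third, your anticipated obstacle with $F\cdot R=r\,d_\xx$ is not one: since $R\simeq\p^1_{k(R)}$ is regular and $\pi|_R\colon R\to B$ is a finite morphism of regular curves of degree $r=[k(R):k(B)]$, one has $F\cdot R=\deg_\k\bigl((\pi|_R)^*b\bigr)=r\,d_b$ by the standard degree formula for pullbacks along finite morphisms, irrespective of separability or the characteristic.
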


\begin{proof}
	By \autoref{prop: conic-bundle-vs-dP}, $X$ is a del Pezzo surface and thus it admits a second $K_X$-negative extremal ray $\xi$.
	By \autoref{lem:linkIV-possible-degree}, we have that the $K_X$-negative extremal contraction $f \colon X\to Y$ associated to $\xi$ is birational. As $\rho(X)=2$, we have that $\rho(Y)=1$ and $f$ is the blow-up of one closed point $\xx$ on $Y$, with $K_X^2 = K_Y^2-\deg(\xx)$. As $K_X^2\geq 3$ and $K_Y^2 \leq 9$ by geometric integrality, we have $\deg(\xx) \leq 6$.
	Let $H$ be a Cartier divisor on $Y$ such that $N^1(Y)=\mathbb{Z}[H]$ and write $-K_Y \equiv iH$.

	Consider the pencil of curves $\{C_t\}$ on $Y$ corresponding to the fibration $\pi\colon X\to\p^1_\k$.
	Writing $C_t\equiv dH$ for some $d\geq 1$, we find that the strict transform $\tilde C_t$ in $X$ satisfies $0=\tilde C_t^2=d^2H^2-m_{\xx}(C_t)^2\deg(\xx)$, where $m_{\xx}(C_t)$ denotes the multiplicity of $C_t$ at $\xx$. 
	We note that the parity of the multiplicities of the prime factorization of the integer $K_Y^2=i^2H^2$ equals the one of $H^2$, which in turn equals the one of $\deg(\xx)$ since $d^2H^2=m_{\xx}(C_t)^2\deg(\xx)$. 
	Going through each $4\leq K_Y^2\leq9$ and each $1\leq\deg(\xx)\leq6$ such that $K_Y^2-\deg(\xx)\geq 3$, one can check that the only possibilities for $(K_Y^2, \deg(\xx))$ are $(9,1)$ (which is excluded by $K_X^2\neq8$) as well as $(9, 4)$, $(8, 2)$ and $(4, 1)$.
	%\textcolor{red}{Thus the multiplicities of the prime factors of $H^2$ (and thus of $K_X^2=i^2H^2$ (DO YOU MEAN $K_Y^2$??)) and $\deg(\xx)$ have the same parity. As $\deg(\xx) \leq 6$, $9 \geq K_Y^2 \geq 4$ and $K_X^2 \neq 8$, it is elementary to see that the only numerical possibilities for $(K_Y^2, \deg(\xx))$ are (9, 4),(8, 2),(4, 1).}

 We are left to show that in the case where $K_Y^2=9$, then $Y$ is $\mathbb{P}^2_\k$. As $Y$ is a Severi-Brauer surface by \autoref{lem: SB} and there exists a point $\xx$ with $\deg(\xx)=4$, then $Y$ is a trivial Severi-Brauer surface by \autoref{lem: index_SB}, that is, $Y\simeq\p^2_\k$.
\end{proof}

We discuss now del Pezzo surfaces of degree 4 with a Mori conic bundle structure and extend part of \cite[Theorem 5(3)]{Isko80} to arbitrary fields.

\begin{lemma} \label{lem: dP4_mfs}
Let $\k$ be any field.
Let $X$ be a geometrically integral del Pezzo surface of degree $K_X^2\in \{1,2,4\}$ admitting a Mori conic bundle structure $X \to B$ onto a curve of genus $0$.
Then $X$ admits a second Mori conic bundle structure.
\end{lemma}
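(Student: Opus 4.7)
The plan is to mimic the approach of \autoref{prop: Mori-conic-bdl-dP}: I analyse the second extremal ray of the Mori cone of $X$ and show it defines a fibration rather than a birational contraction. Since $\pi\colon X\to B$ is a Mori conic bundle with $\rho(X/B)=1$ and $B$ is a regular curve of genus zero (so $\rho(B)=1$), we get $\rho(X)=2$. By the cone theorem (\autoref{t-cone-theorem}), $\overline{\NE}(X)$ is spanned by two $K_X$-negative extremal rays: the fibre class $F$ of $\pi$, and a second ray giving an extremal contraction $g\colon X\to Y$. Producing a second Mori conic bundle is equivalent to showing $g$ is a fibration.

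I would then argue by contradiction, assuming $g$ is birational. By \autoref{lem: Castelnuovo_contraction} and \autoref{lem: push-forward-ample}, $Y$ is a regular geometrically integral del Pezzo surface of Picard rank one, $g$ contracts an exceptional curve $E$ to a closed point $\xx\in Y$, and $K_Y^2=K_X^2+d_\xx$ with $d_\xx=\deg(\xx)\geq 1$. Let $H$ be an ample generator of $\Pic(Y)$; then $g_*F\equiv dH$ with $d\geq 1$ (since $F$ is not contracted by $g$) and $F=g^*(dH)-mE$ for some integer $m$. One must have $m\geq 1$, since $m=0$ would force $\pi$ to factor through $g$, producing a non-constant morphism from $Y$ to the curve $B$, impossible because $\rho(Y)=1$ with $H^2>0$. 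The relation $F^2=0$, combined with $E^2=-d_\xx$, then yields the key Diophantine constraint
\[
d^2\,H^2=m^2\,d_\xx,\qquad d,m\in\mathbb{Z}_{\geq 1}.
\]

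The remaining step is a bounded case analysis. By \autoref{cor: moredP}, the value of $H^2$ is determined by $K_Y^2$: for $K_Y^2\leq 6$ one has $H=-K_Y$ and $H^2=K_Y^2$, for $K_Y^2=8$ one has $H^2\in\{2,8\}$, and for $K_Y^2=9$ one has $H^2\in\{1,9\}$; the case $K_Y^2=7$ is ruled out by \autoref{lem:no-minimal-dp7}. Since $K_X^2=K_Y^2-d_\xx\in\{1,2,4\}$, this yields around twenty sub-cases. In each one the equation $d^2H^2=m^2d_\xx$ reduces to $ad^2=bm^2$ with $a/b$ not a rational square---a prime appears with odd exponent on only one side---so no positive integer solution exists. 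The main obstacle is thus primarily the bookkeeping for these sub-cases; each individual check is a one-line divisibility argument, but I would want to present them in a compact table to keep the argument readable. Combining all cases, $g$ cannot be birational, so it is a fibration, providing the required second Mori conic bundle structure.
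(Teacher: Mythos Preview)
Your approach is correct and essentially identical to the paper's: both argue by contradiction that the second extremal contraction is birational onto a del Pezzo surface $Y$ of Picard rank one, derive the Diophantine constraint $d^2H^2=m^2d_\xx$ from $F^2=0$, and eliminate every possible value of $K_Y^2$ by checking that the resulting ratio is never a rational square. Your version is in fact slightly cleaner, since you work with the ample generator $H$ of $\Pic(Y)$ and invoke \autoref{cor: moredP} to determine $H^2$, whereas the paper writes $C_t\equiv -dK_Y$ as if $-K_Y$ always generated $\Pic(Y)$.
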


\begin{proof}
	As $X$ is a del Pezzo surface of Picard rank 2, it admits a second $K_X$-negative extremal ray associate to a contraction $f \colon X \to Y$.
	Suppose by contradiction $f$ is birational.
	In this case, $Y$ is a del Pezzo surface of Picard rank 1 and $f$ is the blow-up of a closed point $\yy$ on $Y$ of degree $K_Y^2$.

	By \autoref{lem:no-minimal-dp7} $K_Y^2\neq7$.
	Let $(C_t)$ be a pencil of curves on $Y$ such that their strict transform becomes the conic bundle.
	As $K_Y$ generates $\Pic(Y)$, we have that $C_t \equiv -dK_Y$ for some $d \geq 1$.
	In particular, writing $m=m_\yy(C_t)$, we have
	\begin{equation}\label{eq:cf}
	0=\tilde{C}_t^2=C_t^2-m^2\deg(\yy)=d^2K_Y^2-m^2\deg(\yy).
	\end{equation}

	If $K_X^2=1$, then $K_Y^2\in\{2,3,4,5,6,8,9\}$ and so \eqref{eq:cf} becomes of the following
	\begin{align*}
	&0=2d^2-m^2,\ 0=3d^2-2m^2, \ 0=4d^2-3m^2,\ 0=5d^2-4m^2,\\
	 &0=6d^2-5m^2, \ 0=8d^2-7m^2,\ 0=9d^2-8m^2.
	\end{align*}
	However, none of them have solutions $d>0$, $m\geq0$.

	If $K_X^2=2$, then $K_Y^2\in\{3,4,5,6,8,9\}$ and so \eqref{eq:cf} becomes of the following
	\begin{align*}
	&0=3d^2-m^2, \ 0=4d^2-2m^2,\ 0=5d^2-3m^2,\ 0=6d^2-4m^2, \\
	 &0=8d^2-6m^2,\ 0=9d^2-7m^2,
	\end{align*}
	However, none of them have solutions $d>0$, $m\geq0$.

	If $K_X^2=4$, then $K_Y^2\in\{5,6,8,9\}$ and so \eqref{eq:cf} becomes of the following
	\[
	0=5d^2-m^2, \ 0=6d^2-2m^2, \ 0=8d^2-2m^2,\ 0=9d^2-5m^2.
	\]
	None of them have solutions $d>0$, $m\geq0$, $d>m$.
\end{proof}

%===================================
\subsection{Points in Sarkisov general position}

We introduce the notion of points in general position for surfaces in the context of the Sarkisov program (cf. \cite[Definition A.3]{Cor95} in the case of del Pezzo surfaces).

\begin{definition}\label{def: general_position}
	Let $X \to B$ be a rank $r$ fibration over $T$.
	We say that $n$ distinct closed points $p_1,\ldots,p_n$ on $X$ are in \emph{Sarkisov general position} over $B$ if the blow-up $Y\to X$ of $p_1,\ldots,p_n$ is a rank $r+n$ fibration over $B$.
\end{definition}

If $X=\p^2_\k$ and $\k$ is a perfect field, we recover the classical notion of points in general position (cf.\ \cite{Dol12}).
Note that being in Sarkisov general position is a relative notion over $B$: For example, a point on $\p^1_\k\times\p^1_\k$ can be in Sarkisov general position or not depending on whether one considers it over $\Spec\k$, the first $\p^1_\k$, or the second $\p^1_\k$.

The following is an elementary restraint on points in general position on a del Pezzo surface. We work again over an arbitrary field $\k$ of characteristic $p\geq0$.

\begin{lemma} \label{lem: how-much-we-can-blow-up}
	Let $X$ be a geometrically integral del Pezzo surface over a field $\k$. If $p_1,\ldots,p_n$ on $X$ are in Sarkisov general position over $\k$, then $\sum [k(p_i): \k] < K_X^2$. In particular, $\sum [k(p_i): \k] < 9$.
\end{lemma}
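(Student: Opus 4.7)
The plan is to exploit the canonical bundle formula for the blow-up together with the positivity forced by the Sarkisov general position hypothesis. Let $f\colon Y \to X$ denote the blow-up of the points $p_1,\ldots,p_n$, with exceptional divisors $E_1,\ldots,E_n$. Taking the relevant base to be $\Spec\k$ (so that $X\to\Spec\k$ is a rank $\rho(X)$ fibration), the assumption that the $p_i$ lie in Sarkisov general position says exactly that $Y\to\Spec\k$ is again a rank $(\rho(X)+n)$ fibration; equivalently, $Y$ is regular and $-K_Y$ is ample. In particular, $K_Y^2>0$.

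Next I would run the standard intersection-theoretic computation. Iterating the blow-up formula $K_Y=f^*K_X+\sum_i E_i$ recalled above \autoref{def: curve of first kind}, and using the elementary identities $f^*K_X\cdot E_i=0$, $E_i\cdot E_j=0$ for $i\neq j$ (since the $p_i$ are distinct so the blow-ups commute and the exceptional divisors are disjoint), and $E_i^2=-d_{p_i}=-[k(p_i):\k]$, one obtains
\[
K_Y^2 \;=\; K_X^2 \,-\, \sum_{i=1}^{n}[k(p_i):\k].
\]
Combining this with $K_Y^2>0$ yields the inequality $\sum_i[k(p_i):\k]<K_X^2$.

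For the final ``in particular'' clause, I would simply invoke \autoref{cor: geometrically integral del Pezzo}, which gives $K_X^2\leq 9$ for any geometrically integral del Pezzo surface over an arbitrary field. There is no genuine obstacle here: the argument is a one-line intersection computation, and the only mild point to be careful about is the (by now standard in this paper) fact that $E_i^2=-[k(p_i):\k]$ rather than $-1$, which is what accounts for the degree weights in the bound.
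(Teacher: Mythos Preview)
Your proof is correct and follows essentially the same approach as the paper: compute $K_Y^2=K_X^2-\sum_i[k(p_i):\k]$ via the blow-up formula, use that $Y$ is del Pezzo (so $K_Y^2>0$), and then invoke \autoref{cor: geometrically integral del Pezzo} for the bound $K_X^2\leq 9$. The paper's version is terser but the content is identical.
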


\begin{proof}
	Let $f\colon Y\to X$ be the blow-up of $p_1,\dots,p_n$.
	As $K_Y=f^*K_X+\sum_{i} E_i$, we have $K_Y^2=K_X^2-\sum_i [\k(p_i): \k]$. As $Y$ is del Pezzo by hypothesis, we have $K_Y^2>0$, which yields the first claim.
	The second claim follows from \autoref{cor: geometrically integral del Pezzo}.
	\end{proof}

	The following lemma gives sufficient and necessary conditions under which a closed point $\xx$ is in Sarkisov general position on a conic bundle.

	\begin{lemma}\label{lem:k(x)=k(b)}
		Let $\pi \colon X\to B$ be rank $r$ fibration over $T$ and $B$ a curve.
		Let $b\in B$ be a closed point and let $\eta\colon Y\to X$ be the blow-up of a point $\xx \in X_b$.
		Then $\pi \circ \eta\colon Y\to B$ is a rank $r+1$ fibration (that is, $\xx$ is in Sarkisov general position on $X$ over $B$) if and only if
		\begin{enumerate}
			\item\label{it:k(x)-1} $X_b$ is a regular $\k(b)$-conic, and
			\item\label{it:k(x)-2} the extension $\k(b) \subset \k(\xx)$ induced by $\pi$ is trivial.
		\end{enumerate}
	\end{lemma}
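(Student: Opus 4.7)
The plan is to reduce the equivalence to a direct intersection-theoretic calculation. The blow-up $Y$ is automatically regular by \autoref{rmk:BlowUpOfRegular}, and $\rho(Y/B) = r+1$, so $Y\to B$ is a rank $r+1$ fibration if and only if $-K_Y$ is ample over $B$. As $\eta$ is an isomorphism away from $\xx$, one only needs to check positivity of $-K_Y$ on the components of the fibre $Y_b = \eta^{-1}(X_b)$.

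Using the formulae $K_Y = \eta^*K_X + E$ and $E^2 = -d_\xx$, one finds $-K_Y \cdot E = d_\xx > 0$, and for any irreducible component $F$ of $X_b$ through $\xx$ with strict transform $\tilde F$ and multiplicity $\ell = \mathrm{mult}_\xx F$,
$$-K_Y \cdot \tilde F = -K_X \cdot F - \ell\, d_\xx.$$
Components not meeting $\xx$ contribute no new condition, so the task reduces to when this last quantity is positive for every such $F$.

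First I would rule out the case where $X_b$ is reducible. If $X_b = \sum m_i F_i$ has at least two components, then each $F_i$ satisfies $F_i^2 < 0$ (from connectedness of the fibre together with $F_i \cdot X_b = 0$) and $-K_X \cdot F_i > 0$, so by \autoref{lem: characterise_-1_curves} it is an exceptional curve of the first kind with $F_i \simeq \p^1_{K_i}$, where $K_i = H^0(F_i, \mathcal{O}_{F_i})$, and $-K_X \cdot F_i = d_{F_i}$. For the component $F_1$ containing $\xx$ one has $K_1 \subseteq \k(\xx)$ and hence $d_\xx \geq d_{F_1}$; combined with $\ell_1 \geq 1$ this forces $-K_Y\cdot\tilde F_1 \leq 0$, contradicting ampleness. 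Therefore $X_b$ must be irreducible, and by \autoref{l-conic} it is an integral conic in $\p^2_{\k(b)}$. Flatness of $\pi$ and adjunction on the plane conic (using $\omega_{X_b} = \mathcal{O}(-1)|_{X_b}$) yield $-K_X \cdot X_b = 2 d_b$, so
$$-K_Y \cdot \tilde X_b = d_b\,(2 - \ell\,[\k(\xx):\k(b)]),$$
which is strictly positive exactly when $\ell = 1$ and $\k(\xx)=\k(b)$. This yields condition \autoref{it:k(x)-2} together with regularity of $X_b$ at $\xx$.

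The remaining (forward) step is upgrading regularity at $\xx$ to global regularity of $X_b$, i.e.\ \autoref{it:k(x)-1}. Since $\k(\xx) = \k(b)$, regularity at $\xx$ coincides with smoothness, and projection of the integral plane conic $X_b$ from the smooth $\k(b)$-rational point $\xx$ gives a birational morphism to $\p^1_{\k(b)}$ which, by comparison of arithmetic genera (or Châtelet's theorem), is an isomorphism. Thus $X_b\simeq\p^1_{\k(b)}$ is regular everywhere. The converse implication follows by reading the same computation backwards, with \autoref{it:k(x)-1} and \autoref{it:k(x)-2} giving $-K_Y\cdot\tilde X_b = d_b > 0$ and hence ampleness of $-K_Y$. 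The most delicate point in this plan is this upgrade from regular-at-a-rational-point to globally regular, which is the place where one must carefully distinguish regularity and smoothness over imperfect fields.
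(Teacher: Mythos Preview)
Your proof is correct and follows the same intersection-theoretic approach as the paper: reduce relative ampleness of $-K_Y$ to positivity on the components of $Y_b$, dispose of the reducible case, and then compute $-K_Y\cdot\tilde X_b$ on the integral conic. You are in fact more careful than the paper in two places: you justify explicitly why a reducible fibre kills ampleness (via \autoref{lem: characterise_-1_curves} and the inclusion $H^0(F_1,\mathcal O_{F_1})\subset\k(\xx)$), and you supply the step from ``$X_b$ is regular at the $\k(b)$-rational point $\xx$'' to ``$X_b$ is regular everywhere'' via projection from $\xx$ and a genus comparison, whereas the paper simply asserts the equivalence with condition~\autoref{it:k(x)-1}.
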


	\begin{proof}
		As the statement on ampleness is relative to $B$, it is sufficient to check the intersection products over $B$.
		Notice that if $X_b$ is not irreducible, then $K_Y$ is not $(\pi\circ\eta)$-ample and hence $\pi\circ\eta$ not a rank $r+1$ fibration. So, we can assume that $X_b$ is irreducible.
		Then $X_b$ is an integral (reduced) conic in $\p^2_{\k(b)}$ by \autoref{l-conic}.
		Let $E\subset Y$ be the exceptional divisor over $\xx$; it has self-intersection $E^2=-[k(\xx):k(b)]$. Denote by $\widetilde{X}_b\subset Y$ the strict transform of $X_b$.
		As $\eta^*X_b=\widetilde{X}_b+\mult_{\xx}(X_b)E$,
		we have
		\begin{align*}
			K_Y \cdot \widetilde{X}_b=K_X\cdot X_b-\mult_{\xx}(X_b)E^2=-2+\mult_{\xx}(X_b)\cdot [k(\xx):k(b)].
		\end{align*}
		This intersection is negative if and only if conditions \ref{it:k(x)-1} and \ref{it:k(x)-2} are satisfied.
		This is equivalent to $-K_Y$ being relatively ample by Kleiman's criterion (because $K_Y\cdot f=K_Y \cdot \eta(f)<0$ for any other closed fibre $f$ of $Y$ by hypothesis).
	\end{proof}

See \autoref{ex: Mori-conic-double-lines} and \autoref{ex:geometrically-non-normal-pdeg1} for examples of rational Mori conic bundles without any point in Sarkisov general position; that is, they do not admit any Sarkisov link of type \II.

\subsection{Non-rationality of geometrically non-normal del Pezzo surfaces}

We use the Sarkisov program to show birational rigidity of geometrically non-normal del Pezzo surfaces of Picard rank $1$.
We recall the definition of birational rigidity in the context of surfaces.

\begin{definition} \label{def: birational-rigidity}
	Let $\k$ be a field, let $X$ be a regular projective $\k$-surface and let $X \to B$ be a Mori fibre space over $\k$.
	We say $X/B$ is \emph{birationally rigid} if given a birational map $\chi \colon X \rat X'$, where $X' \to B'$ is a Mori fibre space, there exists an isomorphism $\sigma \colon B \to B'$ and a birational map $\varphi \in \Bir(X)$ such that $\chi \circ \varphi$ is an isomorphism and the following diagram commutes:
	\begin{equation}\label{diag:birat_rigidity}
		\begin{tikzcd}
			X \ar[r, dashed,"\varphi"] \ar[d]& X \ar[r, dashed,"\chi"] &  X' \ar[d]  \\
			B \ar[rr,"\sigma"] & & B'.
		\end{tikzcd}
	\end{equation}
	We say $X \to B$ is \emph{birationally super-rigid} if every birational map $X\rat X'$ to a Mori fibre space $X'\to B'$ is an isomorphism of Mori fibre spaces.
	 If $B=\Spec(\k)$, then $X$ is birationally super-rigid if and only if it is birationally rigid and $\Bir_\k(X)=\Aut_\k(X)$.
\end{definition}

We show, as an application of the Sarkisov program \autoref{t-sarkisov-program}, that geometrically non-normal del Pezzo surfaces of Picard rank $1$ are not rational. 

\begin{theorem}[Rigor-mortis statement]\label{prop: birational-rigidity-non-norm-dP}
	Let $\k$ be a field of characteristic $p>0$.
	Let $X$ be a del Pezzo surface over $\k$ of Picard rank $1$ and degree $K_X^2=d$.
	Suppose $X$ is not geometrically normal. Then $X$ is not rational. 
	If we assume $X$ is geometrically integral, we have
	\begin{enumerate}
		\item\label{rigor-mortis:1} if $(p,d) \neq (2,4)$, then $X$ is birationally super-rigid,
		\item\label{rigor-mortis:2} if $(p, d) =(2,4)$, then the only Mori fibre spaces birational to $X$ are $X$ itself and, for each rational point $\xx$ of $X$, the blowup $Y$ at $\xx$ endowed with a (generically non-smooth) conic bundle structure $Y \to \mathbb{P}^1_k$.
	\end{enumerate}
\end{theorem}

\begin{proof}
If $X$ is not geometrically integral, then $X$ is not $\k$-rational. Indeed, if $X$ is birational to $\mathbb{P}^2_\k$, then it is geometrically $R_0$ (i.e. regular in codimension 0). As $X$ is $S_1$, then $X_{\overline{\k}}$ is $S_1$ and $R_0$, thus reduced by \cite[\href{https://stacks.math.columbia.edu/tag/031R}{Tag 031R}]{stacks-project}, reaching a contradiction. Thus we can suppose that $X$ is geometrically integral from now on.

As $\rho(X)=1$, there are no Sarkisov links of type III and IV starting at $X$.
	By \autoref{thm: dP-base-change-classification}, we have that $p \in \left\{2, 3 \right\}$. Moreover, if $p=3$ then $K_X^2\in\left\{1,3 \right\}$, and if $p=2$ then $K_X^2 \in \left\{ 1, 2, 4 \right\}$.

	\autoref{rigor-mortis:1} It suffices to show that there is no rank $2$ fibration dominating $X$, equivalently there are no links of type I and II starting at $X$.

	If $K_X^2=1$, then there are no rank $2$ fibrations dominating $X$ (as for the blow-up $Y \to X$, we have $K_Y^2 \leq 0$) and therefore $X$ is birationally super-rigid.

	%If $K_X^2=2$, then $p=2$, the normalised base change  $(X \times_\k \overline{\k})_{\text{red}}^n \simeq \mathbb{P}(1,1,2)$ and the conductor is $2L$.
	If $K_X^2=2$, a rank $2$ fibration $Y$ dominating $X$ is obtained by blowing up a $\k$-rational point, which lies in the smooth locus of $X$.
	Thus $Y$ is a geometrically non-normal del Pezzo surface of Picard rank $2$ and $K_Y^2=1$, which does not appear in the list of \autoref{thm: dP-base-change-classification}.

	If $K_X^2=3$, then $p=3$ and the normalised base change is $\mathbb{P}(1,1,3)$ with conductor $2L$.
	As $K_X^2=3$, a rank $2$ fibration $Y$ dominating $X$ is obtained by blowing up a closed point $\xx$ of degree at most $2$.
	As $p=3$, $\k(\xx)$ is a separable extension of $\k$, and thus $\xx$ is contained in the smooth locus of $Y$. This implies that $Y$ is a del Pezzo surface which is geometrically non-normal with $\rho(Y)=2$, contradicting the list of \autoref{thm: dP-base-change-classification}.

	\autoref{rigor-mortis:2}
	If $K_X^2=4$, then any rank $2$ fibration $Y$ dominating $X$ is geometrically non-normal with $K_Y^2<4$.
	By \autoref{thm: dP-base-change-classification}, we have $\rho(Y_{\rm sep})=2$ and $K_Y^2\leq 3$.
	If $K_Y^2=2$, then $(Y_{\bar\k})^n$ is isomorphic to $\p^1\times\p^1$ by the list in \autoref{thm: dP-base-change-classification} and the two extremal contractions from $Y$ are onto a curve. 
	This is a contradiction to having a birational contraction $Y\to X$.
	Therefore, $K_Y^2=3$, and $f \colon Y \to X$ is the blow up at a $\k$-rational point with exceptional divisor $E \simeq \mathbb{P}^1_\k$.
	The divisor $-K_Y-E=-f^*K_X-2E$ is base-point-free and the associated contraction is a conic bundle $\pi\colon Y \to \mathbb{P}^1_k$.
	Geometrically, $Y$ is a cubic surface, $E \subset Y$ is a line and $\pi \colon Y \to \mathbb{P}^1$ is the projection from the line $E$.
	We claim that the generic fibre of $\pi$ is geometrically non-reduced. If not, the non-normal locus of $Y_{\overline{\k}}$ would be vertical with respect to $\pi_{\overline{\k}}$, contradicting the classification of the conductors in \autoref{thm: dP-base-change-classification}.
	This shows we cannot perform links of type II by \autoref{lem:k(x)=k(b)}. 
	This implies that the only surfaces with a Mori fibre space structure birational to $X$ are $X$ itself and cubic surfaces with a generically non-smooth Mori conic bundle structure.
	This concludes the proof.
\end{proof}

%===================================

\section{Classification of Sarkisov links} \label{s: Sarkisov_Links}

In this section, we classify Sarkisov links over arbitrary fields 
with respect to the numerical invariants of the involved surfaces (such as the self-intersection of the canonical class and the degree of the closed points that are being blown up).
The results here should be compared with \cite[Theorem 2.6]{Isk96}.

\subsection{Links of type \I}

We classify links of type $\I$ (and $\III$) for surfaces over arbitrary fields.

\begin{proposition} \label{prop: link_I} 
	Let $\k$ be any field and let $X$ be a geometrically integral del Pezzo surface with $N^1(X)=\Z H_X$ for $H_X$ an ample Cartier divisor on $X$.
	Let $\pi \colon Y \to X$ be the blow-up of a closed point $\xx$ with exceptional divisor $E$, such that
	\begin{center}
		\begin{tikzcd}[ampersand replacement=\&,column sep=1.3cm,row sep=0.16cm]
		 \&\& Y \ar[dd,"fib"]\ar[ddll,"div"] \\ \\
		X\ar[uurr,"\chi",dashed, bend left=20]  \ar[dr,"fib",swap] \&  \& B \ar[dl] \\
		\& \Spec k \&
		\end{tikzcd}
	\end{center}
	is a link of type \I. Then $B$ is a curve of genus $0$.
	Writing $H$ for the pull-back of $H_X$ on $Y$ and $C$ for the divisor on $Y$ giving the fibration to $B$, we have the following possibilities:
	\begin{enumerate}
	\item $K_X^2=9$ and
		\subitem $\deg(\xx)=1$: in this case $X \simeq \mathbb{P}^2_\k$, $Y\simeq \mathbb{F}_1$, $C\equiv H-E$.
		\subitem $\deg(\xx)=4$: in this case $X \simeq \mathbb{P}^2_\k$, $C\equiv 2H-E$.
	\item $K_X^2=8$, $\deg(\xx)=2$, and either
	\subitem (a) $-K_X=2H_X$, $B\simeq \mathbb{P}^1_k$, and $C\equiv H-E$, or
	\subitem (b) $-K_X=H_X$, $B(k) =\emptyset$, and $C\equiv H-2E$.
	\item $K_X^2=4$ and $\deg(\xx)=1$: $Y$ is a cubic surface and $f$ is the projection from a line with $C\equiv H-2E$.
\end{enumerate}
\end{proposition}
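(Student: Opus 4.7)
The plan is to exploit that the Mori cone of $Y$ has two extremal rays, one generated by $E$ and the other by the class of a fibre of $\pi_B\colon Y\to B$, and to enumerate the integer solutions of the resulting numerical equations.

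Since $\pi$ is the blow-up of a closed point on a regular surface of Picard rank one, $\Pic(Y)=\Z H\oplus\Z E$ with $H^2=H_X^2$, $H\cdot E=0$, $E^2=-\deg(\xx)$, and $-K_Y=\lambda H-E$, where $-K_X=\lambda H_X$ with $\lambda\in\{1,2,3\}$ as classified by \autoref{cor: moredP}. The link-of-type-$\I$ hypothesis forces $Y$ to be a rank $2$ fibration over $\Spec(\k)$, hence a del Pezzo surface of Picard rank $2$, and $\pi_B$ is a Mori conic bundle onto a regular curve $B$. From $H^1(Y,\mathcal O_Y)=0$, $\pi_{B,*}\mathcal O_Y=\mathcal O_B$ and $H^0(Y,\mathcal O_Y)=\k$, the Leray spectral sequence yields $H^0(B,\mathcal O_B)=\k$ and $H^1(B,\mathcal O_B)=0$, so $B$ is a geometrically integral regular curve of genus zero. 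Thus either $B\simeq\p^1_\k$ or $B$ is a nontrivial conic, and the minimal degree of a closed point of $B$ is $d_0\in\{1,2\}$.

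Let $C=aH-bE$ with $a,b>0$ and $\gcd(a,b)=1$ be the primitive class generating the fibre extremal ray of $\overline{\NE}(Y)$; by the base-point-free theorem, $C$ is the class of a fibre of $\pi_B$ over a closed point $b_0\in B$ of minimal degree $d_0$. The conditions $C^2=0$ and $-K_Y\cdot C=2d_0$ (the latter by adjunction, since $C_{\k(b_0)}$ is a conic in $\p^2_{\k(b_0)}$) translate to
\[
a^2H_X^2=b^2\deg(\xx)\qquad\text{and}\qquad aH_X^2(\lambda b-a)=2d_0\,b.
\]
The first equation together with $\gcd(a,b)=1$ forces $b^2\mid H_X^2$, which, combined with $\deg(\xx)<K_X^2$ from \autoref{lem: how-much-we-can-blow-up} and $d_0\in\{1,2\}$, leaves only finitely many candidates $(a,b,\deg(\xx))$ for each $(H_X^2,\lambda)$.

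By \autoref{cor: moredP} the pair $(H_X^2,\lambda)$ is $(1,3)$ for $\p^2_\k$, $(2,2)$ for a quadric in $\p^3_\k$, $(9,1)$ for a nontrivial Severi--Brauer surface, $(8,1)$ for a degree $8$ del Pezzo that is not a quadric, or $(K_X^2,1)$ for $K_X^2\in\{1,\dots,6\}$. A short case-check yields exactly the five solutions
\[
(H_X^2,\lambda,a,b,\deg(\xx),d_0)\in\{(1,3,1,1,1,1),(1,3,2,1,4,1),(2,2,1,1,2,1),(8,1,1,2,2,2),(4,1,1,2,1,1)\},
\]
matching in order the five cases of the statement; all other pairs admit no solution with $d_0\leq 2$. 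The geometric identifications are then immediate by reading off $|C|$: projection from $\xx$ gives $Y\simeq\F_1$; the pencil of conics through a degree $4$ point gives the quintic del Pezzo; hyperplane sections of the quadric through a degree $2$ point yield case $2(a)$; the pencil $|H-2E|$ on the twisted degree $8$ surface gives case $2(b)$, with $d_0=2$ forcing $B(\k)=\emptyset$; and in case $3$ the exceptional curve $E$ is a line on the anticanonical embedding $Y\hookrightarrow\p^3_\k$ of the cubic, and $|H-2E|=|{-}K_Y-E|$ is the pencil of residual conics cut out by planes containing $E$. The delicate point, and the main obstacle, is ruling out the Severi--Brauer pair $(9,1)$: here $b^2\mid 9$ allows $b=3$ with integer solutions $a\in\{1,2\}$ and $\deg(\xx)\in\{1,4\}$, but the second equation then forces $d_0=3$, incompatible with the index bound on the genus zero curve $B$---this purely arithmetic obstruction is what eliminates nontrivial Severi--Brauer surfaces from the list.
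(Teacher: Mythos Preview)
Your argument is correct and arrives at the same list, but the route differs from the paper's. The paper does not solve the two Diophantine equations directly; instead it first invokes the structure theory of Mori conic bundles developed just before (\autoref{prop: conic-bundle-vs-dP}, \autoref{lem: dP4_mfs}, \autoref{prop: Mori-conic-bdl-dP}) to pin down the admissible values of $K_Y^2$, and then reads off $K_X^2$ and $\deg(\xx)$ case by case. Your approach bypasses those auxiliary results entirely: once you write $C=aH-bE$ with $C^2=0$ and $-K_Y\cdot C=2d_0$, the whole classification becomes an integer enumeration over the finitely many pairs $(H_X^2,\lambda)$ allowed by \autoref{cor: moredP}. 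This is more elementary and self-contained; the price is that you must carry out the case-check carefully, whereas the paper can appeal to statements it has already proved. Your way of eliminating nontrivial Severi--Brauer surfaces via the forced value $d_0=3$ is also cleaner than the paper's, which instead uses the index constraint on closed points of such surfaces.

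One point deserves a firmer justification. You assert that the primitive class $C$ on the fibre ray \emph{is} the class of the fibre over a minimal-degree point of $B$, citing ``the base-point-free theorem''. That theorem only gives you that some multiple of $C$ is the pull-back of an ample class on $B$; it does not by itself show $m=1$ in $F_{b_0}=mC$. What you need is that $\pi_B^*\Pic(B)$ is a primitive sublattice of $\Pic(Y)$. This follows because any $D\in\Pic(Y)$ with $D\cdot F_{b_0}=0$ restricts trivially to every fibre (each fibre is an integral conic, hence has $\Pic^0=0$), so $\pi_{B*}\mathcal O_Y(D)$ is invertible and $D\in\pi_B^*\Pic(B)$; alternatively, argue over a dense open of $B$ with smooth fibres and use that all closed fibres are irreducible. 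Once this is said, your equation $-K_Y\cdot C=2d_0$ is legitimate and the enumeration goes through exactly as you wrote it. The vanishing $H^1(Y,\mathcal O_Y)=0$ you use at the start should also be attributed (Kawamata--Viehweg on the del Pezzo $Y$, or \cite{BM23}), since over arbitrary fields it is not a formality.
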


\begin{proof}
	By \autoref{prop: birational-rigidity-non-norm-dP}, $X$ is geometrically normal, unless $p=2$ and $K_X^2=4$. 
	Therefore, $h^1(X, \mathcal{O}_X)=0$ by \cite[Theorem~1.1]{BM23}. As $Y$ and $X$ are both regular, we have $h^1(Y, \mathcal{O}_{Y})=h^1(X, \mathcal{O}_X)=0$. By the Leray spectral sequence, we see that $h^1(B, \mathcal{O}_B)$ injects into $h^1(Y, \mathcal{O}_Y)$, and thus vanishes. So $B$ is a curve of genus $0$.
	Since $Y$ is a del Pezzo surface by assumption, by \autoref{lem: dP4_mfs} we have that $K_Y^2 =3 $ or $K_Y^2 \geq 5$.
	Suppose $K_X^2=9$.
	By \autoref{prop: Mori-conic-bdl-dP}, we conclude that $K_Y^2$ is either 8 or 5. In both cases, $X$ has a closed point of degree not divisible by 3, and thus $X \simeq \mathbb{P}^2_\k$.
	
	Suppose $K_X^2=8$.
	By  \autoref{prop: Mori-conic-bdl-dP}, we conclude that $K_Y^2$ is 6. Let $\left\{C_t\right\}$ be a pencil of curves on $X$ such that $\tilde{C_t}$ is base point free and let $d>0$ such that $C_t \equiv dH_X$.
	We have $0=\tilde{C_t}^2=C_t^2-2m_{\xx}(C_t)=d^2H_X^2-2m_{\xx}(C_t)^2$.
	Recall by \autoref{lem: dp8-vs-quadric} that either (a) $X$ is a quadric over $\k$ (and in this case $-K_X=2H_X$) or (b) $X$ is not a quadric over $\k$ (and in this case $-K_X=H_X$).
	In case (a), we thus have $2d^2=2m_{\xx}(C_t)^2$ while in case (b) we have $8d^2=2m_{\xx}(C_t)^2$.
	In case (a), consider the pencil of planes $\{H_t\}$ in $\mathbb{P}^3_\k$ passing through $\xx$ and write $C_t=Q \cap H_t$. The strict transforms $\left\{\widetilde{C}_t\right\}$ form a free pencil of conics on $X$, concluding the base of the fibration is $\mathbb{P}^1_\k$.
	In case (b), suppose by contradiction there exists a closed fibre $f$ over a rational point.
	In this case, we have $f \equiv H-2E $ and $2=-K_Y \cdot f=-K_Y \cdot (H-2E)=(H-E)(H-2E)=H^2+2E^2$.
	As $H^2=8$ and $E^2=-2$ we get a contradiction.
	
	The case of $K_X^2=7$ does not appear by \autoref{lem:no-minimal-dp7}
	The case of $K_X^2=6$ also does not appear as in this case $K_Y^2 \in \left\{3, 5 \right\}$ and this is not possible by \autoref{prop: Mori-conic-bdl-dP}.
	Finally, if $K_X^2=4$, then $\pi$ is the blow-up of $X$ at a rational point. Thus $Y$ is a del Pezzo of degree 3 with a rational line and $f$ is the projection from the line.
\end{proof}

\begin{remark}
\begin{enumerate}
	\item Links of type \III are the inverse of links of type \I, and thus \autoref{prop: link_I} classify also those.
	\item Links of type I with a non-separable base point appear only in characteristic $2$, starting from $\mathbb{P}^2_\k$ or a del Pezzo surface of degree $8$.
	\item The surface $Y$ in a link of type \I can fail to be geometrically normal. See \autoref{ex:geometrically-non-normal-pdeg1} for an example over a field with $p$-degree $1$, and \autoref{ex: Mori-conic-double-lines} over a field with $p$-degree $2$.
\end{enumerate}
\end{remark}

\subsection{Links of type \II}\label{sss: type II}

We classify links of type \II for surface Mori fibre spaces over arbitrary fields.
We distinguish according to whether the base is a point (Section~\ref{sss:IIoverSpec}) %(\noindent\ref{sss: type II}(a)) 
or a curve (Section~\ref{sss:IIovercurve}).%(\noindent\ref{sss: type II}(b)).
\bigskip

\begin{definition}[Notation]
Let us fix a notation for links of type \II: consider a Sarkisov link $\chi\colon X\rat Y$ of type \II with base-point $\xx$ and let $\yy$ be the base-point of $\chi^{-1}$. 
In this situation, we write $\chi=:\chi_{\xx,\yy}$.  For $a=\deg(\xx)$ and $b=\deg(\yy)$, we will write $\chi\colon X\overset{a:b}\rat Y$.
\end{definition}

%\noindent\ref{sss: type II}(a) \emph{Links of type $\II$ over $\Spec(\k)$.}
\subsubsection{Links of type $\II$ over $\Spec(\k)$}\label{sss:IIoverSpec}

\begin{remark}
	\autoref{lem: how-much-we-can-blow-up} implies that links of type \II between del Pezzo surfaces which have base points with non-separable residue field appear only in characteristic $p \leq 7$.
	Moreover, in characteristic $5$ and $7$ the base points must be purely inseparable.
\end{remark}

\begin{proposition}\label{prop: link II dP}
Let $\k$ be any field and let $X$ be a geometrically integral del Pezzo surface with $N^1(X)=\Z H_X$ for $H_X$ an ample Cartier divisor on $X$.
	Let $\pi \colon Y \to X$ be the blow-up of a closed point $\xx$ with exceptional divisor $E$, such that
	\begin{center}
	\begin{tikzcd}[ampersand replacement=\&,column sep=.8cm,row sep=0.16cm]
	 \& Y\ar[ddl,"\xx",swap]\ar[ddr,"\yy"]\& \\ \\
	X \ar[rr,"\chi",dashed] \ar[dr,swap] \&  \& X' \ar[dl] \\
	\& \Spec(\k) \&
\end{tikzcd}
	\end{center}
	is a link of type \II.
	Writing $H$ for the pull-back of $H_X$ on $Y$, and $E'$ for the exceptional divisor over $\yy$, we have the following possibilities:
	\[
	\begin{array}{l|ccc|c}
		X&\deg(\xx)&\deg(\yy)&E'& K_{X'}^2\\ \hline
		X=\p^2_{\k}, 3H_X=-K_X
&2 & 1 & 1H - 1E& 8\\
&3 & 3 & 3H - 2E& 9\\
&5 & 1 & 2H - 1E& 5\\
&6 & 6 & 12H - 5E& 9\\
&7 & 7 & 21H - 8E& 9\\
&8 & 8 & 48H - 17E& 9\\
\hline
K_X^2=8, 2H_X=-K_X
&1 & 2 & 1H - 2E& 9\\
&3 & 1 & 1H - 1E& 6\\
&4 & 4 & 4H - 3E& 8\\
&5 & 2 & 3H - 2E& 5\\
&6 & 6 & 12H - 7E& 8\\
&7 & 7 & 28H - 15E& 8\\
\hline
K_X^2= 9 , H_X=-K_X
&3 & 3 & 1H - 2E& 9\\
&6 & 6 & 4H - 5E& 9\\
\hline
K_X^2= 8 , H_X=-K_X
&4 & 4 & 2H - 3E& 8\\
&6 & 6 & 6H - 7E& 8\\
\hline
K_X^2= 6
&1 & 3 & 1H - 3E& 8\\
&2 & 2 & 1H - 2E& 6\\
&3 & 3 & 2H - 3E& 6\\
&4 & 4 & 4H - 5E& 6\\
&5 & 5 & 10H - 11E& 6\\
\hline
K_X^2= 5
&1 & 5 & 2H - 5E& 9\\
&2 & 5 & 3H - 5E& 8\\
&3 & 3 & 3H - 4E& 5\\
&4 & 4 & 8H - 9E& 5\\
\hline
K_X^2= 4
&2 & 2 & 2H - 3E& 4\\
&3 & 3 & 6H - 7E& 4\\
\hline
K_X^2= 3
&1 & 1 & 1H - 2E& 3\\
&2 & 2 & 4H - 5E& 3\\
\hline
K_X^2= 2
&1 & 1 & 2H - 3E& 2\\
\hline
	\end{array}
	\]
\end{proposition}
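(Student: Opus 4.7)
The plan is to extract numerical constraints from the geometry of the rank~$2$ fibration $Y\to\Spec(\k)$ and then perform a finite enumeration. Set $H := \pi^*H_X$, so that $\Pic(Y)=\Z H \oplus \Z E$ with intersection form $H^2 = H_X^2$, $H\cdot E = 0$, $E^2 = -d_\xx$, and $K_Y = -mH + E$, where by \autoref{cor: moredP} we have $-K_X = mH_X$ with
\[
(K_X^2,m,H_X^2) \in \{(9,3,1),(8,2,2),(9,1,9),(8,1,8)\} \cup \{(d,1,d) \colon 2\le d\le 6\},
\]
the case $K_X^2=7$ being excluded by \autoref{lem:no-minimal-dp7} and $K_X^2=1$ being incompatible with the existence of a link of type~\II via \autoref{lem: how-much-we-can-blow-up}. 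Write the class of the second exceptional curve as $E' \equiv aH - bE$ for integers $a,b$. Using that $E,E'$ are distinct integral curves (so $E\cdot E'\ge 0$) together with the fact that $E'$ lies in the effective cone and $(E')^2<0$, one quickly shows that $a,b\ge 1$.

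By \autoref{lem: characterise_-1_curves}, $E'\simeq \p^1_{k(\yy)}$ is characterised by $(E')^2 = -d_\yy$ and $K_Y\cdot E' = -d_\yy$, where $d_\yy = [k(\yy):\k]$. Expanding both intersections yields the two equations
\[
a^2 H_X^2 - b^2 d_\xx = -d_\yy, \qquad -am H_X^2 + b d_\xx = -d_\yy,
\]
whose difference gives the master Diophantine equation
\begin{equation*}
a(a+m)\,H_X^2 \;=\; b(b+1)\,d_\xx.
\end{equation*}
From these we read off $d_\yy = b^2 d_\xx - a^2 H_X^2$ and $K_{X'}^2 = K_X^2 - d_\xx + d_\yy = (m^2-a^2)H_X^2 + (b^2-1)d_\xx$. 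The possible range of $d_\xx$ and $d_\yy$ is then controlled by $1 \le d_\xx < K_X^2$ (by \autoref{lem: how-much-we-can-blow-up} on $Y\to X$), the analogous bound $1\le d_\yy < K_{X'}^2$ on $Y\to X'$, and $K_{X'}^2 \in \{1,2,3,4,5,6,8,9\}$ (by \autoref{cor: geometrically integral del Pezzo} and \autoref{lem:no-minimal-dp7}).

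For each of the finitely many admissible triples $(K_X^2,m,H_X^2)$ and each admissible $d_\xx$, the master equation together with the bound $K_{X'}^2\le 9$ makes the set of pairs $(a,b)$ effectively finite; one enumerates them and reads off the rows of the table, keeping only those with $K_{X'}^2\notin\{7\}$ and, when $K_{X'}^2=8$ or $9$, further checking compatibility with \autoref{prop: divisibility-dp8} and \autoref{cor: moredP} to identify whether $X'$ is a quadric, a Severi--Brauer surface, or $\F_1$ (the last being excluded since $\rho(\F_1)=2$). The bookkeeping is slightly tedious but entirely mechanical; the main difficulty I anticipate is not the enumeration itself, but rather checking that no extra Diophantine solutions sneak in at the boundaries. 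In particular, one must verify that the potential solutions with $K_{X'}^2 = 7$ do not occur, and that when the same pair $(K_X^2,K_{X'}^2)$ appears in several rows one has correctly ruled out all $(a,b)$ beyond those listed --- this is what forces $K_{X'}^2\ne 7$ to propagate consistently and ensures the table is exhaustive rather than a mere lower bound.
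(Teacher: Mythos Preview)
Your proposal is correct and follows essentially the same approach as the paper: set up the two numerical equations coming from $(E')^2=-d_\yy$ and $K_Y\cdot E'=-d_\yy$, reduce to a finite enumeration over the admissible $(K_X^2,m,H_X^2)$ and degrees, and then prune spurious solutions using \autoref{lem:no-minimal-dp7}, \autoref{prop: divisibility-dp8}, and the index constraints on Severi--Brauer surfaces. The paper organises the enumeration slightly differently (solving a quadratic in the multiplicity rather than your ``master equation''), and it is a bit more explicit than you are about two borderline exclusions---namely $(K_X^2,d_\xx,d_\yy)=(3,1,6)$ and $(2,1,8)$---which are ruled out by a reciprocity check (the reverse link would have to appear in the already-computed $K_X^2=8$ or $9$ tables but does not); you should state this argument rather than leaving it implicit in ``checking compatibility''.
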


\begin{proof}
	We have $\Pic(Y)=\Pic(X) \oplus \mathbb{Z}[E]$ by \cite[Proposition 2.18]{Liu02}, thus we can
	write $E'\equiv dH-mE$ for some integers $d,m\in\Z$. Let us show that $m,d>0$. Indeed, as $E'$ is effective, we have that the pushforward of $dH-mE$ on $X$ is ample, and thus $d>0$. As $E'$ is an effective prime divisor and $E' \neq E$, we have $E' \cdot E \geq 0$, which implies $-mE^2 \geq 0$, concluding $m \geq 0$. As $E'^2<0$, we conclude $m>0$.
	By \autoref{cor: moredP}, we are in one of the following cases: $X=\p^2_{\k}$ (i.e. $K_X^2=9$ and $H_X^2=1$), or $K_X^2=8$ and $H_X^2=2$, or $K_X^2\in\{2,3,4,5,6,8,9\}$ and $-K_X=H_X$.
	Note that there is no rank $2$ fibration $Y \to X$ dominating a del Pezzo surface of degree $1$, as one would then have $K_Y^2 \leq 0$.

	Writing $a=\deg(\xx)$ and $b=\deg(\yy)$, we have the following two equations by \autoref{sec:BirationalgeometryofExcellentSurfaces}, 
	where $f$ is the blow-up $Y \to X$:
	\begin{align*}
		-b &=\deg(\yy)= K_Y\cdot E' = (f^*K_X+E)(dH-mE)=dK_XH_X+am, \\
		-b &=\deg(\yy)= (E')^2= (dH-mE)^2 = d^2H_X^2-am^2.
	\end{align*}
	Next, for a fixed $X$, we go through all cases $a\in\{1,\ldots, K_X^2-1\}$, $b\in\{1,\ldots,9-(K_X^2-a)\}$, and check when the value of $m$ is a non-negative integer.

	Writing $K_X=-\lambda H_X$, note that $(K_XH_X)^2/H_X^2=\lambda^2H_X^2=K_X^2$.
	The first equation gives $d=-\frac{b+am}{K_XH_X}$, so $d^2H_X^2=(b+am)^2H_X^2/(K_XH_X)^2=\frac{(b+am)^2}{K_X^2}$.
	The second equation becomes a quadratic equation in $m$, namely
	\begin{align*}
		0&=d^2H_X^2-am^2+b \\ &= \frac{(b+am)^2}{K_X^2} - am^2+b \text{, which implies }\\ 0 &=m^2a(a-K_X^2) +2abm + b(b+K_X^2).
		\end{align*}
	One can check that the only cases $(a,b)$ that give a non-negative integer $m$ are the ones from the statement, and, in addition, the following cases:
	\begin{enumerate}
		\item $K_X^2=9$, $H_X=-K_X$, and $(a,b)\in\{(2,1),(5,1),(7,7),(8,8)\}$. These cases are not possible because $3\mid a$ on a non-trivial Severi-Brauer surface.
		\item $K_X^2=8$, $H_X=-K_X$, and $(a,b)\in\{(1,2), (3,1), (5,2), (7,7)\}$. These cases are excluded as there are no points of odd degree on $X$ by \autoref{prop: divisibility-dp8}.
		\item $K_X^2=3$ and $(a,b)=(1,6)$: In this case $K_{X'}^2=8$, but in the list of del Pezzo surfaces of degree $8$ the case $(6,1)$ does not appear.
		\item $K_X^2=2$ and $(a,b)=(1,8)$: In this case $K_{X'}^2=9$, and in the list of del Pezzo surfaces of degree $9$ the case $(8,1)$ does not appear.
	\end{enumerate}
\end{proof}

%\noindent\noindent\ref{sss: type II}(b) \emph{Links of type \II over a curve.}
\subsubsection{Links of type \II over a curve}\label{sss:IIovercurve}

The following statement implies that the base-point of a Sarkisov link between conic fibrations must have ``direction transversal to the fibre''.
Given a regular surface $X$ and a closed point $\xx$, we denote by $Y \overset \xx\rightarrow  X$ the blow-up at $\xx$.
\begin{proposition}\label{lem:links type II conic fibrations}
	Let $\eta\colon X\to B$ be Mori fibre space onto a curve $B$ over $T$.
For $b\in B$, let $X_b \coloneqq X \times_B \Spec(\k(b))$ be the fibre above $b$.
	\begin{enumerate}
	    \item\label{links type II conic fibrations:2}
		Let $\xx\in X$ be a closed point such that $\eta(\xx)=b$ and $\k(\xx)=\k(b)$, and $X_{b}$ is regular. 
		Then there is a Sarkisov link $X\rat X'$ of type $\II$ over $B$ whose base-point is $\xx$.
		\item\label{links type II conic fibrations:1}
		Let $X \overset \xx\leftarrow  Y \overset \yy\rightarrow  X'$ be a Sarkisov link of type $\II$ over $B$.
		Let $b$ be the closed point of $B$ such that $\eta(\xx)=b$.
		Then $X_b$ is regular and  $\k(\xx)=\k(b)=\k(\yy)$.
		\item\label{links type II conic fibrations:3} Moreover, in the same notation of \autoref{links type II conic fibrations:1}, $X_b$ and $X'_b$ are geometrically regular over $\k(b)$.
		\item\label{links type II conic fibrations:4} Suppose $T=\Spec \k$. Then $X$ is smooth over $\k$ if and only if $X'$ is smooth over $\k$.
	\end{enumerate}
\end{proposition}

\begin{proof}
	\autoref{links type II conic fibrations:2} and \autoref{links type II conic fibrations:1} are just rephrasing of \autoref{lem:k(x)=k(b)}. Now we prove \autoref{links type II conic fibrations:3}.
	For this, we first remark that $X_b$ is a geometrically reduced conic over $\k(b)$ as $\xx$ is a regular $\k(b)$-rational point by \autoref{links type II conic fibrations:1}. %(indeed, it is obtained as the image of the exceptional divisor of $Y \to X$). 
	As $X_{b}$ is a regular and geometrically reduced conic, it is geometrically regular over $\k(b)$ by \cite[Lemma 2.17]{BT22}, thus concluding. By symmetry, the same holds for $X'_{b}$.
	
	We prove \autoref{links type II conic fibrations:4}.  
	Let $X_{\overline{\k}}$ be the base change to the algebraic closure, which is regular by hypothesis. 
	In particular, $X_{\overline{\k}}$ is normal and this implies that $B_{\overline{\k}}$ is normal, showing that $B$ is a geometrically regular curve over $\k$. 
	As the fibre $X'_{b}$ is geometrically regular over $\k(b)$ by \autoref{links type II conic fibrations:3}, the morphism $\eta'$ is smooth over a neighbourhood of $b$. As $X'$ is smooth over $B$ in a neighbourhood of $b$ and $B$ is geometrically regular, this shows that $X'$ is geometrically regular over $\k$.
	%Let $\xx\in X$ be the base-point of a Sarkisov link of type II and write $b=\eta(\xx)$.  Suppose that $X$ is geometrically regular over $k$ at every point of $X_b$.
	%Let $\pi\colon Y\to X$ be the blow-up of $\xx$ and let $\widetilde{X}_b\subset Y$ be the strict transform of $X_b$.
	%The exceptional divisor $E \simeq \mathbb{P}^1_{\k(\xx)}=\mathbb{P}^1_{\k(b)}$ of $\xx$ intersects $\widetilde{X}_b$ in a $\k(b)$-rational point. %In fact, it is the only non-smooth point of $Y$ above $b$ by \autoref{links type II conic fibrations:1}.
	%The second extremal ray of $\NE(Y/B)$ is generated by $\widetilde{X}_b$.
	%Thus $X'_{\k(b)} \simeq \mathbb{P}^1_{\k(\xx)}$ is geometrically regular over $\k(b)$, concluding.
	%on the fibre above $b$ (which is the image of $E$ by the contraction of $\tilde{X}_b$).
\end{proof}

\begin{remark}
	For links of type II over a curve, base points with non-separable residue fields appear in every characteristic.
\end{remark}

\subsection{Links of type \IV}

	We now discuss links of type \IV.
	Recall from \autoref{lem:linkIV-possible-degree} the presence of a Sarkisov link of \IV implies that $K_X^2\in\{1,2,4,8\}$.
In the case where the Mori conic bundle is smooth, we can be more precise.

\begin{lemma} \label{l-sarkisov-4-class}
	Let $\eta \colon X \to B$ be a Mori conic bundle onto a curve $B$ over a field $\k$.
	Suppose $X$ is geometrically integral and that there is a Sarkisov link of type \IV starting from $X$.
	Then, $\eta$ is smooth if and only if $K_X^2=8$.
	If this holds, then $X_{\bar{\k}}\simeq \p^1_{\bar{\k}}\times\p^1_{\bar{\k}}$.
\end{lemma}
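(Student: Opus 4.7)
The plan is to leverage the Sarkisov link of type \IV from $X$, which supplies a second Mori conic bundle structure $\eta'\colon X\to C$ on the same surface. Combined with \autoref{lem:linkIV-possible-degree}, this forces $X$ to be a geometrically integral del Pezzo surface with $K_X^2\in\{1,2,4,8\}$ and both $B$, $C$ to be curves of arithmetic genus zero. The two directions then amount to identifying the geometric type of $X_{\bar\k}$ via the classification of degree-$8$ del Pezzo surfaces.

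For the forward direction, I would first check that smoothness of $\eta$ forces $B$ itself to be smooth over $\k$: because $X$ is geometrically integral, the field extension $\k\subset k(X)$ is separable and $\k$ is algebraically closed in $k(X)$, hence (via the intermediate field $k(B)$ together with $H^0(B,\mathcal{O}_B)=\k$) $B$ is geometrically integral, and being a regular curve of arithmetic genus zero, $B$ is smooth over $\k$ and satisfies $B_{\bar\k}\simeq\p^1_{\bar\k}$. Thus $X$ is smooth over $\k$, and $\eta_{\bar\k}\colon X_{\bar\k}\to B_{\bar\k}$ is a smooth conic bundle; every geometric fibre admits a $\bar\k$-rational point, so $\eta_{\bar\k}$ is a $\p^1$-bundle, and $X_{\bar\k}\simeq\F_n$ for some $n\geq 0$. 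Since the link \IV base-changes to yield a second Mori fibration on $X_{\bar\k}$ to a curve, and $\F_n$ admits such an additional fibration only when $n=0$, we conclude that $X_{\bar\k}\simeq\p^1_{\bar\k}\times\p^1_{\bar\k}$, so $K_X^2=8$.

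For the converse, assume $K_X^2=8$. By \autoref{lem: dP8_normal}, $X$ is geometrically normal, so \autoref{lem: dP8} leaves three possibilities: $X\simeq\F_1$, or $X_{\bar\k}\simeq\p^1_{\bar\k}\times\p^1_{\bar\k}$, or $X_{\bar\k}\simeq\p(1,1,2)$. I rule out $X\simeq\F_1$ because its Mori cone has a unique ray yielding a fibration to a curve, contradicting the presence of the second fibration $\eta'$. I rule out $X_{\bar\k}\simeq\p(1,1,2)$ by a Picard-rank comparison: $\rho(X)=2$ gives $\rho(X_{\bar\k})=\rho(X_{\k^{\sep}})\geq 2$ by \cite[Proposition 2.4]{Tan18b}, whereas $\rho(\p(1,1,2))=1$. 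Hence $X_{\bar\k}\simeq\p^1_{\bar\k}\times\p^1_{\bar\k}$, which is smooth, so $X$ is smooth; then $\eta_{\bar\k}$ is one of the two projections (in particular smooth), and $\eta$ inherits smoothness by fpqc descent along $\Spec\bar\k\to\Spec\k$. The main obstacle I foresee is the geometric-integrality argument for $B$ in the forward direction over an imperfect field $\k$; once this is secured, the remainder consists of a clean invocation of the classification of degree-$8$ del Pezzo surfaces together with standard Picard-rank and descent considerations.
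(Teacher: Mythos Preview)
Your proof is correct and follows essentially the same strategy as the paper's. Both directions proceed by passing to $\bar\k$ and identifying $X_{\bar\k}$ with $\p^1_{\bar\k}\times\p^1_{\bar\k}$; your geometric-integrality argument for $B$ via the function field inclusion $\k\subset k(B)\subset k(X)$ is valid and in fact more explicit than the paper's treatment. The only substantive difference is in the converse: you invoke \autoref{lem: dP8} to enumerate the three possibilities $\F_1$, $\p^1\times\p^1$, $\p(1,1,2)$ and rule out two of them, whereas the paper uses the Picard-rank formula $K_X^2=10-\rho(X_{\bar\k})-\rho(Y/X_{\bar\k})$ from \autoref{rmk:weak-dp}\autoref{lem: rho_volume_dP} directly to force $\rho(X_{\bar\k})=2$ and $\rho(Y/X_{\bar\k})=0$. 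Both routes are short and natural; yours has the advantage of re-using the degree-$8$ classification already established, while the paper's avoids the case split. One small point: your claim that $\eta_{\bar\k}$ is one of the two projections deserves a sentence of justification, e.g.\ that $\rho(X_{\bar\k})=2=\rho(X)$ forces the extremal rays of $X_{\bar\k}$ to be exactly the base-changes of those of $X$, hence the fiber class of $\eta_{\bar\k}$ is a ruling of $\p^1\times\p^1$.
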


\begin{proof}
	By hypothesis, $X$ is a rank $2$ fibration over a point and it has two Mori conic bundle structures onto curves. In particular, $X$ is a del Pezzo surface of Picard rank $\rho(X)=2$.
	By \cite[Theorem 1.1]{BM23}, we have $h^1(X, \mathcal{O}_X)=0$ and therefore by Kawamata--Viehweg vanishing \cite[Theorem 3.4]{Tan18} we deduce $h^1(B, \mathcal{O}_B)=0$, and thus $B$ is a geometrically integral conic, and thus smooth.

	Suppose $\eta$ is smooth.
	As $X_{\bar{\k}} \to B_{\bar{\k}} \simeq \p^1_{\bar{\k}}$ is smooth, we deduce that $X_{\overline{\k}}$ is regular and $\rho(X_{\bar{\k}})=2$.
	As $X_{\overline{\k}}$ admits another Mori conic bundle structure, we deduce that it is isomorphic to $ \p^1_{\bar{\k}} \times \p^1_{\bar{\k}}$.

	Suppose $K_X^2=8$. Then $X$ is geometrically normal by \autoref{thm: dP-base-change-classification} and thus geometrically canonical by \autoref{rmk:weak-dp}\autoref{prop: geom_normal_implies_canonical-DP}.
	By \autoref{rmk:weak-dp}\autoref{lem: rho_volume_dP} we have $8=K_X^2=10-\rho(X_{\bar{\k}})-\rho(Y/X_{\bar{\k}})$.
	This concludes that $\rho(X_{\bar{\k}})=2$ and $\rho(Y/X_{\overline{\k}})=0$, thus showing $X_{\overline{\k}}$ is a smooth rational surface of degree 8, admitting two Mori conic bundle structures. From this we deduce $X_{\overline{\k}}$ is isomorphic to $\p^1_{\bar{\k}} \times \p^1_{\bar{\k}}$.
\end{proof}

We say a link of type IV between $X \to B_1$ and $X \to B_2$ is \emph{represented by} $\varphi$ if $\varphi \colon X \to X$ is an isomorphism such that  $\varphi^* \colon N^1(X)_\mathbb{Q} \to N^1(X)_{\mathbb{Q}}$ interchanges the two extremal rays of the Mori cone of $X$.

\begin{proposition}\label{prop: links IV}
Let

\begin{center}
\begin{tikzcd}[ampersand replacement=\&,column sep=1.7cm,row sep=0.16cm]
X \ar[rr,"\simeq"] \ar[dd,"fib",swap]  \&\& X \ar[dd,"fib"] \\ \\
B_1 \ar[dr] \& \& B_2 \ar[dl] \\
\& \Spec(\k) \&
\end{tikzcd}
\end{center}

be a link of type \IV, where $X$ is geometrically integral. Then, the following holds:
\begin{enumerate}
	\item $K_X^2=8$: $X \simeq B_1 \times B_2$ for some regular projectives curves of genus 0 and the link is represented by a biregular involution if and only if $B_1 \simeq B_2$.
	\item $K_X^2=4$;
	\item $K_X^2=2$: the link is represented by the Geiser involution. % interchanges the two rulings.
	\item $K_X^2=1$: the link is represented by the Bertini involution. %interchanges the two rulings.
\end{enumerate}
\end{proposition}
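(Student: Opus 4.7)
The plan is to proceed by case analysis on $K_X^2$. By \autoref{lem:linkIV-possible-degree}, the existence of a link of type \IV starting at $X$ forces $X$ to be a del Pezzo surface of Picard rank~$2$ with $K_X^2 \in \{1,2,4,8\}$ and both $B_1, B_2$ to be curves of genus zero. In case (2), $K_X^2 = 4$, no additional structural assertion is made, so nothing extra is required there.

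For $K_X^2 = 8$, I would first invoke \autoref{l-sarkisov-4-class} to conclude that both fibrations $\eta_i\colon X\to B_i$ are smooth and $X_{\bar\k} \simeq \mathbb{P}^1_{\bar\k} \times \mathbb{P}^1_{\bar\k}$, so that the classes of general fibres $F_1,F_2$ satisfy $F_1 \cdot F_2 = 1$. The morphism $\phi = (\eta_1, \eta_2)\colon X \to B_1 \times B_2$ is then birational, and since $\rho(X) = 2 = \rho(B_1 \times B_2)$ it has trivial relative Picard rank, contracts no curves, and is therefore an isomorphism of regular surfaces by \autoref{lem: Castelnuovo_contraction}. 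For the biregular-involution statement, if $B_1 \simeq B_2$, the swap of factors on $B_1 \times B_2 \simeq X$ gives a biregular involution exchanging the two fibrations; conversely, any biregular involution of $X$ exchanging the fibrations descends through them to an isomorphism $B_1 \simeq B_2$.

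For $K_X^2 \in \{1,2\}$, I would invoke the classical Geiser (when $K_X^2=2$) or Bertini (when $K_X^2=1$) involution $\sigma$, defined as the covering involution of the finite degree~$2$ morphism $|{-K_X}|\colon X \to \mathbb{P}^2_\k$, respectively $|{-2K_X}|\colon X \to Q \subset \mathbb{P}^3_\k$ onto a quadric cone. This $\sigma$ is a biregular involution of $X$ over $\k$ with $\sigma^*K_X = K_X$ and $\sigma^*$ acting as $-\mathrm{id}$ on the orthogonal complement $K_X^\perp \subset \Pic(X)\otimes \mathbb{Q}$. The intersection numbers $F_i^2 = 0$ and $-K_X \cdot F_i = 2$ yield $-K_X \equiv \tfrac{2}{F_1\cdot F_2}(F_1 + F_2)$ in $\Pic(X)\otimes\mathbb{Q}$, and combined with the identity $K_X^2 \cdot (F_1 \cdot F_2) = 8$ from \autoref{lem:linkIV-possible-degree}, a direct computation gives $\sigma^*F_i = \tfrac{2(F_i\cdot K_X)}{K_X^2}K_X - F_i = F_{3-i}$; hence $\sigma$ interchanges the two rulings.

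The main obstacle I anticipate is characteristic~$2$: the anticanonical double cover defining the Geiser or Bertini involution becomes inseparable, and the classical construction of $\sigma$ degenerates. To treat cases (3) and (4) uniformly, I would invoke a characteristic-free construction of $\sigma$ as a biregular involution (for instance via the Mordell--Weil action on the anticanonical elliptic pencil when $K_X^2=1$), or else handle characteristic~$2$ separately by descending from the separable closure, where the Picard-level computation above still identifies the link of type \IV furnished by \autoref{t-sarkisov-program}.
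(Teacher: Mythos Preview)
Your approach is essentially the same as the paper's: case analysis on $K_X^2$ via \autoref{lem:linkIV-possible-degree}, the product map for $K_X^2=8$, and the Geiser/Bertini involution acting by $-\mathrm{id}$ on $K_X^\perp$ for $K_X^2\in\{1,2\}$. Two small differences are worth noting. For $K_X^2=8$, the paper observes (using the computation in the proof of \autoref{lem:linkIV-possible-degree}) that $(\eta_1,\eta_2)$ is finite of degree~$1$ onto a normal target, hence an isomorphism; this is a bit cleaner than invoking relative Picard rank and \autoref{lem: Castelnuovo_contraction}. For $K_X^2\in\{1,2\}$, the paper handles exactly the characteristic-$2$ concern you raise by passing to $\bar\k$, taking the minimal resolution $Y\to X_{\bar\k}$ (a smooth weak del Pezzo, cf.\ \autoref{rmk:weak-dp}), lifting the involution to $Y$, and citing \cite[8.7.2, 8.8.2]{Dol12} for the fact that the only invariant divisor class on $Y$ is $-K_Y$; this replaces your direct computation over $\k$ with a characteristic-free argument on the smooth model.
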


\begin{proof}
    Note that, as $\rho(X)>1$, we have that $h^1(X, \mathcal{O}_X)=0$ by \cite[Theorem 1.1]{BM23}. By \autoref{lem:linkIV-possible-degree} we have that $K_X^2\in\{1,2,4,8\}$.
	If $K_X^2=8$, then the Mori conic bundle structures are both smooth by \autoref{l-sarkisov-4-class}.
	Note that the natural product morphism $\pi \colon X \to B_1 \times B_2$ is a finite morphism of degree $1$ as shown in the proof of \autoref{lem:linkIV-possible-degree}, and thus it is an isomorphism since source and target are both normal. This implies that the link is represented by an automorphism if and only if  $B_1 \simeq B_2$.
    
    If $K_X^2=2$ or $1$, we conclude by \autoref{lem: action-Bertini-Geiser}.
	%If $K_X^2=2$ (resp. $1$), then the Geiser (resp. Bertini) involution interchanges the two rulings and denote by $G=\mathbb{Z}/2$ the subgroup of automorphism generated by the involution. \textcolor{red}{By \autoref{lem: pic_iso}, we have $N^1(X)_{\mathbb{Q}}^{G} \simeq N^1(\mathbb{P}^2)_{\mathbb{Q}} \simeq \mathbb{Q}$ (resp.$N^1(X)_{\mathbb{Q}}^{G} \simeq N^1(\mathbb{P}(1,1,2))_{\mathbb{Q}}$). As the numerical class $-K_X$ is preserved by the Geiser (resp. Bertini), we have $N^1(X)_{\mathbb{Q}}^{G}=\mathbb{Q}[-K_X]$.} %\textcolor{red}{FIX FROM HERE} Indeed, in this case we have that the minimal resolution $\pi \colon Y \to X_{\overline{k}}$ is a weak del Pezzo surfaces of degree 2 (resp. 1) and the Geiser (resp. Bertini involution) of $X$ lifts to $Y$. By \cite[8.7.2 and 8.8.2]{Dol12} we see that the only invariant divisor for the involution is $-K_Y$ \textcolor{red}{ and some of the exceptional curves. In particular, on $X_{\overline{\k}}$ the only invariant one is $-K_{X_{\overline{\k}}$. Some extra work on the geometrically nn-normal case, which is only needed for the degree 2, i.e. Geiser}. 	This in particular concludes.
\end{proof}

\begin{remark}
	The hypothesis on geometrical integrality in \autoref{l-sarkisov-4-class} is necessary.
	Let $\k$ be an imperfect field of characteristic $p=2$, and let $C$ be a regular geometrically non-reduced conic.
	Then $X=C \times C$ is a del Pezzo surface of degree $8$ admitting a link of type \IV~but $X_{\bar\k}$ is not reduced.
	Note that such phenomenon appears only if $\k$ has characteristic 2 and the $p$-degree of $\k$ is at least $2$ by \cite[Theorem, page 36]{Sch01}.
	An example over $\mathbb{F}_2(s,t)$ of a regular, geometrically non-reduced conic is given by $\left\{x^2+ty^2+sz^2=0 \right\}$.
\end{remark}

\subsection{Sarkisov links between rational surfaces}
We specialise now to the case of rational surfaces.
We begin by extending a well--known result on the rationality of del Pezzo surfaces and Mori conic bundles \cite[Theorem 2.6]{Isk96} to the case of arbitrary fields. It is a direct consequence of the classification of Sarkisov links obtained above.

\begin{theorem} \label{cor: rat_deg_5}
	Let $\k$ be a field.
	Let $X$ be a del Pezzo surface with $\rho(X)=1$ or a Mori conic bundle $X \to B$.
	If $X$ is rational, then $K_X^2 \geq 5$.
\end{theorem}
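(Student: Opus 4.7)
The plan is to invoke the Sarkisov program (\autoref{t-sarkisov-program}): since $X$ is rational, there exists a birational map $\p^2_\k \dashrightarrow X$ factoring as
\[
\p^2_\k = X_0 \overset{\chi_1}{\dashrightarrow} X_1 \overset{\chi_2}{\dashrightarrow} \cdots \overset{\chi_n}{\dashrightarrow} X_n \simeq X,
\]
where each $\chi_j$ is either a Sarkisov link or an isomorphism of Mori fibre spaces, and each $X_j$ is a Mori fibre space birational to $\p^2_\k$, hence rational. I will prove by induction on $j$ that $K_{X_j}^2\geq 5$. The base case $K_{\p^2_\k}^2=9$ is immediate.

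For the inductive step, I distinguish the types of links. Isomorphisms of Mori fibre spaces preserve $K^2$. A link of type \IV\ preserves the underlying surface (\autoref{prop: links IV}) and therefore $K^2$. A link of type \II\ between surface conic bundles is an elementary transformation centred at a point $\xx$ with $\k(\xx)=\k(\eta(\xx))=\k(\yy)$ by \autoref{lem:links type II conic fibrations}, and contracting the strict transform of the fibre (which is isomorphic to $\p^1_{\k(\eta(\xx))}$ and has self-intersection $-\deg\xx$) gives $K_{X_{j+1}}^2=K_{X_j}^2-\deg\xx+\deg\yy=K_{X_j}^2$.

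The heart of the argument lies in the links of type~\I, \III, and in the type~\II\ links between del Pezzo surfaces of Picard rank one. For a link of type \I\ or \III, \autoref{prop: link_I} lists the admissible pairs $(K_X^2,\deg\xx)$ as $(9,1),(9,4),(8,2),(4,1)$, giving the target degree $K_Y^2=K_X^2-\deg\xx\in\{8,5,6,3\}$. The hypothesis $K_{X_j}^2\geq5$ excludes the last case, so the target has $K^2\in\{8,5,6\}$. For a link of type~\II\ between del Pezzo surfaces, I inspect the table in \autoref{prop: link II dP}: for every source with $K_X^2\in\{5,6,8,9\}$ the target satisfies $K_{X'}^2\in\{5,6,8,9\}$ as well, and the table contains no entry crossing the gap between $\{K^2\geq5\}$ and $\{K^2\leq4\}$.

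Combining these cases yields $K_{X_{j+1}}^2\geq 5$, completing the induction and giving $K_X^2\geq 5$. The main obstacle is not theoretical but organisational: one must verify that the partition of the Mori fibre spaces into ``high degree'' ($K^2\geq5$) and ``low degree'' ($K^2\leq4$) is preserved by every Sarkisov link, and the verification is a direct inspection of the explicit classification already established in \autoref{prop: link_I}, \autoref{prop: link II dP}, \autoref{lem:links type II conic fibrations} and \autoref{prop: links IV}.
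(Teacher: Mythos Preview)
Your proof is correct and follows essentially the same approach as the paper: both arguments rest on the observation that no Sarkisov link crosses the boundary between $\{K^2\geq 5\}$ and $\{K^2\leq 4\}$, verified case by case using \autoref{prop: link_I}, \autoref{prop: link II dP}, \autoref{lem:links type II conic fibrations} and \autoref{prop: links IV}. The only cosmetic difference is that the paper phrases this as a contradiction (assume $K_X^2\leq 4$ and observe that every link keeps you in that regime, so you can never reach $\p^2_\k$), whereas you run the induction forward from $\p^2_\k$; the content is identical.
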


\begin{proof}
	Assume that $X$ is rational and $K_X^2\leq 4$.
    Note that, as $X$ is rational, $B \simeq \mathbb{P}^1_{\k}$.
	Note that a link of type \II does not change the self-intersection of the anticanonical divisor (\autoref{lem:links type II conic fibrations} if $B$ is a curve, and \autoref{prop: link II dP} if $B=\Spec(\k)$; for the latter the assumption $K_X^2\leq 4$ is needed).
	The same is true for links of type \IV.
	Moreover, note that the only links of type \I or \III involving a Mori fibre space whose anticanonical divisor has self-intersection $\leq 4$ is between $X_4/\Spec(\k)$ and $X_3/C$ for some curve $C$ by \autoref{prop: link_I}. In particular, there is no Sarkisov link between two surfaces $Y,Z$ with $K_Y^2\leq 4$ and $K_Z^2\geq5$. Hence, $X$ is not rational by the Sarkisov program (\autoref{t-sarkisov-program}).
\end{proof}

We summarise the list of rational Mori fibre spaces over arbitrary fields and the list of possible Sarkisov links between them. Notice that the list coincides with the one given in \cite{Isk96} for perfect fields (see \cite[Figure~2.1]{LS21} for an illustration).
In \autoref{s: general points} we discuss the existence of these links and geometric criteria for a point to be in Sarkisov general position.

\begin{theorem} \label{thm: classification_rational_surfaces}
	Let $\k$ be an arbitrary field. Then the following hold, where $X_d$ denotes a del Pezzo surface of degree $d$.
	\begin{enumerate}
		\item\label{crs:1} Any rational Mori fibre space is isomorphic to one of the following:
		\begin{enumerate}
			\item $\p^2_{\k}$;
			\item a quadric surface $X_8\subset\p^3_{  \k}$ with $\rho(X_8)=1$;
			\item a del Pezzo surface $X_6$ with $\rho(X_6)=1$;
			\item a del Pezzo surface $X_5$ with $\rho(X_5)=1$;
			\item the Hirzebruch surfaces $\mathbb{F}_n\to\p^1_\k$, $n\geq0$;
			\item a Mori conic bundle $X_5\to\p^1_\k$, where $X_5$ is a del Pezzo surface equipped with a birational morphism $X_5\to\p^2_\k$,
			\item a Mori conic bundle $X_6\to\p^1_{\k}$, where $X_6$ is a del Pezzo surface equipped with a birational morphism $X_6\to X_8$;
		\end{enumerate}
	\item\label{crs:2} Any Sarkisov link between rational Mori fibre spaces is one of the following:
		\begin{enumerate}[label=(\alph*)]
		\item links of type \I and \III:
		\begin{enumerate}[label=(\roman*), leftmargin=15mm]
			\item the blow-up $\F_1\to\p^2_{\k}$ of a rational point;
			\item the blow-up $X_5\to\p^2_{\k}$ of a point of degree $4$;
			\item the blow-up $X_6\to X_8$ of a point of degree $2$;
		\end{enumerate}
		\item links of type II:
		\begin{enumerate}[label=(\roman*)]
			\item elementary transformations $\F_n\rat \mathbb{F}_m$;
			\item elementary transformations $X_d\rat X_d'$ of conic fibrations $X_d/\p^1_{\k}$, $X_d'/\p^1_{\k}$, $d\in\{5,6\}$;
			\item $\chi_{\xx,\yy}\colon X_d \rat X_{d'}$, where $d \geq 5$ and $d'$ appear in the list of \autoref{prop: link II dP}.
		\end{enumerate}
		\item links of type $\IV$: exchanging the fibrations on $\F_0=\p^1_{\k}\times\p^1_{\k}$.
		\end{enumerate}
	\end{enumerate}
\end{theorem}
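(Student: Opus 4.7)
The plan is to assemble this theorem from the structural results on del Pezzo surfaces (Section~\ref{ss-delPezzo}) and Mori conic bundles (Section~\ref{ss-conic}), together with the Sarkisov link classifications just established. Throughout I use that a rational variety is geometrically integral and admits a $\k$-rational point, and that by \autoref{cor: rat_deg_5} any rational Mori fibre space $X/B$ satisfies $K_X^2\geq 5$.

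For \eqref{crs:1}, I split according to $\dim B\in\{0,1\}$. If $\dim B=0$, then $X$ is a del Pezzo surface with $\rho(X)=1$ and $K_X^2\in\{5,6,7,8,9\}$; \autoref{prop: birational-rigidity-non-norm-dP} forces $X$ to be geometrically normal (otherwise it would be birationally super-rigid and thus non-rational), \autoref{lem:no-minimal-dp7} excludes degree~$7$, and \autoref{lem: SB}, \autoref{lem: dP8} together with \autoref{lem: dp8-vs-quadric} identify the cases $K_X^2=9$ and $K_X^2=8$ (using the existence of a $\k$-point) with $\p^2_\k$ and with a quadric in $\p^3_\k$ of Picard rank~$1$, respectively. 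If $\dim B=1$, then rationality of $X$ forces $B\simeq\p^1_\k$; \autoref{cor: self-intersection} together with \autoref{lem: MFS_conic_with_section} yields case~(e) when $K_X^2=8$, while \autoref{prop: Mori-conic-bdl-dP} produces the birational morphisms needed for cases~(f) and~(g), with a final application of \autoref{lem: dp8-vs-quadric} to see that the target of degree~$8$ in case~(g) must be a quadric in $\p^3_\k$.

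For \eqref{crs:2}, I invoke the Sarkisov program (\autoref{t-sarkisov-program}) and enumerate all Sarkisov links whose source and target both appear in the list from~\eqref{crs:1}. Each type is obtained by pruning the corresponding classification: links of types \I and \III from \autoref{prop: link_I}, discarding case~(b) of the $K_X^2=8$ case (its base has no $\k$-point, so the resulting conic bundle is not rational) and the $K_X^2=4$ case (whose members fail $K_X^2\geq 5$); links of type \II between del Pezzo surfaces of Picard rank~$1$ from \autoref{prop: link II dP}, keeping only the rows with $K_X^2,\,K_{X'}^2\geq 5$; links of type \II between conic fibrations from \autoref{lem:links type II conic fibrations}, which preserve $K_X^2$ and hence connect conic fibrations of the same degree; links of type \IV from \autoref{prop: links IV}, where the constraint $K_X^2=8$ together with the existence of a $\k$-point on each ruling forces $X\simeq\F_0$.

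The proof is almost entirely a bookkeeping exercise; the real work is contained in the preceding classifications, which had to be carried out over arbitrary, possibly imperfect, fields. The only subtlety to watch for is to consistently discard, at each step, the entries of the source classifications that are geometrically rational but fail to be $\k$-rational, such as non-trivial Severi--Brauer surfaces, non-quadric del Pezzo surfaces of degree~$8$, conic bundles whose base has no $\k$-point, or del Pezzo surfaces of degree $d\leq 4$.
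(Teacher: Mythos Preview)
Your overall strategy matches the paper's: split part~\eqref{crs:1} according to $\dim B$, invoke \autoref{cor: rat_deg_5} to force $K_X^2\geq 5$, and read off the remaining cases from the structural results of Sections~\ref{ss-delPezzo}--\ref{ss-conic}; then obtain part~\eqref{crs:2} by filtering \autoref{prop: link_I}, \autoref{prop: link II dP}, \autoref{lem:links type II conic fibrations} and \autoref{prop: links IV}. Two steps, however, are not justified by the lemmas you cite.

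First, for case~(e) you claim that \autoref{cor: self-intersection} together with \autoref{lem: MFS_conic_with_section} shows a rational Mori conic bundle with $K_X^2=8$ is a Hirzebruch surface. Both of these results take as \emph{input} the existence of a section (a $\k(B)$-point on the generic fibre) and deduce consequences; neither lets you conclude from $K_X^2=8$ and $\k$-rationality of $X$ that such a section exists. Rationality of $X$ gives a $\k$-point on $X$ by Lang--Nishimura, but not a $\k(\p^1)$-point on the generic fibre. The paper closes this gap by a Sarkisov connectivity argument: since $X$ is rational it is linked to $\p^2_\k$, the only link producing a degree-$8$ conic bundle from the del Pezzo side is $\F_1\to\p^2_\k$, and links of type~\II\ between degree-$8$ conic bundles preserve smoothness by \autoref{lem:links type II conic fibrations}\autoref{links type II conic fibrations:3}; hence every rational degree-$8$ Mori conic bundle is some $\F_n$.

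Second, for the del Pezzo case $K_X^2=8$, $\rho(X)=1$, citing \autoref{lem: dP8} and \autoref{lem: dp8-vs-quadric} is not enough: \autoref{lem: dP8} only tells you $X_{\bar\k}$ is a quadric, and \autoref{lem: dp8-vs-quadric} needs $-K_X=2H$ in $\Pic(X)$, which does not follow from merely having a $\k$-point. The missing ingredient is \autoref{prop: divisibility-dp8} (or equivalently \autoref{rem: points on quadric}): a $\k$-point forces $\ind(X)=1$, hence $-K_X=2H$, and then \autoref{lem: dp8-vs-quadric} applies. This is exactly how the paper argues.
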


\begin{proof}
\autoref{crs:1}: 
Let $X$ be a projective regular rational surface endowed with a Mori fibre space structure $X\to B$.
%Let $X\to B$ be a Mori fibre space that is rational.
By \autoref{cor: rat_deg_5} it holds that $K_X^2\geq 5$, and so $K_X^2\in\{5,6,8,9\}$ by \autoref{lem:no-minimal-dp7} and \autoref{prop: conic-bundle-vs-dP}.
If $K_X^2=9$, the claim follows from \autoref{lem: SB}.
If $K_X^2\in\{5,6\}$ then either $\rho(X)=1$ and $X$ is a del Pezzo surface, or $X$ admits a Mori conic bundle structure to $\p^1_\k$. In the latter case, $X$ is a del Pezzo surface admitting a birational morphism as described in the statement by \autoref{prop: Mori-conic-bdl-dP}.
If $K_X^2=8$, then either $X$ is a Mori conic bundle structure to $\p^1_\k$, or $\rho(X)=1$.
In the former case, $X=\F_n$ is a Hirzebruch surface because the only possible Mori conic bundle obtained from $\mathbb{P}^2_\k$ of degree 8 is $\mathbb{F}_1$ and a link of type II takes Hirzebruch surfaces to Hirzebruch surfaces.
In the latter case, since $X$ has a rational point (for example by the Lang-Nishimura theorem; see \cite[Theorem 3.6.11]{Poo17}), we have that $X$ is a quadric in $\p^3_\k$ by combining \autoref{prop: divisibility-dp8} with \autoref{lem: dp8-vs-quadric}.

\autoref{crs:2}: The list of links follows from \autoref{crs:1} and \autoref{prop: link_I}, \autoref{prop: link II dP}, \autoref{lem:links type II conic fibrations} and \autoref{prop: links IV}.
\end{proof}

We conclude by specialising \autoref{thm: classification_rational_surfaces} to the case of rational surfaces over separably closed fields. 
In this way, we classify the numerical possibilities of blown-up points with purely inseparable residue fields according to the characteristic of the ground field.

Recall that we write $d_\xx=[k(\xx):\k]$ for a point $\xx$ on a surface over a field $\k$. If $\k$ is separably closed, then $d_\xx=p^e$ for some $e\geq0$. In this case, together with \autoref{cor: moredP-sepclosed}, the following is a direct corollary of \autoref{thm: classification_rational_surfaces}:

\begin{corollary}\label{thm:classification links p>2}
Let $\k$ be a separably closed field of characteristic $p>0$.
Then the following hold, where $X_d$ denotes a del Pezzo surface of degree $d$:
\begin{enumerate}
\item\label{class:Mfs} Any rational Mori fibre space is isomorphic to one of the following:
	\begin{enumerate}
	\item $\p^2_{\k}$;
	\item the Hirzebruch surfaces $\mathbb{F}_n\to\p^1_\k$, $n\geq0$;
	\item $p=2$, a del Pezzo surface $X_8$ with $\rho(X_8)=1$ and $X_8 \subset \mathbb{P}^3_{\k}$ is a quadric such that $(X_8)_{\bar{\k}}$ is geometrically normal with an $A_1$-singularity;
	\item $p=2$, a conic fibration $X_5\to\p^1_\k$ equipped with a birational morphism $X_5\to\p^2_\k$. 
	\item $p=2$, a conic fibration $X_6\to\p^1_{\k}$ equipped with a birational morphism $X_6\to X_8$;
	\item $p=5$, a del Pezzo surface $X_5$ with $\rho(X_5)=1$ and $(X_{5})_{\bar\k}$ is geometrically normal with an $A_4$-singularity.
	\end{enumerate}
In case (d) (resp.\ (e)), if $X_5$ (resp.\ $X_6)$ is geometrically normal, then $(X_5)_{\bar{\k}}$ has an $A_3$-singularity (resp.~ $(X_6)_{\bar{\k}}$ has two $A_1$-singularities).
\item\label{class:links} Any Sarkisov link between rational Mori fibre spaces is one of the following:
	\begin{enumerate}[label=(\alph*)]
			\item\label{class:I+III} links of type \I and \III:
			\begin{enumerate}[label=(\roman*), leftmargin=15mm]
				\item the blow-up $\F_1\to\p^2_{\k}$ in a rational point;
				\item $p=2$, the blow-up $X_5\to\p^2_{\k}$ of a point of degree $4$;
				\item $p=2$, the blow-up $X_6\to X_8$ of a point of degree $2$;
			\end{enumerate}
		\item\label{class:II} links of type II:
		\begin{enumerate}[label=(\roman*)]
			\item elementary transformations $\F_n\rat \mathbb{F}_m$ with a base-point of degree $p^e\geq1$;
			\item $p=2$, elementary transformations $X_d\rat X_d'$ of conic fibrations $X_d/\p^1_{\k}$, $X_d'/\p^1_{\k}$, $d\in\{5,6\}$ with a base-point of degree $p^e\geq1$;
			\item $p=2$, $\chi_{\xx,\yy}\colon \p^2_{\k}\rat X_8$ with $d_\xx=2$, $d_\yy=1$, or its inverse;
			\item $p=2$, $\chi_{\xx,\yy}\colon \p^2_{\k}\rat\p^2_{\k}$ with $d_\xx=d_\yy=8$;
    			\item $p=2$, $\chi_{\xx,\yy}\colon X_8\rat X_8'$ with $d_\xx=d_\yy=4$;
			\item $p=3$, $\chi_{\xx,\yy}\colon \p^2_{\k}\rat \p^2_{\k}$ with
			$d_\xx=d_\yy=3$;
			\item $p=5$, $\chi_{\xx,\yy}\colon\p^2_{\k}\rat X_5$ with $d_\xx=5, d_\yy=1$, or its inverse;
			\item $p=7$, $\chi_{\xx,\yy}\colon \p^2_{\k}\rat\p^2_{\k}$ with $d_\xx=d_\yy=7$.
		\end{enumerate}
	\item links of type $\IV$: exchanging the fibrations on $\F_0=\p^1_{\k}\times\p^1_{\k}$.
	\end{enumerate}
\end{enumerate}
\end{corollary}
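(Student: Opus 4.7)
The plan is to deduce the corollary as a direct consequence of \autoref{thm: classification_rational_surfaces} combined with \autoref{cor: moredP-sepclosed}. The key simplification is that over a separably closed field $\k$ of characteristic $p>0$, any closed point $\xx$ of a $\k$-variety has residue field $\k(\xx)$ purely inseparable over $\k$, so $[\k(\xx):\k]=p^{e}$ for some $e\geq 0$ by \cite[\href{https://stacks.math.columbia.edu/tag/038I}{Tag 038I}]{stacks-project}. This numerical constraint, together with \autoref{cor: moredP-sepclosed}, is the only extra input beyond Theorem C.

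For part \autoref{class:Mfs}, I would traverse the seven classes listed in \autoref{thm: classification_rational_surfaces}\autoref{crs:1}. The projective plane and the Hirzebruch surfaces $\F_{n}$ appear unconditionally. For the Picard rank one del Pezzo surfaces $X_{d}$ with $d\in\{5,6,8\}$, \autoref{cor: moredP-sepclosed} rules out $X_{6}$ altogether, forces $X_{8}$ to be an $A_{1}$-singular quadric in characteristic $2$, and forces $X_{5}$ to be $A_{4}$-singular in characteristic $5$. For the two remaining Mori conic bundle classes, \autoref{prop: Mori-conic-bdl-dP} exhibits them as blow-ups of $\p^{2}_{\k}$ (respectively of a quadric $X_{8}$) at a closed point of degree $4$ (respectively of degree $2$); since over a separably closed field such a degree must be a power of $p$, both families occur only when $p=2$.

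For part \autoref{class:links}, each family of Sarkisov links from \autoref{thm: classification_rational_surfaces}\autoref{crs:2} should be treated in turn. Links of type \I and \III reduce to the same blow-up analysis as in the previous paragraph. For links of type \II between conic fibrations, \autoref{lem:links type II conic fibrations} forces the base-point $\xx$ to satisfy $\k(\xx)=\k(b)$, where $b$ is the image in the base, so over a separably closed field the residue field has degree $p^{e}$. Links of type \II between Picard rank one del Pezzo surfaces are controlled by the table of \autoref{prop: link II dP}: it suffices to retain the rows in which both $\deg(\xx)$ and $\deg(\yy)$ are powers of a common prime $p$ and in which source and target both still belong to the list of \autoref{class:Mfs}; this selection produces exactly the enumerated cases. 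Finally, links of type \IV are restricted to $\F_{0}\simeq\p^{1}_{\k}\times\p^{1}_{\k}$ by \autoref{prop: links IV}, since the remaining cases there concern Picard rank $2$ del Pezzo surfaces of degree in $\{1,2,4\}$ endowed with two Mori conic bundle structures, which are excluded by \autoref{cor: rat_deg_5}.

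The main bookkeeping step is the enumeration of type \II links between rank one del Pezzo surfaces: one must check row by row that restricting the table of \autoref{prop: link II dP} to entries with $\deg(\xx),\deg(\yy)\in\{1,p,p^{2},\ldots\}$ for a common prime $p$ and with both endpoints in the surviving list of \autoref{class:Mfs} yields precisely the items recorded in \autoref{class:II}. No further geometric input is required, since \autoref{cor: moredP-sepclosed} and the purely inseparable nature of residue fields of closed points over a separably closed base are the only ingredients needed beyond \autoref{thm: classification_rational_surfaces}.
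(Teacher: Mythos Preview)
Your proposal is correct and follows essentially the same approach as the paper's proof: specialize \autoref{thm: classification_rational_surfaces} using \autoref{cor: moredP-sepclosed} and the fact that over a separably closed field every closed point has degree a power of $p$, then filter the tables in \autoref{prop: link_I}, \autoref{prop: link II dP}, \autoref{lem:links type II conic fibrations}, and \autoref{prop: links IV} accordingly. Your write-up is in fact more explicit than the paper's terse proof, which simply cites the same ingredients.
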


\begin{proof}
	This is a specialization of \autoref{thm: classification_rational_surfaces} to the case of separably closed fields.
	The list of geometric realisation of rational del Pezzo surfaces with Picard rank 1 and their geometric singularities follows from \autoref{cor: moredP-sepclosed}.
	We now describe the possible singularities in the case where $X$ is a geometrically normal Mori fibre space over a curve, and $K_X^2$ is either $5$ or $6$.
Since $X$ is a del Pezzo surface by \autoref{prop: conic-bundle-vs-dP}, the base change $X_{\overline{\k}}$ has canonical singularities. Let $Y \to X_{\overline{\k}}$ denote the minimal resolution. By \autoref{rmk:weak-dp}, we have 
$
\rho(Y/X_{\overline{\k}}) = 8 - K_Y^2,
$
which equals $3$ when $K_X^2 = 5$, and $2$ when $K_X^2 = 6$. 
By \cite[Theorem 6.1]{Sch08}, the surface $X_{\overline{\k}}$ has singularities of type either $3A_1$ or one $A_3$ singular points in the degree $5$ case, and $2A_1$ singular points in the degree $6$ case. Finally, by \cite[8.5.1]{Dol12}, the configuration of $3A_1$ singular points does not occur. This completes the classification in these cases.
	
	The list of possible links of type \I and \III follows from the fact that points of degree $2$ and $4$ only exist for $p=2$.
	Similarly, links $\chi_{\xx,\yy}$ of type \II from \autoref{prop: link II dP} can only exist if $d_\xx=p^e$, and $d_\yy=p^{e'}$; these are exactly the ones stated.
The statement follows from \autoref{lem: dP8}, \autoref{thm: classification_rational_surfaces}, \autoref{prop: link_I}, \autoref{prop: link II dP}, \autoref{lem:links type II conic fibrations} and \autoref{prop: links IV} and the fact that $\rho(X_{\bar\k})=\rho(X^{\rm sep})$.
\end{proof}

%==========================================================

 For $p=2$ (resp. $p=5$), Figure~\ref{fig:links_char2} (resp.~ Figure~\ref{fig:links_char5}) illustrates all possible Sarkisov links and Mori fibre spaces for separably closed fields, similar to \cite[Figure~2.1]{LS21}. For $d\in\{5,6,8\}$, $\DD_d$ denotes the set of rational del Pezzo surfaces of degree $d$ of Picard rank $1$, and $\CC_{d}$ denotes the set of rational Mori conic bundles that are del Pezzo surfaces of degree $d$. We do not illustrate the case for $p=3$ and $p=7$ as there is only one link that does not involve a Hirzebruch surface, namely $\chi_{\xx,\yy}\colon\p^2{_\k}\dashrightarrow\p^2_{\k}$ with $d_\xx=d_\yy=p$.

\begin{figure}[ht]
\begin{minipage}[c]{0.48\linewidth}
\[
\resizebox{1.2\linewidth}{!}{
\includegraphics[scale=1]{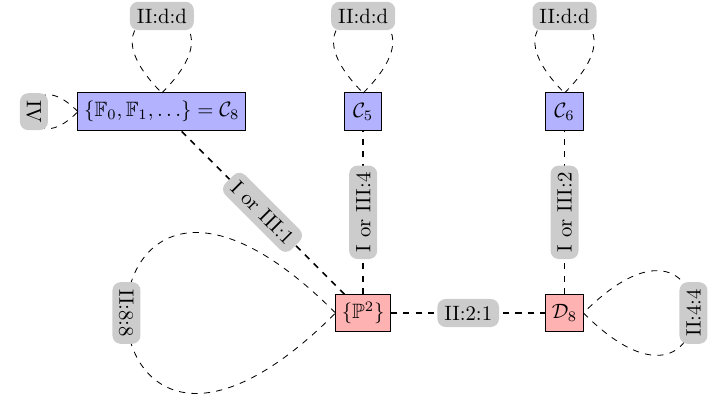}
}
\]
\caption{$p=2$}
\label{fig:links_char2}
\end{minipage}
\begin{minipage}[c]{0.48\linewidth}
\[
\resizebox{1.25\linewidth}{!}{
\includegraphics[scale=.035]{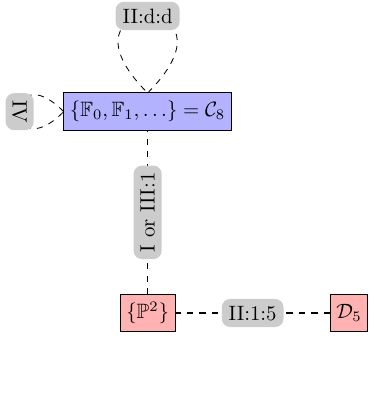}
}
\]
\caption{$p=5$}
\label{fig:links_char5}
\end{minipage}
\end{figure}

\section{Classification of elementary relations}\label{sec:Elementary relations}

In this section, we classify elementary relations appearing in the Sarkisov program.
Equivalently, we classify the Mori cones of rank 3 surface fibrations.

\subsection{How to compute elementary relations}\label{sec:ComputeRelations}
Elementary relations are all obtained by playing the so-called $2$-rays game in a certain way. In the following, we suggest that the reader keeps the example of the blow-up $Z\to \p^2_{\k}$ at two rational points in mind; $Z/\Spec(\k)$ is a rank $3$ fibration inducing a relation between five Sarkisov links, as depicted in Figure~\ref{P2_11}.

From now on, we fix $Z/B$ a rank $3$ fibration, giving rise to an elementary relation
\[
\chi_n\circ\cdots\circ\chi_1=\id,
\]
where $\chi_i\colon X_i\dashrightarrow X_{i+1}$ are Sarkisov links between Mori fibre spaces $X_i/B_i$, and $X_{n+1}=X_1$ (see \autoref{def: elementary relations}). Assume that $\chi_1\circ\chi_n$ as well as $\chi_{i+1}\circ\chi_i$ is not an isomorphism for all $i$. In particular, $n\geq 3$.
For each $i$, consider the Sarkisov diagram of $\chi_i$, with corresponding rank $2$ fibration $Y_i/\widehat B_i$:
\[
	\begin{tikzcd}
		&Y_i\ar[dr,"\beta_i"]\ar[dl,"\alpha_i",swap]& \\
		X_i\ar[d]\ar[rr,"\chi_i",dashed] &&X_{i+1}\ar[d]\\
		B_i\ar[dr]&&B_{i+1}\ar[dl]\\
		&\widehat B_i.&
	\end{tikzcd}\]

Given two adjacent Sarkisov links $\chi_{i-1},\chi_{i}$, we explain how the rank 3 fibration $Z/B$ uniquely determines $\chi_{i+1}$.

As $\chi_{i-1},\chi_i$ are adjacent and dominated by the rank $3$ fibration $Z/B$, there exist $\widehat\beta_{i-1}\colon Z\to Y_i$, $\widehat\alpha_i\colon Z\to Y_{i-1}$ that are isomorphisms exactly if $\beta_{i-1}, \alpha_i$ are, and they are blow-ups of closed points otherwise,
as well as morphisms $\widehat B_{i-1}\to B$ and $\widehat B_i\to B$ of relative Picard rank $\leq1$, such that the following cute diagram commutes:
\[
	\begin{tikzcd}
		&&Z\ar[dl,ultra thick,Green, swap,"\widehat\alpha_i"]\ar[dr,ultra thick,RoyalBlue,"\widehat\beta_{i-1}"]&&\\
		&Y_{i-1}\ar[dl,ultra thick,HotPink,"\alpha_{i-1}",swap]\ar[dr,ultra thick,RoyalBlue,"\beta_{i-1}"]&  &Y_i\ar[dl,ultra thick,Green,"\alpha_i",swap]\ar[dr,ultra thick,FireBrick,"\beta_i"]& \\
		X_{i-1}\ar[d]\ar[rr,"\chi_{i-1}",dashed] &&X_i\ar[d]\ar[rr,"\chi_i",dashed] &&X_{i+1}\ar[d]\\
		B_{i-1}\ar[dr]&&B_i\ar[dl]\ar[dr]&&B_{i+1}\ar[dl]\\
		&\widehat B_{i-1}\ar[dr]&&\widehat B_i\ar[dl]&\\
		&&B.&&
	\end{tikzcd}\]
	Here and in the rest of this section, arrows having the same color mean that either they are all isomorphisms, or that they are divisorial contractions contracting the (strict transform of the) same curve on $Z$. In particular, if $\beta_{i-1}$ (respectively $\alpha_i$) is a divisorial contraction, then $\hat\beta_{i-1}$ (respectively $\hat\alpha_i$) contract the same curve.
%In particular, $\widehat\beta_{i-1}$, $\widehat\alpha_i$ are isomorphisms exactly if $\beta_{i-1},\alpha_i$ are, and they are divisorial contractions otherwise, contracting the same curve respectively.

There are exactly two rank $2$ fibrations dominated by $Z/B$ that dominate $X_{i+1}/B_{i+1}$; one of them is $Y_i/\widehat B_i$ (corresponding to $\chi_i$). We explain now how to find the second one, $Y_{i+1}/\widehat B_{i+1}$, giving $\chi_{i+1}$.

Depending on whether $\beta_{i-1},\beta_i$ are isomorphisms or divisorial contractions, we will determine the rank $2$ fibration $Y_{i+1}/\widehat B_{i+1}$ giving $\chi_{i+1}$, with morphisms $\alpha_{i+1}\colon Y_{i+1}\to X_{i+1}$ and $\widehat\beta_i\colon Z\to Y_{i+1}$ such that the following diagram commutes:
\[\begin{tikzcd}
	&Z\ar[dl,ultra thick,RoyalBlue,swap,"\widehat\beta_{i-1}"]\ar[dr,ultra thick,FireBrick,"\widehat\beta_i"]&\\
	Y_i\ar[dr,ultra thick,FireBrick,"\beta_i"]&& Y_{i+1}\ar[dl,ultra thick,RoyalBlue,"\alpha_{i+1}"]\\
	&X_{i+1}.&
\end{tikzcd}\]
Note that it is not possible that $\beta_{i-1}$ and $\beta_i$ are both isomorphisms (because it would imply that $B_{i+1}/\widehat B_i$ as well as $\widehat B_i/B$ have relative Picard rank $1$, which is not possible since $\dim B_{i+1}\leq 1$). In all cases, \autoref{fig:computing relation} below illustrates the procedure.
\begin{enumerate}
	\item Suppose that $\beta_{i-1},\beta_i$ are divisorial contractions, contracting  curves $E_{i-1},E_i$, respectively.
	Then $\beta_i\circ\widehat\beta_{i-1}\colon Z\to Y_i\to X_{i+1}$ contracts first $E_{i-1}$ and then $E_i$.
Recall that $Z$ is del Pezzo and hence $E_{i-1}, E_i\subset Z$ are disjoint.
	Let $\widehat\beta_i\colon Z\to Y_{i+1}$ be the contraction of $E_i$ and $\alpha_{i+1}\colon Y_{i+1}\to X_{i+1}$ the contraction of $E_{i-1}$.
	Then $Y_{i+1}$ over $\widehat B_{i+1}=B_{i+1}$ is a rank $2$ fibration dominating $X_{i+1}/B_{i+1}$.
	\item Suppose that $\beta_{i-1}$ is the identity, and $\beta_i$ is a divisorial contraction.
	As $Z\simeq Y_i\overset{\beta_i}{\to}X_{i+1}\to B$ with $Z/B$ a rank $3$ fibration, we find that $B=\Spec(k)$ and $X_{i+1}/B$ is a rank $2$ fibration, dominating $X_{i+1}/B_{i+1}$. We take $Y_{i+1}=X_{i+1}$, $\widehat B_{i+1}=B$, $\alpha_{i+1}=\id_{X_{i+1}}$ and $\widehat\beta_i=\beta_i$.
	\item Suppose that $\beta_{i-1}$ is a divisorial contraction and $\beta_i$ is the identity.
	Then $Z/B_{i+1}$ is a rank $2$ fibration dominating $X_{i+1}/B_{i+1}$. We take  $\widehat B_{i+1}=B_{i+1}$, $Y_{i+1}=Z$, $\alpha_{i+1}=\widehat \beta_{i-1}$ and $\widehat\beta_i=\beta_i$.
\end{enumerate}
In particular, $\chi_{i-1}$ and $\chi_i$ determine $\chi_{i+1}$ uniquely as we claimed.
Hence, starting with two links $\chi_1$ and $\chi_2$, one can iteratively find $\chi_3$ up to $\chi_n$:
the relation is complete when $\chi_n\circ\cdots\circ\chi_1=\id$; this happens when $\alpha_{n+1}=\alpha_1$.

\begin{center}
\begin{figure}[ht]
	\begin{tikzpicture}[font=\footnotesize]
	\node[name=s, regular polygon, rotate=180, regular polygon sides=6,inner sep=1.8cm] at (0,0) {};
	\draw[-,ultra thick,RoyalBlue] (s.center) to["$\widehat\beta_{i-1}=\widehat\alpha_{i+1}$",yshift=-2em] (s.side 1);
	\draw[-,ultra thick,FireBrick] (s.center) to["$\widehat\beta_i$"] (s.side 2);
	\draw[-,ultra thick,Green,swap] (s.center) to["$\widehat \alpha_i$"] (s.side 6);
	\draw[-,ultra thick,FireBrick] (s.side 1) to[swap,"$\beta_i$",yshift=-1em] (s.corner 2);
	\draw[-,ultra thick,Green] (s.side 1) to["$\alpha_i$",yshift=-1em] (s.corner 1);
	\draw[-,ultra thick,Orange] (s.side 2) to[swap,"$\beta_{i+1}$"] (s.corner 3);
	\draw[-,ultra thick,RoyalBlue] (s.side 2) to["$\alpha_{i+1}$"] (s.corner 2);
	\draw[dotted] (s.side 3) to[swap] (s.corner 3);
	\draw[dotted] (s.side 5) to[swap] (s.corner 6);
	\draw[-,ultra thick,RoyalBlue] (s.side 6) to[swap,"$\beta_{i-1}$"] (s.corner 1);
	\draw[-,ultra thick,HotPink] (s.side 6) to["$\alpha_{i-1}$"] (s.corner 6);
	%%%
	\draw[dashed, bend right=99] (s.corner 6) to["$\chi_{i-1}$",swap] (s.corner 1);
	\draw[dashed, bend right=99] (s.corner 1) to["$\chi_{i}$",swap] (s.corner 2);
	\draw[dashed, bend right=99] (s.corner 2) to["$\chi_{i+1}$",swap] (s.corner 3);
	%%%
	\node[fill=white,circle,inner sep=2] at (s.center) {$Z_{/B}$};
	\node[fill=white,inner sep=2] at (s.corner 1) {${X_i}_{\tiny/B_i}$};
	\node[fill=white,inner sep=2] at (s.side 1) {${Y_i}_{\tiny/\widehat B_i}$};
	\node[fill=white,inner sep=2] at (s.corner 2) {${X_{i+1}}_{\tiny/B_{i+1}}$};
	\node[fill=white,inner sep=2] at (s.side 2) {${Y_{i+1}}_{/\widehat B_{i+1}}$};
	\node[fill=white,inner sep=2] at (s.corner 3) {${X_{i+2}}_{/B_{i+2}}$};
	\node[fill=white,inner sep=2] at (s.corner 6) {${X_{i-1}}_{/B_{i-1}}$};
	\node[fill=white,inner sep=2] at (s.side 6) {${Y_{i-1}}_{/\widehat B_{i-1}}$};
	\end{tikzpicture}
\caption{Computing an elementary relation}\label{fig:computing relation}
\end{figure}
\end{center}

For each $i$, write $a_i$ (respectively $b_i$) for the degree of the base point of $\chi_i$ (respectively $\chi_i^{-1}$).
Combining \autoref{prop: link II dP} and \autoref{prop: link_I}, we see that $K_{X_i}^2$ and $a_i$ determine $b_i$ uniquely (and hence also $K_{X_{i+1}}^2$).
In particular, the information $b_{i-1}$ of $\chi_{i-1}$, together with the information $K_{X_{i+1}}^2$ from $\chi_i$, determines the relevant information
%$a_{i+1}=b_{i-1}$ and $K_{X_{i+1}}^2$ 
of $\chi_{i+1}$.

\medskip

One can observe that the type of $\chi_{i-1}$ gives restrictions for the type of $\chi_i$ 
(these stem from the fact that the dimension of $B_i$ have to match, and that if $\beta_{i-1}$ is an isomorphism, then $\alpha_i$ cannot be an isomorphism, for otherwise the relation is redundant):
\begin{enumerate}
	\item If $\chi_{i-1}$ is of type \I, then $\chi_i$ is of type \II over a curve.
	\item If $\chi_{i-1}$ is of type \II over $\Spec(k)$, then $\chi_i$ is either of type \I, or of type \II over $\Spec(k)$.
	\item If $\chi_{i-1}$ is of type \II over a curve, then $\chi_i$ is either of type \II over the curve, or of type \III, or of type \IV.
	\item If $\chi_{i-1}$ is of type \III, then $\chi_i$ is of type \I, or of type \II over $\Spec(k)$.
	\item If $\chi_{i-1}$ is of type \IV, then $\chi_i$ is of type \II over a curve.
\end{enumerate}

%====================================
\subsection{The list of elementary relations}

Recall from \autoref{def: elementary relations} that an elementary relation corresponds to a rank $3$ fibration $Z\to B$. 
%In this section, we do not care about the existence of these relations, as they depend on the involved surfaces and points.
In this section, we list all elementary relations among Sarkisov links that correspond to a rank $3$ fibration $Z\to B$. They can all be obtained by following the $2$-rays game explained above. Computations for rational surfaces may be found in \cite[Appendix]{ZimHDR} and \cite[Appendix G]{lamybook} over a perfect field, and are here generalised for any field. 
Moreover, here we also list elementary relations for non-rational surfaces.

Figure~\ref{fig:XB_ab} represents elementary relations over a curve. Colored edges stand for divisorial contractions and they are labeled with the (strict transform on $Z$ of the) contracted curve.
Figures~\ref{P2_11}--\ref{X3_11} represent elementary relations over a point, denoted by $\piece{X}\xx\yy$,
whose edges are color coded according to the degree of the point whose blow-up is represented by the edge
\[
\textcolor{RoyalBlue}{\deg=1},\quad
\textcolor{FireBrick}{\deg=2}, \quad
\textcolor{Green}{\deg=3},\quad
\textcolor{Orange}{\deg=4},\quad
\textcolor{HotPink}{\deg=5},\quad
\textcolor{Turquoise}{\deg=6},\quad
\textcolor{Salmon}{\deg=7},
\]
and where $H$ denotes the pullback to $Z$ of the ample generator of $\Pic(X)$ and $E,F$ are exceptional divisors of the point $\xx,\yy\in X$. In the cases of diagrams $\piece{\F_0}0d$, $H_1,H_2$ denote the pull-backs to $Z$ of the two generators of $\F_0$.

\subsubsection{Elementary relations over a curve}

We list the elementary relations between Sarkisov links over curves.

\begin{remark}\label{rmk:strict transform fibre}
	Let $\k$ be a field and $\pi\colon X\to B$ a Mori fibre space above a curve $B$.
	Let $b\in B$ be such that $\textup{N}^1(B)=\Z[b]$.
	Let $\xx\in X$ be a point in Sarkisov general position over $B$. Consider the blow-up $\rho\colon Y\to X$ at $\xx$ with exceptional divisor $E_\xx$ and we write $f:=\rho^*\pi^*(b)$.
	The strict transform $\tilde X_{\pi(\xx)}\subset Y$ of the fibre over $\pi(\xx)$ is linearly equivalent to $\frac{d_\xx}{d_b} f-E_\xx$.
	In particular, if $B=\p^1_{\k}$ then $b$ is a $\k$-rational point and $\tilde X_{\pi(\xx)}\equiv d_\xx f - E_\xx$. 
\end{remark}

\begin{proposition}
Let $\k$ be a field.
Let $Z/B$ be a rank $3$ fibration where $B$ is a curve. Then the corresponding elementary relation is of the form $\piece{X/B}\xx\yy$ as shown in Figure~\ref{fig:XB_ab} with $X=X_1$, where $X\to B$ is a Mori fibre space, $E,F$ are the exceptional divisors of the points $\xx,\yy\in X$, $f$ denotes the pull-back to $Z$ of a closed point $b \in B$ generating $N^1(B)$, and $\delta_\xx=d_\xx/d_b$.

\begin{figure}[ht]
\[
\resizebox{.3\linewidth}{!}{
\begin{tikzpicture}
  \node[name=s, regular polygon, rotate=180, regular polygon sides=4,inner sep=1.242cm] at (0,0) {};
  \draw[-,ultra thick,Purple] (s.center) to["E"] (s.side 1);
  \draw[-,ultra thick,Green] (s.center) to["$\delta_{\yy}$f-F"] (s.side 2);
  \draw[-,ultra thick,FireBrick] (s.center) to["$\delta_{\xx}$f-E"] (s.side 3);
  \draw[-,ultra thick,RoyalBlue] (s.center) to["F"] (s.side 4);
%  \draw[-,ultra thick,RoyalBlue] (s.center) to["1"] (s.side 5);
  \draw[-,ultra thick,Green] (s.side 1) to[swap,"$\delta_{\yy}$f-F"] (s.corner 2);
  \draw[-,ultra thick,RoyalBlue] (s.side 1) to["F"] (s.corner 1);
  \draw[-,ultra thick,FireBrick] (s.side 2) to[swap,"$\delta_{\xx}$f-E"] (s.corner 3);
  \draw[-,ultra thick,Purple] (s.side 2) to["E"] (s.corner 2);
  \draw[-,ultra thick,RoyalBlue] (s.side 3) to[swap,"F"] (s.corner 4);
  \draw[-,ultra thick,Green] (s.side 3) to["$\delta_{\yy}$f-F"] (s.corner 3);
  \draw[-,ultra thick,FireBrick] (s.side 4) to["$\delta_{\xx}$f-E"] (s.corner 4);
    \draw[-,ultra thick,Purple] (s.side 4) to[swap,"E"] (s.corner 1);
    \node[fill=white,circle,inner sep=2] at (s.center) {${Z/B}$};
     \node[fill=white,inner sep=2] at (s.corner 1) {${X_1/B}$};
     \node[fill=white,inner sep=2] at (s.corner 2) {${X_2/B}$};
     \node[fill=white,inner sep=2] at (s.corner 3) {${X_3/B}$};
     \node[fill=white,inner sep=2] at (s.corner 4) {${X_4/B}$};
\end{tikzpicture}
}
\]
\caption{$\piece{X/\text{curve}}\xx\yy$}
\label{fig:XB_ab}
\end{figure}
\end{proposition}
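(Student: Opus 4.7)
The plan is to analyse the $3$-dimensional Mori cone $\overline{\NE}(T/B)$, exhibit four extremal rays, and determine its combinatorial type, which will force the elementary relation to consist of four Sarkisov links of type \II over $B$ arranged in a square. Since $\dim B=1$, every Sarkisov link dominated by $T/B$ is automatically of type \II over $B$ (the base cannot change across corners); only the square structure requires proof.

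First I would realise $T$ as the blow-up $\rho\colon T\to X$ of a Mori fibre space $X\to B$ at two distinct closed points $\xx,\yy$. This follows from applying \autoref{lem: rank_r_MDS} and \autoref{lem: Castelnuovo_contraction} to the relative $K_T$-MMP, since $\rho(T/X)=2$. By construction these points lie in Sarkisov general position, so by \autoref{lem:k(x)=k(b)} the fibres $X_{\pi(\xx)},X_{\pi(\yy)}$ are regular conics and $\k(\xx)=\k(\pi(\xx))$, $\k(\yy)=\k(\pi(\yy))$. The two points have distinct images in $B$: if $\pi(\xx)=\pi(\yy)=b$ then $d_\xx=d_\yy=d_b$, and using $K_X\cdot X_b=-2d_b$ a direct computation gives $-K_T\cdot\widetilde{X_b}=0$, contradicting the $\pi_B$-ampleness of $-K_T$.

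Setting $E,F$ to be the exceptional divisors over $\xx,\yy$ and $f=\rho^*\pi^*b$, I would next enumerate the extremal rays of $\overline{\NE}(T/B)$. The fibres of $\pi_B\colon T\to B$ away from $\{\pi(\xx),\pi(\yy)\}$ are inherited from $X$ and remain irreducible with zero self-intersection, contributing no extremal ray; the two reducible fibres decompose as $E+\widetilde{X_{\pi(\xx)}}$ and $F+\widetilde{X_{\pi(\yy)}}$, where \autoref{rmk:strict transform fibre} identifies $\widetilde{X_{\pi(\xx)}}\equiv \delta_\xx f-E$ and $\widetilde{X_{\pi(\yy)}}\equiv \delta_\yy f-F$. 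Each of these four classes has negative self-intersection and negative intersection with $K_T$, hence is an exceptional curve of the first kind by \autoref{lem: characterise_-1_curves}; together they span the four extremal rays of $\overline{\NE}(T/B)$.

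Finally, equating the two expressions $\delta_\xx\delta_\yy[f]=\delta_\yy([E]+[\widetilde{X_{\pi(\xx)}}])=\delta_\xx([F]+[\widetilde{X_{\pi(\yy)}}])$ yields the unique linear dependence
\[
\delta_\yy[E]+\delta_\yy[\delta_\xx f-E]-\delta_\xx[F]-\delta_\xx[\delta_\yy f-F]=0
\]
in $N_1(T/B)$, with sign pattern $(+,+,-,-)$. Standard oriented-matroid considerations place the four extremal rays in convex position as a quadrilateral whose diagonals are the same-sign pairs $\{E,\delta_\xx f-E\}$ and $\{F,\delta_\yy f-F\}$ (that is, the two reducible fibres), and whose four facets are the opposite-sign pairs $\{E,F\}$, $\{E,\delta_\yy f-F\}$, $\{\delta_\xx f-E,F\}$, $\{\delta_\xx f-E,\delta_\yy f-F\}$; this is exactly the cyclic order $E,\delta_\yy f-F,\delta_\xx f-E,F$ of Figure~\ref{fig:XB_ab}. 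Each facet is spanned by two \emph{disjoint} exceptional curves of the first kind, contractible simultaneously via \autoref{lem: Castelnuovo_contraction} to a Mori fibre space corner $X_i/B$, and each edge of the resulting square is a Sarkisov link of type \II over $B$ by \autoref{lem:links type II conic fibrations}. The key subtlety is verifying that the two diagonals, whose components meet non-trivially inside a common reducible fibre, are \emph{not} facets of $\overline{\NE}(T/B)$: this is enforced precisely by the sign pattern of the circuit, and is what produces the quadrilateral rather than a shorter degenerate relation.
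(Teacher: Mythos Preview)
Your proof is correct and more detailed than the paper's. The paper gives a three-sentence argument: realise $T$ as the blow-up of a Mori fibre space $X_1/B$ at two points $\xx,\yy$ (one quadrant of the square), invoke \autoref{lem:links type II conic fibrations} and \autoref{rmk:strict transform fibre} to get the two adjacent type-\II\ links, and then simply assert that the diagram ``closes canonically'' with the fourth quadrant, implicitly relying on the $2$-ray iteration of \S\ref{sec:ComputeRelations}. You instead analyse $\overline{\NE}(T/B)$ directly: you exhibit the four extremal rays and use the unique circuit among them to read off the facet structure of the quadrilateral cone. You also make explicit a point the paper glosses over, namely that $\pi(\xx)\neq\pi(\yy)$ (otherwise $-K_T\cdot\widetilde{X_b}=0$ kills relative ampleness).

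Your approach is more self-contained and actually explains \emph{why} there are exactly four links; the paper's is shorter but leans on the general machinery. One simplification you could make: rather than appealing to ``oriented-matroid considerations'', you can argue geometrically that the facets of $\overline{\NE}(T/B)$ are exactly the pairs of \emph{disjoint} exceptional curves of the first kind (those contractible simultaneously to a regular surface via \autoref{lem: Castelnuovo_contraction}). The four mixed pairs $\{E,F\}$, $\{E,\delta_\yy f-F\}$, $\{\delta_\xx f-E,F\}$, $\{\delta_\xx f-E,\delta_\yy f-F\}$ are disjoint because they lie over distinct points of $B$, whereas the two same-fibre pairs $\{E,\delta_\xx f-E\}$ and $\{F,\delta_\yy f-F\}$ meet, hence cannot be facets. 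This immediately gives the square without invoking the Radon partition.
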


\begin{proof}
Since $B$ is a curve and $\rho(Z/B)=3$, any birational morphism $Z\to X=X_1$ over $B$ is the composition of blow-ups of two points $\xx,\yy$ of degree $\deg(\xx)=a$ and $\deg(\yy)=b$. This is the lower left quadrant of the diagram.
Since $X_1\to B$ is a conic fibration, the Sarkisov links induced by blowing up $\xx$ and $\yy$ are links of type \II, see \autoref{lem:links type II conic fibrations} and \autoref{rmk:strict transform fibre}. This induces the adjacent quadrants of the diagram. It then closes canonically with the fourth quadrant.
\end{proof}

\subsubsection{Elementary relations over a point}

We now list all elementary relations over $\Spec(\k)$.

\begin{table}[t]
\begin{minipage}[t]{0.32\linewidth}
\strut\vspace*{-\baselineskip}\newline
\begin{tabular}{Cl}
\toprule
\text{Piece} & Figure \\
\midrule
\piece{\p^2}11 & Fig. \ref{P2_11}  \\
\piece{\p^2}12 & Fig. \ref{P2_12}  \\
\piece{\p^2}13 & Fig. \ref{P2_13}  \\
\piece{\p^2}14 & Fig. \ref{P2_14}  \\
\piece{\p^2}15 & Fig. \ref{P2_15}  \\
\piece{\p^2}16 & Fig. \ref{P2_16}  \\
\piece{\p^2}17 & Fig. \ref{P2_17}  \\
\piece{\p^2}22 & Fig. \ref{P2_22}  \\
\piece{\p^2}23 & Fig. \ref{P2_23}  \\
\piece{\p^2}24 & Fig. \ref{P2_24}  \\
\piece{\p^2}25 & Fig. \ref{P2_25}  \\
\piece{\p^2}26 & Fig. \ref{P2_26}  \\
\piece{\p^2}33 & Fig. \ref{P2_33}  \\
\piece{\p^2}34 & Fig. \ref{P2_34}  \\
\piece{\p^2}35 & Fig. \ref{P2_35}  \\
\piece{\p^2}44 & Fig. \ref{P2_44}  \\
\bottomrule \\
\end{tabular}
\end{minipage}
\begin{minipage}[t]{0.32\linewidth}
\strut\vspace*{-\baselineskip}\newline
\begin{tabular}{Cl}
\toprule
\text{Piece} & Figure \\
\midrule
\piece{X_8}11 & Fig. \ref{P2_12} \\
\piece{X_8}12 & Fig. \ref{P2_22} \\
\piece{X_8}13 & Fig. \ref{P2_23}  \\
\piece{X_8}14 & Fig. \ref{P2_24}  \\
\piece{X_8}15 & Fig. \ref{P2_25}  \\
\piece{X_8}16 & Fig. \ref{P2_26}  \\
\piece{X_8}22 & Fig. \ref{X8_22}  \\
\piece{X_8}23 & Fig. \ref{X8_23}  \\
\piece{X_8}24 & Fig. \ref{X8_24}  \\
\piece{X_8}25 & Fig. \ref{X8_25}  \\
\piece{X_8}33 & Fig. \ref{X8_33}  \\
\piece{X_8}34 & Fig. \ref{X8_34}  \\
\bottomrule \\
\end{tabular}
\end{minipage}
\begin{minipage}[t]{0.32\linewidth}
\strut\vspace*{-\baselineskip}\newline
\begin{tabular}{Cl}
\toprule
\text{Piece} & Figure \\
\midrule
\piece{X_6}11 & Fig. \ref{P2_23} \\
\piece{X_6}12 & Fig. \ref{X8_23}  \\
\piece{X_6}13 & Fig. \ref{X8_33}  \\
\piece{X_6}14 & Fig. \ref{X8_34} \\
\piece{X_6}22 & Fig. \ref{X6_22}  \\
\piece{X_6}23 & Fig. \ref{X6_23} \\
\midrule
\piece{X_5}11 & Fig. \ref{P2_15}  \\
\piece{X_5}12 & Fig. \ref{P2_25}  \\
\piece{X_5}13 & Fig. \ref{P2_35}  \\
\piece{X_5}22 & Fig. \ref{X8_25}  \\
\midrule
\piece{\F_0}0{1} & Fig. \ref{P2_12} \\
\piece{\F_0}0{2} & Fig. \ref{F0_02} \\
\piece{\F_0}0{3} & Fig. \ref{P2_13} \\
\piece{\F_0}0{4} & Fig. \ref{F0_04} \\
\piece{\F_0}0{5} & Fig. \ref{P2_15} \\
\piece{\F_0}0{6} & Fig. \ref{F0_06} \\
\piece{\F_0}0{7} & Fig. \ref{P2_17} \\
\bottomrule \\
\end{tabular}
\end{minipage}
\caption{The $27$ elementary relations over a point for which the surfaces appearing can be rational; from Fig. \ref{P2_11} to Fig. \ref{F0_06}}
\label{tab:pieces}
\end{table}

\begin{table}[t]
	\begin{minipage}[t]{0.5\linewidth}
	\strut\vspace*{-\baselineskip}\newline
	\begin{tabular}{Cl}
	\toprule
	\text{Piece} & Figure \\
	\midrule
	\piece{X_9}33 & compare with Fig. \ref{P2_33} \\
	\midrule
	\piece{X_8}22 & compare with Fig. \ref{X8_22} \\
	\piece{X_8}24 & compare with Fig. \ref{X8_24} \\
	\bottomrule \\
	\end{tabular}
	\end{minipage}
\begin{minipage}[t]{0.32\linewidth}
\strut\vspace*{-\baselineskip}\newline
\begin{tabular}{Cl}
\toprule
\text{Piece} & Figure \\
\midrule
\piece{X_3}11 & Fig. \ref{X3_11} \\
\midrule
\piece{X_4}11 & Fig. \ref{X4_11} \\
\piece{X_4}12 & Fig. \ref{X4_12} \\
\midrule
\piece{X_4}01 & Fig. \ref{X4_01} \\
\piece{X_4}02 & Fig. \ref{X4_02} \\
\piece{X_4}03 & Fig. \ref{X4_03} \\
\bottomrule \\
\end{tabular}
\end{minipage}
\caption{The $9$ elementary relations over a point for which the surfaces appearing are never rational (where $X_9$ denotes a non-trivial Severi-Brauer surface and $X_8$ a not rational del Pezzo surface of degree 8)}
\label{tab:pieces_nonrat}
\end{table}

We classify the possible rank 3 surface fibrations and all the possible contractions to rank 2 and rank 1 fibrations.

\begin{theorem}\label{thm:elementary relations}
Let $\k$ be a field. Let $Z/B$ be a rank $3$ fibration.
\begin{enumerate}
	\item If $B$ is a curve, then the corresponding elementary relation is as in Figure~\ref{fig:XB_ab}.
	\item If $B=\Spec(\k)$ and $Z$ is rational, then the corresponding elementary relation is one of the figures listed in \autoref{tab:pieces}.
	\item If $B=\Spec(\k)$ and $Z$ is not rational, then the corresponding elementary relation is one of the figures listed in \autoref{tab:pieces} or in \autoref{tab:pieces_nonrat}.
\end{enumerate}
\end{theorem}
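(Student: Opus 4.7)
The plan is to treat Part (1) and Parts (2)--(3) separately. Part~(1), which handles the case when $B$ is a curve, is precisely the content of the preceding proposition: the rank $3$ fibration $T\to B$ factors as the double blow-up of a Mori fibre space $X\to B$ at two points $\xx,\yy$ in Sarkisov general position over $B$; by \autoref{lem:links type II conic fibrations} and \autoref{rmk:strict transform fibre} each of the two adjacent links is of type \II over $B$, and the four chambers of $\overline{\NE}(T/B)$ produce the four quadrants of Figure~\ref{fig:XB_ab}.

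For Parts~(2)--(3) we have $B=\Spec(\k)$, so $T$ is a regular projective surface with $\rho(T)=3$ and $-K_T$ ample, i.e.\ a del Pezzo surface of Picard rank~$3$. The strategy is to invoke the iterative $2$-rays game procedure described in Section~\ref{sec:ComputeRelations}: any starting Sarkisov link $\chi_1\colon X_1\dashrightarrow X_2$ dominated by $T$ determines the entire elementary relation, since $\chi_{i+1}$ is uniquely determined by $\chi_i$ through the constraints $a_{i+1}=b_i$ on degrees of base-points and the induced value of $K^2_{X_{i+1}}$ coming from \autoref{prop: link_I} and \autoref{prop: link II dP}.

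Thus it suffices to enumerate all admissible starting pairs $(X_1,\chi_1)$ up to cyclic rotation and inversion of the relation. For rational $T$ (Part~(2)), $X_1$ must appear in the list of \autoref{thm: classification_rational_surfaces}\autoref{crs:1} and $\chi_1$ in \autoref{crs:2}; running through each possibility and iterating the procedure until closure recovers the $27$ figures of \autoref{tab:pieces}. For non-rational $T$ (Part~(3)), \autoref{cor: geometrically integral del Pezzo} and \autoref{cor: moredP} enlarge the list of admissible $X_1$ (non-trivial Severi--Brauer surfaces, del Pezzo surfaces of degree $8$ not isomorphic to quadrics in $\p^3_\k$, and del Pezzo surfaces of degree $3,4$), yielding the supplementary cases collected in \autoref{tab:pieces_nonrat}.

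The main obstacle is the combinatorial bookkeeping: one must make sure that no admissible starting pair is overlooked, that the cyclic symmetry is correctly accounted for, and that the $2$-rays game closes up in every case. The termination after finitely many steps is ensured by \autoref{prop: elementary_relations}. For each concrete initial link, closure is checked by computing at every stage the degrees $a_i,b_i$ and $K^2_{X_{i+1}}$ from the tables of \autoref{prop: link II dP}, and by verifying that the contraction $\alpha_{n+1}$ coincides with $\alpha_1$ for the corresponding $n$; the resulting polygonal shape is precisely the figure displayed in the relevant entry of the tables. Links of type~\III and \IV enter only as inverses of links of type~\I or as the exchange of rulings on $\F_0$, respectively, and are handled by \autoref{prop: link_I} and \autoref{prop: links IV}.
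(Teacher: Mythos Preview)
Your approach is essentially the paper's: play the $2$-rays game of \autoref{sec:ComputeRelations} and enumerate cases, with the rational/non-rational split coming from \autoref{cor: rat_deg_5}. Two small but substantive slips should be corrected. First, the recurrence is $a_{i+1}=b_{i-1}$, not $a_{i+1}=b_i$: the base-point of $\chi_{i+1}$ is the image of the curve contracted by $\beta_{i-1}$, so $\chi_{i+1}$ is determined by the \emph{pair} $(\chi_{i-1},\chi_i)$ (equivalently by $T$ together with $\chi_i$), not by $\chi_i$ alone. Correspondingly, the objects you must enumerate are not ``starting pairs $(X_1,\chi_1)$'' but rather rank-$3$ fibrations $T$, i.e.\ pairs $\piece{X}{\xx}{\yy}$ of a del Pezzo surface $X$ of Picard rank~$1$ with two points in Sarkisov general position; this is exactly the labelling in \autoref{tab:pieces} and \autoref{tab:pieces_nonrat}.

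Second, for Part~(3) you should invoke the index constraints the paper uses to cut down the non-rational cases with $K_X^2\in\{8,9\}$ and $\Pic(X)=\Z[-K_X]$: a non-trivial Severi--Brauer surface has $\ind(X_9)=3$, so only $\piece{X_9}{3}{3}$ survives; a degree-$8$ del Pezzo that is not a quadric has $2\mid\ind(X_8)$ by \autoref{prop: divisibility-dp8}, leaving only $\piece{X_8}{2}{2}$ and $\piece{X_8}{2}{4}$. Citing \autoref{cor: moredP} alone does not give these restrictions.
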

\begin{proof}
If $\k$ is an arbitrary field, we obtain the elementary relations purely computationally when playing the $2$-rays game as described in \autoref{sec:Elementary relations}.

The distinction between rational and non-rational surfaces comes from the fact that rational Mori fibre spaces satisfy $K_X^2\geq5$ by \autoref{cor:  rat_deg_5}.

Moreover, only the following elementary relations over $\Spec(\k)$ involve a del Pezzo surface $X$ with $K_X^2\in\{8,9\}$ and $\Pic(X)=\Z[-K_X]$:
\begin{enumerate}
	\item If $X=X_9$ is a non-trivial Severi--Brauer surface, then $\ind(X_9)=3$ and so only $\piece{X_9}33$ exists, and it is analogous to \autoref{P2_33}, replacing $\p^2_{\k}$ by $X_9$ and its opposite Severi--Brauer surface $X_9^{\rm op}$, and replacing $3H$ by $H$ (see \cite[Figure~3]{BSY}).
	\item If $X=X_8$ is not a quadric in $\p^3_\k$, then $2\mid\ind(X_8)$ and so only $\piece{X_8}22$ and $\piece{X_8}24$ can exist. They are analogous to \autoref{X8_22} respectively \autoref{X8_24}, replacing $2H$ by $H$.
\end{enumerate}
\end{proof}

We specialize above theorem for separably closed fields.

\begin{corollary}\label{cor:elementary relations sep closed}
	Let $\k$ be a separably closed field, and let $Z/B$ be a rank $3$ fibration.
	\begin{enumerate}
		\item If $B$ is a curve then the corresponding elementary relation is $\piece{X}ab$ (Fig.~\ref{fig:XB_ab}).
		\item If $B=\Spec(\k)$ and $Z$ is rational, then the corresponding elementary relation is $\piece{\p^2}11$ or one of the following:
		\begin{enumerate}
		\item in $\mathrm{char}(\k)=2$, \\
		\begin{tabular}{l l l}
		$\piece{\P^2}12$ (Fig.~\ref{P2_12}) & $\piece{\P^2}14$ (Fig.~\ref{P2_14}) & $\piece{\P^2}22$ (Fig.~\ref{P2_22}) \\
		  $\piece{\P^2}24$ (Fig.~\ref{P2_24}) & $\piece{\P^2}44$ (Fig.~\ref{P2_44}) & $\piece{X_8}22$ (Fig.~\ref{X8_22})\\  $\piece{X_8}24$ (Fig.~\ref{X8_24})&
		$\piece{\F_0}02$ (Fig.~\ref{F0_02}) & $\piece{\F_0}04$ (Fig.~\ref{F0_04})\\
		\end{tabular}
		\item in $\mathrm{char}(\k)=3$, $\piece{\P^2}13$ (Fig.~\ref{P2_13}), $\piece{\P^2}33$ (Fig.~\ref{P2_33});
		\item in $\mathrm{char}(\k)=5$, $\piece{\P^2}15$ (Fig.~\ref{P2_15});
		\item in $\mathrm{char}(\k)=7$, $\piece{\P^2}17$ (Fig.~\ref{P2_17});
		\end{enumerate}
		\item If $B=\Spec(\k)$ and $Z$ is not rational, then the corresponding elementary relation is among the ones listed in \autoref{tab:pieces_nonrat} (depending on $\charact{\k}$), $\piece{X_9}33$ excluded.
	\end{enumerate}
\end{corollary}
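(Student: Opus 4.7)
The plan is to derive this corollary as a direct specialization of Theorem \ref{thm:elementary relations}, filtering its list of elementary relations by the arithmetic constraints that a separably closed base field imposes on the degrees of the closed points involved in each Sarkisov link. The crucial observation is that over a separably closed field $\k$ of characteristic $p \geq 0$, every closed point $\xx$ on a $\k$-variety satisfies $d_\xx = [\k(\xx):\k] = p^e$ for some $e \geq 0$ (with the convention $d_\xx = 1$ in characteristic $0$), since $\k(\xx)/\k$ is purely inseparable.

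For part (1), when $B$ is a curve, there is nothing to restrict: Theorem \ref{thm:elementary relations} directly gives the relation of type $\piece{X}ab$ shown in Figure \ref{fig:XB_ab}.

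For part (2), assume $B = \Spec(\k)$ and $T$ is rational. I would apply Theorem \ref{thm:elementary relations}\,(2) to reduce to Table \ref{tab:pieces}, then inspect each figure: an elementary relation $\piece{X}ab$ is realizable over $\k$ only if every edge label (which is the degree of a closed point being blown up) is of the form $p^e$. Concretely, relations with an edge labelled $3,5,6,7$ are excluded unless $p \in \{3,5,7\}$ respectively; edges labelled $4$ or $8$ require $p=2$; and edges labelled $2$ require $p=2$. The piece $\piece{\p^2}11$ has only degree-$1$ edges and hence occurs for every $p$. A systematic pass through Table \ref{tab:pieces} (identifying pieces that correspond to the same figure, e.g.\ $\piece{X_8}11 \cong \piece{\p^2}12$) yields precisely the lists enumerated in the corollary for each relevant characteristic. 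The corresponding Mori fibre spaces appearing on the vertices (such as $X_8$ in $\piece{X_8}22$) are guaranteed to exist over $\k$ by the classification in Corollary \ref{thm:classification links p>2}.

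For part (3), with $B = \Spec(\k)$ and $T$ non-rational, Theorem \ref{thm:elementary relations}\,(3) restricts the possible relations to Table \ref{tab:pieces_nonrat}. The only exclusion over a separably closed field is $\piece{X_9}33$, which involves a non-trivial Severi--Brauer surface $X_9$: by Lemma \ref{lem: SB} (or directly by \cite[Corollary 3.5.71]{Poo17}), every Severi--Brauer surface over $\k$ is isomorphic to $\p^2_\k$, so no such piece exists. The remaining pieces in Table \ref{tab:pieces_nonrat} are permitted and constrained by characteristic exactly as in part (2) through the same degree-divisibility argument.

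The main obstacle is purely bookkeeping: one must check for each figure in both tables which characteristics admit all the point-degrees appearing on its edges, and match the surviving pieces against the list in the corollary. Since Table \ref{tab:pieces} already records the figure of each piece, this amounts to a finite verification with no further geometric input needed.
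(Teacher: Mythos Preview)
Your overall strategy matches the paper's, but there is a genuine gap in the filtering step for part~(2). Checking only the edge labels (degrees of base points) is not enough: you must also check that the Mori fibre spaces sitting at the \emph{corners} of each figure actually exist over a separably closed field. The paper does this via \autoref{cor: moredP-sepclosed}, which says that the only del Pezzo surfaces of Picard rank~$1$ and $K_X^2\geq 5$ over such a field are $\p^2_\k$, or (for $p=2$) an $X_8$, or (for $p=5$) an $X_5$. Your sentence ``the corresponding Mori fibre spaces \ldots\ are guaranteed to exist'' reads as a confirmation after the fact, not as a filter, and as stated it is false: for instance $\piece{X_6}22$ (Fig.~\ref{X6_22}) has all edges of degree~$2$ and so survives your degree test in characteristic~$2$, yet every corner is a del Pezzo surface of degree~$6$ and Picard rank~$1$, which does not exist over a separably closed field. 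This piece is correctly absent from the corollary's list, but your argument as written would keep it.

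For part~(3) there is a second gap. You write that \autoref{thm:elementary relations}\,(3) ``restricts the possible relations to \autoref{tab:pieces_nonrat}'', but that theorem only restricts to the union of \autoref{tab:pieces} and \autoref{tab:pieces_nonrat}. The paper's proof explicitly rules out the pieces from \autoref{tab:pieces} in the non-rational case by again invoking \autoref{cor: moredP-sepclosed}: for $K_X^2\geq 5$ one is forced into $X_8$ (if $p=2$) or $X_5$ (if $p=5$), and then one observes that $\piece{X_8}11=\piece{\p^2}12$ and $\piece{X_5}11=\piece{\p^2}15$ only occur for rational $T$, while the remaining $\piece{X_8}22$, $\piece{X_8}24$ are already in \autoref{tab:pieces_nonrat}. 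Your argument excluding $\piece{X_9}33$ via Severi--Brauer triviality is correct, but it does not by itself reduce to \autoref{tab:pieces_nonrat}.
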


\begin{proof}
	We check which of the relations of \autoref{thm:elementary relations} exist over separably closed fields.
	Recall that in \autoref{cor: moredP-sepclosed} we have seen that over a separably closed field the only del Pezzo surfaces $X$ with $\rho(X)=1$ and $K_X^2\geq5$ are $\p^2_\k$, or $p=2$ and $K_X^2=8$, or $p=5$ and $K_X^2=5$.
	Combining this with the fact that the degree of any point is a power of $p$, we obtain the list for rational surfaces using \autoref{cor: rat_deg_5}.

	If $Z$ is not rational, then either $K_X^2\leq 4$ and the possible pieces are listed in \autoref{tab:pieces_nonrat}, or $K_X^2\geq 5$ and as above either $p=2$ and $K_X^2=8$, or $p=5$ and $K_X^2=5$.
	Note that $\piece{X_8}11=\piece{\p^2}12$ and $\piece{X_5}11=\piece{\p^2}15$, so these appear only for rational surfaces. For $X_5$, the latter is the only relation involving points of degree a power of $p=5$; for $X_8$, there are also the relations $\piece{X_8}22$ and $\piece{X_8}24$, which are also listed in \autoref{tab:pieces_nonrat}.
\end{proof}

%%% P2, 11, 12, 13 %%%
\begin{figure}[ht]
\begin{minipage}[c]{0.32\linewidth}
\[
\resizebox{1.05\linewidth}{!}{
\begin{tikzpicture}[font=\footnotesize]
\node[name=s, regular polygon, rotate=180, regular polygon sides=5,inner sep=1.242cm] at (0,0) {};
\draw[-,ultra thick,RoyalBlue] (s.center) to["F"] (s.side 1);
\draw[-,ultra thick,black] (s.center) to[""] (s.side 2);
\draw[-,ultra thick,RoyalBlue] (s.center) to["H-E-F"] (s.side 3);
\draw[-,ultra thick,black] (s.center) to[""] (s.side 4);
\draw[-,ultra thick,RoyalBlue] (s.center) to["E",swap] (s.side 5);
\draw[-,ultra thick,black] (s.side 1) to[swap,""] (s.corner 2);
\draw[-,ultra thick,RoyalBlue] (s.side 1) to["E"] (s.corner 1);
\draw[-,ultra thick,RoyalBlue] (s.side 2) to[swap,"H-E-F"{yshift=-3pt}] (s.corner 3);
\draw[-,ultra thick,RoyalBlue] (s.side 2) to["F"{yshift=-3pt}] (s.corner 2);
\draw[-,ultra thick,black] (s.side 3) to[swap,""] (s.corner 4);
\draw[-,ultra thick,black] (s.side 3) to[""] (s.corner 3);
\draw[-,ultra thick,RoyalBlue] (s.side 4) to[swap,"E"{yshift=-3pt}] (s.corner 5);
\draw[-,ultra thick,RoyalBlue] (s.side 4) to["H-E-F"{yshift=-3pt}] (s.corner 4);
\draw[-,ultra thick,RoyalBlue] (s.side 5) to[swap,"F"] (s.corner 1);
\draw[-,ultra thick,black] (s.side 5) to[""] (s.corner 5);
\node[fill=white,circle,inner sep=2] at (s.center) {${X_7}$};
\node[fill=white,inner sep=2] at (s.corner 1) {${\p^2}$};
\node[fill=white,inner sep=2] at (s.side 1) {${\F_1}$};
\node[fill=white,inner sep=2] at (s.corner 2) {${\F_1}_{\!/\p^1}$};
\node[fill=white,inner sep=2] at (s.side 2) {${X_7}_{/\p^1}$};
\node[fill=white,inner sep=2] at (s.corner 3) {${\F_0}_{\!/\p^1}$};
\node[fill=white,inner sep=2] at (s.side 3) {${\F_0}$};
\node[fill=white,inner sep=2] at (s.corner 4) {${\F_0}_{\!/\p^1}$};
\node[fill=white,inner sep=2] at (s.side 4) {${X_7}_{/\p^1}$};
\node[fill=white,inner sep=2] at (s.corner 5) {${\F_1}_{\!/\p^1}$};
\node[fill=white,inner sep=2] at (s.side 5) {${\F_1}$};
\end{tikzpicture}
}
\]
\caption{$\piece{\P^2}11$}
\label{P2_11}
\end{minipage}
\begin{minipage}[c]{0.32\linewidth}
\[
\resizebox{1.15\linewidth}{!}{
\begin{tikzpicture}[font=\footnotesize]
\node[name=s, regular polygon, rotate=180, regular polygon sides=5,inner sep=1.242cm] at (0,0) {};
\draw[-,ultra thick,FireBrick] (s.center) to["F"] (s.side 1);
\draw[-,ultra thick,black] (s.center) to[""] (s.side 2);
\draw[-,ultra thick,FireBrick] (s.center) to["2(H-E)-F"{xshift=2pt}] (s.side 3);
\draw[-,ultra thick,RoyalBlue] (s.center) to["H-F"{xshift=4pt}] (s.side 4);
\draw[-,ultra thick,RoyalBlue] (s.center) to["E"] (s.side 5);
\draw[-,ultra thick,black] (s.side 1) to[swap,""] (s.corner 2);
\draw[-,ultra thick,RoyalBlue] (s.side 1) to["E"] (s.corner 1);
\draw[-,ultra thick,FireBrick] (s.side 2) to[swap,"2(H-E)-F"{yshift=-4pt}] (s.corner 3);
\draw[-,ultra thick,FireBrick] (s.side 2) to["F"{yshift=-3pt}] (s.corner 2);
\draw[-,ultra thick,RoyalBlue] (s.side 3) to[swap,"H-F"] (s.corner 4);
\draw[-,ultra thick,black] (s.side 3) to[""] (s.corner 3);
\draw[-,ultra thick,RoyalBlue] (s.side 4) to[swap,"E"] (s.corner 5);
\draw[-,ultra thick,FireBrick] (s.side 4) to["2(H-E)-F"{yshift=-4pt}] (s.corner 4);
\draw[-,ultra thick,FireBrick] (s.side 5) to[swap,"F"{yshift=-3pt}] (s.corner 1);
\draw[-,ultra thick,RoyalBlue] (s.side 5) to["H-F"] (s.corner 5);
\node[fill=white,circle,inner sep=2] at (s.center) {${X_6}$};
\node[fill=white,inner sep=2] at (s.corner 1) {${\p^2}$};
\node[fill=white,inner sep=2] at (s.side 1) {${\F_1}$};
\node[fill=white,inner sep=2] at (s.corner 2) {${\F_1}_{\!/\p^1}$};
\node[fill=white,inner sep=2] at (s.side 2) {${X_6}_{/\p^1}$};
\node[fill=white,inner sep=2] at (s.corner 3) {${\F_1}_{\!/\p^1}$};
\node[fill=white,inner sep=2] at (s.side 3) {${\F_1}$};
\node[fill=white,inner sep=2] at (s.corner 4) {${\p^2}$};
\node[fill=white,inner sep=2] at (s.side 4) {${X_7}$};
\node[fill=white,inner sep=2] at (s.corner 5) {${X_8}$};
\node[fill=white,inner sep=2] at (s.side 5) {${X_7'}$};
\end{tikzpicture}
}
\]
\caption{$\piece{\P^2}12$}
\label{P2_12}
\end{minipage}
\begin{minipage}[c]{0.32\linewidth}
\[
\resizebox{1.15\linewidth}{!}{
\begin{tikzpicture}[font=\footnotesize]
\node[name=s, regular polygon, rotate=180, regular polygon sides=6,inner sep=1.557cm] at (0,0) {};
\draw[-,ultra thick,ForestGreen] (s.center) to["F"] (s.side 1);
\draw[-,ultra thick,black] (s.center) to[""] (s.side 2);
\draw[-,ultra thick,ForestGreen] (s.center) to[swap,"3(H-E)-F"{xshift=-13pt, yshift=-3pt}] (s.side 3);
\draw[-,ultra thick,black] (s.center) to["2H-E-F"] (s.side 4);
\draw[-,ultra thick,ForestGreen] (s.center) to["3H-2F"{xshift=3pt}] (s.side 5);
\draw[-,ultra thick,RoyalBlue] (s.center) to["E"] (s.side 6);
\draw[-,ultra thick,black] (s.side 1) to[swap,""] (s.corner 2);
\draw[-,ultra thick,RoyalBlue] (s.side 1) to["E"] (s.corner 1);
\draw[-,ultra thick,ForestGreen] (s.side 2) to[swap,"3(H-E)-F"] (s.corner 3);
\draw[-,ultra thick,ForestGreen] (s.side 2) to["F"] (s.corner 2);
\draw[-,ultra thick,black] (s.side 3) to[swap,"2H-E-F"] (s.corner 4);
\draw[-,ultra thick,black] (s.side 3) to[""] (s.corner 3);
\draw[-,ultra thick,ForestGreen] (s.side 4) to[swap,"3H-2F"{yshift=4pt}] (s.corner 5);
\draw[-,ultra thick,ForestGreen] (s.side 4) to["3(H-E)-F"{yshift=2pt}] (s.corner 4);
\draw[-,ultra thick,RoyalBlue] (s.side 5) to[swap,"E"] (s.corner 6);
\draw[-,ultra thick,black] (s.side 5) to["2H-E-F"] (s.corner 5);
\draw[-,ultra thick,ForestGreen] (s.side 6) to[swap,"F"] (s.corner 1);
\draw[-,ultra thick,ForestGreen] (s.side 6) to["3H-2F"] (s.corner 6);
\node[fill=white,circle,inner sep=2] at (s.center) {${X_5}$};
\node[fill=white,inner sep=2] at (s.corner 1) {${\p^2}$};
\node[fill=white,inner sep=2] at (s.side 1) {${\F_1}$};
\node[fill=white,inner sep=2] at (s.corner 2) {${\F_1}_{\!/\p^1}$};
\node[fill=white,inner sep=2] at (s.side 2) {${X_5}_{/\p^1}$};
\node[fill=white,inner sep=2] at (s.corner 3) {${\F_0}_{\!/\p^1}$};
\node[fill=white,inner sep=2] at (s.side 3) {${\F_0}$};
\node[fill=white,inner sep=2] at (s.corner 4) {${\F_0}_{\!/\p^1}$};
\node[fill=white,inner sep=2] at (s.side 4) {${X_5}_{/\p^1}$};
\node[fill=white,inner sep=2] at (s.corner 5) {${\F_1}_{\!/\p^1}$};
\node[fill=white,inner sep=2] at (s.side 5) {${\F_1}$};
\node[fill=white,inner sep=2] at (s.corner 6) {${\p^2}$};
\node[fill=white,inner sep=2] at (s.side 6) {${X_6}$};
\end{tikzpicture}
}
\]
\caption{$\piece{\P^2}13$}
\label{P2_13}
\end{minipage}
\end{figure}

%%% P2, 14, 15, 16 %%%
\begin{figure}[ht]
\begin{minipage}[c]{0.32\linewidth}
\[
\resizebox{1.2\linewidth}{!}{
\begin{tikzpicture}[font=\footnotesize]
\node[name=s, regular polygon, rotate=180, regular polygon sides=6,inner sep=1.557cm] at (0,0) {};
\draw[-,ultra thick,Orange] (s.center) to["F"] (s.side 1);
\draw[-,ultra thick,black] (s.center) to[""] (s.side 2);
\draw[-,ultra thick,Orange] (s.center) to["4(H-E)-F"{yshift=-3pt,xshift=-13pt},swap] (s.side 3);
\draw[-,ultra thick,RoyalBlue] (s.center) to["2H-F-E"] (s.side 4);
\draw[-,ultra thick,black] (s.center) to[""] (s.side 5);
\draw[-,ultra thick,RoyalBlue] (s.center) to["E"] (s.side 6);
\draw[-,ultra thick,black] (s.side 1) to[swap,""] (s.corner 2);
\draw[-,ultra thick,RoyalBlue] (s.side 1) to["E"] (s.corner 1);
\draw[-,ultra thick,Orange] (s.side 2) to[swap,"4(H-E)-F"{yshift=3pt}] (s.corner 3);
\draw[-,ultra thick,Orange] (s.side 2) to["F"] (s.corner 2);
\draw[-,ultra thick,RoyalBlue] (s.side 3) to[swap,"2H-F-E"] (s.corner 4);
\draw[-,ultra thick,black] (s.side 3) to[""] (s.corner 3);
\draw[-,ultra thick,black] (s.side 4) to[swap,""] (s.corner 5);
\draw[-,ultra thick,Orange] (s.side 4) to["4(H-E)-F"] (s.corner 4);
\draw[-,ultra thick,RoyalBlue] (s.side 5) to[swap,"E"] (s.corner 6);
\draw[-,ultra thick,RoyalBlue] (s.side 5) to["2H-F-E"] (s.corner 5);
\draw[-,ultra thick,Orange] (s.side 6) to[swap,"F"] (s.corner 1);
\draw[-,ultra thick,black] (s.side 6) to[""] (s.corner 6);
\node[fill=white,circle,inner sep=2] at (s.center) {${X_4}$};
\node[fill=white,inner sep=2] at (s.corner 1) {${\p^2}$};
\node[fill=white,inner sep=2] at (s.side 1) {${\F_1}$};
\node[fill=white,inner sep=2] at (s.corner 2) {${\F_1}_{\!/\p^1}$};
\node[fill=white,inner sep=2] at (s.side 2) {${X_4}_{/\p^1}$};
\node[fill=white,inner sep=2] at (s.corner 3) {${\F_1}_{\!/\p^1}$};
\node[fill=white,inner sep=2] at (s.side 3) {${\F_1}$};
\node[fill=white,inner sep=2] at (s.corner 4) {${\p^2}$};
\node[fill=white,inner sep=2] at (s.side 4) {${X_5}$};
\node[fill=white,inner sep=2] at (s.corner 5) {${X_5}_{\!/\p^1}$};
\node[fill=white,inner sep=2] at (s.side 5) {${X_4}_{/\p^1}$};
\node[fill=white,inner sep=2] at (s.corner 6) {${X_5}_{\!/\p^1}$};
\node[fill=white,inner sep=2] at (s.side 6) {${X_5}$};
\end{tikzpicture}
}
\]
\caption{$\piece{\P^2}14$}
\label{P2_14}
\end{minipage}
\begin{minipage}[c]{0.31\linewidth}
\[
\resizebox{1.2\linewidth}{!}{
\begin{tikzpicture}[font=\footnotesize]
\node[name=s, regular polygon, rotate=180, regular polygon sides=7,inner sep=1.872cm] at (0,0) {};
\draw[-,ultra thick,HotPink] (s.center) to["F"] (s.side 1);
\draw[-,ultra thick,black] (s.center) to[""] (s.side 2);
\draw[-,ultra thick,HotPink] (s.center) to["\!\!\!5(H-E)-F",swap] (s.side 3);
\draw[-,ultra thick,black] (s.center) to["3H-2E-F"{yshift=5pt}] (s.side 4);
\draw[-,ultra thick,HotPink] (s.center) to["10H-5E-4F"{xshift=5pt}] (s.side 5);
\draw[-,ultra thick,RoyalBlue] (s.center) to["2H-F"{xshift=-10pt,yshift=-2pt}] (s.side 6);
\draw[-,ultra thick,RoyalBlue] (s.center) to["E"] (s.side 7);
\draw[-,ultra thick,black] (s.side 1) to[swap,""] (s.corner 2);
\draw[-,ultra thick,RoyalBlue] (s.side 1) to["E"] (s.corner 1);
\draw[-,ultra thick,HotPink] (s.side 2) to[swap,"5(H-E)-F"{yshift=3pt}] (s.corner 3);
\draw[-,ultra thick,HotPink] (s.side 2) to["F"] (s.corner 2);
\draw[-,ultra thick,black] (s.side 3) to[swap,"3H-2E-F"] (s.corner 4);
\draw[-,ultra thick,black] (s.side 3) to[""] (s.corner 3);
\draw[-,ultra thick,HotPink] (s.side 4) to[swap,"10H-5E-4F"{yshift=5pt,xshift=-10pt}] (s.corner 5);
\draw[-,ultra thick,HotPink] (s.side 4) to["5(H-E)-F"{yshift=3pt, xshift=2pt}] (s.corner 4);
\draw[-,ultra thick,RoyalBlue] (s.side 5) to[swap,"2H-F"] (s.corner 6);
\draw[-,ultra thick,black] (s.side 5) to["3H-2E-F"] (s.corner 5);
\draw[-,ultra thick,RoyalBlue] (s.side 6) to[swap,"E"] (s.corner 7);
\draw[-,ultra thick,HotPink] (s.side 6) to["10H-5E-4F"{yshift=-5pt},swap] (s.corner 6);
\draw[-,ultra thick,HotPink] (s.side 7) to[swap,"F"] (s.corner 1);
\draw[-,ultra thick,RoyalBlue] (s.side 7) to["2H-F"] (s.corner 7);
\node[fill=white,circle,inner sep=2] at (s.center) {${X_3}$};
\node[fill=white,inner sep=2] at (s.corner 1) {${\p^2}$};
\node[fill=white,inner sep=2] at (s.side 1) {${\F_1}$};
\node[fill=white,inner sep=2] at (s.corner 2) {${\F_1}_{\!/\p^1}$};
\node[fill=white,inner sep=2] at (s.side 2) {${X_3}_{/\p^1}$};
\node[fill=white,inner sep=2] at (s.corner 3) {${\F_0}_{\!/\p^1}$};
\node[fill=white,inner sep=2] at (s.side 3) {${\F_0}$};
\node[fill=white,inner sep=2] at (s.corner 4) {${\F_0}_{\!/\p^1}$};
\node[fill=white,inner sep=2] at (s.side 4) {${X_3}_{/\p^1}$};
\node[fill=white,inner sep=2] at (s.corner 5) {${\F_1}_{\!/\p^1}$};
\node[fill=white,inner sep=2] at (s.side 5) {${\F_1}$};
\node[fill=white,inner sep=2] at (s.corner 6) {${\p^2}$};
\node[fill=white,inner sep=2] at (s.side 6) {${X_4}$};
\node[fill=white,inner sep=2] at (s.corner 7) {${X_5}$};
\node[fill=white,inner sep=2] at (s.side 7) {${X_4'}$};
\end{tikzpicture}
}
\]
\caption{$\piece{\P^2}15$}
\label{P2_15}
\end{minipage}
\begin{minipage}[c]{0.32\linewidth}
\[
\resizebox{1.25\linewidth}{!}{
\begin{tikzpicture}[font=\footnotesize]
\node[name=s, regular polygon, rotate=180, regular polygon sides=8,inner sep=2.169cm] at (0,0) {};
\draw[-,ultra thick,Turquoise] (s.center) to["F"] (s.side 1);
\draw[-,ultra thick,black] (s.center) to[""] (s.side 2);
\draw[-,ultra thick,Turquoise] (s.center) to["6(H-E)-F"] (s.side 3);
\draw[-,ultra thick,RoyalBlue] (s.center) to["\!\!\!3H-2E-F",swap] (s.side 4);
\draw[-,ultra thick,Turquoise] (s.center) to[swap,"18H-7F-6E"{yshift=11pt,xshift=-3pt}] (s.side 5);
\draw[-,ultra thick,black] (s.center) to["5H-2F-E"{yshift=7pt}] (s.side 6);
\draw[-,ultra thick,Turquoise] (s.center) to["12H-5F"] (s.side 7);
\draw[-,ultra thick,RoyalBlue] (s.center) to["E"] (s.side 8);
\draw[-,ultra thick,black] (s.side 1) to[swap,""] (s.corner 2);
\draw[-,ultra thick,RoyalBlue] (s.side 1) to["E"] (s.corner 1);
\draw[-,ultra thick,Turquoise] (s.side 2) to[swap,"6(H-E)-F"] (s.corner 3);
\draw[-,ultra thick,Turquoise] (s.side 2) to["F"] (s.corner 2);
\draw[-,ultra thick,RoyalBlue] (s.side 3) to[swap,"3H-2E-F"] (s.corner 4);
\draw[-,ultra thick,black] (s.side 3) to[""] (s.corner 3);
\draw[-,ultra thick,Turquoise] (s.side 4) to[swap,"18H-7F-6E"] (s.corner 5);
\draw[-,ultra thick,Turquoise] (s.side 4) to["6(H-E)-F"] (s.corner 4);
\draw[-,ultra thick,black] (s.side 5) to[swap,"5H-2F-E"{yshift=4pt,xshift=-2pt}] (s.corner 6);
\draw[-,ultra thick,RoyalBlue] (s.side 5) to["3H-2E-F"{yshift=4pt,xshift=2pt}] (s.corner 5);
\draw[-,ultra thick,Turquoise] (s.side 6) to[swap,"12H-5F"] (s.corner 7);
\draw[-,ultra thick,Turquoise] (s.side 6) to["18H-7F-6E"] (s.corner 6);
\draw[-,ultra thick,RoyalBlue] (s.side 7) to[swap,"E"] (s.corner 8);
\draw[-,ultra thick,black] (s.side 7) to["5H-2F-E"] (s.corner 7);
\draw[-,ultra thick,Turquoise] (s.side 8) to[swap,"F"] (s.corner 1);
\draw[-,ultra thick,Turquoise] (s.side 8) to["12H-5F"] (s.corner 8);
\node[fill=white,circle,inner sep=2] at (s.center) {${X_2}$};
\node[fill=white,inner sep=2] at (s.corner 1) {${\p^2}$};
\node[fill=white,inner sep=2] at (s.side 1) {${\F_1}$};
\node[fill=white,inner sep=2] at (s.corner 2) {${\F_1}_{\!/\p^1}$};
\node[fill=white,inner sep=2] at (s.side 2) {${X_2}_{/\p^1}$};
\node[fill=white,inner sep=2] at (s.corner 3) {${\F_1}_{\!/\p^1}$};
\node[fill=white,inner sep=2] at (s.side 3) {${\F_1}$};
\node[fill=white,inner sep=2] at (s.corner 4) {${\p^2}$};
\node[fill=white,inner sep=2] at (s.side 4) {${X_3}$};
\node[fill=white,inner sep=2] at (s.corner 5) {${\p^2}$};
\node[fill=white,inner sep=2] at (s.side 5) {${\F_1}$};
\node[fill=white,inner sep=2] at (s.corner 6) {${\F_1}_{\!/\p^1}$};
\node[fill=white,inner sep=2] at (s.side 6) {${X_2}_{/\p^1}$};
\node[fill=white,inner sep=2] at (s.corner 7) {${\F_1}_{\!/\p^1}$};
\node[fill=white,inner sep=2] at (s.side 7) {${\F_1}$};
\node[fill=white,inner sep=2] at (s.corner 8) {${\p^2}$};
\node[fill=white,inner sep=2] at (s.side 8) {${X_3}$};
\end{tikzpicture}
}
\]
\caption{$\piece{\p^2}16$}
\label{P2_16}
\end{minipage}
\end{figure}

%%% P2, 17, 44 %%%
\begin{figure}[ht]
\begin{minipage}[c]{0.49\linewidth}
\[
\resizebox{1.15\linewidth}{!}{
\begin{tikzpicture}[font=\footnotesize]
\node[name=s, regular polygon, rotate=180, regular polygon sides=12,inner sep=3.357cm] at (0,0) {};
\draw[-,ultra thick,Salmon] (s.center) to["F"] (s.side 1);
\draw[-,ultra thick,black] (s.center) to[""] (s.side 2);
\draw[-,ultra thick,Salmon] (s.center) to["7(H-E)-F"{yshift=-7pt}] (s.side 3);
\draw[-,ultra thick,black] (s.center) to[""] (s.side 4);
\draw[-,ultra thick,Salmon] (s.center) to["21H-14E-6F"{yshift=4pt},swap] (s.side 5);
\draw[-,ultra thick,RoyalBlue] (s.center) to["\!\!\!6H-3E-2F",swap] (s.side 6);
\draw[-,ultra thick,Salmon] (s.center) to["42H-14E-15F"{yshift=15pt}] (s.side 7);
\draw[-,ultra thick,black] (s.center) to[""] (s.side 8);
\draw[-,ultra thick,Salmon] (s.center) to["35H-7E-13F"] (s.side 9);
\draw[-,ultra thick,black] (s.center) to[""] (s.side 10);
\draw[-,ultra thick,Salmon] (s.center) to["21H-8F"] (s.side 11);
\draw[-,ultra thick,RoyalBlue] (s.center) to["E"] (s.side 12);
\draw[-,ultra thick,black] (s.side 1) to[swap,""] (s.corner 2);
\draw[-,ultra thick,RoyalBlue] (s.side 1) to["E"] (s.corner 1);
\draw[-,ultra thick,Salmon] (s.side 2) to[swap,"7(H-E)-F"] (s.corner 3);
\draw[-,ultra thick,Salmon] (s.side 2) to["F"] (s.corner 2);
\draw[-,ultra thick,black] (s.side 3) to[swap,""] (s.corner 4);
\draw[-,ultra thick,black] (s.side 3) to[""] (s.corner 3);
\draw[-,ultra thick,Salmon] (s.side 4) to[swap,"21H-14E-6F",swap] (s.corner 5);
\draw[-,ultra thick,Salmon] (s.side 4) to["7(H-E)-F",swap] (s.corner 4);
\draw[-,ultra thick,RoyalBlue] (s.side 5) to[swap,"6H-3E-2F"] (s.corner 6);
\draw[-,ultra thick,black] (s.side 5) to[""] (s.corner 5);
\draw[-,ultra thick,Salmon] (s.side 6) to[swap,"42H-14E-15F"] (s.corner 7);
\draw[-,ultra thick,Salmon] (s.side 6) to["21H-14E-6F"] (s.corner 6);
\draw[-,ultra thick,black] (s.side 7) to[swap,""] (s.corner 8);
\draw[-,ultra thick,RoyalBlue] (s.side 7) to["6H-3E-2F"{yshift=4pt}] (s.corner 7);
\draw[-,ultra thick,Salmon] (s.side 8) to[swap,"35H-7E-13F"] (s.corner 9);
\draw[-,ultra thick,Salmon] (s.side 8) to["42H-14E-15F"] (s.corner 8);
\draw[-,ultra thick,black] (s.side 9) to[swap,""] (s.corner 10);
\draw[-,ultra thick,black] (s.side 9) to[""] (s.corner 9);
\draw[-,ultra thick,Salmon] (s.side 10) to[swap,"21H-8F"] (s.corner 11);
\draw[-,ultra thick,Salmon] (s.side 10) to["35H-7E-13F"] (s.corner 10);
\draw[-,ultra thick,RoyalBlue] (s.side 11) to[swap,"E"] (s.corner 12);
\draw[-,ultra thick,black] (s.side 11) to[""] (s.corner 11);
\draw[-,ultra thick,Salmon] (s.side 12) to[swap,"F"] (s.corner 1);
\draw[-,ultra thick,Salmon] (s.side 12) to["21H-8F"] (s.corner 12);
\node[fill=white,circle,inner sep=2] at (s.center) {${X_1}$};
\node[fill=white,inner sep=2] at (s.corner 1) {${\p^2}$};
\node[fill=white,inner sep=2] at (s.side 1) {${\F_1}$};
\node[fill=white,inner sep=2] at (s.corner 2) {${\F_1}_{\!/\p^1}$};
\node[fill=white,inner sep=2] at (s.side 2) {${X_1}_{/\p^1}$};
\node[fill=white,inner sep=2] at (s.corner 3) {${\F_0}_{\!/\p^1}$};
\node[fill=white,inner sep=2] at (s.side 3) {${\F_0}$};
\node[fill=white,inner sep=2] at (s.corner 4) {${\F_0}_{\!/\p^1}$};
\node[fill=white,inner sep=2] at (s.side 4) {${X_1}_{/\p^1}$};
\node[fill=white,inner sep=2] at (s.corner 5) {${\F_1}_{\!/\p^1}$};
\node[fill=white,inner sep=2] at (s.side 5) {${\F_1}$};
\node[fill=white,inner sep=2] at (s.corner 6) {${\p^2}$};
\node[fill=white,inner sep=2] at (s.side 6) {${X_2}$};
\node[fill=white,inner sep=2] at (s.corner 7) {${\p^2}$};
\node[fill=white,inner sep=2] at (s.side 7) {${\F_1}$};
\node[fill=white,inner sep=2] at (s.corner 8) {${\F_1}_{\!/\p^1}$};
\node[fill=white,inner sep=2] at (s.side 8) {${X_1}_{/\p^1}$};
\node[fill=white,inner sep=2] at (s.corner 9) {${\F_0}_{\!/\p^1}$};
\node[fill=white,inner sep=2] at (s.side 9) {${\F_0}$};
\node[fill=white,inner sep=2] at (s.corner 10) {${\F_0}_{\!/\p^1}$};
\node[fill=white,inner sep=2] at (s.side 10) {${X_1}_{/\p^1}$};
\node[fill=white,inner sep=2] at (s.corner 11) {${\F_1}_{\!/\p^1}$};
\node[fill=white,inner sep=2] at (s.side 11) {${\F_1}$};
\node[fill=white,inner sep=2] at (s.corner 12) {${\p^2}$};
\node[fill=white,inner sep=2] at (s.side 12) {${X_2}$};
\end{tikzpicture}
}
\]
\caption{$\piece{\P^2}17$}
\label{P2_17}
\end{minipage}\ \
\begin{minipage}[c]{0.49\linewidth}
\[
\resizebox{1.2\linewidth}{!}{
\begin{tikzpicture}[font=\footnotesize]
\node[name=s, regular polygon, rotate=180, regular polygon sides=12,inner sep=3.357cm] at (0,0) {};
\draw[-,ultra thick,Orange] (s.center) to["F"] (s.side 1);
\draw[-,ultra thick,black] (s.center) to[""] (s.side 2);
\draw[-,ultra thick,Orange] (s.center) to["4(2H-F)-E"] (s.side 3);
\draw[-,ultra thick,Orange] (s.center) to["16H-4E-7F"] (s.side 4);
\draw[-,ultra thick,black] (s.center) to[""] (s.side 5);
\draw[-,ultra thick,Orange] (s.center) to["24H-8E-9F"{yshift=5pt},swap] (s.side 6);
\draw[-,ultra thick,Orange] (s.center) to["24H-9E-8F"{yshift=29pt}] (s.side 7);
\draw[-,ultra thick,black] (s.center) to[""] (s.side 8);
\draw[-,ultra thick,Orange] (s.center) to["16H-7E-4F"{yshift=5pt,xshift=-3pt}] (s.side 9);
\draw[-,ultra thick,Orange] (s.center) to["4(2H-E)-F"] (s.side 10);
\draw[-,ultra thick,black] (s.center) to[""] (s.side 11);
\draw[-,ultra thick,Orange] (s.center) to["E"] (s.side 12);
\draw[-,ultra thick,black] (s.side 1) to[swap,""] (s.corner 2);
\draw[-,ultra thick,Orange] (s.side 1) to["E"] (s.corner 1);
\draw[-,ultra thick,Orange] (s.side 2) to[swap,"4(2H-F)-E"] (s.corner 3);
\draw[-,ultra thick,Orange] (s.side 2) to["F"] (s.corner 2);
\draw[-,ultra thick,Orange] (s.side 3) to[swap,"16H-4E-7F"] (s.corner 4);
\draw[-,ultra thick,black] (s.side 3) to[""] (s.corner 3);
\draw[-,ultra thick,black] (s.side 4) to[swap,""] (s.corner 5);
\draw[-,ultra thick,Orange] (s.side 4) to["4(2H-F)-E"] (s.corner 4);
\draw[-,ultra thick,Orange] (s.side 5) to[swap,"24H-8E-9F"] (s.corner 6);
\draw[-,ultra thick,Orange] (s.side 5) to["16H-4E-7F"] (s.corner 5);
\draw[-,ultra thick,Orange] (s.side 6) to[swap,"24H-9E-8F"] (s.corner 7);
\draw[-,ultra thick,black] (s.side 6) to[""] (s.corner 6);
\draw[-,ultra thick,black] (s.side 7) to[swap,""] (s.corner 8);
\draw[-,ultra thick,Orange] (s.side 7) to["24H-8E-9F"{yshift=4pt}] (s.corner 7);
\draw[-,ultra thick,Orange] (s.side 8) to[swap,"16H-7E-4F"] (s.corner 9);
\draw[-,ultra thick,Orange] (s.side 8) to["24H-9E-8F"] (s.corner 8);
\draw[-,ultra thick,Orange] (s.side 9) to[swap,"4(2H-E)-F"] (s.corner 10);
\draw[-,ultra thick,black] (s.side 9) to[""] (s.corner 9);
\draw[-,ultra thick,black] (s.side 10) to[swap,""] (s.corner 11);
\draw[-,ultra thick,Orange] (s.side 10) to["16H-7E-4F",swap] (s.corner 10);
\draw[-,ultra thick,Orange] (s.side 11) to[swap,"E"] (s.corner 12);
\draw[-,ultra thick,Orange] (s.side 11) to["4(2H-E)-F"] (s.corner 11);
\draw[-,ultra thick,Orange] (s.side 12) to[swap,"F"] (s.corner 1);
\draw[-,ultra thick,black] (s.side 12) to[""] (s.corner 12);
\node[fill=white,circle,inner sep=2] at (s.center) {${X_1}$};
\node[fill=white,inner sep=2] at (s.corner 1) {${\p^2}$};
\node[fill=white,inner sep=2] at (s.side 1) {${X_5}$};
\node[fill=white,inner sep=2] at (s.corner 2) {${X_5}_{\!/\p^1}$};
\node[fill=white,inner sep=2] at (s.side 2) {${X_1}_{/\p^1}$};
\node[fill=white,inner sep=2] at (s.corner 3) {${X_5}_{\!/\p^1}$};
\node[fill=white,inner sep=2] at (s.side 3) {${X_5}$};
\node[fill=white,inner sep=2] at (s.corner 4) {${\p^2}$};
\node[fill=white,inner sep=2] at (s.side 4) {${X_5}$};
\node[fill=white,inner sep=2] at (s.corner 5) {${X_5}_{\!/\p^1}$};
\node[fill=white,inner sep=2] at (s.side 5) {${X_1}_{/\p^1}$};
\node[fill=white,inner sep=2] at (s.corner 6) {${X_5}_{\!/\p^1}$};
\node[fill=white,inner sep=2] at (s.side 6) {${X_5}$};
\node[fill=white,inner sep=2] at (s.corner 7) {${\p^2}$};
\node[fill=white,inner sep=2] at (s.side 7) {${X_5}$};
\node[fill=white,inner sep=2] at (s.corner 8) {${X_5}_{\!/\p^1}$};
\node[fill=white,inner sep=2] at (s.side 8) {${X_1}_{/\p^1}$};
\node[fill=white,inner sep=2] at (s.corner 9) {${X_5}_{\!/\p^1}$};
\node[fill=white,inner sep=2] at (s.side 9) {${X_5}$};
\node[fill=white,inner sep=2] at (s.corner 10) {${\p^2}$};
\node[fill=white,inner sep=2] at (s.side 10) {${X_5}$};
\node[fill=white,inner sep=2] at (s.corner 11) {${X_5}_{\!/\p^1}$};
\node[fill=white,inner sep=2] at (s.side 11) {${X_1}_{/\p^1}$};
\node[fill=white,inner sep=2] at (s.corner 12) {${X_5}_{\!/\p^1}$};
\node[fill=white,inner sep=2] at (s.side 12) {${X_5}$};
\end{tikzpicture}
}
\]
\caption{$\piece{\P^2}44$}
\label{P2_44}
\end{minipage}
\end{figure}

%%% P2, 22, 23, 24 %%%
\begin{figure}[ht]
\begin{minipage}[c]{0.32\linewidth}
\[
\resizebox{.9\linewidth}{!}{
\begin{tikzpicture}[font=\footnotesize]
\node[name=s, regular polygon, rotate=180, regular polygon sides=5,inner sep=1.242cm] at (0,0) {};
\draw[-,ultra thick,FireBrick] (s.center) to["F"] (s.side 1);
\draw[-,ultra thick,RoyalBlue] (s.center) to["H-E",swap] (s.side 2);
\draw[-,ultra thick,black] (s.center) to[""] (s.side 3);
\draw[-,ultra thick,RoyalBlue] (s.center) to["H-F"] (s.side 4);
\draw[-,ultra thick,FireBrick] (s.center) to["E"] (s.side 5);
\draw[-,ultra thick,RoyalBlue] (s.side 1) to[swap,"H-E"] (s.corner 2);
\draw[-,ultra thick,FireBrick] (s.side 1) to["E"] (s.corner 1);
\draw[-,ultra thick,black] (s.side 2) to[swap,""] (s.corner 3);
\draw[-,ultra thick,FireBrick] (s.side 2) to["F"] (s.corner 2);
\draw[-,ultra thick,RoyalBlue] (s.side 3) to[swap,"H-F"{yshift=4pt}] (s.corner 4);
\draw[-,ultra thick,RoyalBlue] (s.side 3) to["H-E"{yshift=4pt}] (s.corner 3);
\draw[-,ultra thick,FireBrick] (s.side 4) to[swap,"E"] (s.corner 5);
\draw[-,ultra thick,black] (s.side 4) to[""] (s.corner 4);
\draw[-,ultra thick,FireBrick] (s.side 5) to[swap,"F"] (s.corner 1);
\draw[-,ultra thick,RoyalBlue] (s.side 5) to["H-F"] (s.corner 5);
\node[fill=white,circle,inner sep=2] at (s.center) {${X_5}$};
\node[fill=white,inner sep=2] at (s.corner 1) {${\p^2}$};
\node[fill=white,inner sep=2] at (s.side 1) {${X_7}$};
\node[fill=white,inner sep=2] at (s.corner 2) {${X_8}$};
\node[fill=white,inner sep=2] at (s.side 2) {${X_6}$};
\node[fill=white,inner sep=2] at (s.corner 3) {${X_6}_{\!/\p^1}$};
\node[fill=white,inner sep=2] at (s.side 3) {${X_5}_{/\p^1}$};
\node[fill=white,inner sep=2] at (s.corner 4) {${X_6'}_{\!/\p^1}$};
\node[fill=white,inner sep=2] at (s.side 4) {${X_6'}$};
\node[fill=white,inner sep=2] at (s.corner 5) {${X_8'}$};
\node[fill=white,inner sep=2] at (s.side 5) {${X_7'}$};
\end{tikzpicture}
}
\]
\caption{$\piece{\P^2}22$}
\label{P2_22}
\end{minipage}
\begin{minipage}[c]{0.32\linewidth}
\[
\resizebox{1.05\linewidth}{!}{
\begin{tikzpicture}[font=\footnotesize]
\node[name=s, regular polygon, rotate=180, regular polygon sides=5,inner sep=1.242cm] at (0,0) {};
\draw[-,ultra thick,ForestGreen] (s.center) to["F"] (s.side 1);
\draw[-,ultra thick,RoyalBlue] (s.center) to["H-E",swap] (s.side 2);
\draw[-,ultra thick,RoyalBlue] (s.center) to["2H-E-F"] (s.side 3);
\draw[-,ultra thick,ForestGreen] (s.center) to["3H-2F"{xshift=6pt}] (s.side 4);
\draw[-,ultra thick,FireBrick] (s.center) to["E"] (s.side 5);
\draw[-,ultra thick,RoyalBlue] (s.side 1) to[swap,"H-E"] (s.corner 2);
\draw[-,ultra thick,FireBrick] (s.side 1) to["E"] (s.corner 1);
\draw[-,ultra thick,RoyalBlue] (s.side 2) to[swap,"2H-E-F"] (s.corner 3);
\draw[-,ultra thick,ForestGreen] (s.side 2) to["F"] (s.corner 2);
\draw[-,ultra thick,ForestGreen] (s.side 3) to[swap,"3H-2F"{yshift=3pt}] (s.corner 4);
\draw[-,ultra thick,RoyalBlue] (s.side 3) to["H-E"] (s.corner 3);
\draw[-,ultra thick,FireBrick] (s.side 4) to[swap,"E"] (s.corner 5);
\draw[-,ultra thick,RoyalBlue] (s.side 4) to["2H-E-F"] (s.corner 4);
\draw[-,ultra thick,ForestGreen] (s.side 5) to[swap,"F"] (s.corner 1);
\draw[-,ultra thick,ForestGreen] (s.side 5) to["3H-2F"] (s.corner 5);
\node[fill=white,circle,inner sep=2] at (s.center) {${X_4}$};
\node[fill=white,inner sep=2] at (s.corner 1) {${\p^2}$};
\node[fill=white,inner sep=2] at (s.side 1) {${X_7}$};
\node[fill=white,inner sep=2] at (s.corner 2) {${X_8}$};
\node[fill=white,inner sep=2] at (s.side 2) {${X_5}$};
\node[fill=white,inner sep=2] at (s.corner 3) {${X_6}$};
\node[fill=white,inner sep=2] at (s.side 3) {${X_5}$};
\node[fill=white,inner sep=2] at (s.corner 4) {${X_8}$};
\node[fill=white,inner sep=2] at (s.side 4) {${X_7}$};
\node[fill=white,inner sep=2] at (s.corner 5) {${\p^2}$};
\node[fill=white,inner sep=2] at (s.side 5) {${X_6}$};
\end{tikzpicture}
}
\]
\caption{$\piece{\P^2}23$}
\label{P2_23}
\end{minipage}
\begin{minipage}[c]{0.32\linewidth}
\[
\resizebox{1.05\linewidth}{!}{
\begin{tikzpicture}[font=\footnotesize]
\node[name=s, regular polygon, rotate=180, regular polygon sides=6,inner sep=1.557cm] at (0,0) {};
\draw[-,ultra thick,Orange] (s.center) to["F"] (s.side 1);
\draw[-,ultra thick,RoyalBlue] (s.center) to["H-E"] (s.side 2);
\draw[-,ultra thick,Orange] (s.center) to["8H-4E-3F"{xshift=-10pt},swap] (s.side 3);
\draw[-,ultra thick,FireBrick] (s.center) to["4H-E-2F",swap] (s.side 4);
\draw[-,ultra thick,black] (s.center) to[""] (s.side 5);
\draw[-,ultra thick,FireBrick] (s.center) to["E"] (s.side 6);
\draw[-,ultra thick,RoyalBlue] (s.side 1) to[swap,"H-E"] (s.corner 2);
\draw[-,ultra thick,FireBrick] (s.side 1) to["E"] (s.corner 1);
\draw[-,ultra thick,Orange] (s.side 2) to[swap,"8H-4E-3F"] (s.corner 3);
\draw[-,ultra thick,Orange] (s.side 2) to["F"] (s.corner 2);
\draw[-,ultra thick,FireBrick] (s.side 3) to[swap,"4H-E-2F"] (s.corner 4);
\draw[-,ultra thick,RoyalBlue] (s.side 3) to["H-E"] (s.corner 3);
\draw[-,ultra thick,black] (s.side 4) to[swap,""] (s.corner 5);
\draw[-,ultra thick,Orange] (s.side 4) to["8H-4E-3F"{yshift=4pt}] (s.corner 4);
\draw[-,ultra thick,FireBrick] (s.side 5) to[swap,"E"] (s.corner 6);
\draw[-,ultra thick,FireBrick] (s.side 5) to["4H-E-2F"] (s.corner 5);
\draw[-,ultra thick,Orange] (s.side 6) to[swap,"F"] (s.corner 1);
\draw[-,ultra thick,black] (s.side 6) to[""] (s.corner 6);
\node[fill=white,circle,inner sep=2] at (s.center) {${X_3}$};
\node[fill=white,inner sep=2] at (s.corner 1) {${\p^2}$};
\node[fill=white,inner sep=2] at (s.side 1) {${X_7}$};
\node[fill=white,inner sep=2] at (s.corner 2) {${X_8}$};
\node[fill=white,inner sep=2] at (s.side 2) {${X_4}$};
\node[fill=white,inner sep=2] at (s.corner 3) {${X_8'}$};
\node[fill=white,inner sep=2] at (s.side 3) {${X_7'}$};
\node[fill=white,inner sep=2] at (s.corner 4) {${\p^2}$};
\node[fill=white,inner sep=2] at (s.side 4) {${X_5}$};
\node[fill=white,inner sep=2] at (s.corner 5) {${X_5}_{\!/\p^1}$};
\node[fill=white,inner sep=2] at (s.side 5) {${X_3}_{/\p^1}$};
\node[fill=white,inner sep=2] at (s.corner 6) {${X_5}_{\!/\p^1}$};
\node[fill=white,inner sep=2] at (s.side 6) {${X_5}$};
\end{tikzpicture}
}
\]
\caption{$\piece{\P^2}24$}
\label{P2_24}
\end{minipage}
\end{figure}

%%% P2 25, 26 33 %%%
\begin{figure}[ht]
\begin{minipage}[c]{0.32\linewidth}
\[
\resizebox{1.15\linewidth}{!}{
\begin{tikzpicture}[font=\footnotesize]
\node[name=s, regular polygon, rotate=180, regular polygon sides=6,inner sep=1.557cm] at (0,0) {};
\draw[-,ultra thick,HotPink] (s.center) to["F"{yshift=-3pt}] (s.side 1);
\draw[-,ultra thick,RoyalBlue] (s.center) to["H-E",swap] (s.side 2);
\draw[-,ultra thick,FireBrick] (s.center) to[sloped,"6H-3E-2F",swap] (s.side 3);
\draw[-,ultra thick,HotPink] (s.center) to[sloped,"15H-5E-6F",swap] (s.side 4);
\draw[-,ultra thick,RoyalBlue] (s.center) to["2H-F"] (s.side 5);
\draw[-,ultra thick,FireBrick] (s.center) to["E"] (s.side 6);
\draw[-,ultra thick,RoyalBlue] (s.side 1) to[swap,"H-E"] (s.corner 2);
\draw[-,ultra thick,FireBrick] (s.side 1) to["E"] (s.corner 1);
\draw[-,ultra thick,FireBrick] (s.side 2) to[swap,"6H-3E-2F"] (s.corner 3);
\draw[-,ultra thick,HotPink] (s.side 2) to["F"] (s.corner 2);
\draw[-,ultra thick,HotPink] (s.side 3) to[swap,"15H-5E-6F"] (s.corner 4);
\draw[-,ultra thick,RoyalBlue] (s.side 3) to["H-E"] (s.corner 3);
\draw[-,ultra thick,RoyalBlue] (s.side 4) to[swap,"2H-F"] (s.corner 5);
\draw[-,ultra thick,FireBrick] (s.side 4) to["6H-3E-2F"{yshift=4pt}] (s.corner 4);
\draw[-,ultra thick,FireBrick] (s.side 5) to[swap,"E"] (s.corner 6);
\draw[-,ultra thick,HotPink] (s.side 5) to["15H-5E-6F"] (s.corner 5);
\draw[-,ultra thick,HotPink] (s.side 6) to[swap,"F"] (s.corner 1);
\draw[-,ultra thick,RoyalBlue] (s.side 6) to["2H-F"] (s.corner 6);
\node[fill=white,circle,inner sep=2] at (s.center) {${X_2}$};
\node[fill=white,inner sep=2] at (s.corner 1) {${\p^2}$};
\node[fill=white,inner sep=2] at (s.side 1) {${X_7}$};
\node[fill=white,inner sep=2] at (s.corner 2) {${X_8}$};
\node[fill=white,inner sep=2] at (s.side 2) {${X_3}$};
\node[fill=white,inner sep=2] at (s.corner 3) {${X_5}$};
\node[fill=white,inner sep=2] at (s.side 3) {${X_4}$};
\node[fill=white,inner sep=2] at (s.corner 4) {${\p^2}$};
\node[fill=white,inner sep=2] at (s.side 4) {${X_7}$};
\node[fill=white,inner sep=2] at (s.corner 5) {${X_8}$};
\node[fill=white,inner sep=2] at (s.side 5) {${X_3}$};
\node[fill=white,inner sep=2] at (s.corner 6) {${X_5}$};
\node[fill=white,inner sep=2] at (s.side 6) {${X_4}$};
\end{tikzpicture}
}
\]
\caption{$\piece{\P^2}25$}
\label{P2_25}
\end{minipage}
\begin{minipage}[c]{0.32\linewidth}
\[
\resizebox{1.15\linewidth}{!}{
\begin{tikzpicture}[font=\footnotesize]
\node[name=s, regular polygon, rotate=180, regular polygon sides=8,inner sep=2.169cm] at (0,0) {};
\draw[-,ultra thick,Turquoise] (s.center) to["F"] (s.side 1);
\draw[-,ultra thick,RoyalBlue] (s.center) to["H-E"] (s.side 2);
\draw[-,ultra thick,Turquoise] (s.center) to["24H-12E-7F",swap] (s.side 3);
\draw[-,ultra thick,FireBrick] (s.center) to[sloped,"12H-5E-4F",swap] (s.side 4);
\draw[-,ultra thick,Turquoise] (s.center) to[sloped,"36H-12E-13F"{xshift=3pt}] (s.side 5);
\draw[-,ultra thick,RoyalBlue] (s.center) to["5H-E-2F"] (s.side 6);
\draw[-,ultra thick,Turquoise] (s.center) to["12H-5F"] (s.side 7);
\draw[-,ultra thick,FireBrick] (s.center) to["E"] (s.side 8);
\draw[-,ultra thick,RoyalBlue] (s.side 1) to[swap,"H-E"] (s.corner 2);
\draw[-,ultra thick,FireBrick] (s.side 1) to["E"] (s.corner 1);
\draw[-,ultra thick,Turquoise] (s.side 2) to[swap,"24H-12E-7F"] (s.corner 3);
\draw[-,ultra thick,Turquoise] (s.side 2) to["F"] (s.corner 2);
\draw[-,ultra thick,FireBrick] (s.side 3) to[swap,"12H-5E-4F"] (s.corner 4);
\draw[-,ultra thick,RoyalBlue] (s.side 3) to["H-E"] (s.corner 3);
\draw[-,ultra thick,Turquoise] (s.side 4) to[swap,"36H-12E-13F"] (s.corner 5);
\draw[-,ultra thick,Turquoise] (s.side 4) to["24H-12E-7F"] (s.corner 4);
\draw[-,ultra thick,RoyalBlue] (s.side 5) to[swap,"5H-E-2F"{yshift=4pt,xshift=-4pt}] (s.corner 6);
\draw[-,ultra thick,FireBrick] (s.side 5) to["12H-5E-4F"{yshift=4pt,xshift=4pt}] (s.corner 5);
\draw[-,ultra thick,Turquoise] (s.side 6) to[swap,"12H-5F"] (s.corner 7);
\draw[-,ultra thick,Turquoise] (s.side 6) to["36H-12E-13F"] (s.corner 6);
\draw[-,ultra thick,FireBrick] (s.side 7) to[swap,"E"] (s.corner 8);
\draw[-,ultra thick,RoyalBlue] (s.side 7) to["5H-E-2F"] (s.corner 7);
\draw[-,ultra thick,Turquoise] (s.side 8) to[swap,"F"] (s.corner 1);
\draw[-,ultra thick,Turquoise] (s.side 8) to["12H-5F"] (s.corner 8);
\node[fill=white,circle,inner sep=2] at (s.center) {${X_1}$};
\node[fill=white,inner sep=2] at (s.corner 1) {${\p^2}$};
\node[fill=white,inner sep=2] at (s.side 1) {${X_7}$};
\node[fill=white,inner sep=2] at (s.corner 2) {${X_8}$};
\node[fill=white,inner sep=2] at (s.side 2) {${X_2}$};
\node[fill=white,inner sep=2] at (s.corner 3) {${X_8}$};
\node[fill=white,inner sep=2] at (s.side 3) {${X_7}$};
\node[fill=white,inner sep=2] at (s.corner 4) {${\p^2}$};
\node[fill=white,inner sep=2] at (s.side 4) {${X_3}$};
\node[fill=white,inner sep=2] at (s.corner 5) {${\p^2}$};
\node[fill=white,inner sep=2] at (s.side 5) {${X_7}$};
\node[fill=white,inner sep=2] at (s.corner 6) {${X_8}$};
\node[fill=white,inner sep=2] at (s.side 6) {${X_2}$};
\node[fill=white,inner sep=2] at (s.corner 7) {${X_8}$};
\node[fill=white,inner sep=2] at (s.side 7) {${X_7}$};
\node[fill=white,inner sep=2] at (s.corner 8) {${\p^2}$};
\node[fill=white,inner sep=2] at (s.side 8) {${X_3}$};
\end{tikzpicture}
}
\]
\caption{$\piece{\p^2}26$}
\label{P2_26}
\end{minipage}
\begin{minipage}[c]{0.32\linewidth}
\[
\resizebox{1.2\linewidth}{!}{
\begin{tikzpicture}[font=\footnotesize]
\node[name=s, regular polygon, rotate=180, regular polygon sides=6,inner sep=1.557cm] at (0,0) {};
\draw[-,ultra thick,ForestGreen] (s.center) to["F"] (s.side 1);
\draw[-,ultra thick,ForestGreen] (s.center) to["3H-2E"] (s.side 2);
\draw[-,ultra thick,ForestGreen] (s.center) to[sloped,"6H-3E-2F",swap] (s.side 3);
\draw[-,ultra thick,ForestGreen] (s.center) to[sloped,"6H-3F-2E"] (s.side 4);
\draw[-,ultra thick,ForestGreen] (s.center) to["3H-2F"] (s.side 5);
\draw[-,ultra thick,ForestGreen] (s.center) to["E"] (s.side 6);
\draw[-,ultra thick,ForestGreen] (s.side 1) to[swap,"3H-2E"{yshift=-3pt}] (s.corner 2);
\draw[-,ultra thick,ForestGreen] (s.side 1) to["E"] (s.corner 1);
\draw[-,ultra thick,ForestGreen] (s.side 2) to[swap,"6H-3E-2F"] (s.corner 3);
\draw[-,ultra thick,ForestGreen] (s.side 2) to["F"] (s.corner 2);
\draw[-,ultra thick,ForestGreen] (s.side 3) to[swap,"6H-3F-2E"] (s.corner 4);
\draw[-,ultra thick,ForestGreen] (s.side 3) to["3H-2E"] (s.corner 3);
\draw[-,ultra thick,ForestGreen] (s.side 4) to[swap,"3H-2F"{yshift=4pt,xshift=-3pt}] (s.corner 5);
\draw[-,ultra thick,ForestGreen] (s.side 4) to["6H-3E-2F"{yshift=4pt}] (s.corner 4);
\draw[-,ultra thick,ForestGreen] (s.side 5) to[swap,"E"] (s.corner 6);
\draw[-,ultra thick,ForestGreen] (s.side 5) to["6H-3F-2E"] (s.corner 5);
\draw[-,ultra thick,ForestGreen] (s.side 6) to[swap,"F"] (s.corner 1);
\draw[-,ultra thick,ForestGreen] (s.side 6) to["3H-2F"] (s.corner 6);
\node[fill=white,circle,inner sep=2] at (s.center) {${X_3}$};
\node[fill=white,inner sep=2] at (s.corner 1) {${\p^2}$};
\node[fill=white,inner sep=2] at (s.side 1) {${X_6}$};
\node[fill=white,inner sep=2] at (s.corner 2) {${\p^2}$};
\node[fill=white,inner sep=2] at (s.side 2) {${X_6'}$};
\node[fill=white,inner sep=2] at (s.corner 3) {${\p^2}$};
\node[fill=white,inner sep=2] at (s.side 3) {${X_6}$};
\node[fill=white,inner sep=2] at (s.corner 4) {${\p^2}$};
\node[fill=white,inner sep=2] at (s.side 4) {${X_6'}$};
\node[fill=white,inner sep=2] at (s.corner 5) {${\p^2}$};
\node[fill=white,inner sep=2] at (s.side 5) {${X_6}$};
\node[fill=white,inner sep=2] at (s.corner 6) {${\p^2}$};
\node[fill=white,inner sep=2] at (s.side 6) {${X_6'}$};
\end{tikzpicture}
}
\]
\caption{$\piece{\P^2}33$}
\label{P2_33}
\end{minipage}
\end{figure}

%%% P2, 34, 35 %%%
\begin{figure}[ht]
\begin{minipage}[c]{0.49\linewidth}
\[
\resizebox{1.15\linewidth}{!}{
\begin{tikzpicture}[font=\footnotesize]
\node[name=s, regular polygon, rotate=180, regular polygon sides=8,inner sep=2.169cm] at (0,0) {};
\draw[-,ultra thick,Orange] (s.center) to["F"] (s.side 1);
\draw[-,ultra thick,ForestGreen] (s.center) to["3H-2E"] (s.side 2);
\draw[-,ultra thick,black] (s.center) to[""] (s.side 3);
\draw[-,ultra thick,ForestGreen] (s.center) to["\!\!\!9H-4E-3F",swap] (s.side 4);
\draw[-,ultra thick,Orange] (s.center) to[sloped,"12H-4E-5F"] (s.side 5);
\draw[-,ultra thick,ForestGreen] (s.center) to["3(2H-F)-E"] (s.side 6);
\draw[-,ultra thick,black] (s.center) to[""] (s.side 7);
\draw[-,ultra thick,ForestGreen] (s.center) to["E"] (s.side 8);
\draw[-,ultra thick,ForestGreen] (s.side 1) to[swap,"3H-2E"{yshift=-3pt}] (s.corner 2);
\draw[-,ultra thick,ForestGreen] (s.side 1) to["E"] (s.corner 1);
\draw[-,ultra thick,black] (s.side 2) to[swap,""] (s.corner 3);
\draw[-,ultra thick,Orange] (s.side 2) to["F"] (s.corner 2);
\draw[-,ultra thick,ForestGreen] (s.side 3) to[swap,"9H-4E-3F"{yshift=-5pt}] (s.corner 4);
\draw[-,ultra thick,ForestGreen] (s.side 3) to["3H-2E"] (s.corner 3);
\draw[-,ultra thick,Orange] (s.side 4) to[swap,"12H-4E-5F"] (s.corner 5);
\draw[-,ultra thick,black] (s.side 4) to[""] (s.corner 4);
\draw[-,ultra thick,ForestGreen] (s.side 5) to[swap,"3(2H-F)-E"{xshift=-6pt}] (s.corner 6);
\draw[-,ultra thick,ForestGreen] (s.side 5) to["9H-4E-3F"{yshift=3pt,xshift=5pt}] (s.corner 5);
\draw[-,ultra thick,black] (s.side 6) to[swap,""] (s.corner 7);
\draw[-,ultra thick,Orange] (s.side 6) to["12H-4E-5F"] (s.corner 6);
\draw[-,ultra thick,ForestGreen] (s.side 7) to[swap,"E"] (s.corner 8);
\draw[-,ultra thick,ForestGreen] (s.side 7) to["3(2H-F)-E"] (s.corner 7);
\draw[-,ultra thick,Orange] (s.side 8) to[swap,"F"] (s.corner 1);
\draw[-,ultra thick,black] (s.side 8) to[""] (s.corner 8);
\node[fill=white,circle,inner sep=2] at (s.center) {${X_2}$};
\node[fill=white,inner sep=2] at (s.corner 1) {${\p^2}$};
\node[fill=white,inner sep=2] at (s.side 1) {${X_6}$};
\node[fill=white,inner sep=2] at (s.corner 2) {${\p^2}$};
\node[fill=white,inner sep=2] at (s.side 2) {${X_5}$};
\node[fill=white,inner sep=2] at (s.corner 3) {${X_5}_{\!/\p^1}$};
\node[fill=white,inner sep=2] at (s.side 3) {${X_2}_{/\p^1}$};
\node[fill=white,inner sep=2] at (s.corner 4) {${X_5}_{\!/\p^1}$};
\node[fill=white,inner sep=2] at (s.side 4) {${X_5}$};
\node[fill=white,inner sep=2] at (s.corner 5) {${\p^2}$};
\node[fill=white,inner sep=2] at (s.side 5) {${X_6}$};
\node[fill=white,inner sep=2] at (s.corner 6) {${\p^2}$};
\node[fill=white,inner sep=2] at (s.side 6) {${X_5}$};
\node[fill=white,inner sep=2] at (s.corner 7) {${X_5}_{\!/\p^1}$};
\node[fill=white,inner sep=2] at (s.side 7) {${X_2}_{/\p^1}$};
\node[fill=white,inner sep=2] at (s.corner 8) {${X_5}_{\!/\p^1}$};
\node[fill=white,inner sep=2] at (s.side 8) {${X_5}$};
\end{tikzpicture}
}
\]
\caption{$\piece{\p^2}34$}
\label{P2_34}
\end{minipage}\ \ 
\begin{minipage}[c]{0.49\linewidth}
\[
\resizebox{1.15\linewidth}{!}{
\begin{tikzpicture}[font=\footnotesize]
\node[name=s, regular polygon, rotate=180, regular polygon sides=8,inner sep=2.169cm] at (0,0) {};
\draw[-,ultra thick,HotPink] (s.center) to["F"] (s.side 1);
\draw[-,ultra thick,ForestGreen] (s.center) to["3H-2E"] (s.side 2);
\draw[-,ultra thick,RoyalBlue] (s.center) to["4H-2E-F"] (s.side 3);
\draw[-,ultra thick,ForestGreen] (s.center) to[swap,"\!\!\!18H-7E-6F"] (s.side 4);
\draw[-,ultra thick,HotPink] (s.center) to[sloped,"30H-10E-11F"] (s.side 5);
\draw[-,ultra thick,ForestGreen] (s.center) to["15H-4E-6F"] (s.side 6);
\draw[-,ultra thick,RoyalBlue] (s.center) to["2H-F"] (s.side 7);
\draw[-,ultra thick,ForestGreen] (s.center) to["E"] (s.side 8);
\draw[-,ultra thick,ForestGreen] (s.side 1) to[swap,"3H-2E"{yshift=-3pt}] (s.corner 2);
\draw[-,ultra thick,ForestGreen] (s.side 1) to["E"] (s.corner 1);
\draw[-,ultra thick,RoyalBlue] (s.side 2) to[swap,"4H-2E-F"] (s.corner 3);
\draw[-,ultra thick,HotPink] (s.side 2) to["F"] (s.corner 2);
\draw[-,ultra thick,ForestGreen] (s.side 3) to[swap,"18H-7E-6F"] (s.corner 4);
\draw[-,ultra thick,ForestGreen] (s.side 3) to["3H-2E"] (s.corner 3);
\draw[-,ultra thick,HotPink] (s.side 4) to[swap,"30H-10E-11F"] (s.corner 5);
\draw[-,ultra thick,RoyalBlue] (s.side 4) to["4H-2E-F"] (s.corner 4);
\draw[-,ultra thick,ForestGreen] (s.side 5) to[swap,"15H-4E-6F"{yshift=3pt,xshift=-9pt}] (s.corner 6);
\draw[-,ultra thick,ForestGreen] (s.side 5) to["18H-7E-6F"{yshift=3pt,xshift=6pt}] (s.corner 5);
\draw[-,ultra thick,RoyalBlue] (s.side 6) to[swap,"2H-F"] (s.corner 7);
\draw[-,ultra thick,HotPink] (s.side 6) to["30H-10E-11F"] (s.corner 6);
\draw[-,ultra thick,ForestGreen] (s.side 7) to[swap,"E"] (s.corner 8);
\draw[-,ultra thick,ForestGreen] (s.side 7) to["15H-4E-6F"{yshift=7pt}] (s.corner 7);
\draw[-,ultra thick,HotPink] (s.side 8) to[swap,"F"] (s.corner 1);
\draw[-,ultra thick,RoyalBlue] (s.side 8) to["2H-F"] (s.corner 8);
\node[fill=white,circle,inner sep=2] at (s.center) {${X_1}$};
\node[fill=white,inner sep=2] at (s.corner 1) {${\p^2}$};
\node[fill=white,inner sep=2] at (s.side 1) {${X_6}$};
\node[fill=white,inner sep=2] at (s.corner 2) {${\p^2}$};
\node[fill=white,inner sep=2] at (s.side 2) {${X_4}$};
\node[fill=white,inner sep=2] at (s.corner 3) {${X_5}$};
\node[fill=white,inner sep=2] at (s.side 3) {${X_2}$};
\node[fill=white,inner sep=2] at (s.corner 4) {${X_5}$};
\node[fill=white,inner sep=2] at (s.side 4) {${X_4}$};
\node[fill=white,inner sep=2] at (s.corner 5) {${\p^2}$};
\node[fill=white,inner sep=2] at (s.side 5) {${X_6}$};
\node[fill=white,inner sep=2] at (s.corner 6) {${\p^2}$};
\node[fill=white,inner sep=2] at (s.side 6) {${X_4}$};
\node[fill=white,inner sep=2] at (s.corner 7) {${X_5}$};
\node[fill=white,inner sep=2] at (s.side 7) {${X_2}$};
\node[fill=white,inner sep=2] at (s.corner 8) {${X_5}$};
\node[fill=white,inner sep=2] at (s.side 8) {${X_4}$};
\end{tikzpicture}
}
\]
\caption{$\piece{\P^2}35$}
\label{P2_35}
\end{minipage}
\end{figure}

%%% X_8 22, 23, 24%%%
\begin{figure}[ht]
\begin{minipage}[c]{0.32\linewidth}
\[
\resizebox{1.15\linewidth}{!}{
\begin{tikzpicture}[font=\footnotesize]
\node[name=s, regular polygon, rotate=180, regular polygon sides=6,inner sep=1.557cm] at (0,0) {};
\draw[-,ultra thick,FireBrick] (s.center) to["F"] (s.side 1);
\draw[-,ultra thick,black] (s.center) to[""] (s.side 2);
\draw[-,ultra thick,FireBrick] (s.center) to[sloped,"2(H-E)-F",swap] (s.side 3);
\draw[-,ultra thick,FireBrick] (s.center) to[sloped,"2(H-F)-E"] (s.side 4);
\draw[-,ultra thick,black] (s.center) to[""] (s.side 5);
\draw[-,ultra thick,FireBrick] (s.center) to["E"] (s.side 6);
\draw[-,ultra thick,black] (s.side 1) to[swap,""] (s.corner 2);
\draw[-,ultra thick,FireBrick] (s.side 1) to["E"] (s.corner 1);
\draw[-,ultra thick,FireBrick] (s.side 2) to[swap,"2(H-E)-F"] (s.corner 3);
\draw[-,ultra thick,FireBrick] (s.side 2) to["F"] (s.corner 2);
\draw[-,ultra thick,FireBrick] (s.side 3) to[swap,"2(H-F)-E"] (s.corner 4);
\draw[-,ultra thick,black] (s.side 3) to[""] (s.corner 3);
\draw[-,ultra thick,black] (s.side 4) to[swap,""] (s.corner 5);
\draw[-,ultra thick,FireBrick] (s.side 4) to["2(H-E)-F"] (s.corner 4);
\draw[-,ultra thick,FireBrick] (s.side 5) to[swap,"E"] (s.corner 6);
\draw[-,ultra thick,FireBrick] (s.side 5) to["2(H-F)-E"] (s.corner 5);
\draw[-,ultra thick,FireBrick] (s.side 6) to[swap,"F"] (s.corner 1);
\draw[-,ultra thick,black] (s.side 6) to[""] (s.corner 6);
\node[fill=white,circle,inner sep=2] at (s.center) {${X_4}$};
\node[fill=white,inner sep=2] at (s.corner 1) {${X_8}$};
\node[fill=white,inner sep=2] at (s.side 1) {${X_6}$};
\node[fill=white,inner sep=2] at (s.corner 2) {${X_6}_{\!/\p^1}$};
\node[fill=white,inner sep=2] at (s.side 2) {${X_4}_{/\p^1}$};
\node[fill=white,inner sep=2] at (s.corner 3) {${X_6'}_{\!/\p^1}$};
\node[fill=white,inner sep=2] at (s.side 3) {${X_6'}$};
\node[fill=white,inner sep=2] at (s.corner 4) {${X_8'}$};
\node[fill=white,inner sep=2] at (s.side 4) {${X_6''}$};
\node[fill=white,inner sep=2] at (s.corner 5) {${X_6''}_{\!/\p^1}$};
\node[fill=white,inner sep=2] at (s.side 5) {${X_4'}$};
\node[fill=white,inner sep=2] at (s.corner 6) {${X_6'''}_{\!/\p^1}$};
\node[fill=white,inner sep=2] at (s.side 6) {${X_6'''}$};
\end{tikzpicture}
}
\]
\caption{$\piece{X_8}22$}
\label{X8_22}
\end{minipage}
\begin{minipage}[c]{0.33\linewidth}
\[
\resizebox{1.15\linewidth}{!}{
\begin{tikzpicture}[font=\footnotesize]
\node[name=s, regular polygon, rotate=180, regular polygon sides=6,inner sep=1.557cm] at (0,0) {};
\draw[-,ultra thick,ForestGreen] (s.center) to["F"] (s.side 1);
\draw[-,ultra thick,black] (s.center) to[""] (s.side 2);
\draw[-,ultra thick,ForestGreen] (s.center) to[swap,sloped,"3(H-E)-F"] (s.side 3);
\draw[-,ultra thick,FireBrick] (s.center) to[sloped,"3H-2E-2F"] (s.side 4);
\draw[-,ultra thick,RoyalBlue] (s.center) to["H-F"] (s.side 5);
\draw[-,ultra thick,FireBrick] (s.center) to["E"] (s.side 6);
\draw[-,ultra thick,black] (s.side 1) to[swap,""] (s.corner 2);
\draw[-,ultra thick,FireBrick] (s.side 1) to["E"] (s.corner 1);
\draw[-,ultra thick,ForestGreen] (s.side 2) to[swap,"3(H-E)-F"] (s.corner 3);
\draw[-,ultra thick,ForestGreen] (s.side 2) to["F"] (s.corner 2);
\draw[-,ultra thick,FireBrick] (s.side 3) to[swap,"3H-2E-2F"] (s.corner 4);
\draw[-,ultra thick,black] (s.side 3) to[""] (s.corner 3);
\draw[-,ultra thick,RoyalBlue] (s.side 4) to[swap,"H-F"] (s.corner 5);
\draw[-,ultra thick,ForestGreen] (s.side 4) to["3(H-E)-F"] (s.corner 4);
\draw[-,ultra thick,FireBrick] (s.side 5) to[swap,"E"] (s.corner 6);
\draw[-,ultra thick,FireBrick] (s.side 5) to["3H-2E-2F"] (s.corner 5);
\draw[-,ultra thick,ForestGreen] (s.side 6) to[swap,"F"] (s.corner 1);
\draw[-,ultra thick,RoyalBlue] (s.side 6) to["H-F"{yshift=3pt}] (s.corner 6);
\node[fill=white,circle,inner sep=2] at (s.center) {${X_3}$};
\node[fill=white,inner sep=2] at (s.corner 1) {${X_8}$};
\node[fill=white,inner sep=2] at (s.side 1) {${X_6}$};
\node[fill=white,inner sep=2] at (s.corner 2) {${X_6}_{\!/\p^1}$};
\node[fill=white,inner sep=2] at (s.side 2) {${X_3}_{/\p^1}$};
\node[fill=white,inner sep=2] at (s.corner 3) {${X_6'}_{\!/\p^1}$};
\node[fill=white,inner sep=2] at (s.side 3) {${X_6'}$};
\node[fill=white,inner sep=2] at (s.corner 4) {${X_8'}$};
\node[fill=white,inner sep=2] at (s.side 4) {${X_5'}$};
\node[fill=white,inner sep=2] at (s.corner 5) {${X_6''}$};
\node[fill=white,inner sep=2] at (s.side 5) {${X_4}$};
\node[fill=white,inner sep=2] at (s.corner 6) {${X_6'''}$};
\node[fill=white,inner sep=2] at (s.side 6) {${X_5}$};
\end{tikzpicture}
}
\]
\caption{$\piece{X_8}23$}
\label{X8_23}
\end{minipage}\ \
\begin{minipage}[c]{0.32\linewidth}
\[
\resizebox{1.05\linewidth}{!}{
\begin{tikzpicture}[font=\footnotesize]
\node[name=s, regular polygon, rotate=180, regular polygon sides=8,inner sep=2.169cm] at (0,0) {};
\draw[-,ultra thick,Orange] (s.center) to["F"] (s.side 1);
\draw[-,ultra thick,black] (s.center) to[""] (s.side 2);
\draw[-,ultra thick,Orange] (s.center) to["4(H-E)-F"] (s.side 3);
\draw[-,ultra thick,FireBrick] (s.center) to["\!\!\!4H-3E-2F",swap] (s.side 4);
\draw[-,ultra thick,Orange] (s.center) to[sloped,"8H-4E-5F"] (s.side 5);
\draw[-,ultra thick,black] (s.center) to[""] (s.side 6);
\draw[-,ultra thick,Orange] (s.center) to["4H-3F"] (s.side 7);
\draw[-,ultra thick,FireBrick] (s.center) to["E"] (s.side 8);
\draw[-,ultra thick,black] (s.side 1) to[swap,""] (s.corner 2);
\draw[-,ultra thick,FireBrick] (s.side 1) to["E"] (s.corner 1);
\draw[-,ultra thick,Orange] (s.side 2) to[swap,"4(H-E)-F"] (s.corner 3);
\draw[-,ultra thick,Orange] (s.side 2) to["F"] (s.corner 2);
\draw[-,ultra thick,FireBrick] (s.side 3) to[swap,"4H-3E-2F"] (s.corner 4);
\draw[-,ultra thick,black] (s.side 3) to[""] (s.corner 3);
\draw[-,ultra thick,Orange] (s.side 4) to[swap,"8H-4E-5F"] (s.corner 5);
\draw[-,ultra thick,Orange] (s.side 4) to["4(H-E)-F"] (s.corner 4);
\draw[-,ultra thick,black] (s.side 5) to[swap,""] (s.corner 6);
\draw[-,ultra thick,FireBrick] (s.side 5) to["4H-3E-2F"{yshift=3pt}] (s.corner 5);
\draw[-,ultra thick,Orange] (s.side 6) to[swap,"4H-3F"] (s.corner 7);
\draw[-,ultra thick,Orange] (s.side 6) to["8H-4E-5F"] (s.corner 6);
\draw[-,ultra thick,FireBrick] (s.side 7) to[swap,"E"] (s.corner 8);
\draw[-,ultra thick,black] (s.side 7) to[""] (s.corner 7);
\draw[-,ultra thick,Orange] (s.side 8) to[swap,"F"] (s.corner 1);
\draw[-,ultra thick,Orange] (s.side 8) to["4H-3F"] (s.corner 8);
\node[fill=white,circle,inner sep=2] at (s.center) {${X_2}$};
\node[fill=white,inner sep=2] at (s.corner 1) {${X_8}$};
\node[fill=white,inner sep=2] at (s.side 1) {${X_6}$};
\node[fill=white,inner sep=2] at (s.corner 2) {${X_6}_{\!/\p^1}$};
\node[fill=white,inner sep=2] at (s.side 2) {${X_2}_{/\p^1}$};
\node[fill=white,inner sep=2] at (s.corner 3) {${X_6}_{\!/\p^1}$};
\node[fill=white,inner sep=2] at (s.side 3) {${X_6}$};
\node[fill=white,inner sep=2] at (s.corner 4) {${X_8}$};
\node[fill=white,inner sep=2] at (s.side 4) {${X_4}$};
\node[fill=white,inner sep=2] at (s.corner 5) {${X_8}$};
\node[fill=white,inner sep=2] at (s.side 5) {${X_6}$};
\node[fill=white,inner sep=2] at (s.corner 6) {${X_6}_{\!/\p^1}$};
\node[fill=white,inner sep=2] at (s.side 6) {${X_2}_{/\p^1}$};
\node[fill=white,inner sep=2] at (s.corner 7) {${X_6}_{\!/\p^1}$};
\node[fill=white,inner sep=2] at (s.side 7) {${X_6}$};
\node[fill=white,inner sep=2] at (s.corner 8) {${X_8}$};
\node[fill=white,inner sep=2] at (s.side 8) {${X_4}$};
\end{tikzpicture}
}
\]
\caption{$\piece{X_8}24$}
\label{X8_24}
\end{minipage}
\end{figure}

%%% X_8 25, 33, 34%%%
\begin{figure}[ht]
\begin{minipage}[c]{0.32\linewidth}
\[
\resizebox{1.15\linewidth}{!}{
\begin{tikzpicture}[font=\footnotesize]
\node[name=s, regular polygon, rotate=180, regular polygon sides=10,inner sep=2.7720000000000002cm] at (0,0) {};
\draw[-,ultra thick,HotPink] (s.center) to["F"] (s.side 1);
\draw[-,ultra thick,black] (s.center) to[""] (s.side 2);
\draw[-,ultra thick,HotPink] (s.center) to["5(H-E)-F"] (s.side 3);
\draw[-,ultra thick,FireBrick] (s.center) to["\!\!\!5H-4E-2F",swap] (s.side 4);
\draw[-,ultra thick,FireBrick] (s.center) to["\!\!\!8H-5E-4F",swap] (s.side 5);
\draw[-,ultra thick,HotPink] (s.center) to[sloped,"20H-10E-11F"] (s.side 6);
\draw[-,ultra thick,black] (s.center) to[""] (s.side 7);
\draw[-,ultra thick,HotPink] (s.center) to["15H-5E-9F"] (s.side 8);
\draw[-,ultra thick,FireBrick] (s.center) to["3H-2F"] (s.side 9);
\draw[-,ultra thick,FireBrick] (s.center) to["E"] (s.side 10);
\draw[-,ultra thick,black] (s.side 1) to[swap,""] (s.corner 2);
\draw[-,ultra thick,FireBrick] (s.side 1) to["E"] (s.corner 1);
\draw[-,ultra thick,HotPink] (s.side 2) to[swap,"5(H-E)-F"] (s.corner 3);
\draw[-,ultra thick,HotPink] (s.side 2) to["F"] (s.corner 2);
\draw[-,ultra thick,FireBrick] (s.side 3) to[swap,"5H-4E-2F"] (s.corner 4);
\draw[-,ultra thick,black] (s.side 3) to[""] (s.corner 3);
\draw[-,ultra thick,FireBrick] (s.side 4) to[swap,"8H-5E-4F"] (s.corner 5);
\draw[-,ultra thick,HotPink] (s.side 4) to["5(H-E)-F"] (s.corner 4);
\draw[-,ultra thick,HotPink] (s.side 5) to[swap,"20H-10E-11F"] (s.corner 6);
\draw[-,ultra thick,FireBrick] (s.side 5) to["5H-4E-2F"] (s.corner 5);
\draw[-,ultra thick,black] (s.side 6) to[swap,""] (s.corner 7);
\draw[-,ultra thick,FireBrick] (s.side 6) to["8H-5E-4F"{yshift=3pt}] (s.corner 6);
\draw[-,ultra thick,HotPink] (s.side 7) to[swap,"15H-5E-9F"] (s.corner 8);
\draw[-,ultra thick,HotPink] (s.side 7) to["20H-10E-11F"] (s.corner 7);
\draw[-,ultra thick,FireBrick] (s.side 8) to[swap,"3H-2F"] (s.corner 9);
\draw[-,ultra thick,black] (s.side 8) to[""] (s.corner 8);
\draw[-,ultra thick,FireBrick] (s.side 9) to[swap,"E"] (s.corner 10);
\draw[-,ultra thick,HotPink] (s.side 9) to["15H-5E-9F"] (s.corner 9);
\draw[-,ultra thick,HotPink] (s.side 10) to[swap,"F"] (s.corner 1);
\draw[-,ultra thick,FireBrick] (s.side 10) to["3H-2F"] (s.corner 10);
\node[fill=white,circle,inner sep=2] at (s.center) {${X_1}$};
\node[fill=white,inner sep=2] at (s.corner 1) {${X_8}$};
\node[fill=white,inner sep=2] at (s.side 1) {${X_6}$};
\node[fill=white,inner sep=2] at (s.corner 2) {${X_6}_{\!/\p^1}$};
\node[fill=white,inner sep=2] at (s.side 2) {${X_1}_{/\p^1}$};
\node[fill=white,inner sep=2] at (s.corner 3) {${X_6}_{\!/\p^1}$};
\node[fill=white,inner sep=2] at (s.side 3) {${X_6}$};
\node[fill=white,inner sep=2] at (s.corner 4) {${X_8}$};
\node[fill=white,inner sep=2] at (s.side 4) {${X_3}$};
\node[fill=white,inner sep=2] at (s.corner 5) {${X_5}$};
\node[fill=white,inner sep=2] at (s.side 5) {${X_3}$};
\node[fill=white,inner sep=2] at (s.corner 6) {${X_8}$};
\node[fill=white,inner sep=2] at (s.side 6) {${X_6}$};
\node[fill=white,inner sep=2] at (s.corner 7) {${X_6}_{\!/\p^1}$};
\node[fill=white,inner sep=2] at (s.side 7) {${X_1}_{/\p^1}$};
\node[fill=white,inner sep=2] at (s.corner 8) {${X_6}_{\!/\p^1}$};
\node[fill=white,inner sep=2] at (s.side 8) {${X_6}$};
\node[fill=white,inner sep=2] at (s.corner 9) {${X_8}$};
\node[fill=white,inner sep=2] at (s.side 9) {${X_3}$};
\node[fill=white,inner sep=2] at (s.corner 10) {${X_5}$};
\node[fill=white,inner sep=2] at (s.side 10) {${X_3}$};
\end{tikzpicture}
}
\]
\caption{$\piece{X_8}25$}
\label{X8_25}
\end{minipage}
\begin{minipage}[c]{0.32\linewidth}
\[
\resizebox{1.1\linewidth}{!}{
\begin{tikzpicture}[font=\footnotesize]
\node[name=s, regular polygon, rotate=180, regular polygon sides=6,inner sep=1.557cm] at (0,0) {};
\draw[-,ultra thick,ForestGreen] (s.center) to["F"] (s.side 1);
\draw[-,ultra thick,RoyalBlue] (s.center) to["H-E"] (s.side 2);
\draw[-,ultra thick,ForestGreen] (s.center) to[sloped,"6H-4E-3F",swap] (s.side 3);
\draw[-,ultra thick,ForestGreen] (s.center) to[sloped,"6H-3E-4F"] (s.side 4);
\draw[-,ultra thick,RoyalBlue] (s.center) to["H-F"] (s.side 5);
\draw[-,ultra thick,ForestGreen] (s.center) to["E"] (s.side 6);
\draw[-,ultra thick,RoyalBlue] (s.side 1) to[swap,"H-E"] (s.corner 2);
\draw[-,ultra thick,ForestGreen] (s.side 1) to["E"] (s.corner 1);
\draw[-,ultra thick,ForestGreen] (s.side 2) to[swap,"6H-4E-3F"] (s.corner 3);
\draw[-,ultra thick,ForestGreen] (s.side 2) to["F"] (s.corner 2);
\draw[-,ultra thick,ForestGreen] (s.side 3) to[swap,"6H-3E-4F"] (s.corner 4);
\draw[-,ultra thick,RoyalBlue] (s.side 3) to["H-E"] (s.corner 3);
\draw[-,ultra thick,RoyalBlue] (s.side 4) to[swap,"H-F"] (s.corner 5);
\draw[-,ultra thick,ForestGreen] (s.side 4) to["6H-4E-3F"{yshift=3pt}] (s.corner 4);
\draw[-,ultra thick,ForestGreen] (s.side 5) to[swap,"E"] (s.corner 6);
\draw[-,ultra thick,ForestGreen] (s.side 5) to["6H-3E-4F"] (s.corner 5);
\draw[-,ultra thick,ForestGreen] (s.side 6) to[swap,"F"] (s.corner 1);
\draw[-,ultra thick,RoyalBlue] (s.side 6) to["H-F"] (s.corner 6);
\node[fill=white,circle,inner sep=2] at (s.center) {${X_2}$};
\node[fill=white,inner sep=2] at (s.corner 1) {${X_8}$};
\node[fill=white,inner sep=2] at (s.side 1) {${X_5}$};
\node[fill=white,inner sep=2] at (s.corner 2) {${X_6}$};
\node[fill=white,inner sep=2] at (s.side 2) {${X_3}$};
\node[fill=white,inner sep=2] at (s.corner 3) {${X_6'}$};
\node[fill=white,inner sep=2] at (s.side 3) {${X_5'}$};
\node[fill=white,inner sep=2] at (s.corner 4) {${X_8}$};
\node[fill=white,inner sep=2] at (s.side 4) {${X_5}$};
\node[fill=white,inner sep=2] at (s.corner 5) {${X_6}$};
\node[fill=white,inner sep=2] at (s.side 5) {${X_3}$};
\node[fill=white,inner sep=2] at (s.corner 6) {${X_6'}$};
\node[fill=white,inner sep=2] at (s.side 6) {${X_5'}$};
\end{tikzpicture}
}
\]
\caption{$\piece{X_8}33$}
\label{X8_33}
\end{minipage}
\begin{minipage}[c]{0.32\linewidth}
\[
\resizebox{1.15\linewidth}{!}{
\begin{tikzpicture}[font=\footnotesize]
\node[name=s, regular polygon, rotate=180, regular polygon sides=8,inner sep=2.169cm] at (0,0) {};
\draw[-,ultra thick,Orange] (s.center) to["F"] (s.side 1);
\draw[-,ultra thick,RoyalBlue] (s.center) to["H-E"] (s.side 2);
\draw[-,ultra thick,Orange] (s.center) to["12H-8E-5F"] (s.side 3);
\draw[-,ultra thick,ForestGreen] (s.center) to[sloped,"12H-7E-6F"] (s.side 4);
\draw[-,ultra thick,Orange] (s.center) to[sloped,"16H-8E-9F"] (s.side 5);
\draw[-,ultra thick,RoyalBlue] (s.center) to["3H-E-2F"] (s.side 6);
\draw[-,ultra thick,Orange] (s.center) to["4H-3F"] (s.side 7);
\draw[-,ultra thick,ForestGreen] (s.center) to["E"] (s.side 8);
\draw[-,ultra thick,RoyalBlue] (s.side 1) to[swap,"H-E"] (s.corner 2);
\draw[-,ultra thick,ForestGreen] (s.side 1) to["E"] (s.corner 1);
\draw[-,ultra thick,Orange] (s.side 2) to[swap,"12H-8E-5F"] (s.corner 3);
\draw[-,ultra thick,Orange] (s.side 2) to["F"] (s.corner 2);
\draw[-,ultra thick,ForestGreen] (s.side 3) to[swap,"12H-7E-6F"] (s.corner 4);
\draw[-,ultra thick,RoyalBlue] (s.side 3) to["H-E"] (s.corner 3);
\draw[-,ultra thick,Orange] (s.side 4) to[swap,"16H-8E-9F"] (s.corner 5);
\draw[-,ultra thick,Orange] (s.side 4) to["12H-8E-5F"] (s.corner 4);
\draw[-,ultra thick,RoyalBlue] (s.side 5) to[swap,"3H-E-2F"] (s.corner 6);
\draw[-,ultra thick,ForestGreen] (s.side 5) to["12H-7E-6F"] (s.corner 5);
\draw[-,ultra thick,Orange] (s.side 6) to[swap,"4H-3F"] (s.corner 7);
\draw[-,ultra thick,Orange] (s.side 6) to["16H-8E-9F"] (s.corner 6);
\draw[-,ultra thick,ForestGreen] (s.side 7) to[swap,"E"] (s.corner 8);
\draw[-,ultra thick,RoyalBlue] (s.side 7) to["3H-E-2F"] (s.corner 7);
\draw[-,ultra thick,Orange] (s.side 8) to[swap,"F"] (s.corner 1);
\draw[-,ultra thick,Orange] (s.side 8) to["4H-3F"] (s.corner 8);
\node[fill=white,circle,inner sep=2] at (s.center) {${X_1}$};
\node[fill=white,inner sep=2] at (s.corner 1) {${X_8}$};
\node[fill=white,inner sep=2] at (s.side 1) {${X_5}$};
\node[fill=white,inner sep=2] at (s.corner 2) {${X_6}$};
\node[fill=white,inner sep=2] at (s.side 2) {${X_2}$};
\node[fill=white,inner sep=2] at (s.corner 3) {${X_6}$};
\node[fill=white,inner sep=2] at (s.side 3) {${X_5}$};
\node[fill=white,inner sep=2] at (s.corner 4) {${X_8}$};
\node[fill=white,inner sep=2] at (s.side 4) {${X_4}$};
\node[fill=white,inner sep=2] at (s.corner 5) {${X_8}$};
\node[fill=white,inner sep=2] at (s.side 5) {${X_5}$};
\node[fill=white,inner sep=2] at (s.corner 6) {${X_6}$};
\node[fill=white,inner sep=2] at (s.side 6) {${X_2}$};
\node[fill=white,inner sep=2] at (s.corner 7) {${X_6}$};
\node[fill=white,inner sep=2] at (s.side 7) {${X_5}$};
\node[fill=white,inner sep=2] at (s.corner 8) {${X_8}$};
\node[fill=white,inner sep=2] at (s.side 8) {${X_4}$};
\end{tikzpicture}
}
\]
\caption{$\piece{X_8}34$}
\label{X8_34}
\end{minipage}
\end{figure}

%%% X6_22, X6_23
\begin{figure}[ht]
\begin{minipage}[c]{0.32\linewidth}
\[
\resizebox{1.05\linewidth}{!}{
\begin{tikzpicture}[font=\footnotesize]
\node[name=s, regular polygon, rotate=180, regular polygon sides=6,inner sep=1.557cm] at (0,0) {};
\draw[-,ultra thick,FireBrick] (s.center) to["F"] (s.side 1);
\draw[-,ultra thick,FireBrick] (s.center) to["H-2E"] (s.side 2);
\draw[-,ultra thick,FireBrick] (s.center) to[sloped,"2H-3E-2F",swap] (s.side 3);
\draw[-,ultra thick,FireBrick] (s.center) to[sloped,"2H-2E-3F"] (s.side 4);
\draw[-,ultra thick,FireBrick] (s.center) to["H-2F"] (s.side 5);
\draw[-,ultra thick,FireBrick] (s.center) to["E"] (s.side 6);
\draw[-,ultra thick,FireBrick] (s.side 1) to[swap,"H-2E"{yshift=-3pt}] (s.corner 2);
\draw[-,ultra thick,FireBrick] (s.side 1) to["E"] (s.corner 1);
\draw[-,ultra thick,FireBrick] (s.side 2) to[swap,"2H-3E-2F"] (s.corner 3);
\draw[-,ultra thick,FireBrick] (s.side 2) to["F"] (s.corner 2);
\draw[-,ultra thick,FireBrick] (s.side 3) to[swap,"2H-2E-3F"] (s.corner 4);
\draw[-,ultra thick,FireBrick] (s.side 3) to["H-2E"] (s.corner 3);
\draw[-,ultra thick,FireBrick] (s.side 4) to[swap,"H-2F"] (s.corner 5);
\draw[-,ultra thick,FireBrick] (s.side 4) to["2H-3E-2F"{yshift=3pt}] (s.corner 4);
\draw[-,ultra thick,FireBrick] (s.side 5) to[swap,"E"] (s.corner 6);
\draw[-,ultra thick,FireBrick] (s.side 5) to["2H-2E-3F"] (s.corner 5);
\draw[-,ultra thick,FireBrick] (s.side 6) to[swap,"F"] (s.corner 1);
\draw[-,ultra thick,FireBrick] (s.side 6) to["H-2F"] (s.corner 6);
\node[fill=white,circle,inner sep=2] at (s.center) {${X_2}$};
\node[fill=white,inner sep=2] at (s.corner 1) {${X_6}$};
\node[fill=white,inner sep=2] at (s.side 1) {${X_4}$};
\node[fill=white,inner sep=2] at (s.corner 2) {${X_6'}$};
\node[fill=white,inner sep=2] at (s.side 2) {${X_4'}$};
\node[fill=white,inner sep=2] at (s.corner 3) {${X_6''}$};
\node[fill=white,inner sep=2] at (s.side 3) {${X_4''}$};
\node[fill=white,inner sep=2] at (s.corner 4) {${X_6}$};
\node[fill=white,inner sep=2] at (s.side 4) {${X_4}$};
\node[fill=white,inner sep=2] at (s.corner 5) {${X_6'}$};
\node[fill=white,inner sep=2] at (s.side 5) {${X_4'}$};
\node[fill=white,inner sep=2] at (s.corner 6) {${X_6''}$};
\node[fill=white,inner sep=2] at (s.side 6) {${X_4''}$};
\end{tikzpicture}
}
\]
\caption{$\piece{X_6}22$}
\label{X6_22}
\end{minipage}
\begin{minipage}[c]{0.32\linewidth}
\[
\resizebox{1\linewidth}{!}{
\begin{tikzpicture}[font=\footnotesize]
\node[name=s, regular polygon, rotate=180, regular polygon sides=8,inner sep=2.169cm] at (0,0) {};
\draw[-,ultra thick,Green] (s.center) to["F"] (s.side 1);
\draw[-,ultra thick,FireBrick] (s.center) to["H-2E"] (s.side 2);
\draw[-,ultra thick,Green] (s.center) to["4H-6E-3F"] (s.side 3);
\draw[-,ultra thick,FireBrick] (s.center) to[sloped,"4H-5E-4F"] (s.side 4);
\draw[-,ultra thick,Green] (s.center) to[sloped,"6H-6E-7F"] (s.side 5);
\draw[-,ultra thick,FireBrick] (s.center) to["3H-2E-4F"] (s.side 6);
\draw[-,ultra thick,Green] (s.center) to["2H-3F"] (s.side 7);
\draw[-,ultra thick,FireBrick] (s.center) to["E"] (s.side 8);
\draw[-,ultra thick,FireBrick] (s.side 1) to[swap,"H-2E"{yshift=-3pt}] (s.corner 2);
\draw[-,ultra thick,FireBrick] (s.side 1) to["E"] (s.corner 1);
\draw[-,ultra thick,Green] (s.side 2) to[swap,"4H-6E-3F"] (s.corner 3);
\draw[-,ultra thick,Green] (s.side 2) to["F"] (s.corner 2);
\draw[-,ultra thick,FireBrick] (s.side 3) to[swap,"4H-5E-4F"] (s.corner 4);
\draw[-,ultra thick,FireBrick] (s.side 3) to["H-2E"] (s.corner 3);
\draw[-,ultra thick,Green] (s.side 4) to[swap,"6H-6E-7F"] (s.corner 5);
\draw[-,ultra thick,Green] (s.side 4) to["4H-6E-3F"] (s.corner 4);
\draw[-,ultra thick,FireBrick] (s.side 5) to[swap,"3H-2E-4F"{yshift=3pt,xshift=-5pt}] (s.corner 6);
\draw[-,ultra thick,FireBrick] (s.side 5) to["4H-5E-4F"{yshift=3pt,xshift=5pt}] (s.corner 5);
\draw[-,ultra thick,Green] (s.side 6) to[swap,"2H-3F"] (s.corner 7);
\draw[-,ultra thick,Green] (s.side 6) to["6H-6E-7F"] (s.corner 6);
\draw[-,ultra thick,FireBrick] (s.side 7) to[swap,"E"] (s.corner 8);
\draw[-,ultra thick,FireBrick] (s.side 7) to["3H-2E-4F"] (s.corner 7);
\draw[-,ultra thick,Green] (s.side 8) to[swap,"F"] (s.corner 1);
\draw[-,ultra thick,Green] (s.side 8) to["2H-3F"] (s.corner 8);
\node[fill=white,circle,inner sep=2] at (s.center) {${X_1}$};
\node[fill=white,inner sep=2] at (s.corner 1) {$X_6$};
\node[fill=white,inner sep=2] at (s.side 1) {${X_4}$};
\node[fill=white,inner sep=2] at (s.corner 2) {$X_6'$};
\node[fill=white,inner sep=2] at (s.side 2) {${X_3}$};
\node[fill=white,inner sep=2] at (s.corner 3) {${X_6''}$};
\node[fill=white,inner sep=2] at (s.side 3) {${X_4'}$};
\node[fill=white,inner sep=2] at (s.corner 4) {$X_6'''$};
\node[fill=white,inner sep=2] at (s.side 4) {${X_3'}$};
\node[fill=white,inner sep=2] at (s.corner 5) {$X_6$};
\node[fill=white,inner sep=2] at (s.side 5) {${X_4}$};
\node[fill=white,inner sep=2] at (s.corner 6) {$X_6'$};
\node[fill=white,inner sep=2] at (s.side 6) {${X_3}$};
\node[fill=white,inner sep=2] at (s.corner 7) {${X_6''}$};
\node[fill=white,inner sep=2] at (s.side 7) {${X_4'}$};
\node[fill=white,inner sep=2] at (s.corner 8) {$X_6'''$};
\node[fill=white,inner sep=2] at (s.side 8) {${X_3'}$};
\end{tikzpicture}
}
\]
\caption{$\piece{X_6}23$}
\label{X6_23}
\end{minipage}
\end{figure}

%%% F0, 02, 04, 06
\begin{figure}[ht]
\begin{minipage}[c]{0.32\linewidth}
\[
\resizebox{1.05\linewidth}{!}{
\begin{tikzpicture}[font=\footnotesize]
\node[name=s, regular polygon, rotate=180, regular polygon sides=6,inner sep=1.557cm] at (0,0) {};
\draw[-,ultra thick,FireBrick] (s.center) to["F"] (s.side 1);
\draw[-,ultra thick,black] (s.center) to["$H_2$",swap] (s.side 2);
\draw[-,ultra thick,FireBrick] (s.center) to[sloped,"$2H_2-F$",swap] (s.side 3);
\draw[-,ultra thick,black] (s.center) to[sloped,"\tiny$H_1+H_2-F$"] (s.side 4);
\draw[-,ultra thick,FireBrick] (s.center) to[sloped,"$2H_1-F$",swap] (s.side 5);
\draw[-,ultra thick,black] (s.center) to["$H_1$"] (s.side 6);
\draw[-,ultra thick,black] (s.side 1) to[swap,"$H_2$"] (s.corner 2);
\draw[-,ultra thick,black] (s.side 1) to["$H_1$"] (s.corner 1);
\draw[-,ultra thick,FireBrick] (s.side 2) to[swap,"$2H_2-F$"] (s.corner 3);
\draw[-,ultra thick,FireBrick] (s.side 2) to["F"] (s.corner 2);
\draw[-,ultra thick,black] (s.side 3) to[swap,"$H_1+H_2-F$"] (s.corner 4);
\draw[-,ultra thick,black] (s.side 3) to["$H_2$"] (s.corner 3);
\draw[-,ultra thick,FireBrick] (s.side 4) to[swap,"$2H_1-F$"{yshift=3pt,xshift=-4pt}] (s.corner 5);
\draw[-,ultra thick,FireBrick] (s.side 4) to["$2H_2-F$"{yshift=3pt,xshift=4pt}] (s.corner 4);
\draw[-,ultra thick,black] (s.side 5) to[swap,"$H_1$"] (s.corner 6);
\draw[-,ultra thick,black] (s.side 5) to["$H_1+H_2-F$"] (s.corner 5);
\draw[-,ultra thick,FireBrick] (s.side 6) to[swap,"F"] (s.corner 1);
\draw[-,ultra thick,FireBrick] (s.side 6) to["$2H_1-F$"] (s.corner 6);
\node[fill=white,circle,inner sep=2] at (s.center) {${X_6}$};
\node[fill=white,inner sep=2] at (s.corner 1) {${\F_0}_{\!/\p^1}$};
\node[fill=white,inner sep=2] at (s.side 1) {${\F_0}$};
\node[fill=white,inner sep=2] at (s.corner 2) {${\F_0}_{\!/\p^1}$};
\node[fill=white,inner sep=2] at (s.side 2) {${X_6}_{/\p^1}$};
\node[fill=white,inner sep=2] at (s.corner 3) {${\F_0}_{\!/\p^1}$};
\node[fill=white,inner sep=2] at (s.side 3) {${\F_0}$};
\node[fill=white,inner sep=2] at (s.corner 4) {${\F_0}_{\!/\p^1}$};
\node[fill=white,inner sep=2] at (s.side 4) {${X_6}_{/\p^1}$};
\node[fill=white,inner sep=2] at (s.corner 5) {${\F_0}_{\!/\p^1}$};
\node[fill=white,inner sep=2] at (s.side 5) {${\F_0}$};
\node[fill=white,inner sep=2] at (s.corner 6) {${\F_0}_{\!/\p^1}$};
\node[fill=white,inner sep=2] at (s.side 6) {${X_6}_{/\p^1}$};
\end{tikzpicture}
}
\]
\caption{$\piece{\F_0}02$}
\label{F0_02}
\end{minipage}
\begin{minipage}[c]{0.32\linewidth}
\[
\resizebox{1\linewidth}{!}{
\begin{tikzpicture}[font=\footnotesize]
\node[name=s, regular polygon, rotate=180, regular polygon sides=8,inner sep=2.169cm] at (0,0) {};
\draw[-,ultra thick,Orange] (s.center) to["F"] (s.side 1);
\draw[-,ultra thick,black] (s.center) to[""] (s.side 2);
\draw[-,ultra thick,Orange] (s.center) to["4$H_2$-F",swap] (s.side 3);
\draw[-,ultra thick,black] (s.center) to[sloped,"$H_1$+2$H_2$-F",swap] (s.side 4);
\draw[-,ultra thick,Orange] (s.center) to[sloped,"4($H_1$+$H_2$)-3F"{xshift=3pt}] (s.side 5);
\draw[-,ultra thick,black] (s.center) to[sloped,"2$H_1$+$H_2$-F",swap] (s.side 6);
\draw[-,ultra thick,Orange] (s.center) to["4$H_1$-F"] (s.side 7);
\draw[-,ultra thick,black] (s.center) to[""] (s.side 8);
\draw[-,ultra thick,black] (s.side 1) to[swap,""] (s.corner 2);
\draw[-,ultra thick,black] (s.side 1) to[""] (s.corner 1);
\draw[-,ultra thick,Orange] (s.side 2) to[swap,"4$H_2$-F"] (s.corner 3);
\draw[-,ultra thick,Orange] (s.side 2) to["F"] (s.corner 2);
\draw[-,ultra thick,black] (s.side 3) to[swap,"$H_1+2H_2-F$"] (s.corner 4);
\draw[-,ultra thick,black] (s.side 3) to[""] (s.corner 3);
\draw[-,ultra thick,Orange] (s.side 4) to[swap,"4($H_1$+$H_2$)-3F"] (s.corner 5);
\draw[-,ultra thick,Orange] (s.side 4) to["4$H_2$-F"] (s.corner 4);
\draw[-,ultra thick,black] (s.side 5) to[swap,"2$H_1$+$H_2$-F"{yshift=3pt,xshift=-12pt}] (s.corner 6);
\draw[-,ultra thick,black] (s.side 5) to["$H_1$+2$H_2$-F"{yshift=3pt,xshift=12pt}] (s.corner 5);
\draw[-,ultra thick,Orange] (s.side 6) to[swap,"4$H_1$-F"] (s.corner 7);
\draw[-,ultra thick,Orange] (s.side 6) to["4($H_1$+$H_2$)-3F"] (s.corner 6);
\draw[-,ultra thick,black] (s.side 7) to[swap,""] (s.corner 8);
\draw[-,ultra thick,black] (s.side 7) to["2$H_1$+$H_2$-F"] (s.corner 7);
\draw[-,ultra thick,Orange] (s.side 8) to[swap,"F"] (s.corner 1);
\draw[-,ultra thick,Orange] (s.side 8) to["4$H_1$-F"] (s.corner 8);
\node[fill=white,circle,inner sep=2] at (s.center) {${X_4}$};
\node[fill=white,inner sep=2] at (s.corner 1) {${\F_0}_{\!/\p^1}$};
\node[fill=white,inner sep=2] at (s.side 1) {${\F_0}$};
\node[fill=white,inner sep=2] at (s.corner 2) {${\F_0}_{\!/\p^1}$};
\node[fill=white,inner sep=2] at (s.side 2) {${X_4}_{/\p^1}$};
\node[fill=white,inner sep=2] at (s.corner 3) {${\F_0}_{\!/\p^1}$};
\node[fill=white,inner sep=2] at (s.side 3) {${\F_0}$};
\node[fill=white,inner sep=2] at (s.corner 4) {${\F_0}_{\!/\p^1}$};
\node[fill=white,inner sep=2] at (s.side 4) {${X_4}_{/\p^1}$};
\node[fill=white,inner sep=2] at (s.corner 5) {${\F_0}_{\!/\p^1}$};
\node[fill=white,inner sep=2] at (s.side 5) {${\F_0}$};
\node[fill=white,inner sep=2] at (s.corner 6) {${\F_0}_{\!/\p^1}$};
\node[fill=white,inner sep=2] at (s.side 6) {${X_4}_{/\p^1}$};
\node[fill=white,inner sep=2] at (s.corner 7) {${\F_0}_{\!/\p^1}$};
\node[fill=white,inner sep=2] at (s.side 7) {${\F_0}$};
\node[fill=white,inner sep=2] at (s.corner 8) {${\F_0}_{\!/\p^1}$};
\node[fill=white,inner sep=2] at (s.side 8) {${X_4}_{/\p^1}$};
\end{tikzpicture}
}
\]
\caption{$\piece{\F_0}04$}
\label{F0_04}
\end{minipage}
\begin{minipage}[c]{0.32\linewidth}
\[
\resizebox{1\linewidth}{!}{
\begin{tikzpicture}[font=\footnotesize]
\node[name=s, regular polygon, rotate=180, regular polygon sides=12,inner sep=3.357cm] at (0,0) {};
\draw[-,ultra thick,Turquoise] (s.center) to["F"] (s.side 1);
\draw[-,ultra thick,black] (s.center) to["$H_2$"] (s.side 2);
\draw[-,ultra thick,Turquoise] (s.center) to[sloped,"$6H_2-F$",swap] (s.side 3);
\draw[-,ultra thick,black] (s.center) to[swap,"$H_1+3H_2-F$"] (s.side 4);
\draw[-,ultra thick,Turquoise] (s.center) to[sloped,"$6H_1+12H_2-5F$",swap] (s.side 5);
\draw[-,ultra thick,black] (s.center) to[sloped,"$3H_1+4H_2-2F$",swap] (s.side 6);
\draw[-,ultra thick,Turquoise] (s.center) to[sloped,"$12(H_1+H_2)-7F$"] (s.side 7);
\draw[-,ultra thick,black] (s.center) to[sloped,"$4H_1+3H_2-2F$",swap] (s.side 8);
\draw[-,ultra thick,Turquoise] (s.center) to[sloped,swap,"$12H_1+6H_2-5F$"] (s.side 9);
\draw[-,ultra thick,black] (s.center) to["$3H_1+H_2-F$"] (s.side 10);
\draw[-,ultra thick,Turquoise] (s.center) to[sloped,"$6H_1-F$",swap] (s.side 11);
\draw[-,ultra thick,black] (s.center) to["$H_1$"] (s.side 12);
\draw[-,ultra thick,black] (s.side 1) to[swap,"$H_2$"] (s.corner 2);
\draw[-,ultra thick,black] (s.side 1) to["$H_1$"] (s.corner 1);
\draw[-,ultra thick,Turquoise] (s.side 2) to[swap,"$6H_2-F$"] (s.corner 3);
\draw[-,ultra thick,Turquoise] (s.side 2) to["F"] (s.corner 2);
\draw[-,ultra thick,black] (s.side 3) to[swap,"$H_1+3H_2-F$"] (s.corner 4);
\draw[-,ultra thick,black] (s.side 3) to["$H_2$"] (s.corner 3);
\draw[-,ultra thick,Turquoise] (s.side 4) to[swap,"$6H_1+12H_2-5F$"] (s.corner 5);
\draw[-,ultra thick,Turquoise] (s.side 4) to["$6H_2-F$"] (s.corner 4);
\draw[-,ultra thick,black] (s.side 5) to[swap,"$3H_1+4H_2-2F$"] (s.corner 6);
\draw[-,ultra thick,black] (s.side 5) to["$H_1+3H_2-F$"] (s.corner 5);
\draw[-,ultra thick,Turquoise] (s.side 6) to[swap,"$12(H_1+H_2)-7F$"] (s.corner 7);
\draw[-,ultra thick,Turquoise] (s.side 6) to["$6H_1+12H_2-5F$"] (s.corner 6);
\draw[-,ultra thick,black] (s.side 7) to[swap,"$4H_1+3H_2-2F$"{yshift=3pt,xshift=-15pt}] (s.corner 8);
\draw[-,ultra thick,black] (s.side 7) to["$3H_1+4H_2-2F$"{yshift=3pt,xshift=15pt}] (s.corner 7);
\draw[-,ultra thick,Turquoise] (s.side 8) to[swap,"$12H_1+6H_2-5F$"] (s.corner 9);
\draw[-,ultra thick,Turquoise] (s.side 8) to["$12(H_1+H_2)-7F$"] (s.corner 8);
\draw[-,ultra thick,black] (s.side 9) to[swap,"$3H_1+H_2-F$"] (s.corner 10);
\draw[-,ultra thick,black] (s.side 9) to["$4H_1+3H_2-2F$"] (s.corner 9);
\draw[-,ultra thick,Turquoise] (s.side 10) to[swap,"$6H_1-F$"] (s.corner 11);
\draw[-,ultra thick,Turquoise] (s.side 10) to["$12H_1+6H_2-5F$"] (s.corner 10);
\draw[-,ultra thick,black] (s.side 11) to[swap,"$H_1$"] (s.corner 12);
\draw[-,ultra thick,black] (s.side 11) to["$3H_1+H_2-F$"] (s.corner 11);
\draw[-,ultra thick,Turquoise] (s.side 12) to[swap,"F"] (s.corner 1);
\draw[-,ultra thick,Turquoise] (s.side 12) to["$6H_1-F$"] (s.corner 12);
\node[fill=white,circle,inner sep=2] at (s.center) {${X_2}$};
\node[fill=white,inner sep=2] at (s.corner 1) {${\F_0}_{\!/\p^1}$};
\node[fill=white,inner sep=2] at (s.side 1) {${\F_0}$};
\node[fill=white,inner sep=2] at (s.corner 2) {${\F_0}_{\!/\p^1}$};
\node[fill=white,inner sep=2] at (s.side 2) {${X_2}_{/\p^1}$};
\node[fill=white,inner sep=2] at (s.corner 3) {${\F_0}_{\!/\p^1}$};
\node[fill=white,inner sep=2] at (s.side 3) {${\F_0}$};
\node[fill=white,inner sep=2] at (s.corner 4) {${\F_0}_{\!/\p^1}$};
\node[fill=white,inner sep=2] at (s.side 4) {${X_2}_{/\p^1}$};
\node[fill=white,inner sep=2] at (s.corner 5) {${\F_0}_{\!/\p^1}$};
\node[fill=white,inner sep=2] at (s.side 5) {${\F_0}$};
\node[fill=white,inner sep=2] at (s.corner 6) {${\F_0}_{\!/\p^1}$};
\node[fill=white,inner sep=2] at (s.side 6) {${X_2}_{\!/\p^1}$};
\node[fill=white,inner sep=2] at (s.corner 7) {${\F_0}_{\!/\p^1}$};
\node[fill=white,inner sep=2] at (s.side 7) {${\F_0}$};
\node[fill=white,inner sep=2] at (s.corner 8) {${\F_0}_{\!/\p^1}$};
\node[fill=white,inner sep=2] at (s.side 8) {${X_2}_{/\p^1}$};
\node[fill=white,inner sep=2] at (s.corner 9) {${\F_0}_{\!/\p^1}$};
\node[fill=white,inner sep=2] at (s.side 9) {${\F_0}$};
\node[fill=white,inner sep=2] at (s.corner 10) {${\F_0}_{\!/\p^1}$};
\node[fill=white,inner sep=2] at (s.side 10) {${X_2}_{/\p^1}$};
\node[fill=white,inner sep=2] at (s.corner 11) {${\F_0}_{\!/\p^1}$};
\node[fill=white,inner sep=2] at (s.side 11) {${\F_0}$};
\node[fill=white,inner sep=2] at (s.corner 12) {${\F_0}_{\!/\p^1}$};
\node[fill=white,inner sep=2] at (s.side 12) {${X_2}_{\!/\p^1}$};
\end{tikzpicture}
}
\]
\caption{$\piece{\F_0}06$}
\label{F0_06}
\end{minipage}
\end{figure}

%% X_4/P^1 01, 02, 03
\begin{figure}[ht]
	\begin{minipage}[c]{0.32\linewidth}
	\[
	\resizebox{1.05\linewidth}{!}{
	\begin{tikzpicture}[font=\footnotesize]
\node[name=s, regular polygon, rotate=180, regular polygon sides=6,inner sep=1.557cm] at (0,0) {};
\draw[-,ultra thick,RoyalBlue] (s.center) to["F"{yshift=-3pt}] (s.side 1);
\draw[-,ultra thick,black] (s.center) to["$H_2$",swap] (s.side 2);
\draw[-,ultra thick,RoyalBlue] (s.center) to[sloped,"$H_2-F$",swap] (s.side 3);
\draw[-,ultra thick,black] (s.center) to[sloped,"$H_1$+$H_2$-2F"] (s.side 4);
\draw[-,ultra thick,RoyalBlue] (s.center) to[sloped,swap,"$H_1-F$"] (s.side 5);
\draw[-,ultra thick,black] (s.center) to["$H_1$"] (s.side 6);
\draw[-,ultra thick,black] (s.side 1) to[swap,"$H_2$"] (s.corner 2);
\draw[-,ultra thick,black] (s.side 1) to["$H_1$"] (s.corner 1);
\draw[-,ultra thick,RoyalBlue] (s.side 2) to[swap,"$H_2-F$"] (s.corner 3);
\draw[-,ultra thick,RoyalBlue] (s.side 2) to["F"] (s.corner 2);
\draw[-,ultra thick,black] (s.side 3) to[swap,"$H_1+H_2-2F$"{yshift=-2pt}] (s.corner 4);
\draw[-,ultra thick,black] (s.side 3) to["$H_2$"] (s.corner 3);
\draw[-,ultra thick,RoyalBlue] (s.side 4) to[swap,"$H_1-F$"{yshift=3pt}] (s.corner 5);
\draw[-,ultra thick,RoyalBlue] (s.side 4) to["$H_2-F$"{yshift=3pt}] (s.corner 4);
\draw[-,ultra thick,black] (s.side 5) to[swap,"$H_1$"] (s.corner 6);
\draw[-,ultra thick,black] (s.side 5) to["$H_1+H_2-2F$"{yshift=-2pt}] (s.corner 5);
\draw[-,ultra thick,RoyalBlue] (s.side 6) to[swap,"F"] (s.corner 1);
\draw[-,ultra thick,RoyalBlue] (s.side 6) to["$H_1-F$"] (s.corner 6);
\node[fill=white,circle,inner sep=2] at (s.center) {${X_3}$};
\node[fill=white,inner sep=2] at (s.corner 1) {${X_4}_{\!/\p^1}$};
\node[fill=white,inner sep=2] at (s.side 1) {${X_4}$};
\node[fill=white,inner sep=2] at (s.corner 2) {${X_4}_{\!/\p^1}$};
\node[fill=white,inner sep=2] at (s.side 2) {${X_3}_{/\p^1}$};
\node[fill=white,inner sep=2] at (s.corner 3) {${X_4'}_{\!/\p^1}$};
\node[fill=white,inner sep=2] at (s.side 3) {${X_4'}$};
\node[fill=white,inner sep=2] at (s.corner 4) {${X_4'}_{\!/\p^1}$};
\node[fill=white,inner sep=2] at (s.side 4) {${X_3}_{/\p^1}$};
\node[fill=white,inner sep=2] at (s.corner 5) {${X_4''}_{\!/\p^1}$};
\node[fill=white,inner sep=2] at (s.side 5) {${X_4''}$};
\node[fill=white,inner sep=2] at (s.corner 6) {${X_4''}_{\!/\p^1}$};
\node[fill=white,inner sep=2] at (s.side 6) {${X_3}_{/\p^1}$};
\end{tikzpicture}
	}
	\]
	\caption{$\piece{X_4/\p^1}01$}
	\label{X4_01}
	\end{minipage}
	\begin{minipage}[c]{0.32\linewidth}
	\[
	\resizebox{1.05\linewidth}{!}{
	\begin{tikzpicture}[font=\footnotesize]
\node[name=s, regular polygon, rotate=180, regular polygon sides=8,inner sep=2.169cm] at (0,0) {};
\draw[-,ultra thick,FireBrick] (s.center) to["F"{yshift=-3pt}] (s.side 1);
\draw[-,ultra thick,black] (s.center) to["$H_2$",swap] (s.side 2);
\draw[-,ultra thick,FireBrick] (s.center) to[sloped,"$2H_2-F$",swap] (s.side 3);
\draw[-,ultra thick,black] (s.center) to[sloped,"$H_1$+2$H_2$-2F",swap] (s.side 4);
\draw[-,ultra thick,FireBrick] (s.center) to[sloped,"2($H_1$+$H_2$)-3F"{xshift=3pt}] (s.side 5);
\draw[-,ultra thick,black] (s.center) to[sloped,"2$H_1$+$H_2$-2E",swap] (s.side 6);
\draw[-,ultra thick,FireBrick] (s.center) to["$2H_1-F$"] (s.side 7);
\draw[-,ultra thick,black] (s.center) to["$H_1$"] (s.side 8);
\draw[-,ultra thick,black] (s.side 1) to[swap,"$H_2$"] (s.corner 2);
\draw[-,ultra thick,black] (s.side 1) to["$H_1$"] (s.corner 1);
\draw[-,ultra thick,FireBrick] (s.side 2) to[swap,"$2H_2-F$"] (s.corner 3);
\draw[-,ultra thick,FireBrick] (s.side 2) to["F"] (s.corner 2);
\draw[-,ultra thick,black] (s.side 3) to[swap,"$H_1+2H_2-2F$"] (s.corner 4);
\draw[-,ultra thick,black] (s.side 3) to["$H_2$"] (s.corner 3);
\draw[-,ultra thick,FireBrick] (s.side 4) to[swap,"$2(H_1+H_2)-3F$"] (s.corner 5);
\draw[-,ultra thick,FireBrick] (s.side 4) to["$2H_2-F$"] (s.corner 4);
\draw[-,ultra thick,black] (s.side 5) to[swap,"$2H_1+H_2-2F$"{yshift=3pt,xshift=-15pt}] (s.corner 6);
\draw[-,ultra thick,black] (s.side 5) to["$H_1+2H_2-2F$"{yshift=3pt,xshift=15pt}] (s.corner 5);
\draw[-,ultra thick,FireBrick] (s.side 6) to[swap,"$2H_1-F$"] (s.corner 7);
\draw[-,ultra thick,FireBrick] (s.side 6) to["$2(H_1+H_2)-3F$"] (s.corner 6);
\draw[-,ultra thick,black] (s.side 7) to[swap,"$H_1$"] (s.corner 8);
\draw[-,ultra thick,black] (s.side 7) to["$2H_1+H_2-2F$"] (s.corner 7);
\draw[-,ultra thick,FireBrick] (s.side 8) to[swap,"F"] (s.corner 1);
\draw[-,ultra thick,FireBrick] (s.side 8) to["$2H_1-F$"] (s.corner 8);
\node[fill=white,circle,inner sep=2] at (s.center) {${X_2}$};
\node[fill=white,inner sep=2] at (s.corner 1) {${X_4}_{\!/\p^1}$};
\node[fill=white,inner sep=2] at (s.side 1) {${X_4}$};
\node[fill=white,inner sep=2] at (s.corner 2) {${X_4}_{\!/\p^1}$};
\node[fill=white,inner sep=2] at (s.side 2) {${X_2}_{/\p^1}$};
\node[fill=white,inner sep=2] at (s.corner 3) {${X_4'}_{\!/\p^1}$};
\node[fill=white,inner sep=2] at (s.side 3) {${X_4'}$};
\node[fill=white,inner sep=2] at (s.corner 4) {${X_4'}_{\!/\p^1}$};
\node[fill=white,inner sep=2] at (s.side 4) {${X_2'}_{/\p^1}$};
\node[fill=white,inner sep=2] at (s.corner 5) {${X_4}_{\!/\p^1}$};
\node[fill=white,inner sep=2] at (s.side 5) {${X_4}$};
\node[fill=white,inner sep=2] at (s.corner 6) {${X_4}_{\!/\p^1}$};
\node[fill=white,inner sep=2] at (s.side 6) {${X_2}_{/\p^1}$};
\node[fill=white,inner sep=2] at (s.corner 7) {${X_4'}_{\!/\p^1}$};
\node[fill=white,inner sep=2] at (s.side 7) {${X_4'}$};
\node[fill=white,inner sep=2] at (s.corner 8) {${X_4'}_{\!/\p^1}$};
\node[fill=white,inner sep=2] at (s.side 8) {${X_2'}_{/\p^1}$};
\end{tikzpicture}
	}
	\]
	\caption{$\piece{X_4/\p^1}02$}
	\label{X4_02}
	\end{minipage}
\begin{minipage}[c]{0.32\linewidth}
\[
\resizebox{1.05\linewidth}{!}{
\begin{tikzpicture}[font=\footnotesize]
\node[name=s, regular polygon, rotate=180, regular polygon sides=12,inner sep=3.357cm] at (0,0) {};
\draw[-,ultra thick,Green] (s.center) to["F"{yshift=-3pt}] (s.side 1);
\draw[-,ultra thick,black] (s.center) to["$H_2$",swap] (s.side 2);
\draw[-,ultra thick,Green] (s.center) to[sloped,"$3H_2-F$",swap] (s.side 3);
\draw[-,ultra thick,black] (s.center) to["$H_1+3H_2-2F$",swap] (s.side 4);
\draw[-,ultra thick,Green] (s.center) to[sloped,"$3H_1+6H_2-5F$",swap] (s.side 5);
\draw[-,ultra thick,black] (s.center) to[sloped,"$3H_1+4H_2-4F$",swap] (s.side 6);
\draw[-,ultra thick,Green] (s.center) to[sloped,"$6H_1+6H_2-7E$"] (s.side 7);
\draw[-,ultra thick,black] (s.center) to[sloped,"$4H_1+3H_2-4F$",swap] (s.side 8);
\draw[-,ultra thick,Green] (s.center) to[sloped,"$6H_1+3H_2-5E$",swap] (s.side 9);
\draw[-,ultra thick,black] (s.center) to["$3H_1+H_2-2F$"] (s.side 10);
\draw[-,ultra thick,Green] (s.center) to[sloped,"$3H_1-F$",swap] (s.side 11);
\draw[-,ultra thick,black] (s.center) to["$H_1$"] (s.side 12);
\draw[-,ultra thick,black] (s.side 1) to[swap,"$H_2$"] (s.corner 2);
\draw[-,ultra thick,black] (s.side 1) to["$H_1$"] (s.corner 1);
\draw[-,ultra thick,Green] (s.side 2) to[swap,"$3H_2-F$"] (s.corner 3);
\draw[-,ultra thick,Green] (s.side 2) to["F"] (s.corner 2);
\draw[-,ultra thick,black] (s.side 3) to[swap,"$H_1+3H_2-2F$"] (s.corner 4);
\draw[-,ultra thick,black] (s.side 3) to["$H_2$"] (s.corner 3);
\draw[-,ultra thick,Green] (s.side 4) to[swap,"$3H_1+6H_2-5F$"] (s.corner 5);
\draw[-,ultra thick,Green] (s.side 4) to["$3H_2-F$"] (s.corner 4);
\draw[-,ultra thick,black] (s.side 5) to[swap,"$3H_1+4H_2-4F$"] (s.corner 6);
\draw[-,ultra thick,black] (s.side 5) to["$H_1+3H_2-2F$"] (s.corner 5);
\draw[-,ultra thick,Green] (s.side 6) to[swap,"$6H_1+6H_2-7E$"] (s.corner 7);
\draw[-,ultra thick,Green] (s.side 6) to["$3H_1+6H_2-5F$"] (s.corner 6);
\draw[-,ultra thick,black] (s.side 7) to[swap,"$4H_1+3H_2-4F$"{yshift=3pt,xshift=-18pt}] (s.corner 8);
\draw[-,ultra thick,black] (s.side 7) to["$3H_1+4H_2-4F$"{yshift=3pt,xshift=18pt}] (s.corner 7);
\draw[-,ultra thick,Green] (s.side 8) to[swap,"$6H_1+3H_2-5F$"] (s.corner 9);
\draw[-,ultra thick,Green] (s.side 8) to["$6H_1+6H_2-7E$"] (s.corner 8);
\draw[-,ultra thick,black] (s.side 9) to[swap,"$3H_1+H_2-2F$"] (s.corner 10);
\draw[-,ultra thick,black] (s.side 9) to["$4H_1+3H_2-4F$"] (s.corner 9);
\draw[-,ultra thick,Green] (s.side 10) to[swap,"$3H_1-F$"] (s.corner 11);
\draw[-,ultra thick,Green] (s.side 10) to["$6H_1+3H_2-5E$"] (s.corner 10);
\draw[-,ultra thick,black] (s.side 11) to[swap,"$H_1$"] (s.corner 12);
\draw[-,ultra thick,black] (s.side 11) to["$3H_1+H_2-2F$"] (s.corner 11);
\draw[-,ultra thick,Green] (s.side 12) to[swap,"F"] (s.corner 1);
\draw[-,ultra thick,Green] (s.side 12) to["$3H_1-F$"] (s.corner 12);
\node[fill=white,circle,inner sep=2] at (s.center) {${X_1}$};
\node[fill=white,inner sep=2] at (s.corner 1) {${X_4}_{\!/\p^1}$};
\node[fill=white,inner sep=2] at (s.side 1) {${X_4}$};
\node[fill=white,inner sep=2] at (s.corner 2) {${X_4}_{\!/\p^1}$};
\node[fill=white,inner sep=2] at (s.side 2) {${X_1}_{/\p^1}$};
\node[fill=white,inner sep=2] at (s.corner 3) {${X_4'}_{\!/\p^1}$};
\node[fill=white,inner sep=2] at (s.side 3) {${X_4'}$};
\node[fill=white,inner sep=2] at (s.corner 4) {${X_4'}_{\!/\p^1}$};
\node[fill=white,inner sep=2] at (s.side 4) {${X_1}_{/\p^1}$};
\node[fill=white,inner sep=2] at (s.corner 5) {${X_4''}_{\!/\p^1}$};
\node[fill=white,inner sep=2] at (s.side 5) {${X_4''}$};
\node[fill=white,inner sep=2] at (s.corner 6) {${X_4''}_{\!/\p^1}$};
\node[fill=white,inner sep=2] at (s.side 6) {${X_1}_{\!/\p^1}$};
\node[fill=white,inner sep=2] at (s.corner 7) {${X_4}_{\!/\p^1}$};
\node[fill=white,inner sep=2] at (s.side 7) {${X_4}$};
\node[fill=white,inner sep=2] at (s.corner 8) {${X_4}_{\!/\p^1}$};
\node[fill=white,inner sep=2] at (s.side 8) {${X_1}_{/\p^1}$};
\node[fill=white,inner sep=2] at (s.corner 9) {${X_4'}_{\!/\p^1}$};
\node[fill=white,inner sep=2] at (s.side 9) {${X_4'}$};
\node[fill=white,inner sep=2] at (s.corner 10) {${X_4'}_{\!/\p^1}$};
\node[fill=white,inner sep=2] at (s.side 10) {${X_1}_{/\p^1}$};
\node[fill=white,inner sep=2] at (s.corner 11) {${X_4''}_{\!/\p^1}$};
\node[fill=white,inner sep=2] at (s.side 11) {${X_4''}$};
\node[fill=white,inner sep=2] at (s.corner 12) {${X_4''}_{\!/\p^1}$};
\node[fill=white,inner sep=2] at (s.side 12) {${X_1}_{\!/\p^1}$};
\end{tikzpicture}
}
\]
\caption{$\piece{X_4/\p^1}03$}
\label{X4_03}
\end{minipage}
\end{figure}

%%% X4 11, 12, X3 11 %%%
\begin{figure}[ht]
	\begin{minipage}[c]{0.32\linewidth}
	\[
	\resizebox{1\linewidth}{!}{
	\begin{tikzpicture}[font=\footnotesize]
\node[name=s, regular polygon, rotate=180, regular polygon sides=6,inner sep=1.557cm] at (0,0) {};
\draw[-,ultra thick,RoyalBlue] (s.center) to["F"] (s.side 1);
\draw[-,ultra thick,RoyalBlue] (s.center) to["H-2E"] (s.side 2);
\draw[-,ultra thick,RoyalBlue] (s.center) to[sloped,"2H-3E-2F",swap] (s.side 3);
\draw[-,ultra thick,RoyalBlue] (s.center) to[sloped,"2H-2E-3F"] (s.side 4);
\draw[-,ultra thick,RoyalBlue] (s.center) to["H-2F"] (s.side 5);
\draw[-,ultra thick,RoyalBlue] (s.center) to["E"] (s.side 6);
\draw[-,ultra thick,RoyalBlue] (s.side 1) to[swap,"H-2E"] (s.corner 2);
\draw[-,ultra thick,RoyalBlue] (s.side 1) to["E"] (s.corner 1);
\draw[-,ultra thick,RoyalBlue] (s.side 2) to[swap,"2H-3E-2F"] (s.corner 3);
\draw[-,ultra thick,RoyalBlue] (s.side 2) to["F"] (s.corner 2);
\draw[-,ultra thick,RoyalBlue] (s.side 3) to[swap,"2H-2E-3F"] (s.corner 4);
\draw[-,ultra thick,RoyalBlue] (s.side 3) to["H-2E"] (s.corner 3);
\draw[-,ultra thick,RoyalBlue] (s.side 4) to[swap,"H-2F"] (s.corner 5);
\draw[-,ultra thick,RoyalBlue] (s.side 4) to["2H-3E-2F"{yshift=3pt}] (s.corner 4);
\draw[-,ultra thick,RoyalBlue] (s.side 5) to[swap,"E"] (s.corner 6);
\draw[-,ultra thick,RoyalBlue] (s.side 5) to["2H-2E-3F"] (s.corner 5);
\draw[-,ultra thick,RoyalBlue] (s.side 6) to[swap,"F"] (s.corner 1);
\draw[-,ultra thick,RoyalBlue] (s.side 6) to["H-2F"] (s.corner 6);
\node[fill=white,circle,inner sep=2] at (s.center) {${X_1}$};
\node[fill=white,inner sep=2] at (s.corner 1) {${X_3}$};
\node[fill=white,inner sep=2] at (s.side 1) {${X_2}$};
\node[fill=white,inner sep=2] at (s.corner 2) {${X_3}$};
\node[fill=white,inner sep=2] at (s.side 2) {${X_2'}$};
\node[fill=white,inner sep=2] at (s.corner 3) {${X_3}$};
\node[fill=white,inner sep=2] at (s.side 3) {${X_2''}$};
\node[fill=white,inner sep=2] at (s.corner 4) {${X_3}$};
\node[fill=white,inner sep=2] at (s.side 4) {${X_2}$};
\node[fill=white,inner sep=2] at (s.corner 5) {${X_3}$};
\node[fill=white,inner sep=2] at (s.side 5) {${X_2'}$};
\node[fill=white,inner sep=2] at (s.corner 6) {${X_3}$};
\node[fill=white,inner sep=2] at (s.side 6) {${X_2''}$};
\end{tikzpicture}
	}
	\]
	\caption{$\piece{X_3}11$}
	\label{X3_11}
	\end{minipage}
\begin{minipage}[c]{0.32\linewidth}
\[
\resizebox{1\linewidth}{!}{
\begin{tikzpicture}[font=\footnotesize]
\node[name=s, regular polygon, rotate=180, regular polygon sides=6,inner sep=1.557cm] at (0,0) {};
\draw[-,ultra thick,RoyalBlue] (s.center) to["F"] (s.side 1);
\draw[-,ultra thick,black] (s.center) to[""] (s.side 2);
\draw[-,ultra thick,RoyalBlue] (s.center) to["\!\!\!H-2E-F",swap] (s.side 3);
\draw[-,ultra thick,RoyalBlue] (s.center) to["H-E-2F"] (s.side 4);
\draw[-,ultra thick,black] (s.center) to[""] (s.side 5);
\draw[-,ultra thick,RoyalBlue] (s.center) to["E"] (s.side 6);
\draw[-,ultra thick,black] (s.side 1) to[swap,""] (s.corner 2);
\draw[-,ultra thick,RoyalBlue] (s.side 1) to["E"] (s.corner 1);
\draw[-,ultra thick,RoyalBlue] (s.side 2) to[swap,"H-2E-F"] (s.corner 3);
\draw[-,ultra thick,RoyalBlue] (s.side 2) to["F"] (s.corner 2);
\draw[-,ultra thick,RoyalBlue] (s.side 3) to[swap,"H-E-2F"] (s.corner 4);
\draw[-,ultra thick,black] (s.side 3) to[""] (s.corner 3);
\draw[-,ultra thick,black] (s.side 4) to[swap,""] (s.corner 5);
\draw[-,ultra thick,RoyalBlue] (s.side 4) to["H-2E-F"{yshift=3pt}] (s.corner 4);
\draw[-,ultra thick,RoyalBlue] (s.side 5) to[swap,"E"] (s.corner 6);
\draw[-,ultra thick,RoyalBlue] (s.side 5) to["H-E-2F"] (s.corner 5);
\draw[-,ultra thick,RoyalBlue] (s.side 6) to[swap,"F"] (s.corner 1);
\draw[-,ultra thick,black] (s.side 6) to[""] (s.corner 6);
\node[fill=white,circle,inner sep=2] at (s.center) {${X_2}$};
\node[fill=white,inner sep=2] at (s.corner 1) {${X_4}$};
\node[fill=white,inner sep=2] at (s.side 1) {${X_3}$};
\node[fill=white,inner sep=2] at (s.corner 2) {${X_3}_{\!/B}$};
\node[fill=white,inner sep=2] at (s.side 2) {${X_2}_{\!/B}$};
\node[fill=white,inner sep=2] at (s.corner 3) {${X_3'}_{\!/B}$};
\node[fill=white,inner sep=2] at (s.side 3) {${X_3'}$};
\node[fill=white,inner sep=2] at (s.corner 4) {${X_4}$};
\node[fill=white,inner sep=2] at (s.side 4) {${X_3}$};
\node[fill=white,inner sep=2] at (s.corner 5) {${X_3}_{\!/B}$};
\node[fill=white,inner sep=2] at (s.side 5) {${X_2}_{/B}$};
\node[fill=white,inner sep=2] at (s.corner 6) {${X_3'}_{/B}$};
\node[fill=white,inner sep=2] at (s.side 6) {${X_3'}$};
\end{tikzpicture}
}
\]
\caption{$\piece{X_4}11$}
\label{X4_11}
\end{minipage}
\begin{minipage}[c]{0.32\linewidth}
\[
\resizebox{1.05\linewidth}{!}{
\begin{tikzpicture}[font=\footnotesize]
\node[name=s, regular polygon, rotate=180, regular polygon sides=8,inner sep=2.169cm] at (0,0) {};
\draw[-,ultra thick,FireBrick] (s.center) to["F"] (s.side 1);
\draw[-,ultra thick,black] (s.center) to["H-2E"] (s.side 2);
\draw[-,ultra thick,FireBrick] (s.center) to["2H-4E-F"] (s.side 3);
\draw[-,ultra thick,RoyalBlue] (s.center) to["\!\!\!2H-3E-2F",swap] (s.side 4);
\draw[-,ultra thick,FireBrick] (s.center) to["4H-4E-5F"{yshift=8pt}] (s.side 5);
\draw[-,ultra thick,black] (s.center) to["3H-2E-4F"] (s.side 6);
\draw[-,ultra thick,FireBrick] (s.center) to["2H-3F"] (s.side 7);
\draw[-,ultra thick,RoyalBlue] (s.center) to["E"] (s.side 8);
\draw[-,ultra thick,black] (s.side 1) to[swap,"H-2E"{yshift=-3pt}] (s.corner 2);
\draw[-,ultra thick,RoyalBlue] (s.side 1) to["E"] (s.corner 1);
\draw[-,ultra thick,FireBrick] (s.side 2) to[swap,"2H-4E-F"] (s.corner 3);
\draw[-,ultra thick,FireBrick] (s.side 2) to["F"] (s.corner 2);
\draw[-,ultra thick,RoyalBlue] (s.side 3) to[swap,"2H-3E-2F"] (s.corner 4);
\draw[-,ultra thick,black] (s.side 3) to["H-2E"] (s.corner 3);
\draw[-,ultra thick,FireBrick] (s.side 4) to[swap,"4H-4E-5F"] (s.corner 5);
\draw[-,ultra thick,FireBrick] (s.side 4) to["2H-4E-F"] (s.corner 4);
\draw[-,ultra thick,black] (s.side 5) to[swap,"3H-2E-4F"{yshift=3pt,xshift=-8pt}] (s.corner 6);
\draw[-,ultra thick,RoyalBlue] (s.side 5) to["2H-3E-2F"{yshift=3pt,xshift=8pt}] (s.corner 5);
\draw[-,ultra thick,FireBrick] (s.side 6) to[swap,"2H-3F"] (s.corner 7);
\draw[-,ultra thick,FireBrick] (s.side 6) to["4H-4E-5F"] (s.corner 6);
\draw[-,ultra thick,RoyalBlue] (s.side 7) to[swap,"E"] (s.corner 8);
\draw[-,ultra thick,black] (s.side 7) to["3H-2E-4F"] (s.corner 7);
\draw[-,ultra thick,FireBrick] (s.side 8) to[swap,"F"] (s.corner 1);
\draw[-,ultra thick,FireBrick] (s.side 8) to["2H-3F"] (s.corner 8);
\node[fill=white,circle,inner sep=2] at (s.center) {${X_1}$};
\node[fill=white,inner sep=2] at (s.corner 1) {${X_4}$};
\node[fill=white,inner sep=2] at (s.side 1) {${X_3}$};
\node[fill=white,inner sep=2] at (s.corner 2) {${X_3}_{\!/\p^1}$};
\node[fill=white,inner sep=2] at (s.side 2) {${X_1}_{/\p^1}$};
\node[fill=white,inner sep=2] at (s.corner 3) {${X_3}_{\!/\p^1}$};
\node[fill=white,inner sep=2] at (s.side 3) {${X_3}$};
\node[fill=white,inner sep=2] at (s.corner 4) {${X_4}$};
\node[fill=white,inner sep=2] at (s.side 4) {${X_2}$};
\node[fill=white,inner sep=2] at (s.corner 5) {${X_4}$};
\node[fill=white,inner sep=2] at (s.side 5) {${X_3}$};
\node[fill=white,inner sep=2] at (s.corner 6) {${X_3}_{\!/\p^1}$};
\node[fill=white,inner sep=2] at (s.side 6) {${X_1}_{/\p^1}$};
\node[fill=white,inner sep=2] at (s.corner 7) {${X_3}_{\!/\p^1}$};
\node[fill=white,inner sep=2] at (s.side 7) {${X_3}$};
\node[fill=white,inner sep=2] at (s.corner 8) {${X_4}$};
\node[fill=white,inner sep=2] at (s.side 8) {${X_2}$};
\end{tikzpicture}
}
\]
\caption{$\piece{X_4}12$}
\label{X4_12}
\end{minipage}
\end{figure}
\FloatBarrier

We conclude this section with showing an explicit factorisation of the birational map with purely inseparable base points given in \cite[Remark 1.3]{LZ20} via Sarkisov links.

\begin{example}
  For a field $k$ of characteristic $2$, the birational involution $f\colon\p^2_{\k} \dashedrightarrow\p^2_{\k}$ given by
  \[f\colon [x:y:z]\mapsto [xz:yz:x^2+ty^2]\]
  has as base locus the closed points given by the ideal $(x,y)$ (corresponding to $[0:0:1]$) and $(x^2+t,z)$ (whose reduced base change to the algebraic closure is $[t^{1/2}:1:0]$). Its resolution $X_6$ is a del Pezzo surface of degree $6$ (regular but not smooth) and of Picard rank $3$. One can check that $X_6/\Spec k$ is a rank $3$ fibration that dominates five rank $2$ fibrations, depicted in Figure~\ref{P2_12}.
  Note that this gives two decomposition of $f$ into Sarkisov links: $\p^2_{\k} \link{2}{1} X_8\link{1}{2} \p^2_{\k}$, and $\p^2_{\k} \linkI{1}\F_1\link{2}{2}\F_1\linkIII{1}\p^2_{\k}$.
\end{example}

\section{Existence of Sarkisov links}\label{s: general points}

So far we have shown that the Sarkisov program exists over any field (\autoref{t-sarkisov-program}), we have studied properties of Mori fibre spaces (\autoref{s: rank 1 fibrations}) and listed all numerically possible Sarkisov links over any field (\autoref{s: Sarkisov_Links}) and all their elementary relations (\autoref{sec:Elementary relations}).
However, it is not clear that all these Sarkisov links actually exist for some Mori fibre space over some field.
In this section, we explicitly construct examples of Sarkisov links over imperfect fields starting from $\p^2_{\k}$, rational quadric surfaces $Q \subset \p^3_{\k}$ and several Mori conic bundles.

In the case the residue field $\k(\xx)$ of a closed point blown-up is a separable extension of $\k$, the picture is analogue to the case where $\k$ is perfect.
In particular, to construct points in Sarkisov general position with separable residue field we can consider the separable closure $\k^{\sep}$ over $\k$ and then construct $\mathrm{Gal}(\k^{\sep}/\k)$-orbits of $\k^{\rm sep}$-points in general position in $\p^2_{\k^{\rm sep}}$ (see for instance \cite[Section 4.2]{LZ20} for the construction of a closed point of degree $8$ in $\p^2_\k$ in Sarkisov general position).
Instead, in the case where the residue field $k(\xx)$ is a purely inseparable extension of $\k$, the base change $\xx_{\bar{\k}}$ to the algebraic closure is a non-reduced irreducible subscheme of $\p^2_{\bar{\k}}$ and we cannot use Galois theory to prove it is in Sarkisov general position.
For this reason, we prove when the blow-up of $\mathbb{P}^2_{\k}$ is a del Pezzo surface using a more schematic language in \autoref{ss: sarkisov_position}. 
We use this criterion to construct closed points which are twisted forms of irreducible non-reduced sub-schemes whose length is a power of $p$ to construct Sarkisov links with purely inseparable base points in \autoref{ss: existece_Sarkisov_links}.
%on the intuitive level, we can think of blowing-up closed points with purely inseparable extension as blowing-up irreducible non-reduced sub-schemes whose length is a power of $p$ on the algebraic closure.

\subsection{Points in $\P^2_{\k}$ of small degree}

We describe closed points of small degree in $\mathbb{P}^2_{\k}$ with explicit coordinates, up to automorphisms.
As any closed point of $\p^2_{\k}$ is contained in some open subset isomorphic to $\A^2_{\k}$, we then classify points of small degree in $\A^2_{\k}$.

Let $q=p^e$ be a power of $p$ for some $e > 0$.
The maximal ideals with purely inseparable residue field on the affine line are straightforward to describe.
Any closed point $\xx\in \A^1_\k$ of degree $q$ is given by a maximal ideal generated by an irreducible element  $f\in\k[x]$ of degree $q$. If $k(\xx)$ is a purely inseparable extension of $\k$, then $f=x^q-t$ for some $t\in\k\setminus \k^p$.

\begin{remark}
	Let $\k$ be a field and let $\xx\in\A^2$ be a point of degree $p$ prime. Then either $\k(\xx)/\k$ is purely inseparable and $\charact\k=p$, or $\xx_{\bar\k}$ consists of $p$ distinct 	points in $\A^2_{\bar\k}$ forming a $\Gal(\k^{\rm sep}/\k)$-orbit of size $p$.
\end{remark}

For the case of the plane, we need the following characterisation of maximal ideals.
\begin{lemma}[{\cite[\href{https://stacks.math.columbia.edu/tag/00F0}{Tag 00F0}]{stacks-project}}]\label{lem: 2-dim-max-ideal}
	Let $\k$ be any field. Then any maximal ideal $\mathfrak{m}$ in $\k[x,y]$ is of the form $\mathfrak{m}=(g,h)$ with $g \in 	\k[x]$ is irreducible and the class of $h \in\k[x,y]$ is irreducible in $\k[x, y]/(g)$.
\end{lemma}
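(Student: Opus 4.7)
The plan is to exploit the fact that $\k[x,y]/\mathfrak{m}$ is a finite field extension of $\k$ (by the Nullstellensatz / Zariski's lemma, since $\k[x,y]$ is a finitely generated $\k$-algebra), and then cut down to a one-variable polynomial ring via an intermediate quotient.

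First I would consider the intersection $\mathfrak{p} := \mathfrak{m} \cap \k[x]$. Since the composition $\k[x] \hookrightarrow \k[x,y] \twoheadrightarrow \k[x,y]/\mathfrak{m}$ lands in a finite field extension of $\k$, the image of $x$ is algebraic over $\k$, so $\mathfrak{p}$ is a nonzero prime ideal of $\k[x]$. As $\k[x]$ is a PID, we get $\mathfrak{p} = (g)$ for some irreducible $g \in \k[x]$.

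Next I would pass to the quotient $\k[x,y]/(g) \cong K[y]$, where $K := \k[x]/(g)$ is a field. The image $\bar{\mathfrak{m}}$ of $\mathfrak{m}$ in $K[y]$ is a maximal ideal of the PID $K[y]$, hence generated by an irreducible element $\bar h \in K[y]$. Choosing any lift $h \in \k[x,y]$ of $\bar h$ yields $\mathfrak{m} = (g, h)$, and by construction the class of $h$ is irreducible in $\k[x,y]/(g)$.

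This proof is essentially bookkeeping: the only potentially subtle step is the invocation of Zariski's lemma to conclude that $\k[x,y]/\mathfrak{m}$ is a finite (hence algebraic) field extension of $\k$, which guarantees that $\mathfrak{p}$ is nonzero. Since this is a classical fact and the reference \cite[\href{https://stacks.math.columbia.edu/tag/00F0}{Tag 00F0}]{stacks-project} is explicitly cited, no further obstacle is expected.
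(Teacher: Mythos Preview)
Your argument is correct and is the standard proof: intersect with $\k[x]$, use Zariski's lemma to ensure the intersection is nonzero, then work in the PID $K[y]$ with $K=\k[x]/(g)$. The paper does not give its own proof of this lemma; it simply cites the Stacks project reference, so there is nothing further to compare.
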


With the same notation of \autoref{lem: 2-dim-max-ideal}, we can write
$h=\sum_{i=0}^d a_i(x)y^i$, where $\deg(a_i) < \deg(g)$. Then, $\k(\xx) \simeq (\k[x]/(g))[y]/(h)$ and thus $[\k(\xx): \k]=\deg(g) \cdot d$. 
We collect the special case where $[\k(\xx): \k]=p$ as it will be useful for later examples.

\begin{lemma}\label{lem:general_form_maximal_ideal}
	Let $\k$ be any field and let $\xx\in\A^2_\k$ a point of prime degree $p$.
	Then, up to a change of coordinates, we have $\mathfrak{m}_{\xx}=(g(x), b(x) y-f(x))$, where $g\in\k[x]$ is irreducible of degree $p$ and $b,f\in\k[x]$ is of degree $\deg(b),\deg(f)\leq p-1$.
	In particular, if $\k(\xx)$ is purely inseparable over $\k$, then $\mathfrak{m}_{\xx}=(x^p-t, b(x)y-f(x))$ for some $t\in\k\setminus\k^p$.
\end{lemma}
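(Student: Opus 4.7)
The plan is to apply \autoref{lem: 2-dim-max-ideal} to write $\mathfrak{m}_\xx = (g, h)$ with $g \in \k[x]$ irreducible and the class of $h$ irreducible in $(\k[x]/(g))[y]$. Since $\k(\xx) = (\k[x]/(g))[y]/(\bar h)$, the multiplicativity of degrees gives
\[
p = [\k(\xx) : \k] = \deg(g) \cdot \deg_y(h),
\]
where $\deg_y(h)$ denotes the $y$-degree of the image of $h$ in $(\k[x]/(g))[y]$. As $p$ is prime, exactly one of two cases occurs.

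In the first case, $\deg(g) = p$ and $\deg_y(h) = 1$. I would then write $h = b(x) y - f(x)$ and replace $b, f$ by their remainders after euclidean division by $g$, which bounds their degrees by $p - 1$ without altering $\mathfrak{m}_\xx$; this yields the desired normal form directly. In the second case, $\deg(g) = 1$, so after an affine translation in $x$ we may assume $g = x$; the image of $h$ in $\k[y]$ is then an irreducible polynomial $\tilde h(y)$ of degree $p$, and $\mathfrak{m}_\xx = (x, \tilde h(y))$. Swapping the coordinates via the automorphism $(x, y) \mapsto (y, x)$ of $\A^2_\k$, this ideal becomes $(\tilde h(x), y)$, which matches the statement with $b = 1$ and $f = 0$.

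For the purely inseparable addendum, after these reductions we are always in the first case, so $\k[x]/(g) \hookrightarrow \k(\xx)$ is an inclusion of $\k$-algebras of the same degree $p$, hence an equality. Therefore the class $\bar x$ of $x$ generates the purely inseparable extension $\k(\xx)/\k$ of degree $p$, so $\bar x^p \in \k$, say $x^p \equiv t \pmod{g}$ with $t \in \k$. Thus $g$ divides $x^p - t$ in $\k[x]$, and comparing degrees gives $g = c(x^p - t)$ for some $c \in \k^\times$; after rescaling we may take $g(x) = x^p - t$, and irreducibility of $g$ forces $t \in \k \setminus \k^p$.

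I do not expect any serious obstacle: the statement is essentially an elementary normal form for prime-degree maximal ideals of $\k[x,y]$, obtained by unwinding \autoref{lem: 2-dim-max-ideal} and performing two division-with-remainder steps. The only minor subtlety is that in the second case the coordinate swap is genuinely needed to reduce to the first case under a single normal form, and that in the purely inseparable setting one must exploit the equality $\k[x]/(g) = \k(\xx)$ to identify $g$, up to a unit, with the minimal polynomial of $\bar x$.
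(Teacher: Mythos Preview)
Your proof is correct and follows essentially the same approach as the paper: apply \autoref{lem: 2-dim-max-ideal}, split into the two cases $\deg(g)=p$ or $\deg(g)=1$ using multiplicativity of degrees, and swap coordinates in the latter case. Your treatment is in fact slightly more detailed than the paper's, which leaves the reduction of $b,f$ modulo $g$ implicit and simply asserts the form $g(x)=x^p-t$ in the purely inseparable case without the argument you supply.
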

%\begin{proof}
%	By \autoref{lem: 2-dim-max-ideal}, we have $\mathfrak{m}_{\xx}=(g(x),h(x,y))$ where $g \in \k[x]$ is prime and $h\in 	\k[x,y]$ is a polynomial such that its residue class in $\k[x,y]/(g)$ is irreducible.
	
%	Write $h =\sum_{i=0}^{d} a_i(x)y^i$ with $\deg(a_i) <\deg(g)$. Then, there is the the isomorphism $k[x, y]/m \simeq ((k[x]/(g))[y])/(h)$ which implies that $\deg(k(x)) = \deg(g) \cdot d$.
	
%	Consider the first projection $\pi\colon \A^2_\k\to\A^1_\k=\Spec \k[x]$, then $\mathfrak{m}_{\pi(\xx)}=(g(x))$.
%	As the degree of $\xx$ is a prime, we have $\deg(g) \in \left\{ 1,p \right\}$.
%	If $\deg(g)=1$, then $g(x)=x-a$ for some $a \in \k$ and thus $\mathfrak{m}_{\xx}=(x-a, h(a,y))$ and $h(a,y)\in\k[y]$ is irreducible of degree $p$. We conclude by switching the role of $x$ and $y$.

%	If $\deg(g)=p$, let $s\in\bar\k$ be a root of $g$. Since $k(\xx)$ has degree $p$ over $\k$, the class of $h$ in $\k[x,y]/(g)\simeq \k(s)[y]$ is linear.
%	Then	$\mathfrak{m}_{\xx}=(g(x), b(x)y-f(x))$ for some $b,f\in\k[x]$ of degree $\leq p-1$ and thus we conclude as well.
%Finally, if $\k(\xx)$ is purely inseparable of degree $p$, $g(x)=x^p-t$ for some $t\in\k\setminus \k^p$.
%\end{proof}

%====================================

We now collect some facts on closed reduced subschemes of small degree in $\mathbb{P}^2_{\k}$ and their relation to lines, conics and cubics.

\begin{lemma}\label{lem:deg-2-conic-or-line}
	Let $\k$ be a field, and $\xx\in\P_\k^2$ be a closed point of degree $2$. %such that $\k(\xx)/\k$ is purely inseparable.	By \autoref{lem:general_form_maximal_ideal}, we have $\mathfrak{m}_{\xx}=(x^2-t, h(x)y-f(x))$ for some $f,h\in\k[x]$ of degree $\leq1$.
	Then there exists a $\k$-line containing $\xx$.
\end{lemma}
\begin{proof}
Consider the short exact sequence
$$ 0 \to \mathcal{I}_{\xx} \otimes \mathcal{O}_{\mathbb{P}^2_{\k}}(1) \to \mathcal{O}_{\mathbb{P}^2_{\k}}(1) \to \k(\xx) \to 0.$$
As $h^0(\mathbb{P}^2_{\k}, \mathcal{O}_{\mathbb{P}^2_{\k}}(1))=3$ and $h^0(\k(\xx))=2$, we deduce that $H^0(\mathbb{P}^2_{\k}, \mathcal{I}_{\xx} \otimes \mathcal{O}_{\mathbb{P}^2_{\k}}(1)) \neq \emptyset$, thus concluding the proof.
%	Consider a linear polynomial $L(x,y)=Ax+By+C\in\k[x,y]$, and write $h(x)=ax+b$ and $f(x)=cx+d$ with $a,b,c,d\in\k$.
%	We plug in the geometric coordinates $(t^{1/2}, \frac{at^{1/2}+b}{ct^{1/2}+d})$ of the point $\xx$, obtaining $L(t^{1/2}, \frac{at^{1/2}+b}{ct^{1/2}+d})\in \k(t^{1/2})=\k\oplus t^{1/2}\k$.
%	We have $L(t^{1/2}, \frac{at^{1/2}+b}{ct^{1/2}+d})=0$ if and only if
%	\[
%	Aat+Bd+Cb+t^{1/2}(Ab+Bc+Ca)=0
%	\]
%Since $1,t^{1/2}$ are $\k$-linearly independent, this equation holds if and only if $(Aat+Bd+Cb=0)$ and $(Ab+Bc+Ca=0)$. It corrsponds to the intersection $[A:B:C]$ of the two corresponding lines in $\p^2_{\k}$ and it is a $\k$-rational point whose coefficients are rational in $a,b,c,d$.
\end{proof}

\begin{lemma} \label{lem: rat_pt_Fano_curve}
    Let $C$ be an integral Gorenstein projective curve with $H^0(C, \mathcal{O}_C)=\k$ and $-K_C$ ample. 
    Assume there exists an effective Cartier divisor $D$ of odd degree, then $C$ is geometrically integral and $C \simeq \mathbb{P}^1_\k$.
    In particular, if $C$ has a closed point $P$ such that $[\k(P):\k] \geq 3$ is odd, we conclude that $C \simeq \mathbb{P}^1_\k$.
\end{lemma}

\begin{proof}
%If $C$ is regular, then it follows from \autoref{lem: rat_pt_Fano_curve_1}.
%Suppose $C$ is not regular. As it is integral, then the locus of non regularity is a $\k$-rational point $Q$.
As $D$ has degree $d=2n+1$ with $n \geq 0$, by the Riemann--Roch theorem for Gorenstein curves \cite[Theorem 2.7]{Tan18}, there exists an effective Cartier divisor $Q \sim D+nK_C$. Thus $Q$ is a regular $k$-rational point on $C$.
As $C$ is integral and $Q$ is regular, it is easy to see that the projection $\pi_{Q} \colon C \to \mathbb{P}^1_\k$ is a surjective morphism of degree 1, and thus we conclude $C \simeq \mathbb{P}^1_{\k}$.

For the last statement, $C$ is a conic by \cite[10.6]{kk-singbook}. 
It is sufficient to note that for an integral conic the locus of non-regularity is either empty or a $\k$-rational point. Thus $P$ is Cartier and we conclude.
\end{proof}

\begin{lemma}\label{lem:deg-3-conic-or-line}
	Let $\k$ be a field, and $\xx\in\mathbb{P}_\k^2$ be a point of degree $3$.
Then either $\xx$ is contained in a line, or there exists a web of geometrically integral conics containing $\xx$.
\end{lemma}
\begin{proof}
Let us suppose that $\xx$ does not belong to a line. 
Consider the short exact sequence
\[0 \to \mathcal{I}_{\xx} \otimes \mathcal{O}_{\mathbb{P}^2_{\k}}(2) \to \mathcal{O}_{\mathbb{P}^2_{\k}}(2) \to \k(\xx) \to 0.\]
As $h^0(\mathbb{P}^2_{\k}, \mathcal{O}(2))=6$ and $h^0(\mathbb{P}^2_{\k}, \k(\xx))=3$, we deduce that there is a web of conics containing $\xx$. As $\xx$ is not contained in a line, all the conics of the web are integral and thus by \autoref{lem: rat_pt_Fano_curve} we conclude all members are geometrically irreducible.
%	Consider a polynomial $F=ax^2+by^2+c+dxy+ex+gy\in\k[x,y]$ of degree $2$, and write $h(x)=\sum_{i=0}^2h_ix^i$ and $f(x)=\sum_{i=0}^2 f_ix^i$ with $h_i,f_i\in\k$.
%	We plug in the geometric coordinates $(t^{1/3}, \frac{f(t^{1/3})}{h(t^{1/3})})$ of the point $\xx$, obtaining 	$F(t^{1/3}, \frac{f(t^{1/3})}{h(t^{1/3})})\in \k(t^{1/3})$. We have $\frac{f(t^{1/3})}{h(t^{1/3})})=0$  if and only if
%	\[
%	F_1+F_2t^{1/3}+F_3t^{2/3}=0
%	\]
%	for some linear polynomials $F_1,F_2,F_3\in \k[a,b,c,d,e,g]$.
%Since $1,t^{1/3},t^{2/3}$ are $\k$-linearly independent, the polynomial $F\in\k[x,y]$ vanishes at $(t^{1/3}, \frac{f(t^{1/3})}{h(t^{1/3})})$ exactly if the point $[a:b:c:d:e:g]\in\p^5_\k(\k)$ corresponding to $F$ lies in the intersection of the hyperplanes $(F_1=0)$, $(F_2=0)$ and $(F_3=0)$.
%Their intersection is a linear subspace $\mathcal L$ of $\p^5_\k$ of dimension $\geq2$. With \autoref{rmk:Conic geom integral or linear}, either $\mathfrak m_\xx$ contains a linear form, or $\mathcal L(\k)$ contains a web of conics, where all members are geometrically irreducible. As the characteristic of $\k$ is 3, we deduce that the generic member is smooth.
\end{proof}

\begin{lemma}\label{lem:deg-5-conic-or-line}
	Let $\k$ be a field, and $\xx\in\mathbb{P}^2_{\k}$ be a point of degree $5$.
	Then $\xx$ is contained in a line, or there exists a  geometrically integral conic containing $\xx$.
\end{lemma}

\begin{proof}
Let us suppose that $\xx$ does not belong to a line.
Consider
\[0 \to \mathcal{I}_{\xx} \otimes \mathcal{O}(2) \to \mathcal{O}_{\mathbb{P}^2_{\k}}(2) \to \k(\xx) \to 0.\]
As $h^0(\mathbb{P}^2_{\k}, \mathcal{O}_{\mathbb{P}^2_{\k}}(2))=6$ and $h^0(\k(\xx))=5$, we deduce that there is a conic $C$ containing $\xx$, which is integral as $\xx$ is not contained in a line.
The conic $C$ must be geometrically integral by \autoref{lem: rat_pt_Fano_curve}.
\end{proof}

\begin{lemma} \label{lem:cubic_curve}
    Let $\k$ be a field, and $Z \subset \in\mathbb{P}^2_{\k}$ be a reduced 0-dimensional subscheme of degree at most $9$.
    Then there exists a cubic curve $Q$ containing $Z$.
\end{lemma}

\begin{proof}
Consider the short exact sequence 
$$ 0 \to \mathcal{I}_{Z} \otimes \mathcal{O}_{\mathbb{P}^2_{\k}}(3) \to \mathcal{O}_{\mathbb{P}^2_{\k}}(3) \to \mathcal{O}_{Z} \to 0.$$
As $h^0(\mathbb{P}^2_{\k}, \mathcal{O}(3))=10$ and $h^0(\mathbb{P}^2_{\k}, \mathcal{O}_Z) \leq 9$, there exists a cubic curve containing $Z$.
\end{proof}

\subsection{Points in Sarkisov general position in $\mathbb{P}^2_{\k}$} \label{ss: sarkisov_position}
Let $\xx_{1}, ..., \xx_{r}$ be distinct closed points in $\mathbb{P}^2_{\k}$ and let $d_i \coloneqq [\k(\xx_i) :k]$.
Let $X$ be the blow-up of $\mathbb{P}^2_{\k}$ at $\xx_{1}, \dots, \xx_{r}$.
In this subsection, we aim to classify when $X$ is a del Pezzo surface.

We briefly recall what happens in the case of closed points with residue fields that are separable over $k$. Blowing-up a collection of points $T \coloneqq \left\{\xx_1, \dots, \xx_r \right\}$ on $X$ whose residue fields are separable extension of $\k$ is the same as blowing up a collection of $\Gal(\k^{\rm sep}/\k)$-orbits of rational points on $X_{\k^{\rm sep}}$. In that case, the blown-up surface $X$ is del Pezzo if and only if the base change $T_{\k^{\rm sep}}$ is in general position in $\p^2_{\k^{\rm sep}}$, i.e. the irreducible components of $T_{\k^{\rm sep}}$ satisfy that no three are collinear, no six lie on an integral conic and no eight on a singular cubic with one being the singular point. 
As in the more general setting of inseparable residue field we cannot argue in terms of Galois orbits, we prove a statement which involves only the ground field. Fortunately, the same strategy as in \cite[Théorème 1]{demazure80} works, with some extra care.

We start with a couple of preliminaries results.
\begin{lemma} \label{lem: easy_criterion}
The following hold.
    \begin{enumerate}
        \item If $X$ is a del Pezzo surface, then $\sum_{i} d_i \leq 8.$
        \item If $\sum_{i \geq 1} d_i \leq 8,$ then $X$ is a del Pezzo surface if and only if there does not exist an irreducible curve $C$ such that $K_X \cdot C \geq 0.$
    \end{enumerate}
\end{lemma}

\begin{proof}
If $X$ is a del Pezzo surface, then $K_X^2>0$.
As the equality $K_X^2=9 - \sum_{i}d_i$ holds, (1) follows immediately.
As for (2), it is a consequence of the Nakai--Moishezon criterion for ampleness.
\end{proof}

\begin{lemma} \label{lem: sing_curves}
    Let $\k$ be a field. The following hold:
    \begin{enumerate}
        \item  A singular integral conic $C \subset \mathbb{P}^2_{\k}$ has a unique singular point $\xx$, which is a rational point. Moreover, if $C$ is not geometrically reduced, then $p=2$.
        \item A singular integral cubic $C\subset \mathbb{P}^2_{\k}$ has a unique singular point $\xx$, and $(\deg(\xx),\mult_\xx(C))\in\{(1,2), (1,3),(3,2)\}$.
       Moreover, if $C$ is not geometrically reduced, then $p=3$ and the singular point is rational.
    \end{enumerate}
\end{lemma}

\begin{proof}
We discuss the case of cubics (2), as the case of conics follows the same strategy and it is easier.

Let $\xx$ be a singular point of an integral cubic $C$. Passing to an algebraic closure, $\xx_{\bar\k}$ is supported on the singular locus of the cubic $C_{\bar\k}$. We deduce the result going through the well-known classification of singular cubic curves over $\overline{\k}$. 

If $C_{\overline{\k}}$ is integral, it contains a unique singular closed point with multiplicity 2 (which is either a node or a cusp).
As also $C_{\k^{\sep}}$ is integral and it is singular, there is a unique closed point $\yy \in C_{\k^{\sep}}$ contained in its singular locus.  
We now prove that $\yy$ is a rational point of $C_{\k^{\sep}}$.
By the classification on the algebraic closure, we know that $C_{\k^{\sep}}^{\nu} \simeq \mathbb{P}^1_{\k^{\sep}}$ and that the ramification locus $D \subset \mathbb{P}^1_{\k^{\sep}}$ of the normalisation $\nu \colon C_{\k^{\sep}}^{\nu} \to C_{\k^{\sep}}$ satisfies $\dim_{\k^{\sep}}H^0(\mathcal{O}_D)=2$.
By \cite[Proposition A.2]{FS20a} we have $\length \mathcal{O}_{D, \yy}=2\length \mathcal{O}_{P, \yy}$, where $P$ is the conductor locus of $\nu$ and the length are computed as $\mathcal{O}_{C_{\k^{\sep},\yy}}$-module.
Therefore,
$2= \dim_{\k^{\sep}}H^0(\mathcal{O}_D) = 2 \dim_{\k^{\sep}}H^0(\mathcal{O}_P) \geq 2[\k(\yy):\k^{\sep}]$ and we conclude that $\yy$ is a $\k^{\sep}$-rational point.
By Galois descent, as $\yy$ is the unique singular point of $C_{\k^{\sep}}$ and it is rational, it descends to a rational point $\xx$ of $C$ and thus $(\deg(\xx),\mult_\xx(C))=(1,2)$.

If $C_{\bar\k}$ not integral, then we distinguish the three following cases:
\begin{itemize}
    \item Suppose $C_{\overline{\k}}=L \cup D$, where $L$ is a line and $D$ is an irreducible conic such that $L$ and $D$ have different support. Then the same holds true for $C_{\k^{\sep}}$ as $C_{\overline{\k}} \to C_{\k^{\sep}}$ is a universal homeomorphism. As $L$ and $D$ cannot be interchanged by the action of $\Gal(\k^{\sep}/\k)$ (as their degree is different), this contradicts that $C$ is integral. 
    \item Suppose $C_{\overline{\k}}=L_1 \cup L_2 \cup L_3$, where $L_i$ are distinct lines. Then the same holds true for $C_{\k^{\sep}}$ as $C_{\overline{\k}} \to C_{\k^{\sep}}$ is a universal homeomorphism. 
    We distinguish two cases now. 
    If the three lines do not have a common intersection point, as $C$ is integral, the Galois group acts transitively on the irreducible components and we deduce that $\deg(\xx)=3$ and $\mult_\xx(C)=2$.
    If the three lines have a common intersection point, we instead deduce that $\deg(\xx)=1$ and $\mult_\xx(C)=3$.
    \item Suppose $C_{\bar\k}$ is a triple line. Then $C_{\bar\k}$ has equation of the form $(\alpha x+\beta y+\gamma z)^3=0$ with $\alpha,\beta,\gamma\in \overline{\k}$.
    As $C$ is integral, we deduce that $p=3$ by \cite[Theorem 7.3]{Tan21} and thus the equation of $C$ is $
    (a x^3+b y^3+c z^3=0)$ for $a, b, c \in \k$ and we can assume $a=1$ without any loss of generality.
    As $C$ is singular and integral,  by the Jacobian criterion \autoref{lem: jacobian} we deduce that its equation is of the form $x^3+ty^3+sz^3=0$, with $t\in\k \setminus \k^3$ and $s$ belongs to the field generated by $\k^3$ and $t$.
    This implies that $C$ has a unique singular point with $(\deg(\xx),\mult_\xx(C))=(1,3)$.
\end{itemize}
\end{proof}

\begin{figure}
    \centering
    \begin{tikzpicture}
    \draw[label=above] (-3.3,0) -- (-0.7,0) node[midway,below] {$L_2$} node[midway,above] {};
    \draw[] (-1,-0.3)--(-2.3,1.3) node[midway, right] {$L_3$} node[near end,left] {};
    \draw[] (-3,-0.3)--(-1.7,1.3) node[midway,left]
    {$L_1$} node[midway,right] {};

    \draw[] (2,-0.3)--(2,1.3) node[very near start,below] {$L_2$} node[near end,right] {};
    \draw[] (1,-0.3)--(3,1.3) node[very near end, below] {$L_3$} node[near end,left] {};
    \draw[] (3,-0.3)--(1,1.3) node[very near end,below]
    {$L_1$} node[midway,below] {};

    \draw[] (6,-0.3)--(6,1.3) node[very near start, below] {$3L$} node[near end,right] {};
    \draw[] (5.8,-0.3)--(6.2,1.3) node[very near end, below] {} node[near end,left] {};
    \draw[] (6.2,-0.3)--(5.8,1.3) node[very near end,below]
    {} node[midway,below] {};
    
    \end{tikzpicture}
    \caption{$C_{\bar\k}$ for $(\deg(\xx),\mult_\xx(C))\in\{ (1,3),(3,2)\}$}
    \label{fig:my_label}
\end{figure}

\begin{theorem} \label{thm: pts_Sarkisov_general_position}
The surface $X$ obtained as the blow-up of $\mathbb{P}^2_{\k}$ at the closed points $\xx_1, \xx_2, \dots, \xx_r$ is a del Pezzo surface if and only if $\sum_{i \geq 1} d_i \leq 8$ and the following conditions are satisfied:
\begin{enumerate}
    \item \label{i:line} there is no closed subscheme $Z \subset T \coloneqq \bigcup_{i=1}^r \xx_i$ of degree at least 3 contained in a line $L$;
    \item \label{i: conic1}there is no closed subscheme $Z \subset T$ containing both the rational point in the singular locus of an integral conic $C$ and a closed subscheme $Z' \subset Z$ of degree at least 4 in the regular locus of $C$;
    \item \label{i: conic2} there is no closed subscheme $Z \subset T$ of degree at least 6 contained in the regular locus of an integral conic $C$;
    \item \label{i:cubic1} there is no closed subscheme $Z \subset T$ containing both a rational point with multiplicity 3 of an integral cubic curve $Q$ and a closed subscheme $Z' \subset Z$ of degree at least 6 contained in the regular locus of $Q$;
    \item \label{i:cubic2} there is no closed subscheme $Z \subset T$ containing both a closed point of degree 3 with multiplicity 2 of an integral cubic curve $Q$ and a closed subscheme $Z' \subset Z$ of degree at least 3 contained in the regular locus of $Q$
    \item \label{i:cubic3}  there is no closed subscheme $Z \subset T$ containing both a rational point with multiplicity 2 of an integral cubic curve $Q$ and a closed subscheme $Z' \subset Z$ of degree at least 7 contained in the regular locus of $Q$.
\end{enumerate}
\end{theorem}

\begin{proof}
We can assume $\k$ is infinite, as otherwise the result can be deduced from \cite[Théorème 1]{demazure80} and the possible actions of the absolute Galois group.
Note that we have  
$$\Pic(X)= \mathbb{Z}[E_0] \oplus \bigoplus_{i=1}^r \mathbb{Z}[E_i],$$ where $E_0$ is the strict transform of a $\k$-line not passing through any of the $\xx_i$ (which exists as $\k$ is infinite), and, for each $i \geq 1$, $E_i$ is the exceptional divisor over $\xx_i$.
Note that $E_0^2=1, E_i^2=-d_i$ and $E_i \cdot E_j=0$ if $i \neq j$.

By \autoref{lem: easy_criterion}, $X$ is del Pezzo if and only if there are no curves $C$ such that $K_X \cdot C \geq 0$. Assume such a curve $C$ exists: we show that one of the conditions (1)--(6) is violated.
We write the linear equivalence $$C \sim a_0E_0-\sum_{i \geq 1} a_i E_i$$ for some coefficients $a_i \in \mathbb{Z}$.
Note that $K_X \cdot C \geq 0$ is equivalent to 
\begin{equation} \label{eq:courbes_anomales}
    3a_0 \leq \sum_{i = 1}^{r} a_id_i. 
\end{equation}

As $\xx_i$ are distinct, $C$ is the strict transform of an integral curve $\Gamma \subset \mathbb{P}^2_\k$ of degree 
$a_0= C \cdot E_0= \Gamma \cdot L = \deg (\Gamma).$
Moreover, by \cite[Proposition 9.2.23]{Liu02}, we have for $i \geq 1$

$$a_id_i=C \cdot E_i= \text{mult}_{\xx_i}(\Gamma) \cdot d_i;$$
which implies that $a_i=\text{mult}_{\xx_i}(\Gamma)$.
Without loss of generality, we can reorder the coefficients such that $a_1 \geq a_2 \geq \dots \geq a_d.$

Suppose $a_0=1$. In this case, $\Gamma$ is a line and thus $a_i=\mult_{\xx_i}\Gamma \leq 1$. Thus a line satisfying \autoref{eq:courbes_anomales} is equivalent to a line violating condition \autoref{i:line}.

Suppose $a_0=2$. In this case, $a_1 =\mult_{\xx_{1}} \Gamma \leq 2$.
Suppose $a_1=2$. As $\Gamma$ is an irreducible conic with a singular point, then the singular point is a rational point and it is unique by \autoref{lem: sing_curves}.
Thus $d_1=1$ and $a_i \leq 1$ for $i \geq 2$. Equation \autoref{eq:courbes_anomales} becomes $4 \leq \sum_{i \geq 2} d_i$, which is equivalent for the curve $C$ to violate \autoref{i: conic1}.
Suppose $a_1=1$. This shows that $a_i=1$ for all $i \geq 1$ and it implies $6 \leq \sum_i d_i$, violating \autoref{i: conic2}.

Suppose $a_0=3$, so that in this case we have $a_1 \leq 3$.
Suppose first that $a_1=3$. As $C$ is an irreducible cubic with a point $\xx_{1}$ of multiplicity 3, we deduce $d_1 = 1$ and $a_i \leq 1$ for $i \geq 2$ by \autoref{lem: sing_curves}. Thus the equation \autoref{eq:courbes_anomales} becomes $6 \leq \sum_{i \geq 2} d_i $, thus showing $C$ violates \autoref{i:cubic1}. 
Suppose $a_1=2$. In this case,
$C$ is an integral cubic with a unique singular point $P$ of multiplicity $2$ by \autoref{lem: sing_curves}, so that $a_i \leq 1$ for $i \geq 2$. We need to distinguish two more subcases.
If $P$ has degree 3, the equation \autoref{eq:courbes_anomales} becomes $3 \leq \sum_{i \geq 2} d_i$, showing $C$ violates condition \autoref{i:cubic2}. 
If $P$ has degree 1, the equation \autoref{eq:courbes_anomales} becomes $7 \leq \sum_{i \geq 2} d_i$, showing $C$ violates condition \autoref{i:cubic3}.
Finally, suppose $a_1=1$. In this case, $a_i=1$ for all $i\geq 1$ and we have $9 \leq \sum_{i} d_i$, which contradicts the hypothesis $8 \leq \sum_i d_i$.

Suppose now that $a_0 \geq 4$. To conclude, we have to show there does not exist a curve $C$ with $K_X \cdot C \geq 0$.
Since $\sum_i d_i \leq 8$, by \autoref{lem:cubic_curve} there exists a cubic curve $Q$ passing through all the closed points $\xx_i$ and at least through another additional closed point $R$ of the irreducible curve $\Gamma.$ 
Note that, as $a_0 \geq 4$, the cubic $Q$ has no common components with the integral curve $\Gamma$, and thus we can apply Bézout to deduce that
$$ \sum_{i=1}^r a_i d_i +[\k(R):\k] \leq Q \cdot \Gamma =3 a_0 \leq \sum_{i=1}^r a_i d_i, $$
reaching a contradiction.
\end{proof}

\begin{example}
We collect some examples of closed points (mostly with purely inseparable residue field) that are not in Sarkisov position.
    \begin{enumerate}
        \item Let $k=\mathbb{F}_3(t)$, and consider the point $(x^3-tz^3,y)$ of degree 3. It lies on the line $(y)$, therefore it violates the condition \autoref{i:line}.
        \item Fix $k=\mathbb{F}_2(t,s)$, and consider the closed point $(x^2+tz^2,y^2+sz^2)$ of degree 4  together with the rational singular point $[0:1:0]$ of the integral non-regular conic $(x^2+tz^2)$. This collection of two points violate the condition \autoref{i: conic1}.
        \item Fix $k=\mathbb{F}_2(s,t)$, and consider the point $(x^2+tz^2,y^2+sz^2)$ of degree 4  and the point $(x^2+tz^2,y)$ of degree 2 . Both are contained in the integral (non-regular) conic $(x^2+tz^2)$ and thus this case violates condition \autoref{i: conic2}.
        \item
        Fix $k=\mathbb{F}_2(s_1, s_2, t_1, t_2)$, and consider the point $(s_1x^2+s_2y^2+z^2, t_1x^2+t_2y^2+z^2)$ of degree 4 together with the $2$-point $(s_1x^2+s_2y^2+z^2,y)$. They both lie on the regular conic $(s_1x^2+s_2y^2+z^2)$ and thus this case violates condition \autoref{i: conic2}.
        \item Fix $\k=\Q$ and consider the point $(x^3-2,x^2-y)$ of degree 3. Let $Q$ be the cubic with $Q_{\bar\k}=L_1\cup L_2\cup L_3$, where $L_1,L_2,L_3$ are the three $\bar\k$-lines passing through two of the three components of $\xx_{\bar\k}$, and let $\yy$ be some other point lying on $Q$. Then $\deg\yy\geq 3$, and the union of $\xx$ and $\yy$ violates condition \autoref{i:cubic2}.
    \end{enumerate}
    
\end{example}

%===========================================
\subsection{Existence and examples of Sarkisov links} \label{ss: existece_Sarkisov_links}
In this section, we provide examples of each Sarkisov link in \autoref{thm:classification links p>2} and describe geometrically the contracted curve.
Our strategy is the following: we use  \autoref{thm: pts_Sarkisov_general_position} to deduce that a point $\xx\in\p^2_\k$ is in Sarkisov general position. Hence, its blow-up $X$ is a del Pezzo surface of $\rho(X)=2$ giving rise to a Sarkisov link $\chi_{\xx,\yy}\colon \p^2_\k\link ab Y$. The information $\deg(\yy), a,b, K_Y^2$ is uniquely determined by $\deg(\xx)$ (and $K_{\p^2_\k}^2$), and the numerological information of the curve contracted onto $\yy$ comes from \autoref{prop: link_I} and \autoref{prop: link II dP}.

	\subsubsection{Link $2:1$ from $\p^2_{\k}$ to $X_8$}

\begin{lemma}\label{lem:general position deg 2}
	Let $\k$ be a field, and let $\xx\in\p^2_\k$ be a point of degree $2$. Then $\xx$ is in Sarkisov general position. Moreover, the associated Sarkisov link $\chi_{\xx,\yy}\colon\p^2_\k\link21 X_8$ contracts the line containing $\xx$.
\end{lemma}

\begin{proof}
%	By \autoref{lem:point in A^2}, we can view $\xx$ as point in $\A^2_{\k}$.
	By \autoref{lem:deg-2-conic-or-line}, up to an automorphism of $\p^2_\k$, we 
	%have $\mathfrak{m}_{\xx}=(F,y)$ for some $F\in\k[x]$ of degree $2$. 
	%Hence, the line $L\subset\p^2_\k$ given by $(y=0)$ contains $\xx$.
	can assume that $\xx$ is contained in the line $L=(y=0) \subset \p^2_\k$.
	As $(\xx_{\bar\k})_{\rm red}$ consists of one or two $\bar\k$-points. As $(\xx_{\bar\k})$ has degree 2, \autoref{thm: pts_Sarkisov_general_position} immediately yields that the blow up $\pi\colon X\to \p^2_\k$ at $\xx$ is a del Pezzo surface, meaning that $\xx$ is in Sarkisov general position.

		Moreover, note that the strict transform $\tilde L$ of $L$ under the blow-up satisfies $\tilde L\equiv H-E_\xx$, where $H$ is the pull-back of a general line, and so it is the curve contracted by the Sarkisov link $\chi_{\xx,\yy}$ by \autoref{prop: link II dP}.
\end{proof}

\begin{example}\label{ex:link deg2}
	Let $\xx\in\p^2_\k$ be a point of degree $2$. By \autoref{lem:general position deg 2}, it is in Sarkisov general position on $\p^2_{\k}$, and hence, there exists a Sarkisov link $\chi_{\xx,\yy}\colon\p^2_{\k}\link21 X_8$ with base point $\xx$ contracting the line through $\xx$ by \autoref{prop: link II dP}.
	Moreover, $X_8$ is a regular quadric in $\p^3_\k$ as in \autoref{rem: points on quadric}.
	We describe now $\chi_{\xx,\yy}$ explicitely, and we will observe that $X_8$ is not smooth exactly if $\k(\xx)/\k$ is purely inseparable.

	We can assume after a linear change of coordinates that $\xx\in\A^2_\k=\p^2_\k\setminus(z=0)$, and that $\mathfrak{m}_\xx=(g(x),y)$, where $g\in\k[x]$ is irreducible of degree $2$ by \autoref{lem:deg-2-conic-or-line}.
	Denoting the homogenisation of $g$ in $\k[x,z]$ again by $g$, set $Q: = \left(g(x,z)=yw\right) \subset \p^3_{\k}$, with coordinates $[w:x:y:z]$. This is a regular quadric with $\rho(Q)=1$.
	We have a birational map
		\[
		\chi_{\xx,\yy}\colon \p^2_\k\dashedrightarrow Q,\quad [x:y:z]\mapsto[g(x,z): xy: y^2: yz],
		\]
	which factors through the blow-up of $\xx$ and contracts the line $(y=0)$. Hence, this is a Sarkisov link $\chi_{\xx,\yy}\colon\p^2_\k\link21 Q$ with $\yy=[1:0:0:0]$.

	If $\k(\xx)$ is separable over $\k$, then $Q$ is smooth. Otherwise, the characteristic of $\k$ is $2$, and $g=x^2-t$ for some $t\in\k\setminus\k^2$, and $Q_{\bar{\k}}$ has a singularity of type $A_1$, since it is a cone over a smooth conic curve.
\end{example}

\subsubsection{Link $3:3$ on $\p^2_{\k}$}

\begin{proposition}\label{prop:link_P2_33}
	Let $\k$ be a field, and let $\xx\in\p^2_\k$ be a point of degree $3$ not contained in a line. %such that $\mathfrak{m}_\xx$ does not contain a linear form.
	Then $\xx$ is in Sarkisov general position, and the associated Sarkisov link $\chi_{\xx,\yy}\colon \p^2_{\k}\link33\p^2_{\k}$ with base point $\xx$ satisfies $k(\yy)\simeq k(\xx)$. Moreover, $\chi_{\xx,\yy}$ contracts the curve $T\subset\p^2_\k$, where $T$ is a curve of degree $3$ as follows:
	\begin{enumerate}
		\item If $k(\xx)/\k$ is separable, then $T_{\bar\k}$ is the union of the three lines going through two of the three points in $(\xx_{\bar\k})$.
		\item If $k(\xx)/\k$ is purely inseparable, let $C\subset\p^2_\k$ be a conic containing $\xx$, and let $(l=0)\subset\p^2_{\bar\k}$ denote the tangent line to $C_{\bar\k}$ at $(\xx_{\bar\k})_{\rm red}$.
		Then $T_{\overline{k}}=(l^3=0)\subset\p^2_{\overline{k}}$.
	\end{enumerate}
\end{proposition}

\begin{proof}
	%By \autoref{lem:point in A^2}, we can assume that $\xx\in\A^2_{\k}=\p^2_{\k}\setminus(z=0)$.
	The conditions of \autoref{thm: pts_Sarkisov_general_position} are satisfied, and hence $\xx$ is in Sarkisov general position.
	Let $C =(g=0)$ be a geometrically integral conic in $\p^2_\k$ passing through $\xx$ given by \autoref{lem:deg-3-conic-or-line}.
	
	If $\k(\xx)/\k$ is separable, then the result is classical.
	So we assume that $k(\xx)/\k$ is purely inseparable and that $\xx=(x^3-t, b(x)y-f(x))$ by \autoref{lem:general_form_maximal_ideal}.
	The equation for the tangent of $C_{\bar\k}$ at the $\bar\k$-rational point $\yy = (\xx_{\bar\k})_{\rm red} $ is given by the zero set of
	\[
		\ell= \partial_x g|_{\yy}\cdot x + \partial_y g|_{\yy}\cdot y +\partial_z g|_{\yy}\cdot z \in k(t^{1/3})[x,y,z],
	\]
	hence $T=(\ell^3=0)$ defines an integral curve on $\p^2_\k$ (that is geometrically a triple line).
	Note that $\xx$ is a singular point of $T$ with multiplicity $2$. Hence, $T$ is the curve contracted by $\chi_{\xx,\yy}$ from \autoref{prop: link II dP}. Moreover, denoting by $\tilde T$ the strict transform of $T$ under the blow-up of $\p^2_\k$ at $\xx$, we see that $k(\yy)\simeq\Gamma(\tilde T,\O_{\tilde T})=\k(t^{1/3})=k(\xx)$.
\end{proof}

\begin{example}\label{ex:3-3-link}
	Assume that $\k$ is an imperfect field of characteristic $3$ and that $t\in\k\setminus\k^3$. The birational involution $\chi\colon\p^2_{\k}\rat\p^2_{\k}$ given by
  		\[[x:y:z]\mapsto [tz^2-xy:y^2-txz:x^2-yz]\]
  	is not defined at the point of degree $3$ corresponding to the point $\xx$ with $\mathfrak{m}_\xx=(x^3-t,x^2-y)$, and contracts the cubic curve $C=\left(tx^3+y^3+t^2z^3=0\right)$.
	Note that the line $(C_{\bar{\k}})_{\text{red}}=\left( t^{1/3}x+y+t^{2/3}z=0 \right)$ is tangent to $\left(x^2-yz=0 \right)$ at $(\xx_{\bar\k})_{\rm red}=[t^{1/3}:t^{2/3}:1]$ in $\p^2_{\bar{\k}}$.
\end{example}

\subsubsection{Link $5:1$ from $\p^2_\k$ to $X_5$}

\begin{proposition}\label{prop:link_P2_51--15}
	Let $\k$ be a field, and let $\xx\in\p^2_\k$ be a closed point of degree $5$.
	Then $\xx$ is in Sarkisov general position if and only if $\xx$ is not contained in a line. %$\mathfrak{m}_\xx$ does not contain a linear form.

	Assume now that $\xx$ is in Sarkisov general position, and let $\chi_{\xx,\yy}\colon \p^2_{\k}\link51 X_5$ be the associated Sarkisov link with base point $\xx$.
	Then $\chi_{\xx,\yy}$ contracts the conic $C\subset\p^2_{\k}$ containing $\xx$.
	Moreover, let $\yy'\in X_5$ be any rational point.
	Then the following hold.
	\begin{enumerate}
		\item\label{it:link_P2_51--15}
		The point $\yy'\in X_5$ is in Sarkisov general position on $X_5$.
		\item\label{it:link_P2_51--15 v2} The Sarkisov link $\chi_{\yy',\xx'}\colon X_5\link15\p^2_\k$ associated to $\yy'\in X_5$ contracts the curve $(\chi_{\xx,\yy})_*(T)$, where  $T\subset\p^2_{\k}$ is a curve of degree $10$ as follows:
		\begin{enumerate}
			\item If $k(\xx)/\k$ is separable, then $T$ is the union of the five $\bar\k$-conics that go through $\chi_{\xx,\yy}^{-1}(\yy')$ and four of the five points of $\xx_{\bar\k}$.
			\item If $k(\xx)/\k$ is purely inseparable, then $D=(T_{\bar\k})_{\rm red}\subset\p^2_{\bar\k}$ is the conic that contains $\chi_{\xx,\yy}^{-1}(\yy_{\bar\k})\in\p^2_\k$ and meets $C_{\bar\k}$ only at $(\xx_{\bar\k})_{\rm red}$.
		\end{enumerate}
		\item $\k(\xx)\simeq\k(\xx')$.
	\end{enumerate}
\end{proposition}

\begin{proof}
	%By \autoref{lem:deg-5-conic-or-line}, either $\xx$ is contained in a line
	%$\mathfrak{m}_\xx$ contains a linear form (and then $\xx$ is not in Sarkisov general position), or there exists a geometrically integral conic $C\subset\p^2_\k$ containing $\xx$. In the latter case,
	As $\xx$ is irreducible and it is not contained in a line, the assumptions of \autoref{thm: pts_Sarkisov_general_position} are satisfied and hence $\xx$ is in Sarkisov general position.
	By \autoref{lem:deg-5-conic-or-line} there exists a geometrically integral conic $C\subset\p^2_\k$ containing $\xx$
	By \autoref{prop: link II dP}, the Sarkisov link $\chi_{\xx,\yy}\colon \p^2_{\k}\link51 X_5$ associated to $\xx$ contracts the conic $C$ onto $\yy$.

	Now let $\yy'\in X_5$ be a rational point. If $\yy'=\yy$, then $\yy'$ is in Sarkisov general position by definition of a Sarkisov link. Suppose that $\yy'\neq\yy$ and let $Y\to X_5$ be the blow-up at $\yy'$, and $Z\to Y$ the blow-up in $\yy$. Note that $Z$ can also be seen as the blow-up $Z\to\p^2_\k$ at $\xx$ and $\yy'$.

	We will construct a curve $T\subset\p^2_\k$ such that its strict transform $\tilde T$ on $Z$ satisfies $\tilde T\equiv 10L-4E_\xx-5E_{\yy'}$. As a consequence of the numerical data, we see that $\tilde T\subset Z$, as well as its strict transform on $Y$ after contracting $E_\yy$, is an exceptional curve of the first kind.
	As $\rho(Y)=2$, this is the second extremal ray on $Y$, and hence $Y$ is a del Pezzo surface. Therefore, $\yy'\in X_5$ is in Sarkisov general position, and the associated Sarkisov link $\chi_{\yy',\xx'}\colon X_5\link15\p^2_\k$ from \autoref{prop: link II dP} contracts $(\chi_{\xx,\yy})_*(T)$ onto a point $\xx'\in\p^2_\k$ of degree $5$.

	We construct now $T$ as in \autoref{it:link_P2_51--15 v2}. Write $K=k(\xx)$. Recall that $C$ is a geometrically integral conic containing $\xx$.
	As $(\xx_K)_{\rm red}$ is a $K$-point of $C_K$, there is a pencil $\mathcal L$ of conics $D$ in $\p^2_K$ that meet $C_K$ only in the point $(\xx_K)_{\rm red}$; that is, the local intersection of $D$ and $C_K$ at $(\xx_K)_{\rm red}$ is $4$.
	Since $\yy$ does not lie on $C$, also $\yy'_K$ does not lie on $C_K$, and so there is a unique conic $(P=0)$ in the pencil $\mathcal L$ that contains $\yy'_K$, with $P\in K[x,y,z]$.
	Then $T:=(P^5=0)$ is defined over $\k$.
	By construction of $T$ it contains $\xx$ with multiplicity $4$, and $\yy'$ with multiplicity $5$.
	Moreover, $\Gamma(T,\O_T)=\k(t^{1/5})$.
	In particular, $k(\xx)\simeq k(T)\simeq k(\xx')$.
\end{proof}

\subsubsection{Link 4:4 on $X_8$}

\begin{example}\label{ex: X8_44}
	Let $\k$ be an imperfect field with $p=2$ with $t\in\k\setminus\k^2$. Let $\yy$ in $\p^2_\k$ be a point of degree $2$ with $\mathfrak{m}_\yy=(x^2-t,y)$ and $\xx$ in $\p^2_\k$ be a point of degree $4$ with $\mathfrak{m}_\xx=(x^4-t,y-x^2)$. Let $\chi_{\yy,\yy'}\colon \p^2_\k\link21 X_8$  be the Sarkisov link based at $\yy$ (it exists by \autoref{lem:general position deg 2}).
	We will show that $\chi_{\yy,\yy'}(\xx)$ is a point of degree $4$ on $X_8$ that is in Sarkisov general position.
	We work over $K=\k(\xx)\supset\k(\yy)$. There exists a unique conic $(g=0)$ in $\p^2_K$ that has local intersection $2$ with the line $(y=0)$ at $(\yy_K)_{\rm red}$, and local intersection $3$ with the geometrically integral conic $(y-x^2=0)$ at $(\xx_K)_{\rm red}$. Then $D=(g^4=0)$ defines a curve in $\p^2_\k$, and its strict transform on $X_8$ is a curve such that on the blow-up $Y$ of $\xx$ it is an exceptional curve of the first kind.
	In particular, it is the second extremal ray of $Y$, so $Y$ is del Pezzo.
\end{example}

\subsubsection{Links of type \I from $\p^2_\k$ to a conic bundle $X_5/\p^1_\k$}

\begin{example}
Let $\k$ be an imperfect field of characteristic $2$ with $t\in\k\setminus\k^2$ and $\xx\in\p^2_\k$ a point with $\mathfrak m_{\xx}=(x^4-t,y-x^2)$. %[t^{1/4}:t^{1/2}:1]
Blowing up $\xx$ yields a regular surface $X$ that carries a conic fibration $X\to\p^1_\k$ given by the pencil $\lambda (yz-x^2)+\mu(y^2+tz^2)=0$, so it is a Mori conic fibration. \autoref{prop:  Mori-conic-bdl-dP} implies that $X$ is a del Pezzo surface of degree $K_X^2=5$.
\end{example}

\begin{example} \label{ex: Mori-conic-double-lines}
	Let $\k$ be an imperfect field of characteristic $p=2$ with $\pdeg(\k)\geq2$, and let $s,t\in\k\setminus\k^2$ be two $p$-independent elements (see \autoref{def: p-basis}), that is, $1,s,t$ are $\k^2$-linearly independent.
	We construct a rational Mori conic bundle $\pi\colon X \to \p^1_\k$ such that $X$ is not geometrically normal.
	Consider the closed point $\xx\in  \mathbb{P}^2_\k$ with $\mathfrak{m}_\xx=(x^2-s, y^2-t)$ and residue field $\k(\xx)=\k(s^{\frac{1}{2}}, t^{\frac{1}{2}})$, and let $X \to \mathbb{P}^2_\k$ be the blow-up of $\xx$.
	The pencil of conics
	\[
	C_{[\mu:\nu]}:=\left(\mu(x^2-sz^2)+\nu(y^2-tz^2)=0 \right)
	\]
through $\xx$ induces a Mori conic bundle
	$\pi \colon X \to \mathbb{P}^1_\k$, and $X$ is not geometrically normal along the exceptional divisor $E$.
	Note that all closed fibres are not smooth as they are geometrically double lines.
	We show now that there is no point $\xx$ on $X/\p^1_\k$ that is in Sarkisov general position. Indeed, otherwise by \autoref{lem:k(x)=k(b)} we have that $\k(\xx)\simeq\k(\pi(\xx))$  and that $\xx$ lies in the regular locus of the fibre $X_{\k(\pi(\xx))}$. Therefore, the fibre is geometrically reduced, which is a contradiction.

	Moreover, all fibres are integral.
	One can show that the fibre $X_{[\mu:\nu]}$ over $[\mu:\nu]$ is regular for infinitely many values of $[\mu: \nu]$. In fact, one can also show, using derivations, that it is non-regular for infinitely many values.

	Notice that \autoref{prop:  Mori-conic-bdl-dP} implies that $X$ is a del Pezzo surface of degree $K_X^2=5$.
\end{example}

\subsubsection{Links of type \I from $X_8$ to a conic bundle $X_6/\p^1_\k$}

\begin{lemma}\label{lem:point deg2 on quadric general position}
	Let $\k$ be any field, and let $Q\subset\p^3_{\k}$ be a rational regular quadric surface with $\rho(Q)=1$. Then any point $\xx\in Q$ of degree $2$ is in Sarkisov general position on $Q$.
\end{lemma}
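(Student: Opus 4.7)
The plan is to directly verify that the blow-up $\pi\colon Y\to Q$ at $\xx$ is a rank $2$ fibration over $\Spec(\k)$, i.e.\ a del Pezzo surface of Picard rank $2$. Since the blow-up of a regular surface at a closed point is regular (\autoref{lem: Castelnuovo_contraction}), and since $\rho(Y)=\rho(Q)+1=2$, the only non-trivial point is to show that $-K_Y$ is ample.

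First, I will record the numerical setup. By hypothesis $Q\subset\p^3_\k$ is a quadric with $\rho(Q)=1$; by \autoref{lem: dp8-vs-quadric} combined with \autoref{cor: moredP}, $\Pic(Q)=\Z H$ where $H$ is the hyperplane class, $H^2=2$ and $-K_Q=2H$. Write $E$ for the exceptional divisor of $\pi$; then $E^2=-\deg(\xx)=-2$, $\pi^*H\cdot E=0$, and $-K_Y=2\pi^*H-E$. This gives
\[
-K_Y\cdot E = -E^2 = 2>0, \qquad K_Y^2 = K_Q^2 - \deg(\xx) = 6.
\]

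The main step is to produce the second extremal ray. Consider the pencil $\mathcal{P}$ of hyperplanes in $\p^3_\k$ passing through $\xx$; this is a $\p^1_\k$ of hyperplanes because $\xx$ imposes $2$ conditions on the $4$-dimensional space of linear forms. Let $L\subset\p^3_\k$ be the line spanned by $\xx$ (the intersection of all hyperplanes through $\xx$). Since $\Pic(Q)=\Z H$, any integral curve on $Q$ is numerically a multiple of $H$; a line $L\subset Q$ would satisfy $L\cdot H=1$, incompatible with $H^2=2$, so $L\not\subset Q$. Hence $L\cap Q$ is a length-$2$ subscheme containing $\xx$, forcing $L\cap Q=\xx$ scheme-theoretically. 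Therefore the pencil of hyperplane sections of $Q$ through $\xx$ has base scheme exactly $\xx$, and its strict transform on $Y$ is a base-point-free pencil with fibre class $F=\pi^*H-E$. This yields a morphism $f\colon Y\to\p^1_\k$ with $\rho(Y/\p^1_\k)=1$. A direct computation gives $F^2=H^2+E^2=0$ and
\[
-K_Y\cdot F = (2\pi^*H-E)(\pi^*H-E) = 2H^2 + E^2 = 2>0.
\]

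To conclude, I will argue that $E$ and $F$ generate the Mori cone: $E$ is extremal since $E^2<0$, and $F$ is extremal since $F^2=0$ and $F$ moves in a base-point-free pencil (so is the fibre class of the contraction $f$). As $-K_Y$ has positive intersection with both extremal rays, Kleiman's criterion gives that $-K_Y$ is ample, so $Y$ is a del Pezzo of Picard rank $2$ and degree $6$, proving that $\xx$ is in Sarkisov general position. The resulting link of type $\I$ then matches case (2)(a) of \autoref{prop: link_I}.

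The only subtle point is the identification $L\cap Q=\xx$; the argument above via $\Pic(Q)=\Z H$ handles this uniformly, including the case when $\k(\xx)/\k$ is purely inseparable (so that $\xx_{\bar\k}$ is non-reduced and $Q_{\bar\k}$ may be singular, as in \autoref{ex:link deg2}).
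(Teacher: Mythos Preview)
Your proof is correct and follows essentially the same approach as the paper: both use the pencil of hyperplane sections of $Q$ through $\xx$ (equivalently, the pencil of planes containing the line $L$ spanned by $\xx$) to produce a free pencil of conics on the blow-up $Y$. The paper phrases this more geometrically via the blow-up of $\p^3_\k$ along $L$ and concludes directly with the Mori conic bundle $Y\to\p^1_\k$, whereas you verify $L\not\subset Q$ explicitly and finish with an intersection-number check via Kleiman's criterion; the underlying idea is the same.
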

\begin{proof}
	Let $Y\to Q$ be the blow-up at $\xx$.
	Recall that there is a line $l\subset\p^3_\k$ going through $\xx$, as it is of degree $2$, hence, there is a pencil of planes $\{H_{t}\}_{t\in\p^1_\k}$ in $\p^3_\k$ containing $\xx$. Blowing up $\p^3_\k$ in $l$ frees the pencil  $\{H_t\}_{t\in\p^1_\k}$.
	Since $Q\cdot l=2=\deg(\xx)$, the intersection of $Q$ and $l$ is transversal. Therefore, the restriction of the blow-up of $\p^3_\k$ in $l$ to $Q$ is the blow-up $Y\to Q$ in $\xx$.
	Therefore, the strict transforms $\tilde C_{t}$ on $Y$ of the curves $C_{t}=H_{t}\cap Q$ form a free pencil of conics on $Y$, giving a Mori conic bundle $\pi\colon Y\to\p^1_{\k}$.
\end{proof}

The following example describes the Mori conic bundle that one obtains when blowing up a non-smooth quadric $Q$ at the non-smooth point; this is a special case of \autoref{lem:point deg2 on quadric general position}.

\begin{example}\label{ex:geometrically-non-normal-pdeg1}
	We give an example of a rational Mori conic bundle that is not geometrically normal, and such that every fibre is singular.

	Let $\k$ be a field of characteristic $2$, and $t\in\k\setminus \k^2$.
	Consider the del Pezzo surface $Q= (zw=tx^2+y^2)\subset\p^3_\k$, with non-smooth point $\xx =(tx^2+y^2,z,w)$. The surface $Q$ is regular because of the condition  $t\in\k\setminus \k^2$.
	The blow-up $\eta\colon Y \to Q$ at $\xx$ is a del Pezzo surface of Picard rank $2$ and it admits a conic bundle structure $\pi\colon Y \to \mathbb{P}^1_\k$ (see \autoref{lem:point deg2 on quadric general position}).
	Explicitely, $Y$ is given by $(uz=vw)\subset Q\times\p^1_\k$, where $[u:v]$ are the coordinates of $\p^1_\k$, and $\eta$ is given by
	\begin{align*}
		\eta\colon Y&\to Q,\\
		([x:y:z:w],[u:v]) &\mapsto [x:y:z:w]
	\end{align*}
	and $\pi\colon Y\to \p^1_\k$ is the projection onto $\p^1_\k$. The surface $Y$ is non-smooth along the exceptional divisor, and hence $Y$ is geometrically non-normal.
	Moreover, the birational map \begin{align*}
		Q & \dashrightarrow \p^2_{\k},\\
		[x:y:z:w]&\mapsto [x:y:z]
	\end{align*}
	induces the fibration $\p^2\dashrightarrow\p^1_\k$, $[x:y:z:w]\mapsto [z^2:tx^2+y^2]$, corresponding to the pencil of (geometrically non-reduced) conics through the point $(tx^2+y^2,z)$ in $\p^2_\k$.
	Note that each such conic in $\p^2_\k$ is non-regular at its intersection with the line given by $(x=0)$. The generic fibre, however, is the regular (non-smooth) conic $(tx^2+y^2+sz^2=0)$ in $\p^2_{\k(s)}$, with $k(s)\simeq k(\p^1_{\k})$.

	By \autoref{lem:k(x)=k(b)}\autoref{it:k(x)-1}, there is no Sarkisov link of type \II starting at $Y$.
\end{example}

\subsubsection{Links of type \II between Mori conic bundles}

\begin{example}\label{ex: elementary transformation}(Point in general position on $\p^1_\k\times\p^1_\k/\p^1_\k$)
	Let $\k$ be any field, and let $Q(x)\in\k[x]$ be an irreducible polynomial of degree $d$.
	Then the point $\xx\in\p^1_\k\times\p^1_\k$ given in affine coordinates by $\mathfrak{m}_\xx=(Q(x_0),y_0)\in \k[x_0,y_0]$ satisfies $k(\xx)\simeq k(\pi_1(\xx))$, where $\pi_1\colon\p^1_\k\times\p^1_\k\to\p^1_\k$ denotes the first projection.
	Therefore, $\xx$ is in Sarkisov general position on $\p^1_\k\times\p^1_\k$ with respect to $\pi_1$ by \autoref{lem:k(x)=k(b)}, and hence there exists a link $\chi\colon \p^1_\k\times\p^1_\k\link dd\F_d$ of type \II with base-point $\xx$ (noting that by blowing up $\xx$ the strict tranform of the section $(y_0=0)$ has self-intersection $-d$).

	We give now an explicit birational map $\varphi\colon\p^1_\k\times\p^1_\k\dashrightarrow \p^1_\k\times\p^1_\k$ that has a decomposition into Sarkisov links $\varphi=\chi_1\circ\cdots\circ\chi_d\circ\chi$, where $\chi_i\colon\F_i\link11\F_{i-1}$ is a link of type \II with rational base point for $i=1,\ldots,d$.
	In affine coordinates, consider the map \[(x,y)\mapsto \left(x,\frac{y}{Q(x)}\right),\] which is birational with inverse given by $(x,y)\mapsto (x,yQ(x))$.
	It induces a birational map on $\p^1_{\k }\times\p^1_{\k}$ given by
	\[\varphi\colon ([x_0:x_1],[y_0:y_1]) \mapsto ([x_0:x_1],[y_0x_1^{d}:y_1Q(x_0,x_1)]),\]
	where we denote again by $Q$ its homogenisation of degree $d$, which factors into Sarkisov links as claimed.

	In particular, if $\k$ is an imperfect field of characteristic $p\geq 2$, one can take $Q(x)=x^{p^e}-t$ for $t\in\k\setminus \k^p$ and $e\geq1$ an integer.
\end{example}

\subsubsection{Links $7:7$ and $8:8$ on $\p^2_{\k}$ (Geiser and Bertini)}

We study the links corresponding to Geiser and Bertini involutions on the projective plane over an arbitrary field.

\begin{lemma}\label{lem:Geiser}
	Let $\k$ be any field and let $X=X_2$ be a geometrically integral del Pezzo surface of degree $2$.
	\begin{enumerate}
		\item\label{Geiser:2new}
		Suppose $X$ is obtained from the blow up $\eta\colon X\to\p^2_\k$ of $r$ points $\xx_1,\ldots,\xx_r$, with exceptional divisors $E_1,\ldots,E_r$.
		Then, the Geiser involution $\gamma\in\Aut_\k(X)$ from \autoref{lem: Geiser_Bertini_involutions} exists. 
		Moreover, its action on $\Pic(X)$ is described by $\gamma^*(E_i)=-d_{E_i}K_{X_2}-E_i$, for $i=1,\ldots,r$.

		\item\label{Geiser:3}
		Let $\xx\in\p^2_\k$ be a point of degree $7$ in Sarkisov general position.
		Then there exists a birational involution $\chi\colon\p^2_{\k}\rat\p^2_{\k}$ that is a link of type II
		\[
			\begin{tikzcd}
			&X\ar[dl,"\eta_{\xx}",swap]\ar[dr,"\eta_{\yy}"]&\\
			\p^2_{\k} \ar[rr,dashed,"\chi"]&&\p^2_{\k},
			\end{tikzcd}
		\]
		with base-point $\xx$ and the base-point of $\chi^{-1}$ is a point $\yy$ with $\k(\yy)\simeq\k(\xx)$, and $\eta_{\xx},\eta_{\yy}$ are the blow-ups of $\xx$ and $\yy$, respectively. 
		If we denote by $L$ the pullback by $\eta_{\xx}$ of a general line on $\p^2_{\k}$ and $E$ the exceptional divisor over $\xx$, then $\eta_{\yy}$ contracts a curve $F\equiv 21L-8E$.

		\item\label{Geiser:2} Let $\chi\colon \p^2_\k \rat \p^2_\k$ be a link of type II with a base-point of degree 7. Then there exist automorphisms $\alpha,\beta$ of $\p^2_\k$ such that $\beta \circ \chi \circ \alpha$ is a birational involution.
	\end{enumerate}
\end{lemma}

\begin{proof}
We first note that the $X_2$ appearing in the statements satisfy $H^1(X, \mathcal{O}_X)=0$ and $\rho(X_2) \geq 2$, and thus the Geiser involution exists by \autoref{lem: Geiser_Bertini_involutions}.

We claim that $X$ is geometrically normal. Suppose by contradiction it is not. As $\rho(X) \geq 2$, by \autoref{thm: dP-base-change-classification} the isomorphism class $Y=X_{\overline{\k}}^N$ is $\mathbb{P}^1_{\overline{\k}} \times \mathbb{P}^1_{\overline{\k}}$. In particular, both extremal contractions of $X$ are conic bundles, which is a contradiction. Thus in particular $X$ is geometrically normal.

\autoref{Geiser:2new} Let $E_{j}$ be an exceptional divisor, and consider $E_{j'}=\gamma^*(E_{j})$ as a Cartier divisor. As $\gamma^{*}(E_j+E_{j'})$ is fixed by $\gamma$, we have $E_j+E_{j'} \sim -a K_X$ by \autoref{lem: action-Bertini-Geiser} for some $a>0$.
We show that $a=d_{E_j}$.
By a Riemann--Roch computation, one sees that $h^0(X,\mathcal{O}_X(-d_{E_j} K_X-E_j))>0$ and thus there exists a curve $C$ such that $-d_{E_j}K_X \sim E_j+C$. Thus we have that
$d_{E_j}K_X+C \sim aK_X+E_{j'}$. Intersecting both members with $K_X$, we deduce that 
\[ d_{E_j}=d_{E_j}K_X^2+C \cdot K_X=(aK_X+E_{j'})\cdot K_X=2a-d_{E_j},\] which implies that $a=d_{E_j}$ and thus $C \sim E_{j'}$.
A simple computation now shows that 
$\gamma^{*}E_j \sim -d_{E_j} K_X-\gamma^{*}(E_{j}')=-d_{E_j}K_X-E_j$.

	\autoref{Geiser:3} Let $\eta_{\xx}\colon X\to\p^2_\k$ be the blow-up of $\xx$. Then $X$ is a del Pezzo surface of degree $2$ with its Geiser involution $\gamma$. 
By \autoref{lem: action-Bertini-Geiser}, $\gamma$ exchanges the $K_X$-negative extremal rays.
	In particular, if $E$ is the exceptional divisor of $\xx$, then its image $F=\gamma(E) \simeq E$ is an extremal ray and there is a birational morphism $\eta_{\yy}\colon X\to Z$ contracting $F$ onto a regular point $\yy\in Z$. As $F \simeq E$, we have $H^0(E,\mathcal O_E)\simeq H^0(E,\mathcal O_F)$ as $\k$-algebras, thus $\k(\yy)\simeq\k(\xx)$ and $Z=\p^2_{\k}$.

    We have seen that $\eta_{\yy}$ contracts divisors $F$ and we can see that
    $$F \sim \eta_{\xx}^*(-d_EK_{\mathbb{P}^2_\k})-(d_E+1)E$$ which implies $F \equiv -7\eta_{\xx}^*(K_{\mathbb{P}^2_\k})-8E$.

	\autoref{Geiser:2} Let
		\[
			\begin{tikzcd}
			&X\ar[dl,"\eta_{\xx}",swap]\ar[dr, "\eta_{\yy}"]&\\
			\p^2_\k \ar[rr,dashed,"\chi"]&&\p^2_\k
			\end{tikzcd}
		\]
	be the link of type II and
	let $\gamma$ be the Geiser involution on $X$. It is an involution and $\eta_{\xx} \circ\gamma\circ\eta^{-1}_{\xx}$ is a birational involution of $\p^2_\k$ with the same base-point as $\chi$. Therefore, $\chi^{-1}\circ \eta\circ\gamma\circ\eta^{-1}$ is an automorphism of $\p^2_{\k}$.
\end{proof}

\begin{example}\label{ex:PointOfDeg7}
	Let $\k$ be an imperfect field of characteristic $p=7$ and let $t \in \k\setminus \k^7$.
	Let $\pi \colon Y \to \mathbb{P}^2_\k$ be the blow-up of the closed point $\xx=(x^7-t, x^3-xy-1)$ with purely inseparable residue field. Let $C=(x^3-xyz-z^3=0)\subset\p^2_\k$.
	We show that $\xx$ is in Sarkisov general position by applying \autoref{thm: pts_Sarkisov_general_position}.
	%Write $\yy=(\xx_{\bar\k})_{\rm red}=[t^{1/7}:t^{2/7}-t^{-1/7}:1]\in\p^2_{\bar\k}$.

	The following geometric proof was suggested by one of the referees. Without loss of generality, we can suppose $\k$ is separably closed.
	The curve $C$ is a nodal cubic with a smooth rational point, and thus the smooth locus is isomorphic to $\mathbb{G}_{m,\k}$.
	The subscheme of the smooth locus of $C$ consisting of closed points for which there is a curve of degree $d$ with intersection multiplicity $3d$ at such points is a torsor under the $3d$-torsion $\mu_{3d, \k}$ of $\Pic^0(C) \simeq \mathbb{G}_{m,k}$. By \autoref{thm: pts_Sarkisov_general_position}, if $\xx$ were not in Sarkisov general position, then the degree of the residue field of $\xx$ would divide $3d$, reaching a contradiction.
	%since the degree of $\xx$ is 7. does not divide $3d$, then the residue field is necessarily separable.

	%Let $D$ be a line given by $(ax+by+cz=0)$ in $\p^2_{\bar\k}$. To show that $I_\yy(D,C_{\bar\k})\leq 2$, we show that $D|_{C_{\bar\k}}$ is not given by $(x-t^{1/7})^3$:
	%Note that $D|_{y=\frac{x^3-1}{x}}$ is given by
	%\begin{align*}
	%	0& =ax+b(x^2-x^{-1})+c \\
	%	\Leftrightarrow 0& = bx^3+ax^2+cx-b,
	%\end{align*}
	%which is not equal to any multiple of $(x-t^{1/7})^3=x^3-3x^2t^{1/7}+3xt^{2/7}-t^{3/7}$.

	%Let $D$ be a conic given by $ax^2+bxy+cxz+dy^2+eyz+fz^2$ in $\p^2_{\bar\k}$. Note that $D|_{y=\frac{x^3-1}{x}}$ is similarly given by \begin{align*}
	%	0&= dx^6+bx^5+(a+e)x^4+(c-2d)x^3-bx^2-ex+d+f,
	%\end{align*}
	%and comparing its coefficients of $x^5$ and $x^2$ with the ones of $(x-t^{1/7})^6$, one obtains $b=t^{1/7}$ and $-b=t^{4/7}$, which is impossible.
\end{example}

\begin{lemma}\label{lem:Bertini}
	Let $\k$ be any field and let $X_1$ be a geometrically integral del Pezzo surface of degree $1$ with $H^1(X, \mathcal{O}_X)=0$.
	\begin{enumerate}
		\item\label{Bertini:1}
		Suppose $X=X_1$ is a del Pezzo surface of degree $1$ obtained from the blow up $\eta\colon X\to\p^2_\k$ of $r$ points $\xx_1,\ldots,\xx_r$, with exceptional divisors $E_1,\ldots,E_r$.
		Then, the Bertini involution $\beta\in\Aut_\k(X)$ exists and it satisfies $\beta^*(E_i)=-2d_{E_i}K_{X}-E_i$, for $i=1,\ldots,r$.

		\item\label{Bertini:2}
		Let $\xx\in\p^2$ be a point of degree $8$ in Sarkisov general position.
		Then there exists an involution $\chi\colon\p^2_\k\rat\p^2_\k$ that is a link of type II
		\[
			\begin{tikzcd}
			&X\ar[dl,"\eta_{\xx}",swap]\ar[dr,"\eta_{\yy}"]&\\
			\p^2_{\k} \ar[rr,dashed,"\chi"]&&\p^2_{\k},
			\end{tikzcd}
		\]
		with base-point $\xx$ and the base-point of $\chi^{-1}$ is a point $\yy$ with $\k(\yy)\simeq\k(\xx)$.
		If $\eta_{\xx},\eta_{\yy}$ are the blow-ups of $\xx$ and $\yy$, respectively, let $L$ be the pullback by $\eta_{\xx}$ of a general line on $\p^2$ and $E$ the exceptional divisor of $\xx$. Then $\eta_{\yy}$ contracts a curve $F\equiv 48L-17E$.

		\item\label{Bertini:3} Let $\chi\colon \p^2_\k \rat \p^2_\k$ be a link of type II with a base-point of degree $8$. Then there exist automorphisms $\alpha,\gamma$ of $\p^2_\k$ such that $\gamma\circ \chi\circ\alpha$ is a birational involution.
	\end{enumerate}
\end{lemma}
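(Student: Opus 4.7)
My plan is to follow the strategy of \autoref{lem:Geiser}, replacing the Geiser double cover by the Bertini double cover onto a quadric cone and using the analogous Picard-lattice computation on the minimal desingularisation.

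For part (1), let $\nu \colon Y \to (X_1)_{\bar\k}$ be the minimal desingularisation; geometric normality together with geometric rationality of $X_1$ ensures that $Y$ is a weak del Pezzo surface of degree $1$. By \cite[Theorem~8.3.2]{Dol12}, the linear system $|-2K_Y|$ is base-point free and induces a degree-$2$ morphism $\Psi \colon Y \to Q' \subset \p^3_{\bar\k}$ onto a quadric cone, whose Stein factorisation $Y \to (X_1)_{\bar\k} \to Q'$ realises the sought double cover. As this factorisation coincides with the morphism induced by $|-2K_{(X_1)_{\bar\k}}|$, it is defined over $\k$, so the double cover $X_1 \to Q$ and its deck transformation $\beta \in \Aut_\k(X_1)$ descend to $\k$; the structure of the branch locus and the role of the vertex of $Q$ follow from \cite[Theorem~8.3.2]{Dol12}.

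For part (2), the involution $\beta$ lifts to an involution $\beta'$ on $Y$ which, by \cite[Theorem~8.8.2]{Dol12}, fixes $K_Y$ and acts as $-\id$ on $K_Y^\perp$. For any divisor class $D$ on $Y$ with $D \cdot K_Y = d$, writing $D = d K_Y + v$ with $v \in K_Y^\perp$ (possible since $K_Y^2 = 1$) yields $\beta'^{*} D \equiv 2 d K_Y - D$; in particular $\beta'^{*} e_k \equiv -2 K_Y - e_k$ for every $(-1)$-curve $e_k$ on $Y$. Summing over the collection of $(-1)$-curves on $Y$ whose $\nu$-image is $E_i$ and pushing down by $\nu$, exactly as in the proof of \autoref{Geiser:2new}, yields $\beta^{*}(E_i) \equiv -2 d_{E_i} K_{X_1} - E_i$.

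For part (3), the blow-up $\eta_{\xx} \colon X \to \p^2_\k$ at a point $\xx$ of degree $8$ in Sarkisov general position is a del Pezzo surface of degree $1$ with $\rho(X) = 2$. Applying part (2) with $K_X \equiv -3 L + E$, I compute $F := \beta^{*}(E) \equiv -16 K_X - E \equiv 48 L - 17 E$. Since $\beta$ fixes $K_X$ and acts non-trivially on $K_X^\perp$, it exchanges the two $K_X$-negative extremal rays of $\NE(X)$, so $\chi := \eta_{\xx} \circ \beta \circ \eta_{\xx}^{-1}$ is a birational involution of $\p^2_\k$ fitting into a type-II Sarkisov diagram via the morphisms $\eta_{\xx}$ and $\eta_{\yy} := \eta_{\xx} \circ \beta$, the latter contracting $F$. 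From $F^2 = E^2 = -8$, the identification $H^0(F, \mathcal{O}_F) \simeq H^0(E, \mathcal{O}_E)$, and \autoref{lem: characterise_-1_curves}, we obtain $k(\yy) \simeq k(\xx)$.

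For part (4), given any type-II link $\chi \colon \p^2_\k \rat \p^2_\k$ with a base-point of degree $8$, its minimal resolution $\eta_{\xx} \colon X \to \p^2_\k$ has $K_X^2 = 9 - 8 = 1$ and thus $X$ is a del Pezzo surface of degree $1$; write $\chi = \eta' \circ \eta_{\xx}^{-1}$ for the second birational morphism $\eta' \colon X \to \p^2_\k$ contracting the unique second extremal ray of $\NE(X)$. Both $\eta'$ and $\eta_{\xx} \circ \beta$ (with $\beta$ the Bertini involution from part (1)) are birational morphisms contracting the same curve $F$, so they differ by an automorphism $\sigma \in \Aut(\p^2_\k)$; the conjugate $\iota := \eta_{\xx} \circ \beta \circ \eta_{\xx}^{-1}$ is then a birational involution of $\p^2_\k$ satisfying $\iota = \sigma \circ \chi$, giving the required $\gamma := \sigma$ and $\alpha := \id$. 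I expect the main subtlety to be confirming the descent to $\k$ in part (1), which over imperfect fields cannot rely on Galois theory but follows from the fact that $|-2K_{X_1}|$ is defined over $\k$ and has the same base-point-free property as over $\bar\k$.
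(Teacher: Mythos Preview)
Your proof is correct and follows exactly the approach the paper intends: the paper's proof is the single sentence ``analogous to \autoref{lem:Geiser} using \cite[Section~8.8]{Dol12} instead of \cite[Section~8.7]{Dol12}'', and you have carried out precisely that analogy, with the key substitutions $|-2K|$ for $|-K|$, the action of the Bertini involution as $-\id$ on $K_Y^\perp$ from \cite[Theorem~8.8.2]{Dol12}, and the resulting formula $\beta'^*e_k\equiv -2K_Y-e_k$ in place of $-K_Y-e_k$.
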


\begin{proof}
We first note that the $X_1$ appearing in the statements satisfy $H^1(X, \mathcal{O}_X)=0$ and $\rho(X_1) \geq 2$, and thus the Bertini involution exists by \autoref{lem: Geiser_Bertini_involutions}.
We claim that $X$ is geometrically normal. As $\rho(X) \geq 2$ and $K_X^2=1$, this follows from \autoref{thm: dP-base-change-classification} %the isomorphism class $Y=X_{\overline{\k}}^N$ is $\mathbb{P}^1_{\overline{\k}} \times \mathbb{P}^1_{\overline{\k}}$. In particular, both extremal contractions of $X$ should be conic bundles, which is a contradiction. Thus in particular $X$ is geometrically normal.

As for \autoref{Bertini:2} and \autoref{Bertini:3}, the proof is analogous to the one of \autoref{lem:Geiser}. 
\end{proof}

\begin{example}\label{ex:PointOfDeg8}
	Let $\k$ be an imperfect field of characteristic $2$ and let $t\in \k\setminus\k^8$. The point $\xx\in\p^2_\k$ with $\mathfrak{m}_\xx=(x^8-t, xy-x^3+1)$ is in Sarkisov general position:
	Write $\yy=(\xx_{\bar\k})_{\rm red}=[t^{1/8}:t^{2/8}-t^{-1/8}:1]$ and $C=(xyz-x^3+z^3)$.
	Arguing as in \autoref{ex:PointOfDeg7}, we can show that the point $\xx$ is in Sarkisov general position.
%	\FB{erase from now on}
%	Similar to (ref to Example of point of degree 7), one can check that the conditions of \autoref{prop:GeneralPositionCriterion} concerning lines and conics are satisfied. For the condition on cubics, one can check that the cubics $D$ such that $D|_{y=\frac{x^3-1}{x}}$ is divisible by $(x-t^{1/8})^8$ form a pencil generated by
%	\[(xyz+x^3+z^3=0) \text{ and }(t^2(x^2y+xz^2+y^2z) + t(x^3+y^3+z^3) + x^2z+xy^2+yz^2=0).\]
%	Then one can check that no cubic of this pencil is singular at $\yy$.
\end{example}

Finally, we refine the classification \autoref{thm:classification links p>2}.

\begin{theorem}\label{thm: existence of Sarkisov links}
Let $\k$ be a field of characteristic $p>0$.
Let $\chi_{\xx,\yy}\colon \p^2_\k\rat \p^2_\k$ be a link of type II whose base point $\xx$ satisfies that $k(\xx)/\k$ is purely inseparable. Then the following hold:
\begin{enumerate}
\item\label{existence bertini} If $p=2$ and $[\k(\xx):\k]=8$, then $k(\yy)\simeq k(\xx)$.
\item\label{existence quadric} If $p=3$ and $[\k(\xx):\k]=3$, then $k(\yy)\simeq k(\xx)$.
\item\label{existence geiser} If $p=7$ and $[\k(\xx):\k]=7$, then $k(\yy)\simeq k(\xx)$.
\end{enumerate}
Moreover, if $\k$ is imperfect, every kind of
\begin{enumerate}[label=(\alph*)]
	\item\label{Mfs exist} Mori fibre space,
	\item\label{links I exist} link of type I and III and
	\item\label{links II exist} link of type II between del Pezzo surfaces of Picard rank $1$, with base-point of degree $\leq7$.
\end{enumerate}
listed in \autoref{thm:classification links p>2} exists.
\end{theorem}
\begin{proof}
\autoref{existence bertini} follows from \autoref{lem:Bertini}.
\autoref{existence quadric} is \autoref{prop:link_P2_33}.
\autoref{existence geiser} follows from \autoref{lem:Geiser}.

Let us show \ref{Mfs exist}--\ref{links II exist}.
If $p\in\{3,7\}$, links $\chi_{\xx,\yy}\colon \p^2\rat\p^2$ of type II with $[k(\xx):\k]=p$ exist by \autoref{ex:3-3-link} and \autoref{ex:PointOfDeg7}.

If $p=5$, links $\chi_{\xx,\yy}\colon\p^2\rat X_5$ with $[k(\xx):\k]=5$ and $k(\yy)=\k$ exist by taking $\mathfrak m_\xx=(x^5-t,y-x^2)$ with $t\in\k\setminus\k^5$ in \autoref{lem:deg-5-conic-or-line} and applying \autoref{prop:link_P2_51--15}. In particular, rational del Pezzo surfaces of degree $5$ and Picard rank $1$ exist.

For $p=2$, we need to show that every object depicted in \autoref{fig:links_char2} exists.
Links $\p^2_{\k}\rat\p^2_{\k}$ with base point of degree $8$ exist by \autoref{ex:PointOfDeg8}.
A link $X_5\to\p^2_{\k}$ of type I exists by \autoref{ex: Mori-conic-double-lines}. In particular, Mori fibre spaces $X_5/\p^1$ exist.
Links $\p^2_{\k}\rat Q$ to a quadric surface $Q\subset\p^3_{\k}$ exist by \autoref{lem:general position deg 2}, and links $X_6\to Q$ of type I exist by \autoref{lem:point deg2 on quadric general position}. In particular, rational Mori fibre spaces $X_6/\p^1_{\k}$ exist.
Moreover, links $\chi_{\xx,\yy}\colon Q\rat Q$ of type II with $[k(\xx):\k]=4$ exist by \autoref{ex: X8_44}.
\end{proof}

\section{Applications to the planar Cremona groups} \label{s-applications}

In this section, we present three applications of the Sarkisov program and the classification of Sarkisov links: the existence of explicit quotients of the planar Cremona group, the classification of its connected smooth algebraic subgroups and the generation by involutions.

%=================================
%%%%%%%%%%%%%%
\subsection{Quotients of the Cremona group} \label{ss-quotients}

In this section, we construct explicit normal subgroups of the plane Cremona group over any field following the strategy of \cite{LZ20, SchneiderRelations}.

We say that two Mori fibre spaces $X/\p^1_{\k}$, $X'/\p^1_{\k}$ are \emph{equivalent} if they are birational over $\P^1_{\k}$, and write $[X/\p^1_{\k}]$ for their equivalence class.
For $d\in\{5,6,8\}$, denote by $I_d$ the set of such equivalence classes $[X]$ with $K_X^2=d$.
For each class $M=[X/\p^1_{\k}]$ in $I_d$, denote by $\mathcal{N}_M$ the set of integers $e\geq d$ such that there exists a point of degree $e$ on $X$ in Sarkisov general position with respect to $\mathbb{P}^1$.

We denote by $\mathcal{B}$ the set of equivalence classes of points of degree $8$ in Sarkisov general position in $\p^2_\k$ up to automorphisms of $\p^2_\k$.

The following homomorphism exists also if $[\bar\k:\k]\leq2$ but it is trivial. If $[\bar\k:\k]\geq3$, then $\bar\k$ is an infinite extension of $\k$ by \cite{AS1927}.
In particular, this implies that  $\mathcal{N}_{[\F_n/\mathbb{P}^1_{\k}]}$ is countably infinite by \autoref{ex: elementary transformation}.

\begin{theorem}\label{thm: quotients_p2}
	Let $\k$ be any field such that $[\bar\k:\k]\geq3$.
	There exists a non-trivial surjective group homomorphism
	\[
	\phi\colon \Bir(\p^2_{\k})\to (\bigoplus_{\mathcal{N}_{[\F_0/\p^1]}} \Z/2\Z) \ast\left(\Conv_{M\in I_5}(\bigoplus_{\mathcal{N}_{M}} \Z/2\Z)\right) \ast\left(\Conv_{M\in I_6}(\bigoplus_{\mathcal{N}_{M}} \Z/2\Z)\right) \ast\left(\Conv_{\mathcal{B}}\Z/2\right),
	\]
	that is the restriction of a groupoid homomorphism from the groupoid
	\[
	\BirMori_k(\p^2)\to (\bigoplus_{\mathcal{N}_{[\F_0/\p^1]}} \Z/2\Z) \ast\left(\Conv_{M\in I_5}(\bigoplus_{\mathcal{N}_{M}} \Z/2\Z)\right) \ast\left(\Conv_{M\in I_6}(\bigoplus_{\mathcal{N}_{M}} \Z/2\Z)\right) \ast\left(\Conv_\mathcal{B}\Z/2\right),
	\]
	which sends
	\begin{itemize}
		\item any Sarkisov link of type \II between Hirzebruch surfaces with base-point of degree $e\in\mathcal{N}_{[\F_0/\p^1]}$ onto the generator $1_e\in\bigoplus_{\mathcal{N}_{[\F_0/\p^1]}} \Z/2\Z$ indexed by $e\in\mathcal{N}_{[\F_0/\p^1]}$ of the first free factor;
		\item any Sarkisov link of type \II between conic fibrations $X\rat X'$ in $I_d$, where $d\in\{5,6\}$, with base-point of degree $e\in\mathcal{N}_{[X]}$ onto the generator $1_e\in\bigoplus_{\mathcal{N}_{[X]}} \Z/2\Z$ of the free factor indexed by $[X]=[X']\in I_d$.
		\item any Bertini involution with base-point of degree $8$ onto $1_b\in \Conv_{\mathcal{B}}\Z/2$ indexed by its class $b\in \mathcal{B}$;
		\item and all other Sarkisov links and isomorphisms of Mori fibre spaces onto $0$.
	\end{itemize}
	Moreover, $\mathcal{N}_{[\F_0/\p^1]}$ is countably infinite.
\end{theorem}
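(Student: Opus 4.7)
The plan is to apply the strategy of \cite{LZ20} and \cite{SchneiderRelations}, using the Sarkisov program \autoref{t-sarkisov-program} together with our classification of Sarkisov links (\autoref{thm: classification_rational_surfaces}) and elementary relations (\autoref{thm:elementary relations}). By \autoref{t-sarkisov-program}, the groupoid $\BirMori_\k(\p^2_\k)$ is generated by Sarkisov links and isomorphisms of Mori fibre spaces, subject to trivial relations and elementary relations. Hence to define the groupoid homomorphism $\phi$ it suffices to prescribe the image of each generator and to verify that every elementary relation is sent to the identity in the target group. The prescription in the statement is well-defined: each link of the listed type II between Hirzebruch surfaces (respectively between conic bundles in $I_5$ or $I_6$) is sent to the generator indexed by its base-point degree in the appropriate direct summand, Bertini involutions on $\p^2_\k$ are sent to the generator indexed by the class of their base-point, and every other Sarkisov link is sent to $0$. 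Since these generators are involutions in the target, $\phi(\chi)=\phi(\chi^{-1})$, so the prescription is compatible with trivial relations.

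The bulk of the argument is then the verification that each of the finitely many types of elementary relations listed in \autoref{thm:elementary relations} (the figures in \autoref{sec:Elementary relations}) is mapped to the identity. For this, I would go through the list type by type. The key combinatorial observation is that in each elementary relation, the links mapped to a fixed generator of the target must cancel in pairs inside the corresponding factor. For the relations of type $\piece{X/B}{\xx}{\yy}$ (\autoref{fig:XB_ab}) coming from rank $3$ fibrations over a curve, each link of type II over the curve appears exactly twice (the two parallel sides of the quadrilateral representing conjugate copies), and both copies go to the same generator, so their product is $0$. For relations over $\Spec(\k)$ between links $\piece{\F_0}{0}{e}$ and $\piece{X/\p^1}{\xx}{\yy}$ with $X$ in $I_5$ or $I_6$, the same parallelism argument applies. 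For elementary relations involving links of type I, III, IV or links of type II between del Pezzo surfaces (including $8:8$ Bertini involutions), the links of type II between conic fibrations that participate in the relation always come in pairs that are parallel sides of the polygon, which ensures that their contributions to the first three free factors cancel; the Bertini involutions themselves only appear in $\piece{\p^2}{8}{8}$ and $\piece{\p^2}{1}{7}$-type relations (Figure \ref{P2_44}, \ref{P2_17} etc.), and one checks directly that in each such relation the single Bertini involution either does not appear, or appears together with its inverse, up to conjugation by automorphisms of $\p^2_\k$.

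The main technical obstacle is bookkeeping in the verification above, since one has to trace, in every figure of \autoref{sec:Elementary relations}, which edges correspond to generators of each free factor, and confirm that their contribution cancels in the free product. The key point is that in any rank $3$ fibration $T/B$ with $B$ a point, a link of type II between Hirzebruch surfaces (resp. $I_d$-fibrations) arises from contracting a single exceptional curve on $T$ to a point of fixed degree, and such contractions appear symmetrically in the boundary of the polygon. Equivalence of the target equivalence classes $[X/\p^1]$ at the two ends of each such link is guaranteed because adjacent links in the polygon are birational over $\p^1$. For the Bertini case, \autoref{lem:Bertini}\autoref{Bertini:2} ensures that the associated $\Z/2\Z$-generator only depends on the equivalence class of the base-point modulo $\Aut(\p^2_\k)$.

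Finally, surjectivity is immediate by construction: each indexed generator is the image of a Sarkisov link that actually exists (existence of conic-fibration links of arbitrary base-point degree follows from \autoref{ex: elementary transformation}; existence of Bertini involutions follows from \autoref{lem:Bertini}\autoref{Bertini:3}). To restrict from the groupoid $\BirMori_\k(\p^2_\k)$ to the group $\Bir(\p^2_\k)$, I factor each element of $\Bir(\p^2_\k)$ as a word in Sarkisov links (starting and ending at $\p^2_\k$) via the Sarkisov program, and take the image of that word; the result is independent of the factorisation by what was shown above. The countable infiniteness of $\mathcal N_{[\F_0/\p^1]}$ follows from \autoref{ex: elementary transformation} applied to irreducible polynomials $Q(x)\in\k[x]$ of arbitrary degree, which exist because $[\bar\k:\k]\geq 3$ implies $\k$ is infinite by \cite{AS1927}. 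Non-triviality of $\phi$ is then clear as any one of the generators is hit.
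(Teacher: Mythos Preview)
Your overall strategy matches the paper's: define $\phi$ on Sarkisov links, check well-definedness on elementary relations via \autoref{thm:elementary relations}, then argue surjectivity. However, there are a few genuine gaps and one factual error.

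\textbf{Elementary relations and Bertini links.} Your claim that Bertini involutions appear in relations of type $\piece{\p^2}{8}{8}$ or $\piece{\p^2}{1}{7}$ is wrong: there is no $\piece{\p^2}{8}{8}$ (the rank~$3$ fibration would have $K^2\leq 0$), and $\piece{\p^2}{1}{7}$ involves Geiser, not Bertini, links. The actual mechanism is simpler. A link of type~\II with base-point of degree $8$ on $\p^2_\k$, or of degree $\geq d$ on a conic bundle in $I_d$ ($d\in\{5,6,8\}$), cannot appear in any elementary relation over $\Spec(\k)$, because blowing up such a point on the relevant rank~$2$ fibration already drops $K^2$ to $\leq 0$. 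Hence these links occur only in relations $\piece{X/B}{\xx}{\yy}$ over a curve (Figure~\ref{fig:XB_ab}), where each appears exactly twice and cancels. This is the whole reason the thresholds $e\geq d$ in the definition of $\mathcal N_M$ are chosen as they are.

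\textbf{Surjectivity of the group homomorphism.} You only argue surjectivity of the \emph{groupoid} homomorphism. A single Sarkisov link $\chi_e\colon\F_0\rat\F_e$ hitting $1_e$ is not an element of $\Bir(\p^2_\k)$. The paper closes this gap by explicitly constructing, for each generator, a word starting and ending at $\p^2_\k$: for the Hirzebruch factor one composes $\p^2_\k\leftarrow\F_1\rat\F_0\stackrel{\chi_e}{\rat}\F_e\rat\F_{e-1}\rat\cdots\rat\F_1\to\p^2_\k$, where the trailing links have rational base-points and hence image $0$. For $M\in I_5$ one uses the birational morphisms $X_5\to\p^2_\k$ from \autoref{thm: classification_rational_surfaces}; for $M\in I_6$ one passes through $X_6\to X_8$ and then a link $X_8\rat\p^2_\k$ based at a rational point (which exists by Lang--Nishimura and \autoref{prop: link II dP}). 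Your proposal omits all of this.

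\textbf{Minor.} The Artin--Schreier input is that $[\bar\k:\k]\geq 3$ forces $\bar\k/\k$ to be an \emph{infinite extension}, not that $\k$ itself is infinite (take $\k=\mathbb F_p$). What you need is irreducible polynomials of arbitrarily large degree, which follows from the former.
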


\begin{proof}
	The groupoid $\BirMori(\p^2_{\k})$ is generated by isomorphisms and Sarkisov links by \autoref{t-sarkisov-program} and by \autoref{thm:classification links p>2} and \autoref{thm:elementary relations}.
	The generating relations for this set of generators are the trivial relations and the ones listed in \autoref{thm:elementary relations}.
	We check that they are all sent onto zero by the map
	\[
	\BirMori(\p^2_{\k})\to (\bigoplus_{\mathcal{N}_{[\F_0/\p^1]}} \Z/2\Z) \ast\left(\ast_{M\in I_5}(\bigoplus_{\mathcal{N}_{M}} \Z/2\Z)\right) \ast\left(\ast_{M\in I_6}(\bigoplus_{\mathcal{N}_{M}} \Z/2\Z)\right) \ast\left(\ast_B\Z/2\right),
	\]
	defined in the statement: The only relevant elementary relation is the one from Figure~\ref{fig:XB_ab} because all others involve only maps sent onto zero (here we use that $e\geq K_X^2$ for $e\in \mathcal{N}_{[X/\p^1_\k]}$ for each conic bundle $X/\p^1_\k$, and that Bertini involutions with a base-point of degree $8$ do not appear in any non-trivial relation).
	
	Thus this map is a homomorphism of groupoids and restricts to a group homomorphism
	\[
	\Bir(\p^2_{\k})\to (\bigoplus_{\mathcal{N}_{[\F_0/\p^1]}} \Z/2\Z) \ast\left(\ast_{M\in I_5}(\bigoplus_{\mathcal{N}_{M}} \Z/2\Z)\right) \ast\left(\ast_{M\in I_6}(\bigoplus_{\mathcal{N}_{M}} \Z/2\Z)\right) \ast\left(\ast_B\Z/2\right).
	\]
	As we only take points in Sarkisov general position in the definition of $\mathcal{N}_{[X]}$, it is immediate that the \textit{groupoid} homomorphism is surjective.
	Let us show that also the restriction to the \textit{group} homomorphism is surjective.

	First of all, observe that by \autoref{lem:Bertini} for any $b\in \mathcal{B}$ there is a Bertini involution $\p^2_\k\rat\p^2_\k$ with base-point represented by $b$, surjecting onto $1_b$.

	For any $e\in\mathcal{N}_{[\F_0/\p^1]}$, there is an elementary transformation $\chi_e\colon\F_0\rat\F_e$ whose base-point is of degree $e$. We consider the birational map
	\[
	\varphi\colon \p^2_{\k}\longleftarrow\F_1\rat\F_0\stackrel{\chi_e}\rat\F_e\rat\F_{e-1}\rat\cdots\rat\F_1\to\p^2_{\k}
	\]
	where the first link $\chi_e$ is the blow-up of a point of degree $e$ in Sarkisov general position over $\p^1_{\k}$ and the remaining links are elementary transformations with a rational base-point. Then $\varphi$ is mapped to $1_e$.

	Let $M\in I_5$ (resp. $M\in I_6$). For any $e\in\mathcal N_{M}$ there is an elementary transformation $X_5\rat X_5'$ (resp. $X_6\rat X_6'$) whose base-point is of degree $e$.
	%, see \autoref{lem:links type II conic fibrations}.
	By \autoref{thm: classification_rational_surfaces}, there exist birational morphisms $X_5\to \p^2_{\k}$ and $X_5'\to\p^2_{\k}$, each contracting a curve onto a point of degree $4$. The resulting birational map of $\p^2_{\k}$ is mapped onto $1_e$.
	By \autoref{thm: classification_rational_surfaces}, there exist birational morphisms $X_6\to X_8$ and $X_6'\to X_8'$, each contracting a point of degree $2$. There exist Sarkisov links $X_8\rat\p^2_{\k}$ and $X_8'\rat \p^2_{\k}$. Indeed, since $X_8$ is rational, it has a rational point ${\bf z}$ by the Lang-Nishimura theorem (see for instance \cite[Theorem 3.6.1]{Poo17}) and its blow-up induces a Sarkisov link $\chi_{{\bf z},\yy}\colon X_8\rat \p^2_{\k}$ of type II with $\deg(\yy)=2$ by \autoref{prop: link II dP} and Châtelet's theorem. The resulting birational map of $\p^2_{\k}$ is sent onto $1_e$.
\end{proof}

\begin{remark}\label{rem: quotient-emptyEquivalenceClass}
Suppose that $\rm{char}(\k)=p=2$ and let $X\to\p^2_{\k}$ be the blow-up of $(x^2-s, y^2-t)$ for some $p$-linearly independent $t,s\in\k\setminus\k^2$.
Then $[X/\p^1_{\k}]\in I_5$ and $\mathcal N_{[X/\p^1_{\k}]}=\varnothing$, as explained in \autoref{ex: Mori-conic-double-lines}.
\end{remark}

%===========================================================

\subsection{Smooth connected algebraic subgroups of $\Bir(\p^2_{\k})$}

As a second application, we classify smooth connected algebraic subgroups of $\Bir(\mathbb{P}^2_\k)$.
Over an algebraically closed field the classification was achived by Enriques in \cite{Enr93} and the fourth author with Terperau extended it over perfect fields in \cite[Proposition 1.3]{RZ24}.

To study algebraic subgroups, we need Weil's regularisation theorem for surfaces, whose statement and proof, due to Zariski and Lipman, we now recall.

\begin{theorem} \label{thm: regularisation_thm}
	Let $\k$ be a field and let $X$ be a geometrically integral quasi-projective surface over $\k$.
	Let $G$ be a connected smooth algebraic group over $\k$ acting rationally on $X$.
	Then $X$ is $G$-birationally isomorphic to a regular projective $G$-surface $Y$.
\end{theorem}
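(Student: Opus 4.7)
The plan is to obtain $Y$ in two steps: first produce a proper normal $G$-surface $G$-birational to $X$, and then take a $G$-equivariant resolution of its singularities.

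For the first step, I would invoke a version of the Weil--Brion regularisation theorem, which asserts that a rational action of a smooth algebraic group on a geometrically integral variety over a field can be regularised on a proper normal model. This is the classical theorem of Weil over algebraically closed fields, extended by Brion to arbitrary (in particular imperfect) base fields; it is presumably the content of the acknowledged discussion with M.~Brion. Applied to our situation, it produces a proper normal $G$-surface $Y_0$ together with a $G$-equivariant birational map $X\dashrightarrow Y_0$ such that the induced action $a\colon G\times_\k Y_0 \to Y_0$ is a morphism. Since proper normal surfaces over a field are projective, we may take $Y_0$ projective.

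For the second step, let $\pi\colon Y \to Y_0$ be the minimal resolution of singularities, which exists and is projective by Lipman's theorem on resolution for excellent surfaces; the surface $Y$ is then projective and regular. The minimal resolution of a normal excellent surface is canonical and commutes with smooth base change, so the base change $\id_G\times\pi\colon G\times_\k Y\to G\times_\k Y_0$ is again the minimal resolution of its target (using that $G$ is smooth). The action $a$ may be upgraded to the $\k$-automorphism
\[
\Psi\colon G\times_\k Y_0 \to G\times_\k Y_0,\qquad (g,y)\mapsto (g,a(g,y)),
\]
which by functoriality of the minimal resolution lifts uniquely to an automorphism $\widetilde\Psi\colon G\times_\k Y\to G\times_\k Y$. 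The composition of $\widetilde\Psi$ with the projection onto $Y$ provides the desired regular action $G\times_\k Y\to Y$, and pre-composition with the $G$-birational map $X\dashrightarrow Y_0$ gives the sought $G$-birational isomorphism $X\dashrightarrow Y$.

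The main obstacle is step one over an imperfect base field: Weil's classical construction via graphs of partial actions relies on reducedness properties of fibre products that can fail in positive characteristic, and $G$ must be treated as a group scheme rather than as a set of points. I would invoke the relevant Brion-type statement for this rather than reprove it. Once the first step is granted, the descent of the action along the canonical minimal resolution is essentially formal.
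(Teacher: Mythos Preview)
Your two-step strategy (Brion regularisation, then $G$-equivariant resolution) is exactly the paper's strategy, and your first step invokes precisely the result the paper cites (\cite[Corollary 3]{Bri17}). The difference lies in how the second step is justified.

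The paper argues inductively through Lipman's explicit resolution process (repeatedly normalise and blow up a closed singular point), checking that each operation is $G$-equivariant: normalisation is functorial, and each singular closed point is $G$-invariant because, after base change to $\k^{\mathrm{sep}}$, the smooth group $G$ has dense rational points and a connected group acting on the finite singular locus fixes every point. (The paper tacitly uses connectedness here.)

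Your approach instead appeals to the canonicity of the minimal resolution, arguing that $\id_G\times\pi\colon G\times_\k Y\to G\times_\k Y_0$ is again ``the minimal resolution'' and hence the automorphism $\Psi$ lifts. This is correct in spirit and arguably cleaner (in particular it does not need $G$ connected), but as written it is slightly glib: once $\dim G>0$ the scheme $G\times_\k Y_0$ is no longer a surface, so the phrase ``minimal resolution of its target'' needs interpretation. The honest statement is that $\id_G\times\pi$ is the \emph{relative} minimal resolution over $G$: since $K_Y$ is $\pi$-nef (this is what characterises the minimal resolution), its pullback $K_{G\times Y/G}$ is nef over $G\times Y_0$, and relative minimal models over a fixed base are unique, whence $\Psi$ lifts. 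If you make this point explicit, your argument is complete and equivalent to the paper's.
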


\begin{proof}
The existence of a normal projective variety $Y'$ on which $G$ acts regularly is shown in \cite[Corollary 1.3]{Bri17}.
A resolution of singularities $Y \to Y'$ can be obtained by successively blowing-up closed points and normalising (\cite[Remark B, page 155]{Lip78}), so it is enough to show that every step of this process is $G$-equivariant.
As $G$ is smooth, it always lifts to the normalisation. So it is sufficient to show that a singular closed point of $X$ (with its reduced structure) is invariant under the action of $G$.
It is sufficient to check it over a separably closed field: as $G$ is smooth, $G(\k)$ is dense in $G$ and, as $G$ is connected, the closed singular point is invariant for $G$. This concludes the proof.
\end{proof}

We denote by $\Aut_{X/\k}$ the automorphism group scheme of a variety $X$ defined over $\k$ and by $\Aut^\circ_{X/{\k}}$ its connected component containing the identity.

\begin{theorem} \label{thm: max_alg_subgroups}
	Let $\k$ be a field and let $G$ be a smooth connected algebraic group acting rationally on $\mathbb{P}^2_\k$.
	Then there exists a $G$-birational map $\varphi \colon \mathbb{P}^2_\k \dashrightarrow X$ such that $X$ is a regular projective $G$-surface (equivalently, $\varphi G \varphi^{-1} \subset \Aut^\circ_{X/{\k}}$) in the following list:
	\begin{enumerate}
		\item\label{mas:1} $X \simeq \p^2_\k$;
		\item\label{mas:2} $X \simeq \mathbb{F}_n$ for $n \in \mathbb{N} \setminus \left\{1\right\}$;
		\item\label{mas:3} $X \simeq Q \subset \mathbb{P}^3_\k$ is a quadric (in particular, a del Pezzo of degree 8) of Picard rank 1;
		\item\label{mas:4} $X$ is a del Pezzo surface of degree 6 and Picard rank 1.
		\item $X$ is a del Pezzo surface of degree 5 and Picard rank 1 such that $X_{\bar\k}$ is not smooth. Such $X$ exist only if $p \leq 5$ and $\k$ is not perfect.
	\end{enumerate}
Moreover, the conjugacy classes of the group scheme $\Aut^\circ_{X/\k}$ of cases (1)--(5) are all pairwise disjoint, and \eqref{mas:1}, \eqref{mas:2} are maximal among smooth connected subgroups and \eqref{mas:3} (resp.\ \eqref{mas:4}) is maximal among smooth connected subgroups if and only if $Q_{\overline\k}\simeq\p^1_{\overline\k}\times\p^1_{\overline\k}$ (resp.\ $X$ is smooth).
\end{theorem}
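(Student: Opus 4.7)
The plan is to combine Weil's regularisation theorem (\autoref{thm: regularisation_thm}), the $G$-equivariant minimal model program, and the classification of rational Mori fibre spaces (\autoref{thm: classification_rational_surfaces}) with the Sarkisov program (\autoref{t-sarkisov-program}).

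First, I would apply \autoref{thm: regularisation_thm} to produce a regular projective $G$-surface $Y$ that is $G$-birational to $\p^2_\k$. Since $G$ is smooth and connected, it acts trivially on the discrete invariant $N^1(Y)$, so every $K_Y$-negative extremal ray of $\overline{\NE}(Y)$ is $G$-invariant. Hence the $K_Y$-MMP of \autoref{t-mmp-excellent} is $G$-equivariant and terminates with a $G$-equivariant rational Mori fibre space $X\to B$. By \autoref{thm: classification_rational_surfaces}\autoref{crs:1}, $X$ is either already in the list \autoref{mas:1}--\autoref{mas:4}, or $\F_1$, or a Mori conic bundle $X_d/\p^1_\k$ with $d\in\{5,6\}$, or a smooth del Pezzo of degree $5$ with $\rho=1$. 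If $X=\F_1$, the $(-1)$-section is $G$-invariant and its contraction gives case \autoref{mas:1}. In the Mori conic bundle cases, \autoref{prop: Mori-conic-bdl-dP} furnishes a $G$-invariant birational extremal contraction to a surface in cases \autoref{mas:1}, \autoref{mas:3} or \autoref{mas:4}. A smooth del Pezzo of degree $5$ with $\rho=1$ has finite automorphism group over $\bar\k$, so $\Autz_{X/\k}=\{e\}$ and $G$ is trivial, allowing us to take $X=\p^2_\k$; the non-smooth case is (5), whose existence is restricted by \autoref{cor: moredP-sepclosed} to $p\leq 5$ and $\k$ imperfect.

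For the pairwise disjointness of the conjugacy classes of $\Autz_{X/\k}$, I would use that a positive-dimensional smooth connected algebraic group acting on a regular surface has only finitely many fixed closed points. Thus if two surfaces $X,X'$ in the list yield conjugate $\Autz$ via some $\varphi \in \Bir(\p^2_\k)$, decomposing $\varphi$ by \autoref{t-sarkisov-program} into Sarkisov links forces each intermediate link to be $G$-equivariant for $G=\Autz_X$; but every such link from \autoref{thm: classification_rational_surfaces}\autoref{crs:2} has a finite base-scheme (or a base-fibration) that must be pointwise $G$-fixed, which is impossible unless $G$ is trivial. Combined with the numerical invariants $(K_X^2,\rho(X))$ and the geometry of $X_{\bar\k}$ from \autoref{rmk:weak-dp} and \autoref{cor: moredP}, this separates cases \autoref{mas:1}--\autoref{mas:4} and case (5). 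Maximality follows from the same analysis: a strict inclusion $\Autz_{X/\k}\subsetneq H$ applied to the procedure above for $H$ would produce a $G$-equivariant Sarkisov link $X\dashrightarrow X'$ with $\Autz_{X'/\k}$ strictly larger. Going through \autoref{prop: link_I}, \autoref{prop: link II dP} and \autoref{prop: links IV} rules this out in \autoref{mas:1}, \autoref{mas:2}, in \autoref{mas:3} when $Q_{\bar\k}\simeq\p^1_{\bar\k}\times\p^1_{\bar\k}$, and in \autoref{mas:4} when $X$ is smooth; in the opposite subcases, an additional $G$-equivariant Sarkisov link to a conic fibration enlarges $\Autz$, breaking maximality.

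The main obstacle is the maximality claim. One has to verify, link by link, which Sarkisov links in \autoref{thm: classification_rational_surfaces}\autoref{crs:2} admit a $G$-equivariant structure for a smooth connected $G$, using the key observation that $G$ must fix the finite base-scheme of any such link pointwise, together with the explicit Picard classes of the contracted curves in \autoref{prop: link II dP}; the delicate distinction between the two subcases of \autoref{mas:3} and of \autoref{mas:4} requires a case-by-case inspection of the geometric singularities allowed by \autoref{thm: dP-base-change-classification}.
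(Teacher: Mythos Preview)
Your reduction to a Mori fibre space is essentially the paper's argument: regularise via \autoref{thm: regularisation_thm}, run the MMP, and invoke \autoref{thm: classification_rational_surfaces}. One point worth tightening: the paper does not argue via ``$G$ acts trivially on $N^1$'' but instead cites Blanchard's lemma \cite[Theorem~7.2.1]{Bri17b} to push the $G$-action forward along each contraction. Your argument that extremal rays are $G$-invariant is fine, but Blanchard's lemma is the clean way to get a regular action on the target.

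The disjointness and maximality arguments, however, diverge from the paper and contain a genuine gap. Your claim that ``a positive-dimensional smooth connected algebraic group acting on a regular surface has only finitely many fixed closed points'' is false: already $\mathbb{G}_a$ acting on $\p^2_\k$ by $[x:y:z]\mapsto[x+tz:y:z]$ fixes the line $z=0$ pointwise. Likewise, the assertion that a Sarkisov decomposition of a $G$-equivariant birational map produces $G$-equivariant links is not justified by \autoref{t-sarkisov-program}, which is not stated equivariantly. The paper avoids both issues. For disjointness it simply observes that the groups $\Aut^\circ_{X_{\bar\k}/\bar\k}$ in cases (1)--(5) are pairwise non-isomorphic as algebraic groups over $\bar\k$, so no conjugation can identify them. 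For maximality it argues orbit-wise over $\bar\k$: for $X=\p^2_\k$, $\F_0$, a smooth quadric, or a smooth $X_6$, the group $G(\bar\k)=\Aut^\circ_{X_{\bar\k}}(\bar\k)$ acts transitively on $X_{\bar\k}(\bar\k)$, so no closed point of $X$ is $G$-invariant and every $G$-equivariant birational map $X\dashrightarrow Y$ is an isomorphism; for $\F_n$ with $n\geq 2$ there are exactly two orbits (the negative section and its complement), neither a closed point, giving the same conclusion. The non-smooth subcases of \eqref{mas:3} and \eqref{mas:4} are handled by citing \cite[Corollary~2.2]{MS2022}: if $X$ is regular but not geometrically regular then $\Aut^\circ_{X/\k}$ itself fails to be smooth, so it cannot be a maximal \emph{smooth} connected subgroup.
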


\begin{proof}
	By \autoref{thm: regularisation_thm}, there exists a $G$-birational map $\psi \colon \mathbb{P}^2_\k \dashrightarrow Y$, where $Y$ is a regular projective $G$-surface.
	We run a $K_Y$-MMP $f \colon Y \to X$, which terminates with a Mori fibre space $X \to B$.
	As $G$ is connected, Blanchard's lemma \cite[Theorem 7.2.1]{Bri17b} shows that there is a unique (regular) action of $G$ on $X$ such that $f$ is $G$-equivariant.
	We divide the proof according to the dimension of $B$.

	\begin{enumerate}
	\item[(i)] \emph{$\dim(B)=0$}. In this case, $X$ is a rational del Pezzo surface with $\rho(X)=1$  and it is geometrically normal by \autoref{prop: birational-rigidity-non-norm-dP}.
	By \autoref{cor: rat_deg_5}, we know that $K_X^2 \geq 5$.
	Moreover, as $\rho(X)=1$, we can assume that $K_X^2 \neq 7$ by \autoref{lem:no-minimal-dp7}. If $K_X^2=9$, then $X \simeq \mathbb{P}^2_\k$ by \autoref{lem: SB} because $X$ is rational.
	If $K_X^2=8$, then $X$ is a quadric  by \autoref{prop: divisibility-dp8} and \autoref{lem: dp8-vs-quadric}.
	In the case $K_X^2=6$, there is nothing to add.
	If $K_X^2=5$
	and $X_{\bar{\k}}$ is smooth (which is automatic if $\k$ is perfect, or $p \geq 7$ by \autoref{rmk:weak-dp}), then $\Aut^\circ_{X/\k}$ is trivial as $\Aut^\circ_{X_{\bar{\k}}/\bar{\k}}$ is finite and smooth
	by \cite[Theorem 8.5.8]{Dol12} and \cite{MSRDPdP}.

	\item[(ii)] \emph{$\dim(B)=1$.} In this case, $X \to \p^1_{\k}$ is a Mori conic bundle.
	By the classification of Sarkisov links (see \autoref{thm: classification_rational_surfaces}), we deduce $K_X^2 \in \left\{5, 6, 8 \right\}$.
	If $K_X^2=8$, then $X=\F_n$ is a Hirzebruch surface because these are the only possible Mori conic bundles obtained from $\mathbb{P}^2_\k$ via the Sarkisov program: the only link of type I which has a degree 8 Mori conic bundle is $\mathbb{F}_1$ and a link of type II takes Hirzebruch surfaces to Hirzebruch surfaces.
%	If $K_X^2=8$, then $X \to \mathbb{P}^1_{\k}$ is geometrically a Hirzebruch surface $\mathbb{F}_n$ by \autoref{lem: Mori-conic-p>2}. \textcolor{red}{As the other extremal ray is fixed as well, we can deduce it is a twisted form of Hirzebruch. BE CAREFUL ABOUT CHAR 2.}
	If $n=1$, we have that $\Aut^\circ_{\mathbb{F}_1/k} \subset \Aut^\circ_{\mathbb{P}^2_{\k}/\k}$.
	In the two remaining cases, $X$ is a del Pezzo surface and by \autoref{prop: Mori-conic-bdl-dP} there exists a birational contraction to $\mathbb{P}^2_\k$ and thus $\Aut^\circ_{X/{\k}} \subset \Aut^\circ_{\mathbb{P}^2_{\k}/\k}$.
\end{enumerate}
The final assertion follows as $\Aut^\circ_{X/\k}$ of the surfaces in categories (1)-(5) are all non-isomorphic on the algebraic closure.

Let us show that \eqref{mas:1} and \eqref{mas:2} are maximal smooth subgroups and that \eqref{mas:3} is maximal smooth if and only if $Q_{\overline\k}\simeq\p^1_{\overline\k}\times\p^1_{\overline\k}$.
Let $X$ be one of the surfaces in the above list. Then $G=\Aut^\circ_{X/{\k}}$ is maximal if and only if any $G$-equivariant birational map $\varphi\colon X\dashrightarrow Y$ to a regular projective surface $Y$ is either an isomorphism or satisfies the property $\varphi G\varphi^{-1}=\Aut^\circ_{X/{\k}}$.
%So, let us take  a $G$-equivariant birational map $\varphi\colon X\dashrightarrow Y$ to a regular projective surface $Y$.
%We replace $Y$ by an $\Autz(Y)$-equivariant desingularisation REF. We then run the MMP (see \autoref{t-mmp-excellent}), which is $\Autz(Y)$-equivariant by Blanchard's lemma REF. So, we can assume that $Y$ is a Mori fibre space
%By \autoref{t-sarkisov-program}, any $G$-birational map $\varphi$ is the composition of $G$-equivariant Sarkisov links.

Suppose that $X=\p^2_\k$ or $X=\F_0$. Then $G(\overline\k)$ acts transitively on $X_{\overline\k}$ and hence the action of $\Aut^\circ_{X/k}$ has no fixed points in $X$. In particular, any $G$-equivariant birational map $X\dashrightarrow Y$ is an isomorphism.

Suppose that $X=\F_n$, $n\neq0,1$. Then $G(\overline\k)=\Aut^\circ_{\F_n/{\overline\k}}(\overline \k)$ acts with two orbits on $\F_n$, namely the special section and its complement.  In particular, any $G$-equivariant birational map $X\dashrightarrow Y$ is an isomorphism.

Suppose that $X=Q\subset\p^3_\k$ is a quadric surface.
If $Q$ is regular, not smooth, then the group scheme $G$ is not smooth by \cite[Corollary 2.2]{MSRDPdP}.
In the case $Q$ is smooth, then the group
 $G(\overline\k)$ acts transitively on $(Q_{\overline\k})(\overline \k)$. So, $G$ has no fixed points on $Q$ and again, any $G$-equivariant birational map $X\dashrightarrow Y$ is an isomorphism.

 Suppose $X$ is a regular del Pezzo of degree 6. If $\Aut^\circ_{X/k}$ is smooth, then $X$ is smooth by \cite[Corollary 2.2]{MSRDPdP} and $G(\overline{\k})$ acts transitively on a smooth del Pezzo surface of degree 6 outside of its $(-1)$-curves, showing that any $G$-equivariant birational $X\dashrightarrow Y$ is an isomorphism.
\end{proof}

\begin{remark}
\begin{enumerate}
\item Over a field of positive  characteristic, $\Aut^\circ_{X/{\k}}$ might not be a smooth group scheme over $\k$.
In the case where $X$ is a minimal regular rational surface, then $\Aut^\circ_{X/{\k}}$ is not smooth if and only if $X$ is not geometrically regular.
Indeed, if $X$ is not geometrically regular, it follows from \cite[Corollary 2.2]{MSweakdP} that $\Aut^\circ_{X/{\k}}$ is not smooth.
On the other hand, if $X$ is geometrically regular, then $\Aut^\circ_{X_{\bar{\k}}/{\bar{\k}}}$ is a smooth group as $K_X^2 \geq 5$ by \cite{MSRDPdP}.
\item One might be tempted to replace $\Aut^\circ_{X/{\k}}$ with its reduced subscheme and study the properties of $(\Aut^\circ_{X/{\k}})_{\text{red}}$.
We warn however that, over imperfect fields, the reduced subscheme of a group scheme is not always a sub-group scheme of $\Aut^\circ_{X/k}$. As an example, consider the imperfect field $k=\mathbb{F}_p(t)$ and let $h \colon \mathbb{G}_a \times \mathbb{G}_a \to \mathbb{G}_a$ defined by $(x, y) \to x^p+ty^p$. In this case $G= \ker(h)$ is a group scheme whose reduced part is not a subgroup scheme (cf. \cite[Proposition 1.6]{FS20b}).
\end{enumerate}
\end{remark}

We conclude by applying \autoref{thm: max_alg_subgroups} to the case of separably closed fields. Intriguingly, the (only possible) difference with the case of algebraically closed fields appears only for characteristic $p=2$ and $p=5$.

\begin{corollary} \label{cor: max_separably}
Let $\k$ be a separably closed field of characteristic $p$ and let $G$ be a smooth connected algebraic group acting rationally on $\mathbb{P}^2_\k$.
Then there exists a $G$-birational map $\varphi \colon \mathbb{P}^2_\k \dashrightarrow X$ such that $X$ is a regular projective $G$-surface in the following list
\begin{enumerate}
\item $X \simeq \p^2_\k$;
\item $X \simeq \mathbb{F}_n$ for $n \in \mathbb{N} \setminus \left\{1\right\}$;
\item $X \simeq Q \subset \mathbb{P}^3_{\k}$ is a regular, not geometrically regular, quadric in $\p^3_\k$ with $\rho(X)=1$. Such surfaces exist only if $p=2$, $\k$ is not perfect and $X_{\bar\k}$ has an $A_1$-singularity.
\item $X$ is a regular, not geometrically regular, del Pezzo surface of degree $5$ with $\rho(X)=1$. Such surfaces exist only if $p=5$, $\k$ is not perfect and $X_{\bar{\k}}$ has an $A_4$-singularity.
\end{enumerate}
\end{corollary}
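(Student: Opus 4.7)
The plan is to specialize \autoref{thm: max_alg_subgroups} to a separably closed field by invoking the classification of geometrically integral del Pezzo surfaces with Picard rank~1 established in \autoref{cor: moredP-sepclosed}. First I would apply \autoref{thm: max_alg_subgroups} to obtain a $G$-birational map $\varphi\colon\mathbb{P}^2_\k\dashrightarrow X$ where $X$ belongs to one of the five classes (1)--(5) of that theorem. Cases (1) and (2) of \autoref{thm: max_alg_subgroups} yield cases (1) and (2) of the corollary directly (in particular, $\mathbb{F}_0=\mathbb{P}^1_\k\times\mathbb{P}^1_\k$ is absorbed into case (2)); it then remains to reduce the three remaining cases.

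For case (3) of \autoref{thm: max_alg_subgroups}, $X\simeq Q\subset\mathbb{P}^3_\k$ is a quadric with $\rho(Q)=1$. If $Q$ were geometrically regular, then $Q_{\bar\k}$ would be a smooth quadric, hence isomorphic to $\mathbb{P}^1_{\bar\k}\times\mathbb{P}^1_{\bar\k}$ with Picard rank $2$. Over a separably closed field, however, \cite[Proposition~2.4(2)]{Tan18b} gives $\rho(Q)=\rho(Q_{\bar\k})$, contradicting $\rho(Q)=1$. Hence $Q$ is not geometrically regular, so by \autoref{cor: moredP-sepclosed} we must have $p=2$ and $Q_{\bar\k}$ carries an $A_1$-singularity, producing case (3) of the corollary.

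For case (4) of \autoref{thm: max_alg_subgroups}, $X$ would be a del Pezzo surface of degree $6$ with $\Pic(X)=\mathbb{Z}[H]$. By \autoref{cor: moredP-sepclosed}, no such surface exists over a separably closed field, so this case is empty. For case (5) of \autoref{thm: max_alg_subgroups}, $X$ is a del Pezzo surface of degree $5$ with $\rho(X)=1$ and $X_{\bar\k}$ not smooth; applying \autoref{cor: moredP-sepclosed} again forces $p=5$ and an $A_4$-singularity on $X_{\bar\k}$, giving case (4) of the corollary.

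The argument is essentially a bookkeeping exercise rather than a real difficulty: every step is a direct appeal to \autoref{cor: moredP-sepclosed}. The one substantive point is the Picard-rank comparison $\rho(Q)=\rho(Q_{\bar\k})$ used in the smooth quadric subcase, which relies on the preservation of Picard rank of geometrically normal surfaces under purely inseparable extensions (the separable closure being trivial because $\k$ is separably closed). Since the remaining checks are purely numerical, no further obstacle appears.
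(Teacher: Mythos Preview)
Your proposal is correct and follows exactly the paper's approach: the paper's proof is the single line ``This follows directly from \autoref{thm: max_alg_subgroups} and \autoref{cor: moredP-sepclosed},'' and you have simply unpacked what that entails case by case. Your Picard-rank argument for the smooth quadric case is already contained in the proof of \autoref{cor: moredP-sepclosed}, so invoking that corollary directly would suffice, but the extra detail does no harm.
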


\begin{proof}
	This follows directly from \autoref{thm: max_alg_subgroups} and \autoref{cor: moredP-sepclosed}.
\end{proof}

\subsection{The plane Cremona group is generated by involutions}
\label{ss:involutions}

If $\k$ is algebraically closed, then $\Bir(\p^2_\k)$ is generated by $\Aut(\p^2_\k)\simeq\rm{PGL}_3(\k)=\rm{PSL}_3(\k)$ and the standard quadratic involutions $[x:y:z]\rat[yz:xz:xy]$ \cite{Cas1901}.
Since $\rm{PSL}_3(\k)$ is generated by involutions when $\k$ is algebraically closed, the plane Cremona group is generated by involutions.
In \cite{LS21}, Lamy and the third author extend this property to $\Bir(\p^2_\k)$ when $\k$ is a perfect field.
In this section we explain that their argument adapts rather easily to the case where $\k$ is a separably closed field of characteristic $p \geq 3$. If $p=2$, then there are rational conic fibrations whose geometry is very involved and which we do not want to study at this point (to be compared with \cite[Section~3.5]{LS21}).

\begin{theorem}\label{thm:involution}
	Let $\k$ be a separably closed field of characteristic $p\geq3$. Then $\Bir_\k(\p^2)$ is generated by involutions.
\end{theorem}

Recall that over any field $\k$, any birational map of $\Bir(\p^2_\k)$ is a composition of Sarkisov links by \autoref{t-sarkisov-program}.

\begin{definition}\label{def: Sarkisov length}
Let $\k$ be any field and $f\in\Bir(\p^2_{\k})$ an element. We denote by $\rm{sl}(f)$ the minimal number of Sarkisov links necessary to decompose $f$. It is also called {\em Sarkisov length}.
Since every birational map can be decomposed into Sarkisov links, this number exists and is finite.
We say that $f$ is {\em reducible} if we can write $f=f_2\circ f_1$ with $\rm{sl}(f_i)<\rm{sl}(f)$ for $i=1,2$. Otherwise we say that $f$ is {\em irreducible}.
\end{definition}

\begin{remark}\label{rmk:generation by irred}
Let $\k$ be any field.
A direct consequence of the Sarkisov program (see \autoref{t-sarkisov-program}) is that $\Bir(\p^2_{\k})$ is generated by its irreducible elements.
\end{remark}

\begin{definition}
Let $\k$ be any field.
An irreducible element of $\Bir(\p^2_{\k})$ is called {\em of del Pezzo type} if it admits a minimal decomposition into Sarkisov links of type~\II over $\Spec \k$ between del Pezzo surfaces.

An irreducible element of $\Bir(\p^2_{\k})$ is called {\em of fibering type} if it has a minimal decomposition into Sarkisov links containing a link of type~\I (and hence also of type~\III).
\end{definition}

\begin{remark}\label{rmk: irreducible dP}
Let $\k$ be a separably closed field of characteristic $p\geq3$.
\autoref{thm:classification links p>2} implies that any Sarkisov link of type II between rational del Pezzo surface of Picard rank $1$ is of the following or the inverse of one of the following, where $i=[k(\xx_i):\k]$ is the degree of $\xx_i$ and $X_e$ is a del Pezzo surface of degree $e=K_{X_e}^2$.
\begin{enumerate}
\item\label{link:1} $\chi_{\xx_i,\yy_i}\colon \p^2_\k\rat \p^2_\k$, $i\in\{3,7\}$;
\item\label{link:4} $\chi_{\xx_1,\xx_5}\colon X_5\rat \p^2_\k$;
\end{enumerate}
In particular, the Sarkisov length of an irreducible element $f\in\Bir(\p^2_{\k})$ of del Pezzo type is $\rm{sl}(f)\leq2$.
\end{remark}

By $\mathcal{J}$ we denote the group of birational maps of $\p^2_{\k}$ preserving the pencil of lines through $[1:0:0]$. It is conjugate to the group of birational maps of $\F_1$ preserving the fibration to $\p^1_{\k}$ and is isomorphic to $\mathcal{J}\simeq\rm{PGL}_2(\k(t))\rtimes\rm{PGL}_2(\k)$. Classically, it is called group of Jonquières maps.

\begin{lemma}\label{lem:generation for involutions}
	Let $\k$ be a separably closed field of characteristic $p\geq3$. If $f\in\Bir_\k(\p^2_{\k})$ is of fibering type, then there exist $\alpha,\beta\in\Aut_\k(\p^2_{\k})$ such that $\beta\circ f\circ\alpha \in\mathcal{J}$.

	In particular, $\Bir_\k(\p^2_{\k})$ is generated by $\Aut_\k(\p^2_{\k})$, $\mathcal{J}$ and irreducible elements $f$ of del Pezzo type.
\end{lemma}

\begin{proof}
	Let $f\in\Bir(\p^2_{\k})$ be of fibering type and let $f=\chi_m\circ\cdots\circ\chi_1$ be a minimal factorisation into Sarkisov links $\chi_i$ with $m\geq 2$.
	By \autoref{thm:classification links p>2}, the only minimal rational Mori conic bundles are Hirzebruch surfaces $\F_n/\p^1$, $n\geq0$. Therefore, $\chi_1\colon\p^2_\k\dashrightarrow\F_1$ is a link of type~\I and $\chi_m\colon\F_1\to\p^2_{\k}$ is a link of type~\III.

	If $m=2$, then $f\in\Aut_\k(\p^2_{\k})$.
	Assume now that $m\geq 3$. Since $f$ is irreducible the intermediate Sarkisov links $\chi_i\colon\F_{n_i}\dashrightarrow \F_{n_{i+1}}$ are between Hirzebruch surfaces, and they are of type~II since a minimal factorisation never contains the link of type~\IV given by the exchange of the ruling $\F_0\simeq\F_0$ (see~\cite[Proposition~2.5(1)]{LS21}).
	Therefore, $f$ sends the pencil of lines through the base point of $\chi_1$ onto the pencil of lines through the base point of $\chi_m^{-1}$. Therefore, $\alpha,\beta\in\Aut_\k(\p^2_{\k})$ can be chosen such that $\beta\circ f\circ\alpha \in\mathcal{J}$.

	The second claim follows since $\Bir_\k(\p^2_{\k})$ is generated by its irreducible elements, which are of del Pezzo type and/or of fibering type by \autoref{rmk:generation by irred}.
\end{proof}

\begin{lemma}[{\cite[Lemma 3.6]{LS21}}]\label{lem:aut-J-involutions}
	Let $\k$ be a any field. Then $\Aut_\k(\p^2_{\k})$ and $\mathcal{J}$ are contained in the subgroup of $\Bir_\k(\p^2_{\k})$ generated by involutions.
\end{lemma}

We are ready for the proof of \autoref{thm:involution}.

\begin{proof}[Proof of \autoref{thm:involution}]
	By \autoref{lem:generation for involutions}, $\Bir_\k(\p^2_{\k})$ is generated by $\Aut_\k(\p^2_{\k})$, $\mathcal{J}$ and irreducible elements $f$ of del Pezzo type.
	By \autoref{lem:aut-J-involutions}, $\Aut_\k(\p^2_{\k})$ and $\mathcal{J}$ are contained in the subgroup of $\Bir_\k(\p^2_{\k})$ generated by involutions.
	It remains to show that irreducible elements $f$ of del Pezzo type are in the subgroup of $\Bir_\k(\p^2_{\k})$ generated by involutions. \autoref{rmk: irreducible dP} lists all Sarkisov links over separably closed fields in characteristic $p\geq3$.

	$\bullet$ Consider the link $\chi_{\xx_3,\yy_3}\colon\p^2_{\k}\rat\p^2_{\k}$. By \autoref{lem:deg-3-conic-or-line}, there exists a rational conic $C$ through $\xx_3$. Let $\mathbf{z}\in C$ be a $\k$-rational point. Then $\xx_3$ and $\zz$ are together in Sarkisov general position by \autoref{thm: pts_Sarkisov_general_position}. Hence, the elementary relation $\piece{\p^2}13$ in \autoref{P2_13} exists and yields a decomposition $\chi_{\xx_3,\yy_3}=\chi_5\circ\cdots\circ\chi_1$, where $\chi_1^{-1},\chi_5\colon \F_1\to\p^2$ are the blow-up of $\mathbf{z}$, and $\chi_2,\chi_4$ are links of type II between Hirzebruch surfaces (preserving a fibration), and $\chi_3$ is a link of type IV.
	We conclude that $\chi_{\xx_3,\yy_3}$ is contained in the subgroup of $\Bir(\p^2_\k)$ generated by involutions.

	$\bullet$ The link $\chi_{\xx_7,\yy_7}\colon\p^2_{\k}\rat\p^2_{\k}$ is, up to an automorphism of $\p^2_\k$, an involution \autoref{lem:Geiser}\autoref{Geiser:2}.

	$\bullet$ By \autoref{rmk: irreducible dP}, the last type of irreducible element is $f =\chi_{\xx_5,\yy_1}\circ \chi_{\yy_1',\xx_5'}$.
	If $\yy_1=\yy_1'$, then $f\in\Aut(\p^2_\k)$. Suppose that $\yy_1\neq\yy_1'$. Then the point $\mathbf{z}_1:=\chi_{\yy_1',\xx_5'}^{-1}(\yy_1)\in\p^2$ is not contained in the conic through $\xx_5'$.
	\autoref{thm: pts_Sarkisov_general_position} implies that $\xx_5',\mathbf{z}_1$ are in general position and hence the elementary relation $\piece{\p^2}15$ in \autoref{P2_15} exists.
	It yields a decomposition $f=\chi_5\circ\cdots\circ \chi_1$, where the $\chi_1,\chi_5^{-1}$ are links of type I, $\chi_2,\chi_4$ are links of type II between Hirzebruch surfaces and $\chi_3$ is a link of type IV.
	We conclude that $f$ is contained in the subgroup of $\Bir(\p^2_\k)$ generated by involutions.
\end{proof}

%===========================================================
%\section{}
%\label{}

%===========================================================

%===========================================================

\bibliographystyle{amsalpha}
\bibliography{biblio}

\end{document}